\tikzset{
%Define standard arrow tip
>=stealth',
%Define style for different line styles
help lines/.style={dashed, thick},
axis/.style={<->},
important line/.style={thick},
connection/.style={thick, dotted},
}
\newcommand{\nc}{\newcommand}
\nc{\rnc}{\renewcommand}
\nc{\bb}[1]{{\mathbb #1}}
\nc{\bbA}{\bb{A}}\nc{\bbB}{\bb{B}}\nc{\bbC}{\bb{C}}\nc{\bbD}{\bb{D}}
\nc{\bbE}{\bb{E}}\nc{\bbF}{\bb{F}}\nc{\bbG}{\bb{G}}\nc{\bbH}{\bb{H}}
\nc{\bbI}{\bb{I}}\nc{\bbJ}{\bb{J}}\nc{\bbK}{\bb{K}}\nc{\bbL}{\bb{L}}
\nc{\bbM}{\bb{M}}\nc{\bbN}{\bb{N}}\nc{\bbO}{\bb{O}}\nc{\bbP}{\bb{P}}
\nc{\bbQ}{\bb{Q}}\nc{\bbR}{\bb{R}}\nc{\bbS}{\bb{S}}\nc{\bbT}{\bb{T}}
\nc{\bbU}{\bb{U}}\nc{\bbV}{\bb{V}}\nc{\bbW}{\bb{W}}\nc{\bbX}{\bb{X}}
\nc{\bbY}{\bb{Y}}\nc{\bbZ}{\bb{Z}}
\nc{\mbf}[1]{{\mathbf #1}}
\nc{\bfA}{\mbf{A}}\nc{\bfB}{\mbf{B}}\nc{\bfC}{\mbf{C}}\nc{\bfD}{\mbf{D}}
\nc{\bfE}{\mbf{E}}\nc{\bfF}{\mbf{F}}\nc{\bfG}{\mbf{G}}\nc{\bfH}{\mbf{H}}
\nc{\bfI}{\mbf{I}}\nc{\bfJ}{\mbf{J}}\nc{\bfK}{\mbf{K}}\nc{\bfL}{\mbf{L}}
\nc{\bfM}{\mbf{M}}\nc{\bfN}{\mbf{N}}\nc{\bfO}{\mbf{O}}\nc{\bfP}{\mbf{P}}
\nc{\bfQ}{\mbf{Q}}\nc{\bfR}{\mbf{R}}\nc{\bfS}{\mbf{S}}\nc{\bfT}{\mbf{T}}
\nc{\bfU}{\mbf{U}}\nc{\bfV}{\mbf{V}}\nc{\bfW}{\mbf{W}}\nc{\bfX}{\mbf{X}}
\nc{\bfY}{\mbf{Y}}\nc{\bfZ}{\mbf{Z}}
\nc{\bfa}{\mbf{a}}\nc{\bfb}{\mbf{b}}\nc{\bfc}{\mbf{c}}\nc{\bfd}{\mbf{d}}
\nc{\bfe}{\mbf{e}}\nc{\bff}{\mbf{f}}\nc{\bfg}{\mbf{g}}\nc{\bfh}{\mbf{h}}
\nc{\bfi}{\mbf{i}}\nc{\bfj}{\mbf{j}}\nc{\bfk}{\mbf{k}}\nc{\bfl}{\mbf{l}}
\nc{\bfm}{\mbf{m}}\nc{\bfn}{\mbf{n}}\nc{\bfo}{\mbf{o}}\nc{\bfp}{\mbf{p}}
\nc{\bfq}{\mbf{q}}\nc{\bfr}{\mbf{r}}\nc{\bfs}{\mbf{s}}\nc{\bft}{\mbf{t}}
\nc{\bfu}{\mbf{u}}\nc{\bfv}{\mbf{v}}\nc{\bfw}{\mbf{w}}\nc{\bfx}{\mbf{x}}
\nc{\bfy}{\mbf{y}}\nc{\bfz}{\mbf{z}}
\nc{\mcal}[1]{{\mathcal #1}}
\nc{\calA}{\mcal{A}}\nc{\calB}{\mcal{B}}\nc{\calC}{\mcal{C}}\nc{\calD}{\mcal{D}}
\nc{\calE}{\mcal{E}} \nc{\calF}{\mcal{F}}\nc{\calG}{\mcal{G}}\nc{\calH}{\mcal{H}}
\nc{\calI}{\mcal{I}}\nc{\calJ}{\mcal{J}}\nc{\calK}{\mcal{K}}\nc{\calL}{\mcal{L}}
\nc{\calM}{\mcal{M}}\nc{\calN}{\mcal{N}}\nc{\calO}{\mcal{O}}\nc{\calP}{\mcal{P}}
\nc{\calQ}{\mcal{Q}}\nc{\calR}{\mcal{R}}\nc{\calS}{\mcal{S}}\nc{\calT}{\mcal{T}}
\nc{\calU}{\mcal{U}}\nc{\calV}{\mcal{V}}\nc{\calW}{\mcal{W}}\nc{\calX}{\mcal{X}}
\nc{\calY}{\mcal{Y}}\nc{\calZ}{\mcal{Z}}
\nc{\fA}{\frak{A}}\nc{\fB}{\frak{B}}\nc{\fC}{\frak{C}} \nc{\fD}{\frak{D}}
\nc{\fE}{\frak{E}}\nc{\fF}{\frak{F}}\nc{\fG}{\frak{G}}\nc{\fH}{\frak{H}}
\nc{\fI}{\frak{I}}\nc{\fJ}{\frak{J}}\nc{\fK}{\frak{K}}\nc{\fL}{\frak{L}}
\nc{\fM}{\frak{M}}\nc{\fN}{\frak{N}}\nc{\fO}{\frak{O}}\nc{\fP}{\frak{P}}
\nc{\fQ}{\frak{Q}}\nc{\fR}{\frak{R}}\nc{\fS}{\frak{S}}\nc{\fT}{\frak{T}}
\nc{\fU}{\frak{U}}\nc{\fV}{\frak{V}}\nc{\fW}{\frak{W}}\nc{\fX}{\frak{X}}
\nc{\fY}{\frak{Y}}\nc{\fZ}{\frak{Z}}
\nc{\fa}{\frak{a}}\nc{\fb}{\frak{b}}\nc{\fc}{\frak{c}} \nc{\fd}{\frak{d}}
\nc{\fe}{\frak{e}}\nc{\fFf}{\frak{f}}\nc{\fg}{\frak{g}}\nc{\fh}{\frak{h}}
\nc{\fri}{\frak{i}}\nc{\fj}{\frak{j}}\nc{\fk}{\frak{k}}\nc{\fl}{\frak{l}}
\nc{\fm}{\frak{m}}\nc{\fn}{\frak{n}}\nc{\fo}{\frak{o}}\nc{\fp}{\frak{p}}
\nc{\fq}{\frak{q}}\nc{\fr}{\frak{r}}\nc{\fs}{\frak{s}}\nc{\ft}{\frak{t}}
\nc{\fu}{\frak{u}}\nc{\fv}{\frak{v}}\nc{\fw}{\frak{w}}\nc{\fx}{\frak{x}}
\nc{\fy}{\frak{y}}\nc{\fz}{\frak{z}}
\newtheorem{theorem}{Theorem}[section]
\newtheorem{lemma}[theorem]{Lemma}
\newtheorem{corollary}[theorem]{Corollary}
\newtheorem{prop}[theorem]{Proposition}
\theoremstyle{definition}
\newtheorem{definition}[theorem]{Definition}
\newtheorem{example}[theorem]{Example}
\newtheorem{remark}[theorem]{Remark}
\newtheorem{eq:question}[theorem]{eq:question}
\newtheorem{sec:problem}[theorem]{sec:problem}
\newtheorem{conj}[theorem]{Conjecture}
\DeclareMathOperator{\rank}{rank}
 \DeclareMathOperator{\supp}{supp}
\DeclareMathOperator{\Hom}{{Hom}} 
\DeclareMathOperator{\sHom}{{\mathscr{H}om}}
 \DeclareMathOperator{\Lie}{Lie}
\DeclareMathOperator{\Spec}{{Spec}} 
 \DeclareMathOperator{\End}{End}
\DeclareMathOperator{\SL}{SL}
\newcommand{\cR}{\mathcal {R}}
\DeclareMathOperator{\Ind}{Ind}
\def\angl#1{{\langle #1\rangle}}
\DeclareMathOperator{\pt}{pt}
\newcommand{\Z}{\bbZ}
\newcommand{\La}{\Lambda}
\newcommand{\la}{\lambda}
\newcommand{\al}{\alpha}
\newcommand{\be}{\beta}
\newcommand{\de}{\delta}
\DeclareMathOperator{\St}{St}
\newcommand{\Dem}{\Delta}   % formal Demazure operator
\newcommand{\DF}{\mathbb{D}}  % formal affine Demazure algebra
\newcommand{\DFd}{\mathbb{D}^*}
\newcommand{\tilx}{\tilde{x}}
\newcommand{\hatx}{\hat{x}}
\newcommand{\hatY}{\hat{Y}}
\newcommand{\hatQ}{\hat{Q}}
\newcommand{\unit}{\mathbf{1}}  % the identity of the cohomology
\DeclareMathOperator{\ev}{ev}
\newcommand{\taup}{{\overset{+}\tau}}
\newcommand{\taum}{{\overset{-}\tau}}
\newcommand{\taupm}{{\overset{\pm}\tau}}
\newcommand{\taump}{{\overset{\mp}\tau}}
\newcommand{\Stp}{\St^+}
\newcommand{\Stpm}{\St^\pm}
\newcommand{\Stm}{\St^-}
\newcommand{\ap}{a^+}
\newcommand{\bp}{b^+}
\newcommand{\am}{a^-}
\newcommand{\bm}{b^-}
\newcommand{\apm}{a^\pm}
\newcommand{\amp}{a^\mp}
\newcommand{\bpm}{b^\pm}
\newcommand{\bmp}{b^\mp}
\DeclareMathOperator{\stab}{stab}
\DeclareMathOperator{\Leaf}{Leaf}
\DeclareMathOperator{\Slope}{Slope}
\DeclareMathOperator{\Frac}{Frac}
\DeclareMathOperator{\Pic}{Pic}
 \gdef\Young(#1){\hbox{$\vcenter
 {\mathcode`,="8000\mathcode`|="8000
  \def,{\global\advance\cols by 1 &}%
  \def|{\cr
        \multispan{\the\cols}\hrulefill\cr
        &\global\cols=2 }%
  \offinterlineskip\everycr{}\tabskip=0pt
  \dimen0=\ht\strutbox \advance\dimen0 by \dp\strutbox
  \halign
   {\vrule height \ht\strutbox depth \dp\strutbox##
    &&\hbox to \dimen0{\hss$##$\hss}\vrule\cr
    \noalign{\hrule}&\global\cols=2 #1\crcr
    \multispan{\the\cols}\hrulefill\cr%
   }
 }$}}
\title[K-theoretic stable bases]
{On the K-theory stable bases of the Springer resolution}
\date{\today}
\author[C.~Su]{Changjian~Su}
\address{IHES, 35 Route de Chartres, 91440 Bures-sur-Yvette, France}
\email{changjiansu@gmail.com}
\author[G.~Zhao]{Gufang~Zhao}
\address{University of Massachusetts Amherst, Department of Mathematics and Statistics,  Amherst MA 01003}
\curraddr{Institute of Science and Technology Austria,
Am Campus, 1,
Klosterneuburg 3400,
Austria}
\email{gufang.zhao@ist.ac.at}
\author[C.~Zhong]{Changlong~Zhong}
\address{State University of New York at Albany, 1400 Washington Ave, ES 110, Albany, NY, 12222}
\email{czhong@albany.edu}
\begin{document}

\begin{abstract}
Cohomological and K-theoretic stable bases originated from the study of  quantum cohomology and quantum K-theory. Restriction formula for cohomological stable bases played an important role in computing the quantum connection of cotangent bundle of partial flag varieties. In this paper we study the K-theoretic stable bases of cotangent bundles of flag varieties.
We describe these bases in terms of the action of the affine Hecke algebra and the twisted group algebra of Kostant-Kumar. Using this algebraic description and the  method of root polynomials, we give a restriction formula of the stable bases. We apply it to obtain the  restriction formula for partial flag varieties. We also build a relation between the stable basis and the Casselman basis in the principal series representations of the Langlands dual group. As an application, we give a closed formula for the transition matrix between Casselman basis and the characteristic functions. 
\end{abstract}
\maketitle
\tableofcontents

\section{Introduction}
In \cite{MO12}, Maulik-Okounkov defined  the cohomological stable envelope for symplectic resolutions (see also \cite{BMO11}). 
The image of certain cohomology classes under the stable envelope are called the cohomological stable bases. The stable envelope is used to construct a quantum group action on the cohomology of quiver varieties, and to compute its quantum connection. Moreover, Nakajima gave a sheaf theoretic definition of the stable envelope \cite{N13}. 
We refer the readers to  \cite{BFN12,MS13,S13,S14} for other applications. 

The K-theoretic stable envelope is defined in \cite{MO}  (see also \cite{OK15, OS16, RTV15}). It is constructed in \cite{MO} and used to define a quantum group action  on the equivariant K-theory of quiver varieties \cite{OS16}. Based on that, in \cite{OK15}, difference equations in quantum K-theory of quiver varieties are constructed  geometrically,  which are further identified algebraically with the quantum Knizhnik-Zamolodchikov equations \cite{FR92, OK15} and quantum Weyl group actions \cite{OS16}. The monodromy of these difference equations is studied in \cite{AO} using the elliptic stable envelope. The K-theoretic stable bases for Hilbert scheme of points on $\bbC^2$ is studied in \cite{N15} and \cite{GN15}.

Stable basis for cotangent bundle of flag varieties and partial flag varieties are also of interest. The cohomological stable bases for $T^*(G/B)$ turns out to be the characteristic cycles of certain D-modules on the flag variety $G/B$. Pulling it back to $G/B$, we get the Chern--Schwartz--MacPherson classes for the Schubert cells \cite{AMSS17,RV15}. 
Moreover,  for cohomological stable bases of the cotangent bundle $T^*(G/P)$, in \cite{Su15}, the first-named author obtained their restriction formula, which  played an important role in computing the quantum connection of $T^*(G/P)$ in \cite{Su16}.

The goal of the present paper is to study the K-theory stable bases of cotangent bundle of flag varieties, and to find a restriction formula for the K-theoretic stable bases, formula expressing the stable bases in terms of the torus fixed point basis in $T^*(G/B)$. 
For each choice of a Weyl chamber, there is a set of stable basis, labeled by Weyl group elements $w\in W$.  For the positive/negative Weyl chambers, the stable basis will be denoted by  $\{\stab_{\pm}(w)\mid w\in W\}$. (There are other choices involved in the definition. See \S~\ref{sec:act1} for the detail.) In the special cases when $w\in W$ is the identity  $e$ or and the longest element $w_0\in W$, $\stab_+(e)$ and $\stab_-(w_0)$ are equal to the structure sheaves of the corresponding fixed points, up to a factor.
 
Let $Z$ be the Steinberg variety and $A$ be the maximal torus of $G$. The convolution algebra $K_{G\times \bbC^*}(Z)$, which is isomorphic to the affine Hecke algebra by a well known theorem of Kazhdan--Lusztig and Ginzburg (\cite{KL87, CG97}), acts on  $K_{A\times \bbC^*}(T^*G/B)$ on the left  and on the right.  Under these two actions, the Demazure-Lusztig operator corresponding to simple root $\alpha$ are denoted  respectively by  $T_\al$ and $T_\al'$. Our first main result is the following:

\begin{theorem}[Theorem \ref{thm:stabpim}] \label{thm:intro1} The elements $\stab_{\pm}(w)$  are generated by the action of $K_{G\times \bbC^*}(Z)$. More precisely, \[\stab_{+}(w)=q^{-\ell(w)/2}T_{w^{-1}}'(\stab_{+}(e)), \quad \stab_{-}(w)=q^{\ell(w_0w)/2}(T_{w_0w})^{-1}(\stab_{-}(w_0)).\]
\end{theorem}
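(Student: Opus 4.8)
I would prove both formulas by induction on $\ell(w)$, the engine being the defining characterization of the $K$-theoretic stable basis: $\stab_+(w)$ is the \emph{unique} class in $K_{A\times\bbC^*}(T^*(G/B))$ such that (i) it is supported on the closure of the attracting set of the fixed point $w$ for the positive chamber; (ii) its restriction to the fixed point $w$ is the prescribed normalization (a product, up to a power of $q$, over the roots made negative by $w$); and (iii) at every other fixed point $v$ its restriction lies in the shifted Newton polytope dictated by the degree/smallness condition. The base cases are already available from the excerpt: $\stab_+(e)$ and $\stab_-(w_0)$ are, up to a factor, the structure sheaves of the fixed points $e$ and $w_0$.

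\textbf{The positive chamber.} Put $\sigma_+(w):=q^{-\ell(w)/2}T'_{w^{-1}}(\stab_+(e))$. Choosing a reduced word and using that $T'$ is a right action of the affine Hecke algebra $K_{G\times\bbC^*}(Z)$, one rewrites $\sigma_+(\cdot)$ as an iteration in which at each stage a single operator $q^{-1/2}T'_{s_\alpha}$ is applied and the Weyl-group label changes by one simple reflection, the length going up by one (this is the only place the left/right bookkeeping enters, and it is purely formal). It therefore suffices to establish the one-step identity $q^{-1/2}T'_{s_\alpha}\bigl(\stab_+(w)\bigr)=\stab_+(w')$ whenever $\ell(w')=\ell(w)+1$ and $w'$ is obtained from $w$ by the relevant simple reflection; the theorem then follows by uniqueness together with the base case. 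To prove the one-step identity I would verify that $q^{-1/2}T'_{s_\alpha}(\stab_+(w))$ satisfies axioms (i)--(iii) for $\stab_+(w')$. For (i): geometrically $T'_{s_\alpha}$ is convolution with the component of $K_{G\times\bbC^*}(Z)$ coming from the $\PP^1$-bundle attached to the minimal parabolic $P_\alpha$, and such a correspondence carries classes supported on the closure of the attracting set of $w$ into classes supported on that of $w'$; in particular $q^{-1/2}T'_{s_\alpha}(\stab_+(w))$ is an honest, non-localized class. For (ii): using the explicit action of $T'_{s_\alpha}$ on the localized fixed-point basis (a combination of the identity and the $s_\alpha$-twist with the standard Demazure--Lusztig coefficients in $e^{\alpha}$ and $q$), together with the inductively known value $\stab_+(w)|_w$ and the restriction of $\stab_+(w)$ at the $s_\alpha$-neighbour of $w$, one computes the restriction of $q^{-1/2}T'_{s_\alpha}(\stab_+(w))$ at $w'$ and matches it with the normalization prescribed for $\stab_+(w')$; this is a direct rational-function manipulation once the matrix of $T'_{s_\alpha}$ is in hand.

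\textbf{Main obstacle.} The delicate axiom is (iii). A priori $T'_{s_\alpha}$ carries denominators $1-e^{\alpha}$ and it both translates and dilates Newton polytopes, so one must show that for the particular class $\stab_+(w)$ all the necessary cancellations take place: that each restriction $\bigl(q^{-1/2}T'_{s_\alpha}\stab_+(w)\bigr)|_v$ lands inside the polytope imposed on $\stab_+(w')|_v$. This requires propagating the inductive degree bounds on all restrictions $\stab_+(w)|_v$ simultaneously, understanding precisely how the defining polytope transforms under $s_\alpha$ and under multiplication by the Demazure--Lusztig coefficients, and using that the apparent poles along $1-e^{\alpha}$ cancel between the two fixed points interchanged by the $s_\alpha$-twist exactly because $\stab_+(w)$ is a genuine (non-localized) class. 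This combinatorial control of the polytopes — essentially the same bookkeeping that later feeds into the root-polynomial computation — is where the bulk of the work lies; everything else is formal.

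\textbf{The negative chamber.} The second formula is handled in the mirror fashion, now iterating the \emph{left} Demazure--Lusztig operators and building \emph{down} from $\stab_-(w_0)$, i.e.\ proving the one-step identity $q^{1/2}(T_{s_\alpha})^{-1}\bigl(\stab_-(w)\bigr)=\stab_-(w')$ for $\ell(w')=\ell(w)-1$ with the opposite attracting sets and the corresponding normalizations in place of the positive ones. Alternatively one may deduce it directly from the positive-chamber formula via the symmetry exchanging the two chambers ($\stab_+\leftrightarrow\stab_-$, $w\leftrightarrow w_0w$, and inversion of the Hecke operators). Either way the inductive skeleton and the main obstacle are identical to the positive-chamber case.
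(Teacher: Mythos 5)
Your strategy is sound but takes a genuinely different route from the paper. You propose to check the three defining axioms (support, normalization, degree) of Theorem~\ref{thm:geostable} for the candidate class obtained by one application of $q^{\mp 1/2}T^{(\prime)}_{s_\alpha}$ and then invoke uniqueness, inducting along a reduced word. The paper instead expands $D_\alpha(\stab_-(w))=\sum_v\bigl(D_\alpha(\stab_-(w)),\stab_+(v)\bigr)\stab_-(v)$ using the duality of the opposite stable bases (Remark~\ref{rem:afterdef}.(2)); the support axiom forces each pairing to be a proper intersection, hence a Laurent polynomial in $S$; localization plus Lemma~\ref{lem:conormalweight} makes it explicit; and the degree axiom on \emph{both} factors bounds its Newton polygon so that it has finite limits as $t\xi\to\pm\infty$ for a generic one-parameter $\xi$. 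Rigidity (Lemma~\ref{lem:rigidity}) then forces the pairing to be constant, which identifies it. The $T'_{s_\alpha}$-action on $\stab_+$ is deduced afterwards from adjointness (Lemma~\ref{lem:geoadjoint}) and duality, rather than re-verified axiom by axiom as you suggest. The main thing the paper's route buys is that the degree axiom is only ever applied to \emph{scalars} (the pairing coefficients), which compresses the polytope bookkeeping at all fixed points simultaneously — exactly what you flag as the bulk of your argument — into one two-sided growth estimate. Your route would have to propagate the full inclusion $\deg_A(\Gamma|_v)+\calL^{\pm 1}|_w\subset\deg_A(\stab_\pm(v)|_v)+\calL^{\pm 1}|_v$ through the Demazure--Lusztig formula for every $v$, carefully tracking cancellations of the $1-e^{\pm v\alpha}$ poles and using that $\calL$ lies in the fundamental alcove near $0$ to keep the shifts $\calL|_v-\calL|_w$ strictly between $-1$ and $0$ along $\xi$ (the paper makes the same alcove choice, but only needs it once, in the rigidity limits). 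If carried out, your approach would produce the recursive restriction formula of Proposition~\ref{lem:strcharm} as a byproduct rather than a corollary, so it is not wasted effort — just heavier. One small caution: the "deduce the negative-chamber formula by the chamber-exchange symmetry" alternative you mention would need a precise intertwining statement between $\stab_+\leftrightarrow\stab_-$ and $w\leftrightarrow w_0w$; the duality-plus-adjointness argument in the paper is the cleaner way to carry the formula across, and is what you should use if you take this route.
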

In the proof of this theorem, we use the {\it rigidity} technique (see \S~\ref{sec:rigidity}) to calculate the affine Hecke algebra actions on the stable bases in Proposition \ref{prop:stabim}.

Theorem \ref{thm:intro1} allows us to give a purely algebraic definition of the stable bases (Definition~\ref{def:stabalg}), involving only the affine Hecke algebra, the twisted group algebra of   Kostant-Kumar and its dual. The study of properties of the stable bases boils down to combinatorics of the twisted group algebra.

We use Theorem \ref{thm:intro1} and the root polynomial method to find a restriction formula of stable bases. 
Such polynomials for cohomology and K-theory of flag varieties were  studied by Billey, Graham, and Willems \cite{B99, Gr02, W02}, and then generalized by Lenart-Zainoulline \cite{LZ14}. In this method,  a formula of the Schubert classes in terms of classes of torus fixed points are determined by the coefficients of  root polynomials  (see Theorem \ref{thm:rootcoeff}). Generalizing the root polynomial method, we obtain our second main result. For the cotangent bundle of partial flag varieties in type A, this is also obtained by Rim{\'a}nyi, Tarasov and Varchenko using weight functions in \cite{RTV15, RTV17}. In a work in progress of Knutson--Zinn-Justin, K-theory stable basis is also studied from the point of view of integrable systems. 

\begin{theorem}[Theorem \ref{thm:1}] \label{thm:intro2} With $a^+_{w,v}$ (resp. $K^\tau_{w,v}$) defined in Lemma \ref{lem:basis} (resp. \S\ref{subsec:rootDL}), we have
\begin{eqnarray*}
\stab_{+}(w)|_{v}&=&q^{-\ell(w)/2}v(a^+_{w^{-1}, v^{-1}})\prod_{\al>0}(1-e^\al).\\
\stab_{-}(w)|_v&=&q^{\ell(w)/2}K^\tau_{w,v}[\prod_{\al>0, v^{-1}\al>0}(1-qe^{-\al})]\cdot [\prod_{\al>0, v^{-1}\al<0}(1-e^\al)].
\end{eqnarray*}
\end{theorem}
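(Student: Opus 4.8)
The plan is to reduce the two restriction formulas to an explicit computation in the localized equivariant K-theory $\bigoplus_{v\in W}K_{A\times\bbC^*}(\pt)\otimes\Frac$, in which the two commuting actions of $K_{G\times\bbC^*}(Z)$ become concrete ``Demazure--Lusztig'' operators, and then to recognize an iterated application of such operators as the coefficient-extraction of a root polynomial in the sense of Theorem \ref{thm:rootcoeff}.

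First I would record the localized form of the relevant operators. After restricting to the fixed points $\{v\mid v\in W\}$, the right action of $T_\al'$ and the left action of $T_\al$ (and of $T_\al^{-1}$) each become an operator of the shape ``multiplication by a rational function in $e^\al$ and $q$'' plus ``such a function times $s_\al$''; the right action carries an extra $v$-twist on the coefficients while the left action does not, which already explains why the $\stab_+$ formula has an outer $v(-)$ and the $\stab_-$ formula does not. I would also compute the two base restrictions: since $\stab_+(e)$ and $\stab_-(w_0)$ are, up to the normalizing factors appearing in the formulas, the structure sheaves of the fixed points $e$ and $w_0$, one gets $\stab_+(e)|_v=\delta_{e,v}\prod_{\al>0}(1-e^\al)$ and, at $v=w_0$, $\stab_-(w_0)|_{w_0}\ \propto\ \prod_{\al>0}(1-e^\al)$ (the cotangent-fibre weights being absorbed into the normalization).

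Now I would invoke Theorem \ref{thm:intro1}. For the $+$ case, fix a reduced word $w^{-1}=s_{i_1}\cdots s_{i_k}$, write $\stab_+(w)=q^{-\ell(w)/2}\,T_{s_{i_1}}'\cdots T_{s_{i_k}}'(\stab_+(e))$, and expand the product of operators over all subwords. Restricting to the fixed point $v$ and using the base case, the only subwords that contribute are those whose product of reflections is $v^{-1}$, and the resulting sum of products of localized coefficients is exactly the quantity $a^+_{w^{-1},v^{-1}}$ furnished by the ($q$-deformed) root polynomial method; the $v$-twist of the right action produces the $v(-)$, and the base restriction contributes $\prod_{\al>0}(1-e^\al)$, giving the stated formula. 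For the $-$ case the argument is parallel, using $\stab_-(w)=q^{\ell(w_0w)/2}(T_{w_0w})^{-1}(\stab_-(w_0))$ and the inverse (left-action) operators: the analogous subword sum is the dual root polynomial coefficient $K^\tau_{w,v}$, and combining the denominators produced by the $T_\al^{-1}$ with the base restriction $\stab_-(w_0)|_{w_0}$ yields precisely the split product $\prod_{\al>0,\,v^{-1}\al>0}(1-qe^{-\al})\cdot\prod_{\al>0,\,v^{-1}\al<0}(1-e^\al)$ --- the split being forced by how the intervening reflections permute the normal weights about $v$ relative to $w_0$.

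The main obstacle is the identification of these iterated-operator sums with the root polynomial coefficients $a^+_{w,v}$ and $K^\tau_{w,v}$ of Lemma \ref{lem:basis} and \S\ref{subsec:rootDL}. This requires matching our normalization of the Demazure--Lusztig operators with the $q$-deformed divided-difference operators underlying the root polynomials, keeping the left/right actions, the powers $q^{\pm\ell(w)/2}$, and the sign conventions (positive versus negative roots, $e^{\al}$ versus $e^{-\al}$) mutually consistent, and extending the root polynomial formalism of Theorem \ref{thm:rootcoeff} so that the coefficient extraction still applies verbatim in the presence of the Hecke parameter. Once this dictionary is in place the rest is bookkeeping; pinning down the dictionary --- in particular the exact $v$-twist and the origin of the two product factors in the $\stab_-$ formula --- is the crux.
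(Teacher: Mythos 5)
Your plan for the $\stab_+$ formula is essentially the paper's proof. Once you pass to the Kostant--Kumar model of Theorem \ref{cor:alggeo}, $\St^+_w = \tau^+_{w^{-1}}\bullet\pt_e$ with $\pt_e = x_{-w_0}f_e$, and expanding $\tau^+_{w^{-1}} = \sum_u a^+_{w^{-1},u}\delta_u$ and applying $\delta_u\bullet f_e = f_{u^{-1}}$ immediately gives the $v$-twist and the factor $x_{-w_0}=\prod_{\al>0}(1-e^\al)$. One small inaccuracy: $a^+_{w,v}$ is \emph{not} ``furnished by the root polynomial method''; it is the plain $\delta$-basis coefficient of Lemma \ref{lem:basis}, so no root polynomials are needed at all for this half.

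For the $\stab_-$ formula your proposed route has a genuine gap, and it is exactly the point you flag as ``the crux''. Iterating $(T_{w_0w})^{-1}$ (i.e.\ expanding $(\tau^-_{w_0w})^{-1}\bullet\pt_{w_0}$ over subwords) produces $\delta$-basis coefficients of an \emph{inverse} of a $\tau$-product; there is no reason these should coincide with $K^\tau_{w,v}$, which by definition (\S\ref{subsec:rootDL} and Theorem \ref{thm:rootcoeff}) is the coefficient of $\tau^-_v$ in the root polynomial $\cR^{\taum}_w$, equivalently $\tfrac{\hatx_w}{x_{-w}}b^-_{w,v}$ where $b^-_{w,v}$ expresses $\de_w$ in the $\taum$-basis --- not the other way around, and not an inverse. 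The paper closes this by a duality argument you have not invoked: by Theorem \ref{thm:main} together with Lemma \ref{lem:dualT}, $q_{w_0w}\Stm_w$ is the element of $Q_W^*$ dual to $\St^+_u$ under $\hatY_\Pi\bullet(-\cdot-)$, namely $\hatx_{w_0}(\taum_w)^*$, whose $f_v$-expansion $\sum_{v\ge w}\hatx_{w_0}b^-_{v,w}f_v$ is read off from Lemma \ref{lem:basis}; only then does Theorem \ref{thm:rootcoeff} convert $b^-_{v,w}$ into $K^\tau_{w,v}\,x_{-v}/\hatx_v$, which produces the split product over $\{\al>0,\,v^{-1}\al\gtrless 0\}$. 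Without the duality step, you would have to prove an independent identity relating subword sums of $(\taum_{w_0w})^{-1}$ to $K^\tau_{w,v}$, which is essentially Theorem \ref{thm:main} in disguise; your sketch does not supply it.
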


We also give some applications of the above theorems in \S~\ref{sec:geoPJ}. 
We obtain the restriction formula for stable bases in $K_T(T^*G/P_J)$ in Theorem \ref{thm:resPJ}.
This is done by  showing that the stable bases coincide with the image of $\stab_{\pm}(w)\in K_T(T^*G/B)$ via the Lagrange correspondence from $T^*G/B$ to $T^*G/P_J$.

As an application, we study the relation between $K$-theory of the Springer resolution and the principal series representations of $p$-adic groups.

 In Theorem \ref{thm:comparison}, we relate the $T$-equivariant $K$-theory of the Springer resolution to the bases in the Iwahori invariants of an unramified principle series \cite{C80,R92}. Such an isomorphism has been well-known, and has been studied by Lusztig \cite{Lus98} and Braverman--Kazhdan \cite{BK99} from different points of view.  
 However, the present paper explicitly identity different bases from $K$-theory and from $p$-adic  representation theory, which had been previously unknown. In particular, the $K$-theory stable basis is identified with the characteristic functions on certain semi-infinite orbits; the $T$-fixed-point basis is identified with the Casselman basis. 
Consequently, Theorem~\ref{thm:intro2} also gives a closed formula for the transition matrix between these characteristic functions and the Casselman basis. A formula for the generating function of the matrix coefficients has been previously achieved by Reeder via a different approach \cite[Proposition 5.2]{R92}.

Under the isomorphism in Theorem \ref{thm:comparison}, various structures from the $p$-adic representations, e.g., the intertwiners, Macdonald's formula for the spherical functions \cite{M68,C80}, and the Casselman--Shalika formula for Whittaker functions \cite{CS80}, have meanings in terms of $K$-theory. Although this isomorphism is well-known, the $K$-theory interpretation of these structures are not well-documented. 
For the convenience of the readers, we also spell these out in \S~\ref{sec:padic}.

The results in the present paper also provide a way to study the transition matrix between stable bases and the Schubert classes of $K_T(G/B)$, as will be explained in a future publication. 
Such transition matrix is related with  \cite{LLL16} which studies the (spherical) Whittaker functions of $p$-adic groups. It is also shadowed by the two geometric realizations of the affine Hecke algebras \cite{B16} and the periodic modules \cite{BK99, Lus97, Lus98}.  
The cohomological analogue of this transition matrix, i.e., the transition matrix from cohomological Schubert classes to the cohomological stable bases, is of independent interest. 
It was proved in \cite{AMSS17} that cohomological stable bases can be identified with Chern-Schwartz-MacPherson classes. In \cite{AM15}, Aluffi and Mihalcea raised a positivity conjecture concerning this matrix. 
Recently, the non-equivariant case is proved in \cite{AMSS17}, in which the cohomological stable basis played an important role.

Another future application is a relation between the $K$-theory stable basis and the localizations of baby Verma modules in modular representations of Lie algebras, as will be explained in a separate paper. 

The structure of this paper is as follows: 
In Section \ref{sec:geostable} we recall the definition of stable bases. In Section \ref{sec:rigidity} we recall rigidity in K-theory. In Section \ref{sec:Heckeaction} we define the two convolution actions by the Hecke algebra, and compute their effects on the stable bases.   
In Section \ref{sec:affHecke} we recall an algebraic description of affine Hecke algebra in terms of Kostant-Kumar's twisted group algebra. In Section \ref{sec:algstable} we give an algebraic description of stable bases. In Section \ref{sec:root} we define the root polynomials for Hecke algebra and in Theorem \ref{thm:rootcoeff}; we relate some coefficients of Hecke algebra with root polynomials, and obtain the restriction formula in Theorem \ref{thm:1}. In Section \ref{sec:geoPJ} we give an algebraic description of the stable bases for partial flag varieties. In Section~\ref{sec:padic} we talk about the relation between stable basis and the Casselman basis in $p$-adic representations.

\subsection*{Acknowledgment}
The first named author is grateful to his advisor A. Okounkov for teaching him the geometrical definition of the K-theoretic stable envelope. We would also like to thank E. Gorsky, C. Lenart,  A. Negu{\c{t}}, A. Smirnov and Z. Zhou for lots of helpful discussions, and to C. Lenart for pointing out to us of the paper \cite{NN15}. We thank J. Schuermann for pointing out a mistake in a previous version of this paper. The third author  was partially supported by the Simons Foundation and by the Mathematisches Forschungsinstitut Oberwolfach (MFO) under the program of Simons Visiting Professorship, and also supported by  Marc Levine during a research stay at the University of Duisburg-Essen. 

\section*{Notations}
Through out this paper,  $G$ is a complex reductive  group with maximal torus $A$, a Borel subgroup $B$ and its opposite Borel subgroup $B^-$. 
Let $\La$ be the group of characters of $A$.
Let $\Sigma$ be the set of roots of $G$. Let $\Sigma^+$ be the roots in $B$, which is the set of positive roots, and let $\Sigma^-$ be the negative roots. For each root $\al$, we use $\al>0$ or $\al<0$ to say that it is positive or negative. Let $\Pi=\{\al_1,...,\al_n\}$ be the set of simple roots, and  $\rho=\frac12\sum_{\al\in \Sigma^+}\al$. Let $\geq$ denote the Bruhat order in the Weyl group $W$.

Let $G/B$ be the complete flag variety.  The maximal torus $A$ acts on $G/B$ by left multiplication. Hence, it also the cotangent bundle $T^*G/B$ and the tangent bundle $T(G/B)$ are equivariant. 
Let $T=A\times \bbC^*$.
We denote the standard representation of $\bbC^*$ by $q^{\frac{1}{2}}$.  
The factor $\bbC^*\subseteq T$ acts trivially on $G/B$, and dilates the fibers of  $T^*G/B$ by the character $q^{-1}$.  The $T$-fixed points of $T^*G/B$ and $G/B$ are both bijective to $W$, the Weyl group of $G$.

For any $J\subset \Pi$, let $W_J\subset W$ be the corresponding subgroup, $W^J$ be the set of minimal length representatives, and $G/P_J$ be the corresponding variety of partial flags.  Let $\Sigma_J=\{\al\in \Sigma|s_\al\in W_J\}$, and similarly define $\Sigma_J^\pm$.  Let $w_0$ be the longest element of $W$, and $w_0^J$ the longest element of $W_J$.  For a reduced decomposition $w=s_{i_1}\cdots s_{i_l}$, define
\[
\Sigma_w:=w\Sigma^-\cap \Sigma^+=\{\al_{i_1}, s_{i_1}(\al_{i_2}), ..., s_{i_1}s_{i_2}\cdots s_{i_{l-1}}(\al_{i_l})\}.
\]  We will frequently use the identities\[w_0\Sigma^-=\Sigma^+, \quad s_i\Sigma^-=(\{\al_i\}\sqcup\Sigma^-)\backslash\{-\al_i\}, \quad v(\Sigma^+\backslash \Sigma^+_J)=\Sigma^+\backslash \Sigma^+_J, \quad \text{for }v\in W_J.\]

Let $R=\Z[q^{\frac{1}{2}}, q^{-\frac{1}{2}}], S=R[\La]$, then $S\cong K_T(\bbC)$, and let $Q=\Frac (S)$ be its field of fractions.

\section{Stable bases of $T^*G/B$}\label{sec:geostable}
In this section, we recall Maulik and Okounkov's  definition of the K-theoretic stable bases for the Springer resolution.  

Recall that $\bbC^*$ act on the cotangent fiber of $T^*G/B$ by a non-trivial character $q^{-1}$, where $q^{-\frac12}$ corresponds to the standard representation of the torus $\bbC^*$. Therefore $K_{\bbC^*}(\bbC)=R=\Z[q^{\frac12},q^{-\frac12}], K_T(\bbC)\cong S=R[\La]$. For any $T$-invariant vector space $V$, let 
\[\bigwedge\nolimits^\bullet V:=\sum_k(-1)^k\bigwedge\nolimits^k V^\vee=\prod (1-e^{-\alpha})\in K_T(\bbC),\]
where the product is over all $\Lie(T)$-weights in $V$ counted with multiplicities.  

The $T$-fixed loci of $T^*G/B$ is discrete and is in one-to-one correspondence with $W$. For an element $w\in W$,  the corresponding fixed point is still denoted by $w$. Let $\iota_{w}$ be the inclusion of the fixed point $w\in W$ into $T^*G/B$. By Thomason's theorem \cite{Th}, 
$K_T(T^*G/B)\otimes_{S}Q$ is a finite dimensional $Q$-vector space with basis $\{\iota_{w*}1|w\in W\}$. This basis is referred to as the fixed-point basis. For any $\calF\in K_T(T^*G/B)$, let $\calF|_w$ denote the restriction of $\calF$ to the fixed point $w\in T^*G/B$. Let $(\cdot,\cdot)$ denote the K-theoretic pairing on $K_T(T^*G/B)$, which can be  defined using localization as follows:
\[(\calF_1,\calF_2)=\sum_w\frac{\calF_1|_w\otimes\calF_2|_w}{\prod_{\alpha>0}(1-e^{w\alpha})(1-qe^{-w\alpha})},\quad \calF_1, \calF_2\in K_T(T^*G/B).\]

\subsection{The definition of stable bases}\label{sec:geodef}

Let $\La^\vee$ be the lattice of cocharacters of $A$. The Lie algebra of the maximal compact subgroup of $A$ is $\mathfrak{a}_{\mathbb{R}}=\La^\vee\otimes_{\mathbb{Z}}\mathbb{R}.$
The $A$-weights occurring in the normal bundle  $(T^*G/B)^A$ coincide with the usual roots for $G$. The root hyperplanes $\alpha_i^\perp$ partition $\mathfrak{a}_{\mathbb{R}}$ into finitely many chambers
\[
\mathfrak{a}_{\mathbb{R}}\setminus\bigcup \alpha_i^\perp=\coprod \mathfrak{C}_i.
\]
 Let $+$ denote the chamber such that all roots in $\Sigma^+$ are positive on it, and $-$ the opposite chamber. 
Let $\mathfrak{C}$ be a chamber. For any cocharacter  $\sigma\in \mathfrak{C}$,  the stable leaf of $w\in W$ is defined as
\[
\Leaf_{\mathfrak{C}}(w)=\left\{x\in T^*G/B \mid \lim\limits_{z\rightarrow 0}\sigma(z)\cdot x=w . \right\}.
\]
Note that the limit, and hence $\Leaf_{\mathfrak{C}}(w)$ per se,  is independent of the choice of $\sigma$. In particular,  $\Leaf_+(w)=T_{BwB/B}^*G/B$, and $\Leaf_-(w)=T_{B^-wB/B}^*G/B$. Define a partial order on W as follows:
\[
w\preceq_{\mathfrak{C}} v \text{\quad if \quad} \overline{\Leaf_{\mathfrak{C}}(v)}\cap w\neq \emptyset.
\]
Then the order $\preceq_+$ determined by the positive chamber is the same as the Bruhat order $\leq$, and $\preceq_-$ is the opposite Bruhat  order.  Define the slope of a fixed point $v$ by
\[
\Slope_{\mathfrak{C}}(v)=\bigcup_{w\preceq_{\mathfrak{C}} v} \Leaf_{\mathfrak{C}}(w).
\] 

\begin{definition}
A {\it polarization} $T^{\frac{1}{2}}\in K_T(T^*G/B)$ is the choice of a Lagrangian subbundle of the tangent bundle $T(T^*G/B)\in K_T(T^*G/B)$, so that
\[T^{\frac{1}{2}}+q^{-1}(T^{\frac{1}{2}})^\vee=T(T^*G/B)\]  as $T$-equivariant vector bundles.
\end{definition}
For any polarization $T^{\frac{1}{2}}$, there is an opposite one defined as
\[T^{\frac{1}{2}}_{\text{opp}}=q^{-1}(T^{\frac{1}{2}})^\vee.\]
There are two natural polarizations: $T(G/B)$ and $T^*G/B$ which are opposite to each other. Let $N_w$ denote the normal bundle of $T^*G/B$ at $w\in W$. 

Any chamber $\mathfrak{C}$ determines a decomposition $N_w=N_{w,+}\oplus N_{w,-}$   into $A$-weight spaces which are positive and negative with respect to $\mathfrak{C}$  respectively. For any polarization $T^{\frac{1}{2}}$,  denote $N_w^{\frac{1}{2}}$ by $N_w\cap T^{1/2}|_w$. Similarly, we have $N_{w,+}^{\frac{1}{2}}$ and $N_{w,-}^{\frac{1}{2}}$. In particular,   $N_{w,-}=N^{\frac{1}{2}}_{w,-}\oplus q^{-1}(N_{w,+}^{\frac{1}{2}})^\vee$. Consequently, we have
\[N_{w,-}- N_w^{\frac{1}{2}}=q^{-1}(N_{w,+}^{\frac{1}{2}})^\vee- N_{w,+}^{\frac{1}{2}}\] as virtual vector bundles, 
whose determinant is a complete square. Therefore,  we denoted by
\[\left(\frac{\det N_{w,-}}{\det N_w^{\frac{1}{2}}}\right)^{\frac{1}{2}}\]
its square root. 

Recall that for any  weight $\lambda$,  let $\calL_\lambda$ be the associated line bundle on $G/B$.  Pulling it back to $T^*G/B$ via the projection map, we get the corresponding line bundle on $T^*G/B$, denoted by $\calO(\lambda)$. The assignment associating $\lambda\in \La$  to $\calO(\lambda)\in \Pic_A(T^*G/B)$ induced an isomorphsim.  For every rational weight $\lambda\in P\otimes_\bbZ \bbQ$, let $\calO(\lambda)$ denote the corresponding element in $\Pic_A(T^*G/B)\otimes_\bbZ \bbQ$.  We say $\lambda$, or the corresponding $\calO(\la)$ is sufficiently general if 
\begin{equation}\label{eq:condLineBun}
\lambda-w\lambda\notin \La \text{ for any } w \in W. 
\end{equation}

For a Laurent polynomial $f:=\sum_\mu f_\mu z^\mu\in K_T(\text{pt})$, where $e^\mu\in K_A(\text{pt})$ and $f_\mu\in R$, we define its \textit{Newton Polygon}, denoted by $\deg_Af$ to be
\[\deg_A f=\text{Convex hull} (\{\mu| f_\mu\neq 0\})\subset \La\otimes_\bbZ \bbQ.\]

We use the following theorem  as the definition of K-theoretic stable bases.
\begin{theorem}\label{thm:geostable} \cite[\S 9.1]{OK15}
For any chamber $\fC$, a sufficiently general $\calL$, and a polarization $T^{1/2}$, there exists a unique map of $S$-modules
\[
\stab_{\mathfrak{C},T^{\frac{1}{2}},\calL}:K_T((T^*G/B)^A)\rightarrow K_T(T^*G/B)
\]
such that for any $w\in W$, $\Gamma=\stab_{\mathfrak{C},T^{\frac{1}{2}},\calL}(w)$ satisfies:
\begin{enumerate}
\item (\textit{support}) $\supp \Gamma\subset \Slope_{\mathfrak{C}}(w)$;
\item (\textit{normalization}) $\Gamma|_w=(-1)^{\rank N_{w,+}^{\frac{1}{2}}}\left(\frac{\det N_{w,-}}{\det N_w^{\frac{1}{2}}}\right)^{\frac{1}{2}}\calO_{\Leaf_\mathfrak{C}(w)}|_w$;
\item (\textit{degree}) $\deg_A\left(\Gamma|_v\otimes \calL|_w\right)\subseteq \deg_A\left((\stab_{\mathfrak{C},T^{\frac{1}{2}},\calL}(v)\otimes\calL)|_v\right)$, for any $v\prec_{\mathfrak{C}} w$,
\end{enumerate}
where $w$ in $\stab_{\mathfrak{C},T^{\frac{1}{2}},\calL}(w)$ is the unit in $K_T^*(w)$.
% \cha{Can you rephrase the last sentence as "where $w\in W$ can be thought of as the unit in $K_T(w)$"? A naive question: the first part of this degree condition, is it really $\Gamma|_v\otimes \calL|_w$, or $\Gamma|_v\otimes \calL|_v$?}\changjian{maybe I should write $\Gamma=\stab_{\mathfrak{C},T^{\frac{1}{2}},\calL}(1_w)$, and later in the file, just use $\stab_{\mathfrak{C},T^{\frac{1}{2}},\calL}(w)$ to denote $\stab_{\mathfrak{C},T^{\frac{1}{2}},\calL}(1_w)$. The condition is the one as stated. If use $\Gamma|_v\otimes \calL|_v$, those $\calL|_v$ will cancel on both sides. This is how the line bundle enters the picture.}
\end{theorem}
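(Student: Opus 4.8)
This is the defining existence-and-uniqueness statement for K-theoretic stable envelopes, and I would prove it following the strategy of Maulik--Okounkov \cite{MO12,MO} in its K-theoretic form, by induction on the poset $(W,\preceq_{\mathfrak{C}})$; note that axiom (3) already refers to $\stab_{\mathfrak{C},T^{\frac{1}{2}},\calL}(v)$ for $v\prec_{\mathfrak{C}}w$, so the statement is inherently inductive. The uniqueness half is a triangularity argument, and the existence half an explicit construction.

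\textbf{Uniqueness.} Enumerate $\{v:v\preceq_{\mathfrak{C}}w\}$ as $w=v_0\succ v_1\succ\cdots$ refining $\preceq_{\mathfrak{C}}$. Given two solutions $\Gamma,\Gamma'$, set $\Delta=\Gamma-\Gamma'$. By the support axiom $\Delta|_v=0$ whenever $v\not\preceq_{\mathfrak{C}}w$, and the normalization axiom gives $\Delta|_{w}=0$. Assume inductively $\Delta|_{v_j}=0$ for all $j<k$. Two facts then pin down $\Delta|_{v_k}\in R[\La]$. First, since $\supp\Delta\subseteq\Slope_{\mathfrak{C}}(w)$ and $\Slope_{\mathfrak{C}}(w)$ is a union of attracting cells, a self-intersection computation (the K-theoretic analogue of the Maulik--Okounkov divisibility lemma) shows $\Delta|_{v_k}$ is divisible by $\bigwedge^\bullet N_{v_k,-}$, whose Newton polytope is a full-dimensional zonotope $P_{v_k}$; moreover $\deg_A(\stab_{\mathfrak{C},T^{\frac{1}{2}},\calL}(v_k)|_{v_k})$ is just a monomial translate of $P_{v_k}$, by the normalization axiom. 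Second, the vanishing $\Delta|_{v_j}=0$ for $j<k$, together with the already-established uniqueness of $\stab_{\mathfrak{C},T^{\frac{1}{2}},\calL}(v_k)$ and the degree axiom, confine $\deg_A(\Delta|_{v_k}\otimes\calL|_w)$ to a single translate of $P_{v_k}$. But a nonzero class divisible by $\bigwedge^\bullet N_{v_k,-}$ has Newton polytope $P_{v_k}$ plus the polytope of the quotient, so it can sit inside a translate of $P_{v_k}$ only if the quotient is a monomial $c\,e^{\mu_0}$ with $\mu_0\in\La$; chasing the translation vectors then forces $v_k\lambda-w\lambda\in\La$, where $\lambda$ is the weight of $\calL$, contradicting the genericity hypothesis \eqref{eq:condLineBun} because $v_k\neq w$. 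Hence $c=0$, i.e. $\Delta|_{v_k}=0$; the induction closes, $\Delta$ vanishes at every fixed point, and $\Delta=0$ since $\{\iota_{v*}1\}$ is a $Q$-basis of $K_T(T^*G/B)\otimes_SQ$.

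\textbf{Existence.} I would build the envelopes bottom-up along $\preceq_{\mathfrak{C}}$. For $w$ minimal the cell $\Leaf_{\mathfrak{C}}(w)$ is closed (it is the cotangent fibre over the corresponding $T$-fixed point), and the twist of $\calO_{\Leaf_{\mathfrak{C}}(w)}$ prescribed by (2) is readily checked to satisfy (1)--(3). To step up by one in $\preceq_{\mathfrak{C}}$, I would apply the operator on $K_T(T^*G/B)$ coming from the Lagrangian correspondence attached to $T^*(G/B)\to T^*(G/P_\alpha)\leftarrow T^*(G/B)$ for a simple root $\alpha$ (a Demazure--Lusztig type operator), and verify that it enlarges the support by exactly one leaf (axiom (1)), transforms the normalization through a rank-one computation along the $\bbP^1$-fibre (axiom (2)), and moves the Newton polytope in a controlled way (axiom (3)). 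Iterating along a reduced word for $w$ yields a class satisfying (1)--(3), and independence of the reduced word follows from uniqueness. Alternatively, in type $A$ one may use abelianization from a product of cotangent bundles of Grassmannians as in \cite{MO}, and in general one may degenerate $T^*G/B$ to the normal cone of $\overline{\Slope_{\mathfrak{C}}(w)}$ and take $\Gamma$ to be a limit of classes supported there.

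\textbf{Main difficulty.} The technical heart is axiom (3): where cohomology has a single degree, here one has a whole Newton polytope, and one must (i) compute exactly the polytope that (3) forces at each $v\prec_{\mathfrak{C}}w$, (ii) see that it is tight enough for the monomial-killing step above to succeed once $\calL$ is generic, and (iii) check that the recursively built class actually lands inside it --- all while keeping the sign $(-1)^{\rank N_{w,+}^{\frac{1}{2}}}$, the square root $\left(\frac{\det N_{w,-}}{\det N_w^{\frac{1}{2}}}\right)^{\frac{1}{2}}$, and the twist by $\calL$ consistent across the induction. The geometric inputs --- the local structure of $\Slope_{\mathfrak{C}}(w)$ at fixed points, the self-intersection formula, and the behaviour of the simple Lagrangian correspondences on the Springer resolution --- are by now standard; it is the polytope bookkeeping that demands care.
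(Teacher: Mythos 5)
The paper does not prove this theorem: it states it as a citation to \cite[\S~9.1]{OK15} and explicitly says "We use the following theorem as the definition of K-theoretic stable bases." So there is no in-paper proof to compare against, and you are reconstructing the argument from the primary source.

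Your uniqueness sketch follows the standard Maulik--Okounkov template and captures the correct structure: downward induction along $\preceq_{\mathfrak{C}}$, divisibility of $\Delta|_{v_k}$ by $\bigwedge^\bullet N_{v_k,-}$ from the support axiom, and the observation that the degree axiom pins the Newton polytope down to a single translate of the zonotope $P_{v_k}$, so that a nonzero class would force $v_k\lambda - w\lambda\in\La$, contradicting genericity~\eqref{eq:condLineBun}. This matches the role of the genericity hypothesis that Remark~\ref{rem:afterdef}(5) illustrates, and I think it is essentially right, although the phrase "a full-dimensional zonotope" and "a monomial translate of $P_{v_k}$" slide past the fact that a nonzero element of $R[\La]$ can have Newton polytope strictly smaller than that of the divisor, so the forcing step really needs the explicit monotonicity of $(\xi,\calL|_v-\calL|_w)$ along $\preceq_{\mathfrak{C}}$ (Lemma~\ref{lem:Bruhatcompare} in the paper), not just "genericity."

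Your existence proposal is the more delicate part, and I would flag it as not matching what is known. Building $\stab_{\mathfrak{C}}(w)$ bottom-up by applying Demazure--Lusztig type correspondences along a reduced word is logically the reverse of what this paper does: Theorem~\ref{thm:stabpim} and Theorem~\ref{thm:intro1} \emph{derive} the fact that Hecke operators send stable bases to stable bases as a consequence of their prior existence and uniqueness, and the proof there uses the degree axiom and rigidity in an essential way. Turning this into an existence proof would require verifying, \emph{ab initio}, that the class produced by one Hecke step satisfies the degree axiom for the chosen $\calL$ at every fixed point $v\prec_{\mathfrak{C}}w$ --- precisely the hardest verification, and not something the local $\bbP^1$-fibre computation alone gives you. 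The actual proof in \cite{OK15} (and \cite{MO}) instead uses a geometric construction: a resolution of the attracting correspondence together with a specialization/interpolation argument on the line bundle $\calL$, not an inductive application of Hecke correspondences. Your alternative route via abelianization is also only available in limited cases (type $A$, or quiver varieties), so I would not present it as the general-type existence proof. If you want to keep the Hecke-recursion approach, you must first establish an a priori bound on the Newton polytopes $\deg_A(\Gamma|_v)$ after a single Hecke step, and this is not among the "standard" inputs you list.
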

\subsection{Comments and examples}
\begin{remark}\label{rem:afterdef} 
(1). 
From the characterization, the transition matrix from $\{\stab_{\mathfrak{C},T^{\frac{1}{2}},\calL}(w)| w\in W\}$ to the fixed point basis is a triangular matrix with nonzero diagonal terms. Hence, after localization, $\{\stab_{\mathfrak{C},T^{\frac{1}{2}},\calL}(w)| w\in W\}$ form a basis, which is called the \textit{stable bases}.

(2).
It is shown in \cite[Proposition 1]{OS16} that via the K-theory pairing, $\{\stab_{\mathfrak{C},T^{\frac{1}{2}},\calL}(w)| w\in W\}$ and $\{\stab_{-\mathfrak{C},T_{\text{opp}}^{\frac{1}{2}},\calL^{-1}}(w)| w\in W\}$ are dual to each other, i.e.,
\[\left(\stab_{\mathfrak{C},T^{\frac{1}{2}},\calL}(v),~\stab_{\mathfrak{-C},T_{\text{opp}}^{\frac{1}{2}},\calL^{-1}}(w)\right)=\delta_{v,w}.\]

(3).
Let the alcoves of $\mathfrak{g}=\Lie G$  to be the connected components of $(\Lie A_\bbR)^*\setminus H_{\alpha,n}$, with $(Lie A_\bbR)^*=\La$ and  $H_{\alpha,n}=\{\lambda\in (\Lie A_\bbR)^*|(\lambda, \alpha^\vee)=n\}$. Then $\stab_{\mathfrak{C},T^{\frac{1}{2}},\calL}$ stays the same if $\calL$ is in the same alcove. I.e., $\stab_{\mathfrak{C},T^{\frac{1}{2}},\calL}$ depends  on $\calL$ locally.

(4). 
By the uniqueness property, we have
\[\stab_{\mathfrak{C},T^{\frac{1}{2}},\calL\otimes \calO(\lambda)}(w)=e^{-w\lambda}[\calO(\lambda)]\otimes\stab_{\mathfrak{C},T^{\frac{1}{2}},\calL}(w),\]
where $\lambda$ is an integral weight of $\mathfrak{g}$. Combining with part (3), it is sufficient to study stable bases for those alcoves near $0\in \Lie A^*_\bbR$.

(5). 
Let us explain why Condition~\eqref{eq:condLineBun} on $\calL=\calO(\lambda)$ is imposed. Suppose $\mu:=v\lambda-u\lambda\in \La$ for some $v\prec_{\mathfrak{C}} u$, and suppose we already have a map $\stab_{\mathfrak{C},T^{\frac{1}{2}},\calL}$ as in Theorem \ref{thm:geostable}. For any Laurent polynomial $f(q)$, we define a new map $\stab'_{\mathfrak{C},T^{\frac{1}{2}},\calL}$ as follows:
\[\stab'_{\mathfrak{C},T^{1/2},\calL}(y)=
\left\{ \begin{array}{cc}
\stab_{\mathfrak{C},T^{1/2},\calL}(y),& \text{ if }  y\neq u;\\
\stab_{\mathfrak{C},T^{1/2},\calL}(u)+f(q)e^{\mu}\stab_{\mathfrak{C},T^{1/2}, \calL}(v),& \text{ if } y=u.
\end{array}\right. \]
 Then the new map also satisfies the conditions in Theorem \ref{thm:geostable}, which contradicts with the uniqueness. We check this as follows: The first two conditions are obvious. For the degree condition, there are four cases: $u\prec_{\mathfrak{C}}y$, $y=u$, $y\prec_{\mathfrak{C}}u$, and $y$ is not comparable with $u$. The last two cases are easy to check, so we only consider the first two. 
\begin{itemize}
\item Case $u\prec_{\mathfrak{C}}y$.\\
In this case, $\stab'_{\mathfrak{C},T^{1/2},\calL}(y)=\stab_{\mathfrak{C},T^{1/2},\calL}(y)$. If $w\neq u$, then 
$\stab'_{\mathfrak{C},T^{1/2},\calL}(w)=\stab_{\mathfrak{C},T^{1/2},\calL}(w)$. Thus the condition is satisfied. If $w=u$, then 
$\stab'_{\mathfrak{C},T^{1/2},\calL}(u)|_u=\stab_{\mathfrak{C},T^{1/2},\calL}(u)|_u$ because $v\prec_{\mathfrak{C}}u$. So the condition is also satisfied.
\item Case $y=u$.\\
In this case, $\stab'_{\mathfrak{C},T^{1/2},\calL}(w)=\stab_{\mathfrak{C},T^{1/2},\calL}(w)$, for any $w\prec_{\mathfrak{C}}u$.
By definition,
\[\deg_A\left(\stab'_{\mathfrak{C},T^{1/2},\calL}(u)|_w\right)+u\lambda=\deg_A\left(\stab_{\mathfrak{C},T^{1/2},\calL}(u)|_w+f(q)e^\mu\stab_{\mathfrak{C},T^{1/2},\calL}(v)|_w\right)+u\lambda.\]
Since 
\[\deg_A\left(\stab_{\mathfrak{C},T^{1/2},\calL}(u)|_w\right)+u\lambda\subset \deg_A\left(\stab_{\mathfrak{C},T^{1/2},\calL}(w)|_w\right)+w\lambda\]
and 
\[\deg_A\left(f(q)e^\mu\stab_{\mathfrak{C},T^{1/2},\calL}(v)|_w\right)+u\lambda\subset \deg_A\left(\stab_{\mathfrak{C},T^{1/2},\calL}(w)|_w\right)+w\lambda,\] 
therefore,  \[\deg_A\left(\stab'_{\mathfrak{C},T^{1/2},\calL}(u)|_w\right)+u\lambda\subset \deg_A\left(\stab'_{\mathfrak{C},T^{1/2},\calL}(w)|_w\right)+w\lambda.\]
\end{itemize}

\end{remark}

\begin{example}
Let us study the easiest example in which $G=\SL(2,\bbC)$, and hence $G/B=\bbP^1$. Let $\alpha$ be the unique positive root. Let $0$ and $\infty$ denote the two fixed points, which correspond to $1$ and $s_\alpha$ in the Weyl group. Then $S=\bbZ[q^{\pm\frac{1}{2}}][e^{\pm\frac{\alpha}{2}}]$,  $T_0\bbP^1$ has weight $e^{-\alpha}$, $T_0^*\bbP^1$ has weight $q^{-1}e^{\alpha}$, $T_\infty\bbP^1$ has weight $e^{\alpha}$, and $T_\infty^*\bbP^1$ has weight $q^{-1}e^{-\alpha}$.  The condition (\ref{eq:condLineBun}) on $\lambda$ is equivalent to $\lambda\notin \frac{\bbZ}{4}\alpha$. The alcoves are $(\frac{n\alpha}{2},\frac{(n+1)\alpha}{2})$, where $n\in \bbZ$. 

Let us pick the negative chamber, and fix the polarization to be $T^*\bbP^1$. Then $\Leaf(\infty)=T_\infty^*\bbP^1$, and $\Leaf(0)=\bbP^1\setminus \{\infty\}$. Thus $0>\infty$. It is easy to see that for any slope 
$\calO(\lambda)$, 
\[\stab(\infty)=-q^{\frac{1}{2}}e^\alpha[\calO_{T_\infty^*\bbP^1}].\]

By  Remark \ref{rem:afterdef}.(4), we only need to study the case for a fixed $\lambda\in (0,\frac{\alpha}{2})$. By the support and normalization conditions in Theorem \ref{thm:geostable}, we get
\[\stab(0)=[\calO_{\bbP^1}]+a[\calO_{T_\infty^*\bbP^1}], \quad a\in Q.\] However, by the support condition, $(\stab(0),[\calO_{\bbP^1}])\in S=\bbZ[e^{\pm\frac{\alpha}{2}}][q^{\frac{1}{2}},q^{-\frac{1}{2}}]$. Since $([\calO_{\bbP^1}], [\calO_{\bbP^1}])\in S$ and $([\calO_{T_\infty^*\bbP^1}], [\calO_{\bbP^1}])=1$, we get $a\in S$.

We have
\[\stab(0)|_\infty=1-qe^\alpha+a(1-e^{-\alpha}).\]
Since $\deg_A\stab(\infty)|_\infty=[0,\alpha]$, we have
\[\deg_A(1-qe^\alpha+a(1-e^{-\alpha}))\subset [s_\alpha\lambda-\lambda,s_\alpha\lambda-\lambda+\alpha]=[-(\lambda,\alpha^\vee)\alpha,-(\lambda,\alpha^\vee)\alpha+\alpha].\]
There are two cases.

\noindent(1). {\it  Case $\lambda\in (0,\frac{\alpha}{4})$:}

 In this case, 
\[\deg_A(1-qe^\alpha+a(1-e^{-\alpha}))\subset[-(\lambda,\alpha^\vee)\alpha,-(\lambda,\alpha^\vee)\alpha+\alpha]\subset (-\frac{\alpha}{2}, \alpha).\]
Since $a\in S$, we get $a=qe^\alpha$.

\noindent(2). {\it Case $\lambda\in (\frac{\alpha}{4},\frac{\alpha}{2})$:}

In this case, 
\[\deg_A(1-qe^\alpha+a(1-e^{-\alpha}))\subset[-(\lambda,\alpha^\vee)\alpha,-(\lambda,\alpha^\vee)\alpha+\alpha]\subset (-\alpha,\frac{\alpha}{2}).\]
Then $a$ must be of the form $a_1e^\alpha+a_2e^{\frac{\alpha}{2}}$ for some $a_i\in R$. Plugging into $1-qe^\alpha+a(1-e^{-\alpha})$, we get $1-qe^\alpha+a_1e^\alpha+a_2e^{\frac{\alpha}{2}}+a_1+a_2e^{-\frac{\alpha}{2}}$. Since $\deg_A(1-qe^\alpha+a_1e^\alpha+a_2e^{\frac{\alpha}{2}}+a_1+a_2e^{-\frac{\alpha}{2}})\subset (-\alpha,\frac{\alpha}{2})$, we get $a_1=q$, $a_2=0$. Thus $a=qe^\alpha$.

To conclude, when $\lambda_0\in (0,\frac{\alpha}{2})$, we have 
\[\stab_{-,T^*\bbP^1,\lambda_0}(0)=[\calO_{\bbP^1}]+qe^\alpha[\calO_{T_\infty^*\bbP^1}],\]
and 
\[\stab_{-,T^*\bbP^1,\lambda_0}(\infty)=-q^{\frac{1}{2}}e^\alpha[\calO_{T_\infty^*\bbP^1}].\]
In general, when $\lambda_n\in (\frac{n\alpha}{2},\frac{(n+1)\alpha}{2})$, then $\lambda-\frac{n}{2}\alpha\in (0,\frac{\alpha}{2})$. Thus for $w=1, s_\alpha$, 
\[\stab_{-,T^*\bbP^1,\lambda_n}(w)=e^{-\frac{n}{2}w\alpha}[\calO(\frac{n}{2}\alpha)]\otimes\stab(w).\]

For the positive chamber, the opposite polarization $T\bbP^1$ and the opposite slope $\lambda_{-1}\in (-\frac{\alpha}{2},0)$, we have
\[\stab_{+,T\bbP^1,\lambda_{-1}}(0)=[\calO_{T_0^*\bbP^1}],\]
and 
\[\stab_{+,T\bbP^1,\lambda_{-1}}(\infty)=-q^{-\frac{1}{2}}e^{-\alpha}[\calO_{\bbP^1}]+\left(-q^{\frac{1}{2}}e^{-2\alpha}+(q^{-\frac{1}{2}}-q^{\frac{1}{2}})e^{-\alpha}\right)[\calO_{T_0^*\bbP^1}].\]
It is easy to check that 
\[\left(\stab_{+,T\bbP^1,\lambda_{-1}}(v), \stab_{-,T^*\bbP^1,\lambda_0}(w)\right)=\delta_{v,w}.\]
\end{example}

\section{Rigidity}\label{sec:rigidity}
In this section, we introduce rigidity, and make the normalization axiom for stable bases more explicit. 

In equivariant cohomology, degree counting is a very useful method in computations. In equivariant K-theory, this  method is often replaced by a rigidity argument.  If a $T$-equivariant sheaf $\calF$ has compact support, then the equivariant holomorphic Euler characteristic $\chi(\calF)\in K_T(\pt)$ is a Laurent polynomial, which, in general, is difficult to calculate. However, the calculation is simplified if $\chi(\calF)$ depends on few or even no equivariant variables. This property is known as rigidity. One standard way to  prove such a property is
to use the following elementary observation: for any $p(z)\in \bbC[z^{\pm}]$,
\begin{equation}\label{eq:obser}p(z) \text{ is bounded as } z^{\pm 1}\rightarrow \infty \Longleftrightarrow p=\text{constant}.
\end{equation}
For applications of this observation, see \cite[\S 2.4]{OK15}.

We will need the following lemma.
\begin{lemma}\label{lem:rigidity}
Suppose $f=\sum\limits_{\mu\in I} a_\mu e^\mu\in S$ is a Laurent polynomial, with $\mu\in (\Lie A)^*$ and $0\neq a_\mu\in R$.
\begin{enumerate}
\item 
There exists a  $\xi\in \Lie A_\bbR$ in the positive chamber, such that for any $\mu\in I$, $(\xi,\mu)\in \bbZ$, and further more,  $(\xi,\mu)\neq (\xi,\mu')$ for any $\mu\neq \mu'$ in $I$.
\item 
Moreover, if both of the limits $\lim\limits_{t\rightarrow \pm\infty}f(t\xi)$ are bounded, and one of them equals $g(q)$ for some $g(q)\in R$, then $f=g(q)$.
\end{enumerate}
\end{lemma}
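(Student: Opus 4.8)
The plan is to establish part (1) by a genericity/general-position argument and then deduce part (2) from the elementary observation \eqref{eq:obser}. For part (1), first I would note that the finitely many differences $\mu - \mu'$ for distinct $\mu,\mu' \in I$ are nonzero vectors in $(\Lie A)^*$. The set of $\xi \in \Lie A_\bbR$ lying in the closure of the positive chamber and satisfying $(\xi, \mu - \mu') = 0$ for some such pair is a finite union of proper linear subspaces (intersected with the chamber), hence does not exhaust the (open, nonempty) positive chamber. So a generic $\xi$ in the positive chamber already gives $(\xi,\mu) \neq (\xi,\mu')$ for all $\mu \neq \mu'$ in $I$. To arrange in addition that $(\xi,\mu) \in \bbZ$ for every $\mu \in I$: the pairings $(\cdot,\mu)$ are a finite collection of rational linear functionals (the $\mu$ being characters of $A$, hence lying in the character lattice $\La$), so the locus where all of them take integer values is a full-rank sublattice's worth of points, which is dense; intersecting this dense set with the nonempty open set of ``separating'' $\xi$ in the positive chamber is nonempty. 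Choosing $\xi$ there completes part (1). A mild point to be careful about is that scaling $\xi$ by a positive integer preserves membership in the positive chamber and multiplies all pairings by that integer, which is the cleanest way to clear denominators once a separating rational $\xi$ is found.

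For part (2), fix $\xi$ as in part (1) and consider the one-variable Laurent polynomial obtained by specializing $e^\mu \mapsto z^{(\xi,\mu)}$, i.e.
\[
f(t\xi) \;=\; \sum_{\mu \in I} a_\mu(q)\, z^{(\xi,\mu)}, \qquad z = e^{t}.
\]
Because the exponents $(\xi,\mu)$ are pairwise distinct integers by part (1), this is a genuine nonzero Laurent polynomial in $z$ with coefficients in $R = \bbZ[q^{1/2},q^{-1/2}]$, and no cancellation of terms occurs. The hypothesis that both limits $\lim_{t \to \pm\infty} f(t\xi)$ are bounded says precisely that this Laurent polynomial in $z$ stays bounded as $z \to \infty$ and as $z \to 0$; viewing $q$ as a parameter (or specializing $q$ to a generic value and using that a Laurent polynomial identity holds iff it holds after such specialization), the observation \eqref{eq:obser} forces it to be constant in $z$, i.e.\ only the $\mu$ with $(\xi,\mu) = 0$ can contribute. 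Since $\xi$ is in the positive chamber it pairs nontrivially with every root, but more to the point the separation property from part (1) means at most one $\mu \in I$ has $(\xi,\mu)=0$; hence $f = a_{\mu_0}(q)$ for that single index (or $f$ is a constant that is then forced to equal $g(q)$). Finally, the assumption that one of the two limits equals $g(q) \in R$ identifies this constant: $f = g(q)$.

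I do not expect a serious obstacle here; this is a routine unwinding. The only place demanding a little care is the interplay between the ground ring $R$ and the observation \eqref{eq:obser}, which is stated over $\bbC[z^{\pm}]$: one should either note that a Laurent polynomial over $R$ is bounded (as $z^{\pm1}\to\infty$) if and only if each of its $q$-coefficients is, reducing immediately to the scalar statement applied coefficientwise, or specialize $q$ to a transcendental and invoke \eqref{eq:obser} directly. Either way the argument is short; the substantive content is entirely in the general-position choice of $\xi$ in part (1).
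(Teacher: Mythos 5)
Your proposal is correct and is essentially an expanded unwinding of the paper's one-line proof, which simply cites the finiteness of $I$ for part~(1) and the observation~\eqref{eq:obser} for part~(2). The only imprecision is the passing remark that the integrality locus is ``dense''; the cleaner clearing-of-denominators argument you give immediately afterward (choose a rational separating $\xi$ and scale by a positive integer) is the right way to say it, and it settles the point.
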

\begin{proof}
The existence of such $\xi$ follows easily from the fact that $f$ has only finitely many terms. The second part follows from \eqref{eq:obser}. \end{proof}
With Lemma \ref{lem:rigidity} we can define the following two scalars
\[max_\xi f=\max_{\mu\in I}(\mu, \xi) \text{\quad and \quad} min_\xi f=\min_{\mu\in I}(\mu, \xi).\]

For any $v\in W$, we denote $q^{\ell(v)}$ simply by $q_v$.

By Theorem \ref{thm:geostable}, we have the following. 
\begin{lemma}\label{lem:charstab}
For $v,w\in W$,
\begin{enumerate}
\item 
$\stab_{-,T^*G/B,\calL}(v)|_w=0$, unless $w\geq v$;
\item 
$\stab_{-,T^*G/B,\calL}(v)|_v=q_v^{\frac{1}{2}} \underset{\al\in \Sigma^-\cap v\Sigma^-}\prod(1-qe^{\al})\cdot \underset{{\al\in \Sigma^+\cap v\Sigma^-}}\prod(1-e^{\al})$;
\item 
$\stab_{+,T(G/B),\calL}(v)|_w=0$, unless $w\leq v$;
\item 
$\stab_{+,T(G/B),\calL}(v)|_v=q_v^{-\frac{1}{2}}\underset{\al\in \Sigma^-\cap v\Sigma^+}\prod(q-e^\alpha)\cdot \underset{\al\in \Sigma^+\cap v\Sigma^+}\prod(1-e^\alpha)$.
\end{enumerate}
\end{lemma}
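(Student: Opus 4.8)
The plan is to deduce all four statements directly from the characterization in Theorem~\ref{thm:geostable}, treating the two vanishing statements (1), (3) and the two self-restriction statements (2), (4) separately.

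For (1) and (3) I would invoke only the support axiom. The set $\Slope_{\mathfrak{C}}(v)=\bigcup_{u\preceq_{\mathfrak{C}}v}\Leaf_{\mathfrak{C}}(u)$ is a closed union of leaves, and a torus fixed point $w$ lies on $\Leaf_{\mathfrak{C}}(u)$ if and only if $u=w$ (the limit of $w$ under the one-parameter flow is $w$ itself), so $w\in\Slope_{\mathfrak{C}}(v)$ exactly when $w\preceq_{\mathfrak{C}}v$. If $w\not\preceq_{\mathfrak{C}}v$, then $w$ lies in the open complement of $\Slope_{\mathfrak{C}}(v)$, on which $\stab_{\mathfrak{C},T^{1/2},\calL}(v)$ restricts to zero; hence $\stab_{\mathfrak{C},T^{1/2},\calL}(v)|_w=0$. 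Substituting that $\preceq_+$ is the Bruhat order and $\preceq_-$ its opposite, as recalled in \S\ref{sec:geodef}, gives the ranges $w\geq v$ in (1) and $w\leq v$ in (3).

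For (2) and (4) I would evaluate the normalization axiom, the computation being bookkeeping with $T$-weights. At the fixed point $v$ the tangent space $N_v=T_v(T^*G/B)$ decomposes as $T_v(G/B)\oplus T^*_v(G/B)$, with weights $\{e^{v\beta}\mid\beta\in\Sigma^-\}$ for the first summand and $\{q^{-1}e^{v\beta}\mid\beta\in\Sigma^+\}$ for the second, the $q^{-1}$ being the dilation of the cotangent fibres. For the negative chamber and the polarization $T^{1/2}=T^*G/B$ this pins down $N_v^{1/2}$, the two pieces $N_{v,\pm}$, and $N_{v,+}^{1/2}$ purely combinatorially; in particular $\rank N_{v,+}^{1/2}=\lvert\Sigma^+\cap v^{-1}\Sigma^-\rvert=\ell(v)$, and a short determinant computation, using the identity $\rho-v\rho=\sum_{\gamma\in\Sigma^+\cap v\Sigma^-}\gamma$, gives $\bigl(\det N_{v,-}/\det N_v^{1/2}\bigr)^{1/2}=q_v^{1/2}\prod_{\gamma\in\Sigma^+\cap v\Sigma^-}e^{\gamma}$. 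Next, $\Leaf_-(v)=T^*_{B^-vB/B}G/B$ is smooth near $v$, so the Koszul resolution gives $\calO_{\Leaf_-(v)}|_v=\prod_\mu(1-e^{-\mu})$, the product being over the weights $\mu$ of the normal bundle of the leaf at $v$; writing $T_v\Leaf_-(v)$ as $T_v(B^-vB/B)$ plus the conormal of $B^-vB/B$ in $G/B$, suitably $q^{-1}$-twisted, identifies these weights as $\{e^{\gamma}\mid\gamma\in\Sigma^+\cap v\Sigma^-\}\cup\{q^{-1}e^{\gamma}\mid\gamma\in\Sigma^+\cap v\Sigma^+\}$. Multiplying the three factors of the normalization formula and simplifying via $e^{\gamma}(1-e^{-\gamma})=-(1-e^{\gamma})$, so that the sign produced cancels $(-1)^{\rank N_{v,+}^{1/2}}$, leaves exactly the product in (2). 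Statement (4) is the mirror computation: chamber $+$, polarization $T(G/B)$, leaf $\Leaf_+(v)=T^*_{BvB/B}G/B$; the only changes are which half of $N_v$ is retained as $N_{v,-}$ and as $N_v^{1/2}$, and the monomial prefactor is rewritten through $q-e^\alpha=-e^{\alpha}(1-qe^{-\alpha})$.

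The only genuine obstacle is the bookkeeping: keeping the $v$-twists on the roots straight, carrying the $q^{-1}$ on every cotangent direction, and checking that the sign $(-1)^{\rank N_{v,+}^{1/2}}$ together with the monomial $\bigl(\det N_{v,-}/\det N_v^{1/2}\bigr)^{1/2}$ and the Koszul product collapse to the clean, sign-free expressions asserted. No idea beyond Theorem~\ref{thm:geostable} is needed.
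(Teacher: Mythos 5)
Your proof is correct and follows essentially the same route as the paper: deduce (1) and (3) directly from the support axiom, and obtain (2) and (4) by computing the $T$-weights of $N_{v,\pm}$, $N_v^{1/2}$, and $\calO_{\Leaf}|_v$ and substituting into the normalization formula, exactly as done in the paper's proof. The sign bookkeeping via $e^{\gamma}(1-e^{-\gamma})=-(1-e^{\gamma})$, and the resulting cancellation against $(-1)^{\rank N_{v,+}^{1/2}}$, also matches the paper's step $\sharp_1$.
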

\begin{proof}
(1) and (3) follow from the the support condition. 

Now we prove (2). For the negative chamber, we have
\begin{flalign*}A-\text{weights in } N_{v,+}&=\{e^{-v\beta}|\beta>0,v\beta>0\}\cup \{q^{-1}e^{v\beta}|\beta>0,v\beta<0\},\\
A-\text{weights in } N_{v,-}&=\{e^{-v\beta}|\beta>0,v\beta<0\}\cup \{q^{-1}e^{v\beta}|\beta>0,v\beta>0\},\\
A-\text{weights in } N_v^{\frac{1}{2}}&=\{q^{-1}e^{v\beta}|\beta>0\}.\end{flalign*}
Therefore,
\begin{flalign*}
\stab_{-,T^*G/B,\calL}(v)|_v&=(-1)^{\rank N_{v,+}^{\frac{1}{2}}}\left(\frac{\det N_{v,-}}{\det N_v^{\frac{1}{2}}}\right)^{\frac{1}{2}}\calO_{\Leaf_\mathfrak{C}(v)}|_v\\
&=(-1)^{\ell(v)}\left(\frac{\prod_{\beta>0,v\beta<0}e^{-v\beta}\prod_{\beta>0,v\beta>0}q^{-1}e^{v\beta}}{\prod_{\beta>0}q^{-1}e^{v\beta}}\right)^{\frac{1}{2}}\prod_{\beta>0,v\beta<0}(1-e^{v\beta})\prod_{\beta>0,v\beta>0}(1-qe^{-v\beta})\\
&\overset{\sharp_1}=(-1)^{\ell(v)}q_v^{\frac{1}{2}}\prod\limits_{\beta>0,v\beta<0}(e^{-v\beta}-1)\prod\limits_{\beta>0,v\beta>0}(1-qe^{-v\beta}).
\end{flalign*}
We comment on the proof of the equality $\sharp_1$. In $\left(\frac{\prod_{\beta>0,v\beta<0}e^{-v\beta}\prod_{\beta>0,v\beta>0}q^{-1}e^{v\beta}}{\prod_{\beta>0}q^{-1}e^{v\beta}}\right)^{\frac{1}{2}}$, the factors involving powers of $e$ can be regrouped into two copies of $\Sigma^+\cap v\Sigma^-.$ Equality $\sharp_1$ then follows from the identity $e^{-\al}(1-e^\al)=e^{-\al}-1$. 

(4) follows from a similar argument as that of (2).
\end{proof}
Lemma \ref{lem:charstab}  implies that $\left(\stab_{+,T(G/B),\calL}(v), \stab_{-,T^*G/B,\calL}(v)\right)=1$,  
keeping in mind that
\[\underset{{\al\in \Sigma^+\cap v\Sigma^-}}\prod(1-e^{\al})\cdot \underset{\al\in \Sigma^-\cap v\Sigma^-}\prod(1-qe^{\al})\underset{\al\in \Sigma^-\cap v\Sigma^+}\prod(q-e^\alpha)\cdot \underset{\al\in \Sigma^+\cap v\Sigma^+}\prod(1-e^\alpha)=\bigwedge\nolimits^\bullet T_v(T^*G/B).\]

Choosing $\xi\in\Lie A$ as in Lemma \ref{lem:rigidity}, regarding the Laurent polynomials $\stab_{+,T(G/B),\calL}(v)|_v$ and $\stab_{-,T^*G/B,\calL}(v)|_v$, we have
\begin{align}\label{deg con}
max_\xi(\stab_{+,T(G/B),\calL}(v)|_v)=(\xi, \sum_{\beta>0,v\beta>0}v\beta),&\quad min_\xi(\stab_{+,T(G/B),\calL}(v)|_v)=(\xi, \sum_{\beta>0,v\beta<0} v\beta),\\
max_\xi(\stab_{-,T^*G/B,\calL}(v)|_v)=(\xi, \sum_{\beta>0,v\beta<0} -v\beta),&\quad min_\xi(\stab_{-,T^*G/B,\calL}(v)|_v)=(\xi, \sum_{\beta>0,v\beta>0} -v\beta).
\end{align}
Let $\rho$ be half sum of all the positive roots. For  any simple root $\al$,  we have
\begin{gather}max_\xi(\stab_{+,T(G/B),\calL}(v)|_v)+max_\xi(\stab_{-,T^*G/B,\calL}(v)|_v)=(\xi,2\rho),\\
min_\xi(\stab_{+,T(G/B),\calL}(v)|_v)+min_\xi(\stab_{-,T^*G/B,\calL}(v)|_v)=-(\xi,2\rho),\\
\label{eq:degalpha} max_\xi(\stab_{+,T(G/B),\calL}(vs_\alpha)|_{vs_\alpha})+max_\xi(\stab_{-,T^*G/B,\calL}(v)|_v)+(\xi,v\alpha)=(\xi,2\rho),\\
\label{eq:degalpha2}
min_\xi(\stab_{+,T(G/B),\calL}(vs_\alpha)|_{vs_\alpha})+min_\xi(\stab_{-,T^*G/B,\calL}(v)|_v)+(\xi,v\alpha)=-(\xi,2\rho).\end{gather}

\section{The two Hecke actions}\label{sec:Heckeaction}
In this section, we compute the action of the affine Hecke algebra on stable bases.
\subsection{Reminder on the Demazure-Lusztig operators}
Let $Z=T^*G/B\times_\calN T^*G/B$ be the Steinberg variety, where $\calN$ is the nilpotent cone. Let $\bbH$ be the affine Hecke algebra (see Chapter 7 in \cite{CG97}). There is an isomorphism 
\begin{equation}
\label{eqn:KL}
\bbH\simeq K_{G\times \bbC^*}(Z)\end{equation} defined as follows. The diagonal $G$-orbits on $G/B\times G/B$ are indexed by the Weyl group. For each simple root $\alpha\in \Pi$, let $Y^\circ_{\alpha}$ be the orbit corresponding to the simple reflection $s_{\alpha}$, whose closure is
\[Y_{\alpha}:=\overline{Y^\circ_{\alpha}}=G/B\times_{\mathcal{P}_{\alpha}}G/B,\]
where $\mathcal{P}_{\alpha}=G/P_{\alpha}$ and $P_{\alpha}$ is the minimal parabolic subgroup corresponding to  $\alpha$. Therefore, only two kinds of torus fixed points lie in $Y_\alpha$: $(w,w)$ and $(w,ws_\alpha)$. Let $\Omega_{\alpha}$ be the sheaf of differentials along the first projection from $Y_\alpha$ to $G/B$. Let $T_{Y_{\alpha}}^*:=T_{Y_{\alpha}}^*(G/B\times G/B)$ be the conormal bundle to $Y_{\alpha}$, and consider $\Omega_{\alpha}$ as a sheaf on $T_{Y_{\alpha}}^*(G/B\times G/B)$ via pullback. Then, $[\Omega_{\alpha}]=[\pi_2^*\calO(\alpha)]$ as a sheaf on $T_{Y_{\alpha}}^*$, where $\pi_1$ and $\pi_2$ are the two projections from $T_{Y_{\alpha}}^*(G/B\times G/B)$ to $T^*G/B$ respectively.  
The isomorphism \eqref{eqn:KL} sends the simple generator $\tau_\al$ to $-[\calO_\Delta]-[\Omega_{\alpha}]$, where $\calO_\Delta$ is the structure sheaf of the diagonal component of the Steinberg variety $Z$, and it sends $e^\lambda\in X^*(A)$ to $[\calO_\Delta(\lambda)]$ (see \cite[Prop. 6.1.5]{R08}). This morphism is conjugate to the one in the \textit{loc. cit.} by the sheaf $\calO(\rho)$, and it is related to the one used in \cite{CG97} by an Iwahori--Matsumoto involution (without signs).\footnote{We thank J. Schuermann for pointing this out to us.}

There is a natural embedding of the convolution algebras $K_{G\times \bbC^*}(T^*G/B\times_\calN T^*G/B)$ into $K_{A\times \bbC^*}(T^*G/B\times_\calN T^*G/B)$, which in turn  acts on  $K_T(T^*G/B)$ by convolution from left and from right (see \cite[\S 5.2.20]{CG97}). The left action is given by
\[D_{\alpha}(\calF):=\pi_{1*}(\pi_2^*\calF\otimes \Omega_{\alpha}),\]
where $\calF\in K_T(T^*G/B)$. The pushforward is understood as derived pushforward in equivariant K-theory.  Similarly, the right action is
\[D_{\alpha}'(\calF):=\pi_{2*}(\pi_1^*\calF\otimes \Omega_{\alpha}).\]

For $\calF\in K_T(T^*G/B)$,  the left (resp. right) actions of $\tau_w\in \bbH$ on $\calF$ is denoted  by $T_w(\calF)$ (resp. and $T_w'(\calF)$). 
By definition, $D_\al=-T_\al-1$ and $D_\al'=-T_\al'-1.$

\subsection{Hecke algebra action $D_\al$ on $\stab_{-, T^*G/B, \calL}$}\label{sec:act1}

We will need the following lemma, which can be proved easily by calculating the weights.
\begin{lemma}\label{lem:conormalweight}
For any $v\in W$ and simple root $\alpha$, with $X=T^*G/B$, we have
\begin{gather*}\bigwedge\nolimits^\bullet(T_{(v,v)}T^*_{Y_\alpha})=\bigwedge\nolimits^\bullet(T_vX)\frac{1-e^{v\alpha}}{1-qe^{-v\alpha}},\\
\bigwedge\nolimits^\bullet(T_{(v,vs_\alpha)}T^*_{Y_\alpha})=\bigwedge\nolimits^\bullet(T_{(vs_\alpha, v)}T^*_{Y_\alpha})=\bigwedge\nolimits^\bullet(T_vX)\frac{1-e^{-v\alpha}}{1-qe^{-v\alpha}}.\end{gather*}
\end{lemma}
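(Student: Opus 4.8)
The statement is a $T$-weight count at torus fixed points, and the plan is to perform it using the fibration $p_1\colon Y_\alpha\to G/B$ together with the description of $T^*_{Y_\alpha}$ as a conormal bundle.

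First I would use that $p_1$ makes $Y_\alpha = G/B\times_{\mathcal{P}_\alpha}G/B$ into a $\mathbb{P}^1$-bundle whose fibre over $uB$ is $\{uB\}\times(uP_\alpha/B)$. At a $T$-fixed point $(u_1,u_2)$ of $Y_\alpha$ this yields a $T$-equivariant splitting $T_{(u_1,u_2)}Y_\alpha = T_{u_1}(G/B)\oplus T_{u_2B}(u_1P_\alpha/B)$; the first summand has weights $\{-u_1\beta:\beta>0\}$, and the relative summand has weight $-u_1\alpha$ if $u_2=u_1$ and $u_1\alpha$ if $u_2=u_1s_\alpha$ (the two $A$-weights on $P_\alpha/B\cong\mathbb{P}^1$, transported by $u_1$). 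I would likewise record the weights of the ambient $T_{(u_1,u_2)}(G/B\times G/B) = T_{u_1}(G/B)\oplus T_{u_2}(G/B)$, using $\{s_\alpha\beta:\beta>0\} = (\Sigma^+\setminus\{\alpha\})\cup\{-\alpha\}$ whenever one of the $u_i$ equals $vs_\alpha$.

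Next, since $T^*_{Y_\alpha}:=T^*_{Y_\alpha}(G/B\times G/B)$ is a conormal bundle, every $T$-fixed point of it is the zero covector over a fixed point $(u_1,u_2)$ of $Y_\alpha$, and there the tangent space splits $T$-equivariantly as $T_{(u_1,u_2)}Y_\alpha\oplus N^\vee_{(u_1,u_2)}$, where $N^\vee$ is the conormal fibre, i.e. the annihilator of $T_{(u_1,u_2)}Y_\alpha$ in $T^*_{(u_1,u_2)}(G/B\times G/B)$. Its weights are obtained by first taking the multiset difference between the weights of $T_{(u_1,u_2)}(G/B\times G/B)$ and those of $T_{(u_1,u_2)}Y_\alpha$ (giving the weights of the normal bundle $N_{(u_1,u_2)}$) and then negating; moreover, since $\mathbb{C}^*$ dilates cotangent fibres, these conormal directions carry an extra factor $q^{-1}$ (consistent with the weights $\{q^{-1}e^{v\beta}:\beta>0\}$ of $N_v^{1/2}$ recorded in the proof of Lemma~\ref{lem:charstab}). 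One computes $N^\vee_{(v,v)}$ to have weights $\{q^{-1}e^{v\beta}:\beta>0,\ \beta\neq\alpha\}$ and $N^\vee_{(v,vs_\alpha)}$ to have weights $\{q^{-1}e^{v\gamma}:\gamma>0,\ \gamma\neq\alpha\}$.

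Finally I would form $\bigwedge\nolimits^\bullet(T_{(u_1,u_2)}T^*_{Y_\alpha}) = \prod_\mu(1-e^{-\mu})$ over the weights collected above and divide by $\bigwedge\nolimits^\bullet(T_vX) = \prod_{\beta>0}(1-e^{v\beta})(1-qe^{-v\beta})$: at $(v,v)$ everything cancels except $\tfrac{1-e^{v\alpha}}{1-qe^{-v\alpha}}$, and at $(v,vs_\alpha)$ everything cancels except $\tfrac{1-e^{-v\alpha}}{1-qe^{-v\alpha}}$, which are the two asserted ratios. The identity at $(vs_\alpha,v)$ then follows from that at $(v,vs_\alpha)$ because the involution of $G/B\times G/B$ exchanging the two factors is $T$-equivariant, preserves $Y_\alpha$, and swaps the two fixed points (equivalently, one repeats the computation with $p_2$ in place of $p_1$). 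The only point demanding care is the bookkeeping of multiplicities in the multiset difference---for instance $-v\alpha$ occurs with multiplicity two in both $T_{(v,v)}(G/B\times G/B)$ and $T_{(v,v)}Y_\alpha$, hence drops out of $N^\vee_{(v,v)}$---together with keeping straight which directions carry the factor $q^{-1}$; beyond that there is no genuine difficulty.
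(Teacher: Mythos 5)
Your proof is correct and follows exactly the weight-calculation approach the paper gestures at (the paper states only that the lemma ``can be proved easily by calculating the weights'' and gives no further details). Your decomposition via the $\mathbb{P}^1$-bundle $p_1$, the multiset accounting of normal/conormal weights with the extra $q^{-1}$-twist on conormal directions, and the final cancellation against $\bigwedge^\bullet(T_vX)$ all check out, including the multiplicity bookkeeping at $(v,v)$ where $-v\alpha$ has multiplicity two in both the ambient tangent space and $T_{(v,v)}Y_\alpha$.
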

Among  the alcoves for $\mathfrak{g}$, there is a fundamental one defined by 
\[\nabla:=\{\lambda\in (\Lie A)^*_\bbR|0<(\lambda,\alpha^\vee)<1, \text{for all positive roots } \alpha \}.\]
If we pick the slope $\calL\in \nabla$, we have the following lemma
\begin{lemma}\label{lem:Bruhatcompare}
Given $v>w\in W$ under the Bruhat order, then for any $\xi\in \Lie(A)^*$ in the 
positive chamber, $(\xi, \calL|_v-\calL|_w)<0$.
\end{lemma}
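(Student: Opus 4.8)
The plan is to reduce the statement to a combinatorial inequality about a regular dominant weight and then apply the classical comparison between the Bruhat order and the dominance order. First I would fix the weight convention: the fiber of $\calL=\calO(\lambda)$ at the fixed point $v$ is the one-dimensional $A$-module of character $e^{v\lambda}$. (This is forced by Remark~\ref{rem:afterdef}.(4): restricting the identity displayed there to the fixed point $w$ and using that condition~(2) of Theorem~\ref{thm:geostable} does not involve $\calL$, one gets $e^{-w\lambda}\,[\calO(\lambda)]|_w=1$, i.e.\ $[\calO(\lambda)]|_w=e^{w\lambda}$.) Hence $(\xi,\calL|_v-\calL|_w)$ is the pairing $(\xi,v\lambda-w\lambda)$, and the assertion becomes $(\xi,v\lambda)<(\xi,w\lambda)$.

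Next I would observe that $\lambda\in\nabla$ implies $(\lambda,\alpha^\vee)>0$ for every $\alpha\in\Sigma^+$, so that $\lambda$ is a regular dominant (rational) weight; the upper bound $(\lambda,\alpha^\vee)<1$ is not used here. The key claim is that for $v>w$ the vector $w\lambda-v\lambda$ is a \emph{nonzero} nonnegative linear combination of simple roots. I would prove this by taking a saturated chain $w=w_0\lessdot w_1\lessdot\cdots\lessdot w_k=v$ with $k\ge 1$, writing each cover as $w_{i+1}=s_{\beta_i}w_i$ with $\beta_i\in\Sigma^+$, and invoking the standard fact that $\ell(s_{\beta_i}w_i)>\ell(w_i)$ forces $w_i^{-1}\beta_i\in\Sigma^+$. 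Regular dominance of $\lambda$ then gives $(\lambda,w_i^{-1}\beta_i^\vee)>0$, so that
\[
w_i\lambda-w_{i+1}\lambda=w_i\lambda-s_{\beta_i}(w_i\lambda)=(w_i\lambda,\beta_i^\vee)\,\beta_i=(\lambda,w_i^{-1}\beta_i^\vee)\,\beta_i
\]
is a strictly positive multiple of the positive root $\beta_i$. Summing over $i$ exhibits $w\lambda-v\lambda$ as a nonnegative combination of simple roots, nonzero because it is a sum of strictly positive multiples of positive roots with no cancellation and $k\ge 1$.

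To conclude, since $\xi$ lies in the positive chamber, $(\xi,\alpha)>0$ for all $\alpha\in\Sigma^+$, in particular for all simple roots; pairing $\xi$ against the nonzero nonnegative root combination $w\lambda-v\lambda$ yields $(\xi,w\lambda-v\lambda)>0$, i.e.\ $(\xi,v\lambda-w\lambda)<0$, which is the claim. The argument is short; the only points requiring care are getting the sign of the convention $[\calO(\lambda)]|_v=e^{v\lambda}$ right, and noticing that it is the \emph{regularity} of $\lambda$ — not merely its dominance — that promotes ``$w\lambda-v\lambda$ is a nonnegative root combination'' to ``a nonzero one'' (this fails for a weakly dominant $\lambda$ on a wall). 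The Bruhat-order/reflection-length facts used above are entirely standard and can be quoted from the literature on Coxeter groups.
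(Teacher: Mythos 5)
Your proposal is correct and follows essentially the same telescoping argument as the paper, which also reduces the claim to summing contributions along a Bruhat chain and using regular dominance of $\lambda$ together with $\xi$ lying in the positive chamber. The only cosmetic difference is that the paper descends from $v$ to $w$ via right multiplications $u\mapsto us_{\alpha_i}$ (quoting the chain lemma from [BGG]), whereas you ascend along a saturated cover chain via left multiplications $u\mapsto s_{\beta_i}u$ — the two formulations are equivalent and each step contributes a strictly positive multiple of a positive root.
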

\begin{proof}
By \cite[\S 2]{BGG}, there exits a sequence of positive roots $\alpha_i$, $1\leq i\leq l$, such that $v>vs_{\alpha_1}>\cdots >vs_{\alpha_1}\cdots s_{\alpha_l}=w$. Therefore, $v\alpha_1<0$, $vs_{\alpha_1}\alpha_2<0$, $\dots$, $vs_{\alpha_1}\cdots s_{\alpha_{l-1}}\alpha_l<0$. So
\[(\xi, \calL|_v-\calL|_w)=\sum_i(\xi, \calL|_{vs_{\alpha_1}\cdots s_{\alpha_{i-1}}}-\calL|_{vs_{\alpha_1}\cdots s_{\alpha_{i-1}}s_{\alpha_i}})=\sum_i(\calL, \alpha_i^\vee)(\xi, vs_{\alpha_1}\cdots s_{\alpha_{i-1}}\alpha_i)<0.\]
\end{proof}

In the remaining part of this paper, we fix $\calL\in \Pic(X)\otimes_\bbZ\bbQ$ lying in the fundamental alcove, i.e.,  in the positive chamber and near $0$. Denote 
\begin{equation}\label{eqn:def_stab}
\stab_{-}(w)=\stab_{-,T^*G/B,\calL}(w), \quad \stab_{+}(w)=\stab_{+,T(G/B),\calL^{-1}}(w). 
\end{equation}

\begin{prop}\label{prop:stabim} With notations as above, we have
\[D_{\alpha}(\stab_-(w))=
\left\{ \begin{array}{cc}
-q\stab_-(w)-q^{\frac{1}{2}}\stab_-(ws_{\alpha}),& \text{ if } ws_{\alpha}<w;\\
-\stab_-(w)-q^{\frac{1}{2}}\stab_-(ws_{\alpha}),& \text{ if } ws_{\alpha}>w.
\end{array}\right. \]
\end{prop}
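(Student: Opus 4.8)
The strategy is to use the characterization of $\stab_-(w)$ via support, normalization, and degree (Theorem \ref{thm:geostable}) together with the rigidity Lemma \ref{lem:rigidity}, rather than a direct sheaf-theoretic computation of $D_\alpha$. First I would fix notation: write $\Gamma := D_\alpha(\stab_-(w))$ and compute it by equivariant localization on $T^*G/B \times_{\calN} T^*G/B$. Since only the fixed points $(u,u)$ and $(u,us_\alpha)$ lie in $Y_\alpha$, the restriction $\Gamma|_u$ is a sum of two localization contributions, expressible via $\stab_-(w)|_u$, $\stab_-(w)|_{us_\alpha}$, the class $[\Omega_\alpha] = [\pi_2^*\calO(\alpha)]$, and the denominators $\bigwedge^\bullet(T_{(u,u)}T^*_{Y_\alpha})$, $\bigwedge^\bullet(T_{(u,us_\alpha)}T^*_{Y_\alpha})$ supplied by Lemma \ref{lem:conormalweight}. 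This yields a closed rational expression for each $\Gamma|_u$ in terms of the known diagonal and off-diagonal restrictions of $\stab_-(w)$ from Lemma \ref{lem:charstab}.

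Next I would verify that $\Gamma$ lies in the span of $\{\stab_-(w), \stab_-(ws_\alpha)\}$ by checking the defining axioms for the right linear combination. The support condition is immediate: $D_\alpha$ is convolution with a sheaf supported on $Y_\alpha$-related loci, and $\Slope_-(w) \cup \Slope_-(ws_\alpha)$ is preserved; more precisely one checks $\Gamma|_u = 0$ unless $u \geq \min(w, ws_\alpha)$ using Lemma \ref{lem:charstab}(1). For the normalization, I would compute $\Gamma|_w$ and $\Gamma|_{ws_\alpha}$ explicitly from the localization formula and match them against the prescribed normalization values for $q_v^{1/2}\prod(\cdots)$ in Lemma \ref{lem:charstab}(2), which pins down the scalar coefficients of $\stab_-(w)$ and $\stab_-(ws_\alpha)$ — these are the coefficients $-q$ or $-1$ and $-q^{1/2}$ appearing in the statement, with the dichotomy $ws_\alpha \lessgtr w$ coming from whether $w\alpha > 0$ or $w\alpha < 0$ (equivalently which of $w, ws_\alpha$ is Bruhat-smaller). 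Finally, the degree condition is where the Newton-polygon estimates of \S\ref{sec:rigidity} — in particular the relations \eqref{eq:degalpha}, \eqref{eq:degalpha2} together with Lemma \ref{lem:Bruhatcompare} (using $\calL \in \nabla$) — guarantee that the candidate combination satisfies the degree bound, so by uniqueness in Theorem \ref{thm:geostable} it must equal $\Gamma$.

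The main obstacle I anticipate is the degree condition, not the support or normalization. The localization expression for $\Gamma|_u$ is a sum of two terms with a common denominator $1 - qe^{-u\alpha}$ (from Lemma \ref{lem:conormalweight}), and one must show the potential pole along $1 - qe^{-u\alpha}$ cancels — this is the usual "the Demazure-Lusztig operator preserves $K_T$" phenomenon and follows from comparing the numerators at $u$ and $us_\alpha$, but it requires care. Once $\Gamma \in K_T(T^*G/B)$ is established, one must control $\deg_A(\Gamma|_v \otimes \calL|_w)$ for $v \prec w$: here the key input is that multiplication by $[\pi_2^*\calO(\alpha)]$ shifts Newton polygons by at most $\alpha$, and the slope comparison of Lemma \ref{lem:Bruhatcompare} absorbs this shift because $\calL$ lies in the fundamental alcove $\nabla$. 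I would organize this as a direct check that the explicit combination $-q^{\epsilon}\stab_-(w) - q^{1/2}\stab_-(ws_\alpha)$ (with $\epsilon \in \{0,1\}$ as in the statement) satisfies all three axioms, invoking rigidity/Lemma \ref{lem:rigidity} only where a Laurent polynomial identity needs to be deduced from boundedness of one-parameter limits. A parallel remark: the identity $D_\alpha = -T_\alpha - 1$ then lets one translate this into the Hecke-operator statement $T_\alpha(\stab_-(w)) = q^{1/2}\stab_-(ws_\alpha) + (q-1)\stab_-(w)$ or $q\,\text{(identity)}$ form, matching the quadratic relation $(T_\alpha + 1)(T_\alpha - q) = 0$, which serves as a useful consistency check on the signs and powers of $q$.
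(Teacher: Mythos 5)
Your proposal uses the same ingredients as the paper — the localization formula on $T^*_{Y_\alpha}$, the weight computation of Lemma~\ref{lem:conormalweight}, the restriction values of Lemma~\ref{lem:charstab}, and rigidity combined with the degree estimates of \S\ref{sec:rigidity} — but the logical organization is different, and there is a real gap in how you close the argument.

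The paper does not verify the axioms of Theorem~\ref{thm:geostable} for a candidate class. Instead it expands
$D_\alpha(\stab_-(w))=\sum_{v}\bigl(D_\alpha(\stab_-(w)),\stab_+(v)\bigr)\stab_-(v)$
via the duality of Remark~\ref{rem:afterdef}(2), notes that each pairing is a proper intersection and hence a Laurent polynomial in $S$, and then shows by rigidity (Lemma~\ref{lem:rigidity}) that the coefficient vanishes whenever $v\notin\{w,ws_\alpha\}$; the two surviving coefficients are again computed by localization plus rigidity. The whole weight of the proof rests on showing these pairings vanish, and the degree axiom enters only through the Newton-polygon bounds used in that vanishing argument.

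Your plan, by contrast, proposes to show that the candidate $\Lambda=-q^\epsilon\stab_-(w)-q^{1/2}\stab_-(ws_\alpha)$ satisfies the support, normalization, and degree axioms and then ``by uniqueness in Theorem~\ref{thm:geostable}'' conclude $\Gamma=\Lambda$. This does not work as stated: the uniqueness statement in Theorem~\ref{thm:geostable} characterizes the individual $\stab(w)$, not an arbitrary class with a given support and a given restriction at two fixed points. Knowing that $\Gamma$ is supported in the right place, that $\Gamma|_w$ and $\Gamma|_{ws_\alpha}$ agree with $\Lambda|_w$ and $\Lambda|_{ws_\alpha}$, and that $\Lambda$ satisfies the degree bound does not preclude $\Gamma-\Lambda$ from being a nonzero combination of $\stab_-(v)$ for larger $v$. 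To rule out those higher contributions you need exactly the vanishing of the pairings $\bigl(D_\alpha(\stab_-(w)),\stab_+(v)\bigr)$ for $v\notin\{w,ws_\alpha\}$, which is the paper's central computation and is absent from your plan. Relatedly, your concern about cancellation of the apparent pole along $1-qe^{-u\alpha}$ is a side effect of trying to compute $\Gamma|_u$ directly; the paper sidesteps it entirely because the pairing with $\stab_+(v)$ is a proper intersection and is integral from the start. The computation of the two surviving coefficients is also not an immediate consequence of the normalization axiom: in the paper those are again obtained by localization and rigidity (the normalization axiom controls only the single diagonal restriction $\stab_-(w)|_w$, whereas the pairing involves the full set of restrictions).
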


\begin{proof}
By Remark \ref{rem:afterdef}.(2),
\[D_{\alpha}(\stab_-(w))=\sum_v\left(D_{\alpha}(\stab_-(w)),\stab_+(v)\right)\stab_-(v).\]
By the support condition, $\left(D_{\alpha}(\stab_-(w)),\stab_+(v)\right)$ is a proper intersection, so it belongs to $S$, i.e., it is a Laurent polynomial.
By the localization formula and Lemma \ref{lem:conormalweight},
\begin{align*}
&\left(D_{\alpha}(\stab_-(w)),\stab_+(v)\right)\\
=&\sum_{u}\frac{\stab_+(v)|_u\stab_-(w)|_u}{\bigwedge^\bullet(T_{(u,u)}T^*_{Y_{\alpha}})}e^{u\alpha}+\sum_{u}\frac{\stab_+(v)|_{us_\alpha}\stab_-(w)|_u}{\bigwedge^\bullet (T_{(us_{\alpha},u)}T^*_{Y_{\alpha}})}e^{u\alpha}\\
=&\sum_{w\leq u\leq y}\frac{\stab_+(v)|_u\stab_-(w)|_u}{\bigwedge^\bullet(T_uX)}\frac{e^{u\alpha}-q}{1-e^{u\alpha}}\\
&+\sum_{w\leq u, us_{\alpha}\leq y }\frac{\stab_+(v)|_{us_\alpha}\stab_-(w)|_u}{\bigwedge^\bullet(T_uX)}\frac{1-qe^{-u\alpha}}{1-e^{-u\alpha}}e^{u\alpha}.
\end{align*}
We first show that if $v\notin \{w,ws_\alpha\}$, this is 0.

Denote 
\begin{gather*} f_1:=\stab_+(v)|_u\stab_-(w)|_u, \quad f_2:=\bigwedge\nolimits^\bullet(T_uX), \\
~f_3:=\stab_+(v)|_{us_\alpha}\stab_-(w)|_u e^{u\alpha}.\end{gather*}
We can find a common $\xi$ as in Lemma \ref{lem:rigidity} for all $w, u, v$. Then, by the degree condition for stable bases, we have 
\begin{gather*}max_\xi f_1\leq max_\xi(\stab_+(u)|_u)+max_\xi(\stab_-(u)|_u)+(\xi, \calL|_u-\calL|_w+\calL^{-1}|_u-\calL^{-1}|_v)\\
=(\xi,2\rho+\calL|_v-\calL|_w),\\
max_\xi f_2=(\xi, 2\rho),\quad max_\xi f_3\leq  (\xi, 2\rho+\calL|_u-\calL|_w+\calL|_v-\calL|_{us_\alpha}),\end{gather*}
where the last inequality follows from the degree condition for stable bases and Equation (\ref{eq:degalpha}).

By Lemma \ref{lem:Bruhatcompare}, $(\xi,\calL|_v-\calL|_w)<0$, and $(\xi, \calL|_u-\calL|_w+\calL|_v-\calL|_{us_\alpha})<0$ because $u>w$ and $v>us_\alpha$. Therefore,
\begin{equation} \label{eq:lim1}
\lim_{t\rightarrow \infty}\left(D_{\alpha}(\stab_-(w)),\stab_+(v)\right)(t\xi)=0.
\end{equation}

For the minimal degree, we have
\begin{gather*}min_\xi f_1\geq min_\xi(\stab_+(u)|_u)+min_\xi(\stab_-(u)|_u)\\
+(\xi, \calL|_u-\calL|_w+\calL^{-1}|_u-\calL^{-1}|_v)=(\xi,-2\rho+\calL|_v-\calL|_w),\\
min_\xi f_2=(\xi, -2\rho),\quad min_\xi f_3\geq  (\xi, -2\rho+\calL|_u-\calL|_w+\calL|_v-\calL|_{us_\alpha}),\end{gather*}
where the last inequality follows from the degree condition for stable bases and Equation (\ref{eq:degalpha2}). 

We can choose $\calL$ sufficiently close to 0, such that 
\[-1<(\xi, \calL|_v-\calL|_w)<0, \quad \text{ and }-1<(\xi, \calL|_u-\calL|_w+\calL|_v-\calL|_{us_\alpha})<0.\]
Then,
\begin{equation}\label{eq:lim2}
\lim_{t\rightarrow -\infty }\left(D_{\alpha}(\stab_(w)),\stab_+(v)\right)(t\xi) \text{ is bounded}.
\end{equation}
Due to Lemma \ref{lem:rigidity} and  \eqref{eq:lim1}, \eqref{eq:lim2}, we get 
\[\left(\stab_-(w)),\stab_+(v)\right)=0, \text{ if } v\notin \{w,ws_\alpha\}.\]
Hence we only need to compute 
\[\left(D_{\alpha}(\stab_-(w)),\stab_+(w)\right) \text{ and } \left(D_{\alpha}(\stab_-(w)),\stab_+(ws_\alpha)\right).\]
This is done by analyzing two cases below, depending on the order of $w$ and $ws_{\alpha}$.

\noindent (1). {\it Case $ws_{\alpha}<w$:}

There is only one term in the localization of $\left(D_{\alpha}(\stab_-(w)),\stab_+(ws_\alpha)\right)$. Therefore, by Lemma \ref{lem:charstab} and Lemma \ref{lem:conormalweight}, we get
\begin{align*}
&\left(D_{\alpha}(\stab_-(w)),\stab_+(ws_\alpha)\right)\\
=&\frac{\stab_+(ws_\alpha)|_{ws_\alpha}\stab_-(w))|_w}{\bigwedge^\bullet (T_{(ws_{\alpha},w)}T^*_{Y_{\alpha}})}e^{w\alpha}=-q^{\frac{1}{2}}.
\end{align*}
There are two terms in the localization of $\left(D_{\alpha}(\stab_-(w)),\stab_+(w)\right)$.
\begin{align*}
&\left(D_{\alpha}(\stab_-(w)),\stab_+(w)\right)\\
=&\frac{\stab_+(w)|_w\stab_-(w))|_w}{\bigwedge^\bullet (T_{(w,w)}T^*_{Y_{\alpha}})}e^{w\alpha}+\frac{\stab_+(w)|_{ws_\alpha}\stab_-(w)|_w}{\bigwedge^\bullet (T_{(ws_{\alpha},w)}T^*_{Y_{\alpha}})}e^{w\alpha}\\
=&\frac{e^{w\alpha}-q}{1-e^{w\alpha}}+\frac{\stab_+(w)|_{ws_\alpha}\stab_-(w)|_w}{\bigwedge^\bullet (T_wX)}\frac{1-qe^{-w\alpha}}{1-e^{-w\alpha}}e^{w\alpha}.
\end{align*}
As in the first part of the proof, we can find a $\xi\in \Lie A$ in the positive chamber, such that $-1<(\xi, \calL|_w-\calL|_{ws_\alpha})<0$. Notice that $w\alpha<0$. We have
\begin{gather*}\lim_{t\rightarrow \infty}\left(D_{\alpha}(\stab_-(w)),\stab_+(w)\right)(t\xi)= -q,\\
\lim_{t\rightarrow -\infty}\left(D_{\alpha}(\stab_-(w)),\stab_+(w)\right)(t\xi)\text{ is bounded }.\end{gather*}
Therefore, due to Lemma \ref{lem:rigidity}, we get
\[\left(D_{\alpha}(\stab_-(w)),\stab_+(w)\right)=-q.\]

\noindent(2). {\it Case $ws_{\alpha}>w$:}

Although this case can be proved directly using the relation $D_{\alpha}^2+(q+1)D_{\alpha}=0$, we still give a localization proof for it.
\begin{align*}
&\left(D_{\alpha}(\stab_-(w)),\stab_+(w)\right)\\
=&\frac{e^{w\alpha}-q}{1-e^{w\alpha}}+\frac{\stab_+(w)|_{w}\stab_-(w)|_{ws_\alpha}}{\bigwedge^\bullet (T_wX)}\frac{1-qe^{-w\alpha}}{1-e^{-w\alpha}}e^{-w\alpha}.
\end{align*}
As in the first case, the limit as $t\rightarrow +\infty$ is $-1$, while the limit as $t\rightarrow -\infty$ is bounded. Therefore,
\[\left(D_{\alpha}(\stab_-(w)),\stab_+(w)\right)=-1.\]
For the other one, we have
\begin{align*}
&\left(D_{\alpha}(\stab_-(w)),\stab_+(ws_\alpha)\right)\\
=&\frac{\stab_+(ws_\alpha)|_w\stab_-(w)|_{w}}{\bigwedge^\bullet (T_wX)}\frac{e^{w\alpha}-q}{1-e^{w\alpha}}\\
&+\frac{\stab_+(ws_\alpha)|_{ws_\alpha}\stab_-(w)|_{ws_\alpha}}{\bigwedge^\bullet (T_{ws_\alpha}X)}\frac{e^{-w\alpha}-q}{1-e^{-w\alpha}}\\
&+\frac{\stab_+(ws_\alpha)|_{ws_\alpha}\stab_-(w)|_{w}}{\bigwedge^\bullet (T_wX)}\frac{1-qe^{-w\alpha}}{1-e^{-w\alpha}}e^{w\alpha}\\
&+\frac{\stab_+(ws_\alpha)|_{w}\stab_-(w)|_{ws_\alpha}}{\bigwedge^\bullet (T_wX)}\frac{1-qe^{-w\alpha}}{1-e^{-w\alpha}}e^{-w\alpha}.
\end{align*}
Because of Lemma \ref{lem:charstab}, the third term is $-q^{-\frac{1}{2}}\frac{1-qe^{w\alpha}}{1-e^{w\alpha}}\frac{1-qe^{-w\alpha}}{1-e^{-w\alpha}}$. Since $ws_\alpha>w$, $w\alpha>0$. As in the first case, pick good $\xi$, then the limit as $t\rightarrow +\infty$ is $-q^{\frac{1}{2}}$, while the limit as $t\rightarrow -\infty$ is bounded.
Therefore,
\[\left(D_{\alpha}(\stab_-(w)),\stab_+(ws_\alpha)\right)=-q^{\frac{1}{2}}.\]
\end{proof}

\subsection{Hecke algebra action $D_\al'$ on $\stab_{+, T(G/B), \calL^{-1}}$}\label{sec:act2}
In this section, we compute the second Hecke algebra $\bbH$ action on the stable bases for the positive chamber $+$. Although the method from~\ref{sec:act1} still works in this case,  we use a different method for illustration purpose. 

The relation between these two Hecke actions is the following adjoint property, which is a K-theoretic analogue of \cite[Lemma 5.2]{AMSS17}.
\begin{lemma}\label{lem:geoadjoint}
For any $\calF$ and $\calG$ in $K_T^*(T^*G/B)$, we have
\[(D_\alpha(\calF), \calG)=(\calF, D_\alpha'(\calG)).\]
Therefore, for any $T_w\in \bbH$, 
\[(T_w(\calF), \calG)=(\calF, T_{w^{-1}}'(\calG)).\]
\end{lemma}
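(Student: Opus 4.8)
The plan is to reduce the statement for general $T_w$ to the case of the simple generators $D_\alpha, D'_\alpha$, and to prove the latter by a direct computation using the correspondence $T^*_{Y_\alpha}(G/B\times G/B)$ together with the self-duality of the K-theoretic pairing under the transposition of the two factors. For the reduction step: since the affine Hecke algebra $\bbH$ is generated by the $\tau_{s_\alpha}$ (and the $e^\lambda$, whose left and right actions are multiplication operators and visibly satisfy the adjunction), and since $(T_w(\calF),\calG) = (\calF, T'_{w^{-1}}(\calG))$ follows from the simple-generator case by writing $T_w$ as a product $T_{s_{i_1}}\cdots T_{s_{i_l}}$ (with $w = s_{i_1}\cdots s_{i_l}$ reduced), applying the rank-one adjunction one factor at a time, and using that $T'_{w^{-1}} = T'_{s_{i_l}}\cdots T'_{s_{i_1}}$, it suffices to establish $(D_\alpha(\calF),\calG) = (\calF, D'_\alpha(\calG))$.

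For the rank-one case I would argue as follows. Let $p_1, p_2\colon T^*_{Y_\alpha}(G/B\times G/B)\to T^*G/B$ be the two projections (the restrictions of $\pi_1,\pi_2$), so that $D_\alpha(\calF) = p_{1*}(p_2^*\calF\otimes\Omega_\alpha)$ and $D'_\alpha(\calG) = p_{2*}(p_1^*\calG\otimes\Omega_\alpha)$. The K-theoretic pairing on $K_T(T^*G/B)$ is $(\calF_1,\calF_2) = \chi(T^*G/B, \calF_1\otimes\calF_2\otimes(\text{correction}))$; more precisely it is the pushforward to a point of $\calF_1\otimes\calF_2$ against the (twisted) structure sheaf, and the localization formula in \S\ref{sec:geostable} is one way to compute it. The key point is that $\Omega_\alpha$ is symmetric with respect to the swap $\sigma\colon G/B\times G/B\to G/B\times G/B$ of the two factors, which restricts to an involution of $T^*_{Y_\alpha}$ exchanging $p_1$ and $p_2$; combined with the projection formula and base change along the fiber square expressing the convolution, one gets
\[
(D_\alpha(\calF),\calG) = \chi\bigl(T^*_{Y_\alpha}(G/B\times G/B),\; p_2^*\calF\otimes p_1^*\calG\otimes\Omega_\alpha\otimes(\text{corr})\bigr) = (\calF, D'_\alpha(\calG)),
\]
the middle expression being manifestly symmetric under exchanging the roles of $\calF$ and $\calG$ (i.e., under $\sigma$). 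Concretely, after localizing everything to the $T$-fixed points $(u,u)$ and $(u,us_\alpha)$ of $Y_\alpha$ and using Lemma \ref{lem:conormalweight} to identify the conormal weights, both $(D_\alpha(\calF),\calG)$ and $(\calF,D'_\alpha(\calG))$ become the same explicit sum over $u\in W$ of terms $\calF|_u\,\calG|_{u}\,(\cdots)$ and $\calF|_u\,\calG|_{us_\alpha}\,(\cdots)$ divided by $\bigwedge^\bullet(T_u X)$, so the equality can alternatively be checked termwise.

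The main obstacle I anticipate is bookkeeping of the line bundle $\Omega_\alpha = \pi_2^*\calO(\alpha)$ and the non-symmetric appearance of $\calO(\alpha)$ versus $\calO(-\alpha)$: the correspondence is only symmetric up to twisting $\Omega_\alpha$ against its swap, which differs by something supported on the anti-diagonal; one must check this twist is exactly absorbed by the $(1-qe^{-u\alpha})^{-1}$ versus $(1-e^{-u\alpha})^{-1}$ denominators coming from $\bigwedge^\bullet(T T^*_{Y_\alpha})$ in Lemma \ref{lem:conormalweight}, so that the two localization sums genuinely coincide. Keeping the $q$-weights (the dilation character) straight through the pushforwards is the part that requires care; everything else is the projection formula plus flat base change. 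The cited cohomological analogue \cite[Lemma 5.2]{AMSS17} is the model, and the only new ingredient here is the presence of the $\bbC^*$-weight $q$, which enters symmetrically and hence does not break the argument.
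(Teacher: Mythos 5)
Your proposal is essentially correct, and in fact contains two routes: the ``alternatively checked termwise'' localization computation is precisely what the paper does (it writes out $D_\alpha(\iota_{v*}1)$ and $D_\alpha'(\iota_{v*}1)$ on the fixed-point basis via Lemma~\ref{lem:conormalweight} and compares the resulting pairings directly), while your first, more conceptual route via the projection formula is a genuinely different and in some ways cleaner argument. However, your conceptual route is built on a misreading of what makes it work. You assert that ``the key point is that $\Omega_\alpha$ is symmetric with respect to the swap $\sigma$,'' and you flag the ``non-symmetric appearance of $\calO(\alpha)$ versus $\calO(-\alpha)$'' as the main obstacle. Neither is relevant: $\Omega_\alpha=\pi_2^*\calO(\alpha)$ is \emph{not} $\sigma$-invariant ($\sigma^*\Omega_\alpha=\pi_1^*\calO(\alpha)$), and this does not matter, because $D_\alpha$ and $D_\alpha'$ are defined with the \emph{same} kernel $\Omega_\alpha$ --- they differ only in which projection carries the pullback and which the pushforward. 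Two applications of the projection formula then give
\[
(D_\alpha\calF,\calG)=\chi\bigl(\pi_{1*}(\pi_2^*\calF\otimes\Omega_\alpha)\otimes\calG\bigr)=\chi\bigl(\pi_2^*\calF\otimes\pi_1^*\calG\otimes\Omega_\alpha\bigr)=\chi\bigl(\calF\otimes\pi_{2*}(\pi_1^*\calG\otimes\Omega_\alpha)\bigr)=(\calF,D_\alpha'\calG),
\]
with no invocation of $\sigma$ and no cancellation to track; the $q$-weight enters through $\Omega_\alpha$ identically on both sides, so the ``obstacle'' you anticipate simply does not arise. (All the $\chi$'s here are to be interpreted via the localization definition of the pairing, since $T^*G/B$ is noncompact, but this is exactly the setup the paper uses.) Your reduction from $T_w$ to the simple generators is correct and matches the paper's ``we only need to prove the first one.'' So: correct conclusion, correct fallback to localization agreeing with the paper, but the structural reason the conceptual argument works is not swap-symmetry of the kernel --- it is that the two convolutions share the same kernel, which makes the middle term symmetric automatically.
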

By definition, under this pairing, operators from the subalgebra $K_{G\times\bbC^*}(T^*G/B)\subseteq K_{G\times\bbC^*}(T^*G/B\times_\calN T^*G/B)$ are self-adjoint.
\begin{proof}[Proof of Lemma~\ref{lem:geoadjoint}]
We only need to prove the first one. And we can check on the fixed point basis. Using localization and Lemma \ref{lem:conormalweight}, we get
\begin{equation}\label{eq:Heckeaction}D_\alpha(\iota_{v*}1)=\frac{e^{v\alpha}-q}{1-e^{v\alpha}}\iota_{v*}1+\frac{e^{v\alpha}-q}{1-e^{-v\alpha}}\iota_{vs_\alpha*}1,\quad D_\alpha'(\iota_{v*}1)=\frac{e^{v\alpha}-q}{1-e^{v\alpha}}\iota_{v*}1+\frac{1-qe^{-v\alpha}}{e^{v\alpha}-1}\iota_{vs_\alpha*}1.\end{equation}
Therefore,
\begin{gather*}(D_\alpha(\iota_{v*}1),\iota_{w*}1)=\delta_{v,w}\frac{e^{v\alpha}-q}{1-e^{v\alpha}}\bigwedge\nolimits^\bullet(T_vT^*G/B)+\delta_{vs_\alpha,w}\frac{e^{v\alpha}-q}{1-e^{-v\alpha}}\bigwedge\nolimits^\bullet(T_{vs_\alpha}T^*G/B),\\
(\iota_{v*}1,D_\alpha'(\iota_{w*}1))=\delta_{v,w}\frac{e^{v\alpha}-q}{1-e^{v\alpha}}\bigwedge\nolimits^\bullet(T_vT^*G/B)+\delta_{vs_\alpha,w}\frac{1-qe^{-w\alpha}}{e^{w\alpha}-1}\bigwedge\nolimits^\bullet(T_vT^*G/B).\end{gather*}
Now it is easy to see they are equal to each other.
\end{proof}

\begin{theorem}\label{thm:stabpim}
With notations defined in \eqref{eqn:def_stab}, the affine Hecke algebra $\bbH$ acts on the stable bases as follows:
\begin{gather*}
T_{s_\alpha}(\stab_-(w))=
\left\{ \begin{array}{cc}
q^{\frac{1}{2}}\stab_-(ws_{\alpha})+(q-1)\stab_-(w),& \text{ if } ws_{\alpha}<w;\\
q^{\frac{1}{2}}\stab_-(ws_{\alpha}),& \text{ if } ws_{\alpha}>w.
\end{array}\right. \\
T_{s_\alpha}'(\stab_+(w))=
\left\{ \begin{array}{cc}
q^{\frac{1}{2}}\stab_+(ws_{\alpha})+(q-1)\stab_+(w),& \text{ if } ws_{\alpha}<w;\\
q^{\frac{1}{2}}\stab_+(ws_{\alpha}),& \text{ if } ws_{\alpha}>w.
\end{array}\right. 
\end{gather*}
\end{theorem}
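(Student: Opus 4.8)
The plan is to derive the first identity directly from Proposition~\ref{prop:stabim}, and then to deduce the second from the first by an adjointness argument, so that no new geometric input is required.

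For the left action, recall the relation $T_{s_\al}=-1-D_\al$ recorded at the end of \S\ref{sec:Heckeaction}. Substituting the two cases of Proposition~\ref{prop:stabim} gives, when $ws_\al<w$,
\[T_{s_\al}(\stab_-(w))=-\stab_-(w)+q\,\stab_-(w)+q^{\frac12}\stab_-(ws_\al)=(q-1)\stab_-(w)+q^{\frac12}\stab_-(ws_\al),\]
and, when $ws_\al>w$, $T_{s_\al}(\stab_-(w))=-\stab_-(w)+\stab_-(w)+q^{\frac12}\stab_-(ws_\al)=q^{\frac12}\stab_-(ws_\al)$, which is the asserted formula.

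For the right action, I would expand $T_{s_\al}'(\stab_+(w))=\sum_{v\in W}a_v\,\stab_+(v)$ in the stable basis (legitimate after localizing, by Remark~\ref{rem:afterdef}.(1)) and compute the coefficients. The dual-basis property of Remark~\ref{rem:afterdef}.(2) --- which applies here because $T(G/B)$ and $T^*G/B$ are opposite polarizations and $\calL^{-1},\calL$ are reciprocal slopes --- reads $(\stab_+(x),\stab_-(y))=\delta_{x,y}$. Hence $a_v=(T_{s_\al}'(\stab_+(w)),\stab_-(v))$, and since the K-theoretic pairing is symmetric, Lemma~\ref{lem:geoadjoint} turns this into $a_v=(T_{s_\al}(\stab_-(v)),\stab_+(w))$. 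Feeding in the first identity just proved and pairing termwise against $\stab_+(w)$ via $(\stab_-(x),\stab_+(w))=\delta_{x,w}$, one finds $a_v=(q-1)\delta_{v,w}+q^{\frac12}\delta_{vs_\al,w}$ when $vs_\al<v$, and $a_v=q^{\frac12}\delta_{vs_\al,w}$ when $vs_\al>v$; in particular the $a_v$ are Laurent polynomials, so the identity holds in $K_T(T^*G/B)$. A short Bruhat-order case analysis then finishes the argument: if $ws_\al>w$, the only $v$ contributing is $v=ws_\al$ (for which $vs_\al=w<v$), giving $a_{ws_\al}=q^{\frac12}$; if $ws_\al<w$, the contributions come from $v=w$ (where $vs_\al<v$), giving $a_w=q-1$, and from $v=ws_\al$ (where $vs_\al=w>v$), giving $a_{ws_\al}=q^{\frac12}$.

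Since Proposition~\ref{prop:stabim} and Lemma~\ref{lem:geoadjoint} are in hand, there is no analytic obstacle; the only points requiring care are the bookkeeping of conventions --- in particular verifying that $(\stab_+(\cdot),\stab_-(\cdot))$ is exactly the dual pairing of Remark~\ref{rem:afterdef}.(2) --- and the enumeration of which fixed points contribute in each Bruhat-order case, so that no term is dropped or double-counted. As an alternative one could instead repeat the rigidity computation of \S\ref{sec:act1} verbatim, with the roles of the two projections $\pi_1,\pi_2$ and of the polarizations $T(G/B),T^*G/B$ interchanged, but the adjointness route is shorter because it reuses the first identity.
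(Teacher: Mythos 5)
Your proof is correct and follows essentially the same route as the paper: the first identity by substituting Proposition~\ref{prop:stabim} into $T_{s_\alpha}=-D_\alpha-1$, and the second by expanding $T'_{s_\alpha}(\stab_+(w))$ in the stable basis, applying the adjointness of Lemma~\ref{lem:geoadjoint} and the duality of Remark~\ref{rem:afterdef}.(2), and then reusing the first identity. The only difference is that you spell out the Bruhat-order bookkeeping more explicitly than the paper does.
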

\begin{proof}
The formula concerning the $T_{s_{\al}}$ action comes from the identity  $T_{s_\alpha}=-D_\alpha-1$  and Proposition \ref{prop:stabim}.

We look at the $T'_{s_\al}$ action.
By the duality of stable bases (see Remark \ref{rem:afterdef}) and Proposition \ref{prop:stabim}, we have
\begin{align*}
T_{s_\alpha}'(\stab_+(w))=&\sum_y\left(T_{s_\alpha}'(\stab_+(w)), \stab_-(y)\right)\stab_+(y)\\
=&\sum_y\left(\stab_+(w), T_{s_\alpha}(\stab_-(y))\right)\stab_+(y)\\
=&\left(\stab_+(w), T_{s_\alpha}(\stab_-(w))\right)\stab_+(w)\\
+&\left(\stab_+(w), T_{s_\alpha}(\stab_-(ws_\alpha))\right)\stab_+(ws_\alpha).
\end{align*}
The rest follows from the first part of this theorem and the duality property in Remark \ref{rem:afterdef}.(2).
\end{proof}

\subsection{A recursive formula of the restriction coefficients}
Using the Hecke actions, we  give a recursive formula for the restriction coefficients of $\stab_-(w)$. 
In Theorem \ref{thm:1}, we  will give a closed formula of those coefficients.

\begin{prop}\label{lem:strcharm}
With notations defined in \eqref{eqn:def_stab}, the restriction coefficients $\stab_-(w)|_v$ are uniquely characterized by
\begin{enumerate}
\item 
$\stab_-(w)|_v=0$, unless $v\geq w$.
\item 
$\stab_-(w)|_w=q_{w}^{\frac{1}{2}} \underset{\al\in \Sigma^-\cap w\Sigma^-}\prod(1-qe^{\al})\cdot \underset{{\al\in \Sigma^+\cap w\Sigma^-}}\prod(1-e^{\al})$.
\item 
\[\stab_-(w)|_{vs_\alpha}=
\left\{ \begin{array}{cc}
\frac{(1-q)e^{v\alpha}}{1-qe^{-v\alpha}}\stab_-(w)|_v+q^{\frac{1}{2}}\frac{1-e^{v\alpha}}{1-qe^{-v\alpha}}\stab_-(ws_\alpha)|_v, & \text{ if } ws_{\alpha}<w;\\
\frac{1-q}{1-qe^{-v\alpha}}\stab_-(w)|_v+q^{\frac{1}{2}}\frac{1-e^{v\alpha}}{1-qe^{-v\alpha}}\stab_-(ws_\alpha)|_v,& \text{ if } ws_{\alpha}>w.
\end{array}\right. \]
\end{enumerate}
\end{prop}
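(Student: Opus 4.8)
The plan is to derive all three statements from the results already established, using the left Hecke action computed in Proposition~\ref{prop:stabim} to obtain the recursion, and then arguing uniqueness separately. Items (1) and (2) are nothing but parts (1) and (2) of Lemma~\ref{lem:charstab} restated for $\stab_-(w)=\stab_{-,T^*G/B,\calL}(w)$, so there is nothing to prove there; I would simply cite that lemma. The content is item (3), the recursion, together with the claim that (1)--(3) pin down the restriction coefficients completely.

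For the recursion, I would start from the formula in Proposition~\ref{prop:stabim},
\[
D_\alpha(\stab_-(w)) = -q^{\epsilon}\,\stab_-(w) - q^{\frac12}\stab_-(ws_\alpha),
\]
where $\epsilon=1$ if $ws_\alpha<w$ and $\epsilon=0$ if $ws_\alpha>w$, and restrict both sides to the fixed point $vs_\alpha$. The left-hand side is evaluated using the explicit fixed-point formula for the left convolution action on the fixed-point basis; concretely, from \eqref{eq:Heckeaction} one reads off the matrix of $D_\alpha$, and transposing (or using that $D_\alpha$ is given by a correspondence supported on the two fixed points $(u,u)$ and $(us_\alpha,u)$ in $Y_\alpha$) gives
\[
\bigl(D_\alpha(\calF)\bigr)|_{vs_\alpha}
= \frac{e^{v\alpha}-q}{1-e^{-v\alpha}}\,\calF|_v + \frac{1-qe^{-v\alpha}}{1-e^{v\alpha}}\,\calF|_{vs_\alpha},
\]
or the analogous identity with the roles adapted — I would recompute this cleanly from Lemma~\ref{lem:conormalweight} and the localization formula for $\pi_{1*}(\pi_2^*(-)\otimes\Omega_\alpha)$ rather than quote it. Substituting $\calF=\stab_-(w)$, and separately restricting the right-hand side $-q^\epsilon\stab_-(w)-q^{\frac12}\stab_-(ws_\alpha)$ to $vs_\alpha$, yields one linear equation relating $\stab_-(w)|_{vs_\alpha}$, $\stab_-(w)|_v$ and $\stab_-(ws_\alpha)|_v$. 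Solving for $\stab_-(w)|_{vs_\alpha}$ and simplifying the rational functions in $e^{v\alpha}$ and $q$ gives exactly the two cases displayed in (3). The main care needed here is bookkeeping: one must be consistent about which of $D_\alpha$ or $D_\alpha'$ is used, about the sign in $[\Omega_\alpha]=[\pi_2^*\calO(\alpha)]$, and about the half-integer powers $q^{\pm\frac12}$; a useful sanity check is to verify the recursion against the rank-one computation in the Example, and against the known identity $D_\alpha^2=-(q+1)D_\alpha$.

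For uniqueness, I would argue by induction on $\ell(v)-\ell(w)$ (equivalently on $v$ in Bruhat order, for fixed $w$). By (1) the coefficient vanishes unless $v\ge w$; by (2) the base case $v=w$ is determined. For $v>w$, pick a simple reflection $s_\alpha$ with $vs_\alpha<v$, so $v=(vs_\alpha)s_\alpha$ with $vs_\alpha\ge w$ still (standard Bruhat-order fact) and $\ell(vs_\alpha)<\ell(v)$. Then (3), applied with $vs_\alpha$ in place of $v$, expresses $\stab_-(w)|_v$ in terms of $\stab_-(w)|_{vs_\alpha}$ and $\stab_-(ws_\alpha)|_{vs_\alpha}$, both of which involve Weyl group elements of strictly smaller "codistance" from the lower index and are already determined by induction (for the second, note $\ell(ws_\alpha)$ may exceed $\ell(w)$, but $vs_\alpha\ge ws_\alpha$ is not automatic — so instead I would phrase the induction as: the full collection $\{\stab_-(w)|_v\}_{w,v}$ is determined, inducting on $\ell(v)$, since (3) reduces the value at $v$ to values at $vs_\alpha$ with $\ell(vs_\alpha)=\ell(v)-1$ for both stable basis elements $\stab_-(w)$ and $\stab_-(ws_\alpha)$). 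The one point requiring a small argument is that the denominators $1-qe^{-v\alpha}$ appearing in (3) are invertible in the relevant ring — they are nonzero in $Q=\Frac(S)$, which suffices since we only need to solve a linear equation over $Q$ — and that the resulting expression indeed lands back in $S$, which is guaranteed a posteriori because the $\stab_-(w)$ are honest classes in $K_T(T^*G/B)$.

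I expect the only genuine obstacle to be the clean verification of the fixed-point formula for $\bigl(D_\alpha\calF\bigr)|_{vs_\alpha}$ with all signs and $q^{\pm\frac12}$ factors correct, and then the algebraic simplification bringing the raw output of the substitution into the two-case form stated in (3); everything else is either a citation of Lemma~\ref{lem:charstab} or a routine Bruhat-order induction.
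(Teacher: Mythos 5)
Your plan coincides with the paper's proof: items (1) and (2) are Lemma~\ref{lem:charstab}, the recursion (3) comes from combining Proposition~\ref{prop:stabim} with the fixed-point action formula~\eqref{eq:Heckeaction} applied to the localization expansion of $\stab_-(w)$, and uniqueness follows by induction on $\ell(v)$. There is, however, one concrete error in the derivation of (3): the evaluation point should be $v$, not $vs_\alpha$. Restricting the right-hand side $-q^{\epsilon}\stab_-(w)-q^{1/2}\stab_-(ws_\alpha)$ to $vs_\alpha$ produces $\stab_-(ws_\alpha)|_{vs_\alpha}$, not the $\stab_-(ws_\alpha)|_v$ that appears in the statement, so your assertion that this ``yields one linear equation relating $\stab_-(w)|_{vs_\alpha}$, $\stab_-(w)|_v$ and $\stab_-(ws_\alpha)|_v$'' fails at $vs_\alpha$. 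The correct move is to read off the coefficient of $f_v$: from~\eqref{eq:Heckeaction} one gets
\[
\bigl(D_\alpha\calF\bigr)|_v \;=\; \frac{e^{v\alpha}-q}{1-e^{v\alpha}}\,\calF|_v \;-\; \frac{1-qe^{-v\alpha}}{1-e^{v\alpha}}\,\calF|_{vs_\alpha},
\]
which is also not what you wrote (you have the wrong numerator on the $\calF|_v$ term and the wrong denominators throughout, though you flagged you would recompute). Setting $\calF=\stab_-(w)$ and equating with $-q^{\epsilon}\stab_-(w)|_v - q^{1/2}\stab_-(ws_\alpha)|_v$, then solving for $\stab_-(w)|_{vs_\alpha}$ and using $e^{v\alpha}-q+q^{\epsilon}(1-e^{v\alpha}) = (1-q)e^{v\alpha}$ when $\epsilon=1$ and $=1-q$ when $\epsilon=0$, yields precisely the two cases of (3). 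The uniqueness argument by induction on $\ell(v)$ (using (3) to drop $\ell(vs_\alpha)=\ell(v)-1$ for both $\stab_-(w)$ and $\stab_-(ws_\alpha)$) is sound and matches what the paper intends.
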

This is an analogue of Corollary 3.3 in \cite{Su15}.
\begin{proof}
The uniqueness can be easily proved by induction on $\ell(y)$. The first two equalities now follow directly from Lemma \ref{lem:charstab}. 

The last equality follows from Proposition \ref{prop:stabim} and \eqref{eq:Heckeaction} by applying $D_\alpha$ to the following identity
\[\stab_-(w)=\sum_v \stab_-(w)|_v\frac{\iota_{v*}1}{\bigwedge\nolimits^\bullet(T_vT^*G/B)}.\] 
\end{proof}
A similar recursive formula for $\stab_+(w)|_y$ can also be obtained from Theorem \ref{thm:stabpim}.

\section{More on the affine Hecke algebra}\label{sec:affHecke}

In this section we recall the definition of the affine Hecke algebra in terms of the twisted group algebra of Kostant and Kumar \cite{KK90}, while following notions from \cite{CZZ1, CZZ2}. The root system we consider will be the one associated to the group $G$. 

\subsection{The Demazure-Lusztig elements}
In the ring $S=R[\La]$, we use the following notations. 
\[
x_\al=1-e^{-\al},  ~ x_{-\al}=-e^\al x_\al, ~\tilx_\al=q-e^\al, ~\hatx_{\al}=-e^{-\al}\tilx_\al=1-qe^{-\al},  ~q_w=q^{\ell(w)}.\]
\begin{eqnarray}\label{eq:1} ~x_{\pm w}=\prod_{\be\in \Sigma_w}x_{\pm \beta}, ~\tilx_{\pm w}=\prod_{\be\in \Sigma_w}\tilx_{\pm\beta}, ~\hatx_{\pm w}=\prod_{\be\in \Sigma_w}\hatx_{\pm\beta}.
\end{eqnarray}
Note that $u(x_\la)=x_{u(\la)}$ for $u\in W, \la\in \La$, but $u(x_w)\neq x_{uw}$.

Consider the twisted product $Q_W=Q\rtimes R[W]$, 
which has a $Q$-basis $\{\de_w\}_{w\in W}$. The ring $Q_W$ naturally acts on $Q$ by 
\[
p\de_w\cdot p'=pw(p'), ~p,p'\in Q.
\]
For each root $\al$, we define the push-pull element 
\[
Y_\al=\frac{1}{x_{-\al}}+\frac{1}{x_\al}\de_\al=\frac{1}{1-e^\al}+\frac{1}{1-e^{-\al}}\de_\al.
\]
We also define the divided difference operator (or the Demazure operator) 
\[
\Dem_\al(p):=\frac{s_\al(p)-p}{1-e^{-\al}}=(Y_\al-1)\cdot p, ~p\in Q.\]
It restricts to an $S^W$-linear endomorphism  on $S$.  We define the  Demazure-Lusztig elements:
\begin{eqnarray}\label{definition of tau}
\taup_\al=\tilx_\al Y_\al-1=\frac{q-1}{x_{-\al}}+\frac{\tilx_\al}{x_\al}\de_\al=\frac{q-1}{1-e^\al}+\frac{q-e^\al}{1-e^{-\al}}\de_\al,\\
\taum_\al=\hatx_\al Y_{-\al}-1=\frac{q-1}{x_{-\al}}+\frac{\hatx_\al}{x_{-\al}}\de_\al=\frac{q-1}{1-e^\al}+\frac{1-qe^{-\al}}{1-e^\al}\de_\al.
\end{eqnarray}
For simplicity, for simple root $\al_i$, we will write $x_{\pm i}, \tilx_{\pm i}, \hatx_{\pm i}, Y_{\pm i}, \Dem_{\pm i}, \taupm_{ i}$ for $x_{\pm \al_i}, \tilx_{\pm \al_i}, \hatx_{\pm \al_i}, Y_{\pm\al_i}, \Dem_{\pm\al_i}$ and $\taupm_{\al_i}$, respectively. For each reduced  decomposition  $w=s_{i_1}\cdots s_{i_k}$,  define $\taupm_{w}=\taupm_{i_1}\cdots \taupm_{i_k}$, and similarly define $Y_{\pm w}$ (e.g., $Y_{-w}$ is a product of $Y_{-i}$). As shown in \cite{KK90}, they do not depend on the choice of the reduced decomposition.
 
By straightforward computations, we have the following properties:
\begin{lemma} \label{lem:pro}
\begin{enumerate}
\item $Y_i^2=Y_i$,  $Y_i p-s_i(p)Y_i=-\Dem_{-i}(p), ~p\in Q$.
\item $\taupm_i p-s_i(p)\taupm_i=-(q-1) \Dem_{-i}(p), ~p\in Q.$
\item $\taupm_i^2=(q-1)\taupm_i+q$, $\taupm_i^{-1}=q^{-1}(\taupm_i+1-q)$.
%\item If one considers the automorphism of $S$ by sending $t$ to $t^{-1}$, it induces a morphism $Q_W\to Q_W$ by assuming $\de_w$ is mapped to $\de_w$. Then it maps $\taup_i$ to $\taum_{-i}^{-1}$.
\item $\de_i=x_iY_i-\frac{x_i}{x_{-i}}=\frac{x_{-i}}{\hatx_{i}}\taum_i-\frac{q-1}{\hatx_{i}}=\frac{x_{i}}{\tilx_i}\taup_i-\frac{(q-1)x_{i}}{x_{-i}\tilx_i}. $
\end{enumerate}
\end{lemma}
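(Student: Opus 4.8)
The statement is Lemma \ref{lem:pro}, a list of four identities in the twisted group algebra $Q_W$. Since the point of the lemma is that these follow "by straightforward computations," the plan is to verify each identity directly from the definitions of $Y_\al$, $\Dem_\al$ and $\taupm_\al$ in terms of the basis $\{\de_w\}$, using only the commutation rule $p\de_w\cdot p' = p\,w(p')$ and the relations $\de_i^2 = 1$, $\de_i p = s_i(p)\de_i$ for $p\in Q$.

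For part (1), I would compute $Y_i^2 = \bigl(\tfrac{1}{x_{-i}} + \tfrac{1}{x_i}\de_i\bigr)^2$, expanding the product and using $\de_i \cdot \tfrac{1}{x_i} = \tfrac{1}{s_i(x_i)}\de_i = \tfrac{1}{x_{-i}}\de_i$ (since $s_i(x_{\al_i}) = x_{-\al_i}$). The cross terms and the $\de_i^2$ term should collapse, via the partial-fraction identity $\tfrac{1}{x_i} + \tfrac{1}{x_{-i}} = 1$ (equivalently $\tfrac{1}{1-e^{-\al}} + \tfrac{1}{1-e^{\al}} = 1$), back to $Y_i$. The identity $Y_i p - s_i(p)Y_i = -\Dem_{-i}(p)$ is essentially the definition of the Demazure operator rearranged; one checks it by writing out both sides as elements of $Q \oplus Q\de_i$ and comparing the $\de_i^0$ and $\de_i^1$ components. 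For part (2), since $\taupm_i = \tilx_i Y_i - 1$ (resp. $\hatx_i Y_{-i} - 1$) differs from a multiple of $Y$ by a scalar and a $W$-equivariant factor, the commutation relation should follow from part (1) by tracking how the coefficients $\tilx_i$, $\hatx_i$ transform under $s_i$; the constant $-1$ commutes with everything, and $\tilx_i Y_i$ contributes the factor $q-1$ after using $s_i(\tilx_i)\cdot(\text{something}) = \hatx_i\cdot(\text{something})$-type identities relating $\tilx_{\pm i}$ and $\hatx_{\pm i}$.

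Part (3) is the quadratic Hecke relation: I would compute $\bigl(\taup_i\bigr)^2$ by squaring $\tfrac{q-1}{x_{-i}} + \tfrac{\tilx_i}{x_i}\de_i$, again using $\de_i \cdot \tfrac{\tilx_i}{x_i} = \tfrac{s_i(\tilx_i)}{x_{-i}}\de_i = \tfrac{\tilx_{-i}}{x_{-i}}\de_i$ and collecting terms; the rational-function identities (e.g. $\tilx_\al = q - e^\al$, $\tilx_{-\al} = q - e^{-\al}$, $x_\al x_{-\al} = -e^\al x_\al^2$) should conspire to produce $(q-1)\taup_i + q$. The formula for $\taupm_i^{-1}$ is then immediate algebra from the quadratic relation. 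Part (4) is three expressions for $\de_i$; each is obtained by solving the defining equation of $Y_i$ (resp. $\taum_i$, $\taup_i$) for $\de_i$: from $Y_i = \tfrac{1}{x_{-i}} + \tfrac{1}{x_i}\de_i$ one gets $\de_i = x_i Y_i - \tfrac{x_i}{x_{-i}}$ directly, and the other two come from substituting $Y_{-i} = \tfrac{1}{\hatx_i}(\taum_i + 1)$ and $Y_i = \tfrac{1}{\tilx_i}(\taup_i + 1)$ and simplifying with the relation between $x_{\pm i}$, $\tilx_i$, $\hatx_i$ (namely $\hatx_i = -e^{-\al_i}\tilx_i$ and $x_{-i} = -e^{\al_i}x_i$).

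I do not expect any genuine obstacle here: every step is a finite manipulation in the rank-one subalgebra $Q \rtimes R[\langle s_i\rangle]$. The only thing requiring minor care is bookkeeping the action of $s_i$ on the various deformed roots $x_{\pm i}, \tilx_{\pm i}, \hatx_{\pm i}$ and the partial-fraction identities among them; organizing the computation by always reducing an expression to the normal form $a + b\,\de_i$ with $a,b \in Q$ and then comparing coefficients makes each verification routine. The claimed independence of $\taupm_w$ on the reduced decomposition is cited from \cite{KK90} and is not part of what needs proving here.
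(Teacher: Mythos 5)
Your proposal is correct and matches the paper's treatment: the paper offers no proof beyond the phrase ``by straightforward computations,'' and your plan of reducing everything to the normal form $a+b\,\de_i$ in the rank-one subalgebra, using $\de_i p = s_i(p)\de_i$ together with the partial-fraction identity $\frac{1}{x_i}+\frac{1}{x_{-i}}=1$ and the elementary relations among $x_{\pm i},\tilx_{\pm i},\hatx_{\pm i}$, is precisely that straightforward computation.

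One small inaccuracy worth flagging in your sketch of part (2): the factor $q-1$ does not arise from an identity of the form $s_i(\tilx_i)\cdot(\cdot)=\hatx_i\cdot(\cdot)$. After invoking part (1) to commute $Y_i$ (resp.\ $Y_{-i}$) past $p$, the residual terms combine via the scalar identities $x_{-i}-\tilx_i=1-q$ (for $\taup_i$) and $x_i-\hatx_i=(q-1)e^{-\al_i}$ (for $\taum_i$); no application of $s_i$ to the deformed roots is involved at that step. This does not affect the viability of your plan, since you would discover the correct identity upon comparing coefficients of $1$ and $\de_i$, but the heuristic as stated is slightly misleading.
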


\begin{lemma}\label{lem:basis}\begin{enumerate}
\item We have $\taupm_{w}=\sum_{v\le w}\apm_{w,v}\de_v$ and  $ \de_w=\sum_{v\le w}\bpm_{w,v}\taupm_{v}$ such that
\[ \apm_{w,v}\in S[\frac{1}{x_{w_0}}], \quad \bpm_{w,v}\in S[\frac{1}{\hatx_{w_0}}], \quad \bp_{w,w}=\frac{1}{\ap_{w,w}}=\frac{x_{w}}{\tilx_w}, \quad \bm_{w,w}=\frac{1}{\am_{w,w}}=\frac{x_{-w}}{\hatx_w}.\]
\item We have $\taupm_w=\sum_{v\le w}d^\pm_{w,v}Y_{\pm v}$ such that 
\[\quad d^\pm_{w,v}\in S, \quad d^+_{w,w}=\tilx_w,\quad d^-_{w,w}=\hatx_w,\quad d^\pm_{w,e}=(-1)^{\ell(w)}.\]
\end{enumerate}
\end{lemma}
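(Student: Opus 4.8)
The plan is to prove both parts of Lemma~\ref{lem:basis} by induction on $\ell(w)$, exploiting the recursion $\taupm_w = \taupm_{s_i}\taupm_{w'}$ where $w = s_i w'$ is a reduced decomposition with $\ell(w') = \ell(w)-1$, together with the explicit formula~\eqref{definition of tau} for $\taupm_i$ and the commutation relations in Lemma~\ref{lem:pro}. The base case $w = e$ is trivial ($\tau^{\pm}_e = \delta_e = Y_{\pm e}$, and all coefficients are $1$). For part (1), writing $\taupm_{w'} = \sum_{v \le w'} \apm_{w',v}\de_v$ by induction, I multiply on the left by $\taupm_i = \frac{q-1}{x_{-i}} + \frac{\tilx_i}{x_i}\de_i$ (in the $+$ case) and use $\de_i \cdot p\de_v = s_i(p)\de_{s_iv}$ to collect terms; each new coefficient is an $S[\frac{1}{x_{w_0}}]$-combination of $s_i$-translates of the old ones divided by $x_{\pm i}$, and since $x_{\pm i}$ divides $x_{w_0}$ up to a unit and the whole ring $S[\frac{1}{x_{w_0}}]$ is $W$-stable, membership in $S[\frac{1}{x_{w_0}}]$ is preserved. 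The triangularity (coefficients supported on $v \le w$) follows from the standard fact that $\de_i\de_v \in \{\de_{s_iv}\}$ with $s_iv \le w$ when $v\le w'$ and $\ell(s_iv) $ is either $\ell(v)\pm 1$. The leading coefficient $\ap_{w,w}$: the top term of $\taup_i\taup_{w'}$ in the $\de$-expansion is $\frac{\tilx_i}{x_i}\de_i \cdot \ap_{w',w'}\de_{w'} = \frac{\tilx_i}{x_i}s_i(\ap_{w',w'})\de_w$; using $\ap_{w',w'} = \tilx_{w'}/x_{w'}$ and the multiplicativity identities~\eqref{eq:1} for $x_w, \tilx_w$ under the reduced word (i.e.\ $x_w = x_i \cdot s_i(x_{w'})$ and likewise for $\tilx_w$), one gets $\ap_{w,w} = \tilx_w/x_w$. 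The claim $\bp_{w,v}$ and its leading term follow symmetrically by inverting, using $\taup_i{}^{-1} = q^{-1}(\taup_i + 1 - q)$ from Lemma~\ref{lem:pro}(3) to express $\de_w$ in terms of the $\taup_v$, with denominators now landing in $S[\frac{1}{\hatx_{w_0}}]$ because $\de_i = \frac{x_i}{\tilx_i}\taup_i - \frac{(q-1)x_i}{x_{-i}\tilx_i}$ from Lemma~\ref{lem:pro}(4) introduces $\tilx_i$ (equivalently $\hatx_i$, up to a unit $-e^{-\al_i}$) in denominators; the $-$ case is entirely parallel using $\taum_i$ and $\hatx$.

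For part (2), I again induct using $\taup_w = \taup_i \taup_{w'} = \taup_i \sum_{v\le w'} d^+_{w',v} Y_{+v}$. Here I use the identity $\taup_i = \tilx_i Y_i - 1$ from~\eqref{definition of tau}, so $\taup_i \cdot d^+_{w',v} Y_{+v} = \tilx_i Y_i d^+_{w',v} Y_{+v} - d^+_{w',v} Y_{+v}$. The second summand already is an $S$-combination of $Y_{+v}$'s with $v\le w' \le w$. For the first, I commute $Y_i$ past $d^+_{w',v} \in S$ using Lemma~\ref{lem:pro}(1): $Y_i d^+_{w',v} = s_i(d^+_{w',v}) Y_i - \Dem_{-i}(d^+_{w',v})$, where $\Dem_{-i}$ maps $S$ to $S$, so $\tilx_i Y_i d^+_{w',v} Y_{+v} = \tilx_i s_i(d^+_{w',v}) Y_i Y_{+v} - \tilx_i \Dem_{-i}(d^+_{w',v}) Y_{+v}$. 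Now $Y_i Y_{+v}$: if $s_i v > v$ (length-additive) this is $Y_{+ s_i v}$ by the definition $Y_{+w} = Y_{i_1}\cdots Y_{i_k}$ for a reduced word; if $s_i v < v$, then $v = s_i v'$ reduced for some $v'$ and $Y_i Y_{+v} = Y_i^2 Y_{+v'} = Y_i Y_{+v'} = Y_{+v}$ using $Y_i^2 = Y_i$ from Lemma~\ref{lem:pro}(1) — crucially $\tilx_i \in S$, so all coefficients stay in $S$. Triangularity $v \le w$ is immediate from these two cases ($s_i v \le w$ when $v \le w'$). For the leading coefficient $d^+_{w,w}$: the only way to produce $Y_{+w}$ is $\tilx_i s_i(d^+_{w',w'}) Y_i Y_{+w'}$ with $s_i w' = w$ length-additive, giving $d^+_{w,w} = \tilx_i s_i(d^+_{w',w'}) = \tilx_i s_i(\tilx_{w'}) = \tilx_w$ by~\eqref{eq:1}. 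For $d^+_{w,e}$: the coefficient of $Y_{+e} = \id$ collects contributions $-d^+_{w',e}$ (from $-d^+_{w',v}Y_{+v}$ at $v = e$) plus $-\tilx_i \Dem_{-i}(d^+_{w',v})$-type terms whose $Y_{+e}$-part I track; a cleaner route is to apply both sides to $1 \in S$ and evaluate, or to note $Y_{\pm v}\cdot 1 = 1$ for all $v$ (since $Y_\al \cdot 1 = 1$) so that $\taup_w \cdot 1 = \sum_v d^+_{w,v}$, while directly $\taup_i(\text{anything})\cdot 1$ telescopes; alternatively, and most simply, specialize: the $Y_{+e}$-coefficient of $\taup_i \taup_{w'}$ equals $(-1)\cdot d^+_{w',e}$ because $Y_i \cdot (\text{sum of }Y_{+v}) $ never decreases to produce a brand-new $Y_{+e}$ term with smaller index while $-1 \cdot Y_{+v}$ at $v=e$ contributes $-d^+_{w',e}$, and the $\Dem_{-i}$ correction terms $-\tilx_i\Dem_{-i}(d^+_{w',v})Y_{+v}$ contribute to $Y_{+e}$ only at $v = e$ where $\Dem_{-i}(d^+_{w',e}) $ — with $d^+_{w',e} = (-1)^{\ell(w')}$ a constant — vanishes; hence $d^+_{w,e} = -(-1)^{\ell(w')} = (-1)^{\ell(w)}$. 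The $-$ case uses $\taum_i = \hatx_i Y_{-i} - 1$ identically, yielding $d^-_{w,w} = \hatx_w$.

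I expect the main obstacle to be the careful bookkeeping in part (2) of which $Y_{\pm v}$ appear and with what coefficients after commuting $Y_i$ through elements of $S$ — in particular verifying that the index set stays within $\{v : v \le w\}$ and isolating the $Y_{+e}$ coefficient cleanly, since the operators $Y_{+v}$ for varying $v$ are not linearly independent over $S$ in an obvious basis-like way (unlike the $\de_v$), so "the coefficient of $Y_{+v}$" must be interpreted via a fixed choice of presentation (e.g.\ the one produced by the recursion, which Kostant--Kumar show is well-defined). Once the presentation is fixed, the length-additivity bookkeeping for $Y_i Y_{+v}$ via $Y_i^2 = Y_i$ is the crux; everything else is routine manipulation with the formulas in~\eqref{definition of tau}, Lemma~\ref{lem:pro}, and the multiplicativity identities~\eqref{eq:1}. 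Part (1) is more standard — it is the $q$-deformed analogue of the classical Kostant--Kumar nil-Hecke computation — and the only subtlety there is tracking that denominators stay in $S[\frac{1}{x_{w_0}}]$ versus $S[\frac{1}{\hatx_{w_0}}]$, which is controlled by whether one expands via $\de_i$ in terms of $\taup_i$ (introducing $\tilx_i$, hence $\hatx_i$, in denominators) or conversely.
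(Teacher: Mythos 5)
Your overall strategy (induction on $\ell(w)$ via $\taupm_w=\taupm_i\taupm_{w'}$, with the relations in Lemma~\ref{lem:pro}) is exactly what the paper has in mind — it offers only a one-line reference to \cite[Lemma~3.2]{CZZ2} and says the identities follow from Lemma~\ref{lem:pro}. Your treatment of part~(2) is correct, and your worry about ``the operators $Y_{+v}$ are not linearly independent'' is unfounded: by the lemma preceding Lemma~\ref{lem:dualcoeff}, $\{Y_{w}\}_{w\in W}$ \emph{is} a $Q$-basis of $Q_W$, so ``the coefficient of $Y_{+v}$'' is unambiguous. The part~(1) argument for $\apm_{w,v}\in S[\tfrac1{x_{w_0}}]$ is also fine, since $s_i(x_{w_0})=-e^{\al_i}x_{w_0}$, so $S[\tfrac1{x_{w_0}}]$ really is $W$-stable.

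However, the parallel claim for the $b$-coefficients is where your proof has a genuine gap. You assert the denominators ``land in $S[\tfrac1{\hatx_{w_0}}]$'' because $\de_i$ only introduces $\tilx_i\sim\hatx_i$, but in the induction $\de_w=\de_i\de_{w'}=\de_i\sum_{v}\bp_{w',v}\taup_v$ you must commute $\de_i$ past the coefficients, producing $s_i(\bp_{w',v})$ — and unlike $S[\tfrac1{x_{w_0}}]$, the ring $S[\tfrac1{\hatx_{w_0}}]$ is \emph{not} $W$-stable: $s_i(\hatx_{w_0})=\frac{1-qe^{\al_i}}{1-qe^{-\al_i}}\hatx_{w_0}$, and $\frac{1-qe^{\al_i}}{1-qe^{-\al_i}}$ is not a unit of $S$. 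So the naive induction lands you in $S[\tfrac1{\hatx_{w_0}},\tfrac1{\hatx_{-\al_i}}]$, not $S[\tfrac1{\hatx_{w_0}}]$. The fix is to strengthen the inductive hypothesis: prove $\bpm_{w,v}\in S[\tfrac1{\hatx_w}]$, where $\hatx_w=\prod_{\be\in\Sigma_w}\hatx_\be$. Then in the step $w=s_iw'$ (reduced), $\al_i\notin\Sigma_{w'}$, so $s_i\Sigma_{w'}\subset\Sigma^+$ and in fact $\{\al_i\}\sqcup s_i\Sigma_{w'}=\Sigma_w$; hence $s_i(\bp_{w',v})\in S[\tfrac1{\hatx_w/\hatx_i}]$, and combining with the $S[\tfrac1{\hatx_i}]$-coefficients coming from $\de_i\taup_v=\frac{x_i}{\tilx_i}\taup_i\taup_v+\frac{(q-1)e^{-\al_i}}{\tilx_i}\taup_v$ (note $\frac{x_i}{x_{-i}}=-e^{-\al_i}$, so no $x$-denominators appear at all here), everything stays in $S[\tfrac1{\hatx_w}]\subset S[\tfrac1{\hatx_{w_0}}]$. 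Your ``invert the triangular matrix'' phrasing has the same issue in a different guise: naively inverting gives entries in $S[\tfrac1{x_{w_0}},\tfrac1{\hatx_{w_0}}]$, and one still needs an argument to drop the $\tfrac1{x_{w_0}}$, which the refined induction above supplies.
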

\begin{proof} Similar to \cite[Lemma 3.2]{CZZ2}, these identities follow from Lemma \ref{lem:pro}.
\end{proof}

\begin{remark}
(1).   There is an anti-involution on $Q_W$ defined by
\[
\iota:Q_W\to Q_W, \quad p\de_v\mapsto \de_{v^{-1}}p\frac{v(x_{-w_0} \hatx_{w_0})}{x_{-w_0} \hatx_{w_0}}, \quad p\in Q, 
\]
such that 
\[
\iota(\taupm_\al)=\taump_\al. 
\]

(2).  Recall the following operator  of Lusztig in \cite{Lus85}
\[T^L_\alpha=\frac{q-1}{1-e^{\alpha}}+\frac{1-qe^{\alpha}}{1-e^{\alpha}}\de_\alpha.\]
They satisfy the same relations as $\tau_\al$ do. We have identities
\begin{equation}\label{equ:relationwithlus}
e^{-\rho} \taup_\alpha e^{\rho}=e^\rho \taum_\alpha e^{-\rho}=-q\cdot (T_\al^L|_{q\to q^{-1}}).
\end{equation}

\end{remark}

\subsection{The Hecke algebra}
\begin{definition}Define the affine $0$-Hecke algebra $\DF$ to be the $R$-subalgebra generated by $S$ and $Y_i$ for all $i$. Define the  affine Hecke algebra $\bbH$ to be the $R$-subalgebra of $Q_W$ generated by $S$ and  $\taup_i$ for all $i$. It is not difficult to see that  all $\taum_i$ together with $S$ also generate $\bbH$.
\end{definition}

\begin{lemma}\cite{KK90, Lus85}
The sets $\{Y_{w}\}_{w\in W}$ , $\{\taup_{w}\}_{w\in W}$ and $\{\taum_{w}\}_{w\in W}$  are $Q$-bases of $Q_W$. Moreover, the first set is a $S$-basis of $\bbD$ and the last two are $S$-bases of $\bbH$.
\end{lemma}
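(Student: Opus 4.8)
The plan is to separate the six assertions into two groups: the three claims that a prescribed set is a $Q$-basis of $Q_W$, and the three claims identifying an $S$-span with $\bbD$ or with $\bbH$. The $Q$-basis claims I would deduce formally from Lemma~\ref{lem:basis}: each of $\{\taup_w\}$, $\{\taum_w\}$, $\{Y_w\}$ is related to the tautological $Q$-basis $\{\de_w\}_{w\in W}$ of $Q_W$ by a transition matrix that is triangular for the Bruhat order with diagonal entries that are units in the field $Q$, hence invertible over $Q$. The $S$-basis claims I would get by showing the relevant $S$-module is closed under multiplication, so it contains (hence equals) the subalgebra generated by its elements, and then noting that $Q$-linear independence forces $S$-linear independence because $S\subseteq Q=\Frac(S)$.

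\emph{The $Q$-basis statements.} By construction $\{\de_w\}_{w\in W}$ is a $Q$-basis. Lemma~\ref{lem:basis}(1) gives $\taupm_w=\sum_{v\le w}\apm_{w,v}\de_v$ with $\ap_{w,w}=\tilx_w/x_w$ and $\am_{w,w}=\hatx_w/x_{-w}$; these diagonal entries are nonzero in $Q$, hence invertible, so the matrices $(\apm_{w,v})$ are invertible over $Q$ and $\{\taup_w\}_{w\in W}$, $\{\taum_w\}_{w\in W}$ are $Q$-bases of $Q_W$. Then Lemma~\ref{lem:basis}(2) gives $\taup_w=\sum_{v\le w}d^+_{w,v}Y_v$ with $d^+_{w,w}=\tilx_w\ne 0$; since $\{\taup_w\}$ is already a $Q$-basis and $(d^+_{w,v})$ is triangular with invertible diagonal, $\{Y_w\}_{w\in W}$ is a $Q$-basis as well (the same with $\taum$ and $d^-_{w,w}=\hatx_w$ handles $\{Y_{-w}\}$, should it be wanted).

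\emph{The $S$-basis statements.} Put $M:=\sum_{w\in W}S\cdot Y_w\subseteq\bbD$; then $1=Y_e\in M$, $S\subseteq M$, each $Y_i\in M$, and $M\subseteq\bbD$ since each $Y_w$ is a product of the $Y_i$. It suffices to check that left multiplication by any $p\in S$ and by any $Y_i$ sends $M$ into $M$, for then $M$ contains every product of generators of $\bbD$, whence $M=\bbD$. The case of $p\in S$ is clear. For $Y_i$, with $\sum_v a_vY_v\in M$ ($a_v\in S$), Lemma~\ref{lem:pro}(1) gives $Y_i(a_vY_v)=s_i(a_v)\,Y_iY_v-\Dem_{-i}(a_v)\,Y_v$, where $s_i(a_v)\in S$ and $\Dem_{-i}(a_v)=-e^{-\al_i}\Dem_i(a_v)\in S$ since $\Dem_i$ preserves $S$; and $Y_iY_v\in M$ because $Y_iY_v=Y_{s_iv}$ when $\ell(s_iv)=\ell(v)+1$, while $Y_iY_v=Y_i^2Y_{v'}=Y_iY_{v'}=Y_v$ when $\ell(s_iv)=\ell(v)-1$ (writing $v=s_iv'$, $\ell(v')=\ell(v)-1$), using $Y_i^2=Y_i$. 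So $M=\bbD$, and $\{Y_w\}$ is $S$-linearly independent because it is $Q$-linearly independent, giving an $S$-basis of $\bbD$. The same scheme with $M':=\sum_{w\in W}S\cdot\taup_w$ works for $\bbH$: left multiplication by $\taup_i$ uses $\taup_i(a\taup_v)=s_i(a)\,\taup_i\taup_v-(q-1)\Dem_{-i}(a)\,\taup_v$ from Lemma~\ref{lem:pro}(2), with $\taup_i\taup_v=\taup_{s_iv}$ when $\ell(s_iv)=\ell(v)+1$ and $\taup_i\taup_v=(\taup_i)^2\taup_{v'}=((q-1)\taup_i+q)\taup_{v'}=(q-1)\taup_v+q\taup_{v'}$ when $\ell(s_iv)=\ell(v)-1$, by the quadratic relation of Lemma~\ref{lem:pro}(3); hence $M'=\bbH$ and $\{\taup_w\}_{w\in W}$ is an $S$-basis. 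Since $S$ together with the $\taum_i$ also generates $\bbH$, the identical argument with the relations for $\taum$ shows $\{\taum_w\}_{w\in W}$ is an $S$-basis of $\bbH$.

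\emph{Expected obstacle.} I do not expect a genuine difficulty: the only real work is the combinatorial bookkeeping in the closure step — the length dichotomy $\ell(s_iv)=\ell(v)\pm 1$ and checking that the straightening products $Y_iY_v$ and $\taupm_i\taupm_v$ stay inside the appropriate $S$-span — together with the prerequisite Lemma~\ref{lem:basis}, itself proved by induction on length from Lemma~\ref{lem:pro}; everything else is formal linear algebra over the field $Q$. This is a streamlined re-derivation of the results of \cite{KK90, Lus85}.
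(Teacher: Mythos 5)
Your proof is correct and for the $Q$-basis part it coincides with the paper's: both read the triangularity of Lemma~\ref{lem:basis} with unit diagonal entries. The difference is in the $S$-basis statements: the paper simply cites \cite[Proposition~7.7]{CZZ1} and \cite[Corollary~3.4]{ZZ14}, whereas you re-derive the result by showing the $S$-span $M=\sum_w S\cdot Y_w$ (and $M'=\sum_w S\cdot\taup_w$) contains $1$ and is stable under left multiplication by each generator, using the straightening relations of Lemma~\ref{lem:pro}(1)--(3) together with the length dichotomy $\ell(s_iv)=\ell(v)\pm1$; this forces $M=\bbD$ and $M'=\bbH$, and $S$-linear independence then follows for free from $Q$-linear independence because $S$ embeds in its fraction field $Q$. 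What your route buys is self-containment: the only external input is the well-definedness of $Y_w$ and $\taupm_w$ over reduced words (already stated in the paper via \cite{KK90}), so the reader need not consult \cite{CZZ1,ZZ14}. One small remark: the step $\Dem_{-i}(a)=-e^{-\al_i}\Dem_i(a)\in S$ is right, but it is worth noting explicitly that $\Dem_i$ preserves $S$ because $s_i(a)-a$ is divisible by $x_i=1-e^{-\al_i}$ in $S$, which is the classical fact underlying the whole closure argument.
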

\begin{proof} The first statement follows from Lemma \ref{lem:basis}, and the second one is from \cite[Proposition 7.7]{CZZ1} and \cite[Corollary 3.4]{ZZ14}. 
\end{proof}
The following lemma is used to give an algebraic proof of duality of stable bases in Theorem \ref{thm:main}.
\begin{lemma}\label{lem:dualcoeff}
 Writing $\taupm_w(\taupm_{w_0u})^{-1}=\sum_{v\in W}c_v\taupm_v$,  then $c_{w_0}=q_{w_0u}^{-1}\de_{w,u}$.
\end{lemma}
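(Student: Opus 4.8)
The plan is to compute $c_{w_0}$ entirely inside $\bbH$, expanded in the $S$-basis $\{\taupm_v\}_{v\in W}$, by multiplying on the right by $\taupm_{w_0}$ and applying the symmetrizing form of the Hecke algebra; this collapses the whole question to one classical pairing identity. First I would isolate a right factor of $(\taupm_{w_0u})^{-1}$: since $w_0$ is the longest element, $\ell(w_0u)=\ell(w_0)-\ell(u)$, hence $\ell(w_0)=\ell(w_0u)+\ell(u^{-1})$, so concatenating reduced words gives $\taupm_{w_0}=\taupm_{w_0u}\taupm_{u^{-1}}$ and therefore $(\taupm_{w_0u})^{-1}=\taupm_{u^{-1}}(\taupm_{w_0})^{-1}$ (both $\taupm_{w_0}$ and $\taupm_{w_0u}$ are invertible in $\bbH$ by Lemma~\ref{lem:pro}(3)). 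Multiplying the element in question on the right by $\taupm_{w_0}$ then yields the clean identity
\[
\taupm_w(\taupm_{w_0u})^{-1}\,\taupm_{w_0}=\taupm_w\,\taupm_{u^{-1}}\,(\taupm_{w_0})^{-1}\taupm_{w_0}=\taupm_w\taupm_{u^{-1}}.
\]

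Next I would use the left-$S$-linear form $\phi\colon\bbH\to S$ defined by $\phi\big(\sum_v c_v\taupm_v\big)=c_e$ (well defined since $\{\taupm_v\}$ is an $S$-basis of $\bbH$), together with the identity
\[
\phi(\taupm_a\taupm_b)=q^{\ell(a)}\,\de_{a,b^{-1}},\qquad a,b\in W,
\]
which says that $\bbH$ is symmetric with dual basis $\{q^{-\ell(v)}\taupm_{v^{-1}}\}$ (standard; cf.\ \cite{CZZ1}, or prove it by a short induction on $\ell(b)$ from $(\taupm_i)^2=(q-1)\taupm_i+q$ and the braid relations). Using $w_0^{-1}=w_0$ and left-$S$-linearity of $\phi$, for any $h=\sum_v c_v\taupm_v\in\bbH$ one gets $\phi(h\,\taupm_{w_0})=c_{w_0}\,q^{\ell(w_0)}$, i.e.\ $c_{w_0}=q^{-\ell(w_0)}\phi(h\,\taupm_{w_0})$.

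Putting $h=\taupm_w(\taupm_{w_0u})^{-1}$ and combining the last two displays,
\[
c_{w_0}=q^{-\ell(w_0)}\,\phi\big(\taupm_w\taupm_{u^{-1}}\big)=q^{-\ell(w_0)}\,q^{\ell(w)}\,\de_{w,u}=q^{-(\ell(w_0)-\ell(u))}\,\de_{w,u}=q_{w_0u}^{-1}\,\de_{w,u},
\]
where in the last two steps one uses $\phi(\taupm_w\taupm_{u^{-1}})=q^{\ell(w)}\de_{w,u}$ (the case $a=w$, $b=u^{-1}$ of the pairing identity) and, on the support of $\de_{w,u}$, the equality $\ell(w)=\ell(u)$. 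The computation is verbatim the same for $\taup$ and $\taum$, which proves the lemma.

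The only non-formal ingredient is the pairing identity $\phi(\taupm_a\taupm_b)=q^{\ell(a)}\de_{a,b^{-1}}$, and I expect its (easy) inductive proof to be the one spot requiring care: writing $b=s_ib'$ with $\ell(b)=1+\ell(b')$, the two cases $\ell(as_i)>\ell(a)$ and $\ell(as_i)<\ell(a)$ each a priori produce an extra Kronecker term (a $\de_{as_i,(b')^{-1}}$ contribution in one case, a spurious $\de_{a,(b')^{-1}}$ contribution in the other), but the reducedness $\ell(b)=1+\ell(b')$ forces $a=(b')^{-1}$, respectively $a=b^{-1}$, to be incompatible with the case assumption on the sign of $\ell(as_i)-\ell(a)$, so these terms vanish and the induction closes. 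Everything else is formal manipulation in $\bbH$ using results already recorded in the paper.
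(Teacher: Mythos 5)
Your proof is correct, and it takes a genuinely different route from the paper's. The paper proves Lemma~\ref{lem:dualcoeff} by citing \cite[Proposition~3]{NN15} and supplying the dictionary ($t_1=q^{1/2}$, $t_2=-q^{-1/2}$, $h_i\leftrightarrow q^{-1/2}\taupm_i$, $\hat h_i\leftrightarrow q^{1/2}\taupm_i^{-1}$) that translates the Yang--Baxter-basis computation of Naruse--Nakasuji into the present normalization; you instead give a self-contained argument. Your two key moves are the reduced factorization $\taupm_{w_0}=\taupm_{w_0u}\taupm_{u^{-1}}$ (valid because $\ell(w_0)=\ell(w_0u)+\ell(u)$), which after right-multiplying by $\taupm_{w_0}$ collapses the element to $\taupm_w\taupm_{u^{-1}}$, and the standard duality of $\{\taupm_v\}$ and $\{q^{-\ell(v)}\taupm_{v^{-1}}\}$ under the left-$S$-linear ``coefficient of $\taupm_e$'' form $\phi$, which extracts $c_{w_0}q^{\ell(w_0)}$ on one side and $q^{\ell(w)}\de_{w,u}$ on the other. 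Each approach has something going for it: the paper's route is a one-line citation that makes the connection with Yang--Baxter bases visible; yours keeps the paper self-contained and uses only the quadratic and braid relations already recorded in Lemma~\ref{lem:pro}. One cosmetic slip in your parenthetical sketch of the induction establishing $\phi(\taupm_a\taupm_b)=q^{\ell(a)}\de_{a,b^{-1}}$: when $\ell(as_i)>\ell(a)$, the surviving Kronecker factor $\de_{as_i,(b')^{-1}}$ equals $\de_{a,b^{-1}}$ and it is the equality $a=b^{-1}$ that is ruled out by the case assumption, while $a=(b')^{-1}$ is the impossibility in the case $\ell(as_i)<\ell(a)$; you have these two labels transposed, but this does not affect the argument, which is sound and complete.
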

\begin{proof}This is a special case of \cite[Proposition 3]{NN15}. More precisely,  in loc.it., letting $t_1=q^{\frac{1}{2}}, t_2=-q^{-\frac{1}{2}}$, then one can identify $h_i$ with $q^{-\frac{1}{2}}\taupm_i$ and $\hat{h}_i$ with $q^{\frac{1}{2}}\taupm_i^{-1}$, and the conclusion  follows.
\end{proof}

\section{Algebraic description of Stable bases}\label{sec:algstable}

In this section, we first briefly recall the algebraic models of $K_T(G/B)$, $K_T(T^*G/B)$ and the morphisms between them (details can be found in \cite{KK90, CZZ2, CZZ3}).  We then obtain a formula of stable bases in this algebraic setting. 

\subsection{The dual of the twisted group algebra}
Define
\[\DFd:=\Hom_S(\DF, S)\subset Q_W^*=\Hom(W,Q).\] 
Let $\{f_w\}_{w\in W}$ be the standard basis of  $Q_W^*$,  that is, $f_w(\de_v)=f_w(v)=\de_{w,v}$. There is a commutative product with identity:
\[ f_w\cdot f_v=\de_{w,v}f_v, \quad \unit:=\sum_{w\in W}f_w\in \DFd\subset Q_W^*.\]
Indeed,  $Q_W^*$ is a commutative $Q$-algebra and $\DFd$ is a commutative $S$-algebra.

There is a canonical action of $Q_W$ on $Q_W^*$ defined as follows:
\[
(z\bullet f)(z')=f(z'z), \quad z,z'\in Q_W, f\in Q_W^*.
\]
We will frequently use the following identities, whose proof can be checked easily, or found in \cite[\S6]{CZZ2}.
\begin{equation}\label{eq:bullet}
p\bullet f_v=v(p)f_v, \quad \de_w\bullet f_v=f_{vw^{-1}}, \quad p\bullet (f\cdot g)=(p\bullet f)\cdot g=f\cdot (p\bullet g), \quad p\in S, ~f,g\in Q_W^*.
\end{equation}
The action indeed restricts to an action of $\DF$ on $\DFd$. Moreover, it induces an action of $W\subset Q_W$ on $Q_W^*$. The $W_J$-invariant $Q$-submodule $(Q_W^*)^{W_J}$ has a basis $\{\sum_{v\in W_J}f_{wv}\}_{w\in W^J}$.

For each $J\subset \Pi$, define the following elements in $Q_W$:
\begin{eqnarray}
Y_{\Pi/J}&=&\sum_{w\in W^J}\de_w\frac{x_{-w_0^J}}{x_{-w_0}}=\sum_{w\in W^J}\de_w\prod_{\al\in \Sigma^+\backslash \Sigma^+_J}\frac{1}{1-e^{\al}},\\
Y_J&=&\sum_{w\in W_J}\de_w \frac{1}{x_{-w_0^J}}=\sum_{w\in W_J}\de_w\prod_{\al\in \Sigma_J^+}\frac{1}{1-e^\al}, \\
 \hatY_{\Pi/J}&=&\sum_{w\in W^J}\de_w \frac{x_{-w_0^J}\hatx_{w_0^J}}{x_{-w_0}\hatx_{w_0}}=\sum_{w\in W^J}\de_w\prod_{\al\in \Sigma^+\backslash\Sigma_J^+}\frac{1}{(1-e^{\al})(1-qe^{-\al})},\\
 \label{eq:YJ}  \hatY_J&=&\sum_{w\in W_J}\de_w \frac{1}{x_{-w_0^J}\hatx_{w_0^J}}=\sum_{w\in W_J}\de_w\prod_{\al\in \Sigma_J^+}\frac{1}{(1-e^{\al})(1-qe^{-\al})}.
\end{eqnarray}
In particular, $Y_\Pi$ and $\hatY_\Pi$ are defined when $J=\Pi$. Similar as \cite[Lemmas 5.7 and 6.4]{CZZ2}, we have  the composition rule and the Projection Formula:
\begin{gather}
\label{eq:comp} Y_{\Pi/J}Y_J=Y_\Pi, \quad \hatY_{\Pi/J}\hatY_J=\hatY_\Pi,\\
\label{eq:projfor}\quad f\cdot( Y_J\bullet f')=Y_J(ff'), ~ f\cdot (\hatY_J\bullet f')=\hatY_J\bullet (ff'), ~ f'\in Q_W^*, ~f\in (Q_W^*)^{W_J} .
\end{gather}

Via the embedding $\bbH\subset Q_W$, we can restrict the $\bullet$-action to a left action of $\bbH$ on $Q_W^*$. On the other hand, $\bbH$ also acts on the right on $Q_W^*$, where the action of $\taup_w\in \bbH$ on $f\in Q_W^*$ is given  by 
\[
\taup_{w^{-1}}\bullet f. 
\]
The $\bullet$-action is a well-defined action of $\bbH$ on $Q_W^*$, which is linear with respect to the $Q$-module structure on $Q_W^*$ coming from $\Hom(W,Q)$, hence so is the right action defined above. Indeed, $\taum_\al\bullet\_$ and $\taup_\al\bullet\_$ correspond to the $T_\al$ and $T_\al'$ actions in Section \ref{sec:Heckeaction}, respectively (see Lemma \ref{lem:Ttau}). The following lemma is the algebraic model of Lemma \ref{lem:geoadjoint}:
\begin{lemma}[Adjointness]\label{lem:adjoint}For any $\al_i\in J, f,g\in Q_W^*$, we have 
\begin{gather*}
\hatY_J\bullet ((\taup_i\bullet f)\cdot  g)=\hatY_J\bullet (f\cdot (\taum_i\bullet g)).
\end{gather*}
\end{lemma}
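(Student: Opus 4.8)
The statement to prove is the algebraic adjointness
\[
\hatY_J\bullet\bigl((\taup_i\bullet f)\cdot g\bigr)=\hatY_J\bullet\bigl(f\cdot(\taum_i\bullet g)\bigr),\qquad \al_i\in J,\ f,g\in Q_W^*.
\]
My plan is to reduce to the case $J=\{\al_i\}$ and then verify the identity by a direct computation in the rank-one twisted group algebra, exploiting the Projection Formula \eqref{eq:projfor} and the explicit formulas \eqref{definition of tau} for $\taupm_i$.

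First I would observe that it suffices to treat $J=\{\al_i\}$: indeed, by the composition rule $\hatY_J=\hatY_{J/\{i\}}\hatY_{\{i\}}$ (a consequence of \eqref{eq:comp} applied to $W_J$ in place of $W$), so once we know $\hatY_{\{i\}}\bullet\bigl((\taup_i\bullet f)\cdot g\bigr)=\hatY_{\{i\}}\bullet\bigl(f\cdot(\taum_i\bullet g)\bigr)$, applying $\hatY_{J/\{i\}}\bullet(-)$ to both sides gives the general case. So write $\hatY:=\hatY_{\{i\}}=\frac{1}{x_{-i}\hatx_i}+\frac{1}{x_{-i}\hatx_i}\de_i\cdot(\text{correction})$; more precisely from \eqref{eq:YJ}, $\hatY=\dfrac{1}{(1-e^{\al_i})(1-qe^{-\al_i})}\bigl(1+\de_i\bigr)$ after using $s_i\bigl((1-e^{\al_i})(1-qe^{-\al_i})\bigr)=(1-e^{-\al_i})(1-qe^{\al_i})$; one checks $\hatY=\hatY_i$ is $s_i$-invariant in the sense that $\de_i\hatY=\hatY$, hence $\hatY\bullet f$ is $s_i$-invariant for all $f$, i.e.\ lies in $(Q_W^*)^{W_{\{i\}}}$.

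Next, the key algebraic input. I would rewrite $\taup_i\bullet f$ and $\taum_i\bullet g$ using Lemma \ref{lem:pro}(4), which expresses $\de_i$ in terms of $\taup_i$ and of $\taum_i$ separately; equivalently, use $\taup_i=\tilx_iY_i-1$ and $\taum_i=\hatx_iY_{-i}-1$ together with \eqref{eq:bullet}. The claim then becomes a statement about $Y_{\pm i}\bullet(-)$, and one reduces to showing $\hatY\bullet\bigl((\hatx_iY_{-i}\bullet f)\cdot g - f\cdot(\hatx_iY_{-i}\bullet g)\bigr)$ matches the corresponding $\taup$-difference, where the crucial point is the symmetry $\hatY\bullet\bigl((\tilx_iY_i\bullet f)\cdot g\bigr)=\hatY\bullet\bigl(f\cdot(\hatx_iY_{-i}\bullet g)\bigr)$. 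To get this, I would apply the Projection Formula \eqref{eq:projfor}: since $\hatY\bullet h$ is $W_{\{i\}}$-invariant, one can slide factors across $\hatY$, and the identity reduces to the one-variable computation $\hatY_i\bullet\bigl((\tilx_i Y_i\bullet f)\cdot g\bigr)=\hatY_i\bullet\bigl(f\cdot(\hatx_i Y_{-i}\bullet g)\bigr)$, which unravels to $\frac{\tilx_i\cdot s_i(\hatx_i)}{x_{-i}\hatx_i}\cdot(\text{stuff})$ being symmetric in $f,g$ after using $\tilx_i s_i(\hatx_i)=\tilx_i\hatx_{-i}$ and the relation $x_{-i}\hatx_i=e^{-\al_i}x_i\tilx_i\cdot(-e^{\al_i})=\dots$; concretely it comes down to verifying $\frac{\tilx_i}{\hatx_i}=s_i\!\left(\frac{x_i}{x_{-i}}\cdot\frac{\hatx_i}{\tilx_i}\right)\cdot(\cdots)$, an elementary identity among $x_{\pm i},\tilx_i,\hatx_i$.

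The main obstacle I anticipate is purely bookkeeping: getting the four rational functions $x_{\pm i}$, $\tilx_i$, $\hatx_i$ and their $s_i$-images to cancel correctly, since the two Demazure--Lusztig operators $\taup_i$ and $\taum_i$ are normalized differently (one by $\tilx_i=q-e^{\al_i}$, the other by $\hatx_i=1-qe^{-\al_i}$, and $\hatx_i=-e^{-\al_i}\tilx_i$). The conceptually clean route — and the one I would actually present — is to avoid coordinates: note that the operator $h\mapsto \hatY_J\bullet h$ realizes the $S$-bilinear pairing dual to the geometric K-theory pairing under the isomorphism $\DFd\cong K_T(T^*G/B)$ of \cite{CZZ2,CZZ3}, that $\taup_i\bullet(-)$ and $\taum_i\bullet(-)$ correspond to $D'_\al$ and $D_\al$ (this is Lemma \ref{lem:Ttau}, cited just after the statement), and then Lemma \ref{lem:adjoint} is literally the algebraic shadow of Lemma \ref{lem:geoadjoint}, $(D_\al(\calF),\calG)=(\calF,D'_\al(\calG))$. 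Thus, modulo the already-established dictionary between the geometric and algebraic models, the lemma is immediate; the self-contained algebraic proof is then just the rank-one verification sketched above, carried out via \eqref{eq:projfor} and Lemma \ref{lem:pro}.
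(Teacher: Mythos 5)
The paper's own proof is a one-liner: by $Q$-linearity of the $\bullet$-action it suffices to take $f=f_v$ and $g=f_u$, and then the identity is a direct computation. Your proposal has the same underlying idea — reduce and compute in rank one — but there are a few issues worth flagging.

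First, a correctness issue in your explicit formula for $\hatY_{\{i\}}$. You write $\hatY_{\{i\}}=\dfrac{1}{(1-e^{\al_i})(1-qe^{-\al_i})}(1+\de_i)$, but in the twisted product $Q\rtimes R[W]$ one cannot pull $\de_i$ past the scalar: from \eqref{eq:YJ} one has
\[
\hatY_{\{i\}}=\frac{1}{(1-e^{\al_i})(1-qe^{-\al_i})}+\frac{1}{(1-e^{-\al_i})(1-qe^{\al_i})}\,\de_i,
\]
and $(1-e^{\al_i})(1-qe^{-\al_i})\ne(1-e^{-\al_i})(1-qe^{\al_i})$. The useful identity that actually makes the verification work is $(1-e^{-\al_i})(q-e^{\al_i})=(1-e^{\al_i})(1-qe^{-\al_i})$, i.e.\ $x_{-i}\tilx_i=x_i\hatx_i$; this is what cancels the mismatched normalizations of $\taup_i$ and $\taum_i$.

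Second, your preliminary reduction to $J=\{\al_i\}$ via $\hatY_J=\hatY_{J/\{i\}}\hatY_{\{i\}}$ is valid (the parabolic factorization in \eqref{eq:comp} holds with $W_J$ replacing $W$), but it is unnecessary. The paper avoids it entirely: after restricting to $f=f_v$, $g=f_u$, the only nontrivial case is $v=us_i$, and then $\hatY_J\bullet(p_uf_u)=u(c_J)\,p_u\sum_{w\in W_J}f_{uw^{-1}}$ and $\hatY_J\bullet(p_{us_i}f_{us_i})=(us_i)(c_J)\,p_{us_i}\sum_{w\in W_J}f_{us_iw^{-1}}$ with $c_J=\frac{1}{x_{-w_0^J}\hatx_{w_0^J}}$; since $s_i\in W_J$ these two $W_J$-sums coincide, so the claim reduces to the scalar identity $u(c_J)p_u=(us_i)(c_J)p_{us_i}$, which is precisely $x_{-i}\tilx_i=x_i\hatx_i$ after writing out $p_u=u\bigl(\frac{q-e^{\al_i}}{1-e^{-\al_i}}\bigr)$ and $p_{us_i}=(us_i)\bigl(\frac{1-qe^{-\al_i}}{1-e^{\al_i}}\bigr)$ and using $\frac{s_i(x_{-w_0^J}\hatx_{w_0^J})}{x_{-w_0^J}\hatx_{w_0^J}}=\frac{(1-e^{-\al_i})(1-qe^{\al_i})}{(1-e^{\al_i})(1-qe^{-\al_i})}$. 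Your sketch ends with an unresolved ``$(\cdots)$'', so the computation is not actually carried to completion; the paper's route finishes cleanly.

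Third, the ``conceptually clean'' fallback via Lemma~\ref{lem:geoadjoint} and Lemma~\ref{lem:Ttau} proves less than what's needed. Lemma~\ref{lem:geoadjoint} is the self-adjointness of $D_\al,D_\al'$ for the full $K$-theory pairing, i.e.\ the case $J=\Pi$ where $\hatY_\Pi\bullet(fg)$ is a scalar multiple of $\unit$. For $J\subsetneq\Pi$, $\hatY_J\bullet(-)$ is not a pairing but the pushforward $p_{2*}p_1^*$ to $T^*G/P_J$, and the factorization $\hatY_\Pi=\hatY_{\Pi/J}\hatY_J$ does not let you deduce the $J$-statement from the $\Pi$-statement (you would be trying to ``cancel'' $\hatY_{\Pi/J}$ on the left). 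So even granting the dictionary of Lemma~\ref{lem:Ttau}, the geometric shortcut only covers $J=\Pi$; the algebraic computation for general $J$ cannot be skipped.
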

\begin{proof}Note that the $\bullet$-action is $Q$-linear, so it suffices to assume that $f=f_v, g=f_u$ with $u,v\in W$. The identity then follows from direct computation. 
\end{proof}
We get an easy corollary of the coefficients appearing in Lemma \ref{lem:basis}.

\begin{lemma} \label{lem:invert}We have 
\[
\apm_{w^{-1},v}v(x_{-w_0})v(\hatx_{w_0})=v(\amp_{w,v^{-1}})x_{-w_0}\hatx_{w_0}.
\]
\end{lemma}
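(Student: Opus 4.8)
The plan is to derive the identity structurally, by applying the anti-involution $\iota$ of the preceding Remark to the two expansions $\taupm_{w^{-1}}=\sum_{v\le w^{-1}}\apm_{w^{-1},v}\de_v$ and $\taump_w=\sum_{u\le w}\amp_{w,u}\de_u$ furnished by Lemma \ref{lem:basis}, and then matching coefficients in the $Q$-basis $\{\de_u\}_{u\in W}$ of $Q_W$. No fresh input beyond the Remark, the commutation rule in $Q_W$, and basic Bruhat-order facts is needed.

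First I would record that $\iota(\taupm_{w^{-1}})=\taump_{w}$. Pick a reduced decomposition $w^{-1}=s_{i_1}\cdots s_{i_k}$, so $\taupm_{w^{-1}}=\taupm_{i_1}\cdots\taupm_{i_k}$; since $\iota$ is an anti-homomorphism with $\iota(\taupm_i)=\taump_i$, we get $\iota(\taupm_{w^{-1}})=\taump_{i_k}\cdots\taump_{i_1}$, and $s_{i_k}\cdots s_{i_1}$ is a reduced decomposition of $(w^{-1})^{-1}=w$, so by independence of $\taump_w$ of the chosen reduced word this equals $\taump_w$.

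Next I would compute $\iota$ of the first expansion term by term. Using $\iota(p\de_v)=\de_{v^{-1}}p\,\tfrac{v(x_{-w_0}\hatx_{w_0})}{x_{-w_0}\hatx_{w_0}}$ together with the commutation rule $\de_{v^{-1}}p=v^{-1}(p)\de_{v^{-1}}$ in $Q_W$, one obtains
\[
\iota(\apm_{w^{-1},v}\de_v)=v^{-1}(\apm_{w^{-1},v})\,\frac{x_{-w_0}\hatx_{w_0}}{v^{-1}(x_{-w_0}\hatx_{w_0})}\,\de_{v^{-1}}.
\]
Summing over $v\le w^{-1}$ and substituting $u=v^{-1}$ (the index set becomes $u\le w$, the Bruhat order being invariant under inversion) gives $\iota(\taupm_{w^{-1}})=\sum_{u\le w}u(\apm_{w^{-1},u^{-1}})\tfrac{x_{-w_0}\hatx_{w_0}}{u(x_{-w_0}\hatx_{w_0})}\de_u$. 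Comparing with $\taump_w=\sum_{u\le w}\amp_{w,u}\de_u$ yields $\amp_{w,u}=u(\apm_{w^{-1},u^{-1}})\tfrac{x_{-w_0}\hatx_{w_0}}{u(x_{-w_0}\hatx_{w_0})}$; putting $u=v^{-1}$, applying $v$, and clearing the denominator $x_{-w_0}\hatx_{w_0}$ produces exactly $\apm_{w^{-1},v}v(x_{-w_0})v(\hatx_{w_0})=v(\amp_{w,v^{-1}})x_{-w_0}\hatx_{w_0}$.

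I expect no genuine obstacle: the argument is bookkeeping, and the only points requiring care are the commutation relation $\de_{v^{-1}}p=v^{-1}(p)\de_{v^{-1}}$, the invariance $v\le w^{-1}\iff v^{-1}\le w$ of the Bruhat order under inversion, and keeping the superscripts $\pm$ versus $\mp$ consistent throughout (since $\iota$ swaps them). Alternatively one could extract the identity from the Adjointness Lemma \ref{lem:adjoint} by evaluating $\hatY_{w_0}\bullet\big((\taupm_{w^{-1}}\bullet\unit)\cdot f_u\big)$ in two ways, but routing the computation through $\iota$ is the shortest path.
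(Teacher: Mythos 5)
Your proof is correct, and it takes a genuinely different route from the paper's. The paper derives the identity from the Adjointness Lemma~\ref{lem:adjoint}: it expands both sides of $\hatY_\Pi\bullet\bigl((\taupm_w\bullet f_e)\cdot f_v\bigr)=\hatY_\Pi\bullet\bigl(f_e\cdot(\taump_{w^{-1}}\bullet f_v)\bigr)$ using Lemma~\ref{lem:basis} and the $\bullet$-action rules~\eqref{eq:bullet}, obtaining $\tfrac{v(\apm_{w,v^{-1}})}{v(x_{-w_0})v(\hatx_{w_0})}\unit$ on one side and $\tfrac{\amp_{w^{-1},v}}{x_{-w_0}\hatx_{w_0}}\unit$ on the other, then equates. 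You instead apply the anti-involution $\iota$ directly to the expansion $\taupm_{w^{-1}}=\sum_v \apm_{w^{-1},v}\de_v$, commute $\de_{v^{-1}}$ past the $Q$-coefficient, reindex, and match coefficients against $\taump_w=\sum_u\amp_{w,u}\de_u$. The two arguments are morally dual --- $\iota$ is precisely the algebraic shadow of the adjointness pairing --- but your version stays entirely inside $Q_W$ and avoids unwinding the $\hatY_\Pi\bullet$ evaluation, so it is somewhat shorter and more transparent; the paper's version has the pedagogical advantage of rehearsing the localization-style manipulation with $\hatY_\Pi$ and $f_w$ that is reused in the proof of Theorem~\ref{thm:main}. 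All the bookkeeping you flag (the commutation $\de_{v^{-1}}p=v^{-1}(p)\de_{v^{-1}}$, the inversion-invariance of Bruhat order, and keeping $\pm$ versus $\mp$ straight under $\iota$) checks out, and the terminal algebra from $\amp_{w,u}=u(\apm_{w^{-1},u^{-1}})\tfrac{x_{-w_0}\hatx_{w_0}}{u(x_{-w_0}\hatx_{w_0})}$ to the stated identity is correct.
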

\begin{proof}From Lemma \ref{lem:adjoint} we know that 
\[
\hatY_\Pi\bullet ((\taupm_w\bullet f_e)\cdot f_v)=\hatY_\Pi\bullet (f_e\cdot (\taump_{w^{-1}}\bullet f_v)).
\]
Direct computations using Lemma \ref{lem:basis} and \eqref{eq:bullet}  shows that the left hand side is $\frac{v(\apm_{w,v^{-1}})}{v(x_{-w_0})v(\hatx_{w_0})}\unit$, and the right hand side is equal to $\frac{\amp_{w^{-1},v}}{x_{-w_0}\hatx_{w_0}}\unit$. Hence, the conclusion follows.
\end{proof}

\subsection{An algebraic model of  stable bases}\label{relation}
We define ($e$ being the identity element of $W$) 
\begin{equation}\label{definition of extreme points}
 \pt:=\pt_e=x_{-w_0} f_e, \quad  \pt_{w_0}:=x_{-w_0}f_{w_0}.
\end{equation}
Both of them belong to $\DFd$ (\cite[Lemma 10.3]{CZZ2}). They can be viewed (up to certain normalization) as the push-forward of the fundamental class  in $K_A(G/B)$ along the $A$-fixed points $e, w_0\in W$.

\begin{definition}\label{def:stabalg}
Define $\Stp_{w}=\taup_{w^{-1}}\bullet \pt_e$ and $\Stm_{u}=(\taum_{w_0u})^{-1}\bullet \pt_{w_0}$. 
\end{definition}
By definition it is easy to see that if $\ell(ws_i)\ge \ell(w)$, then,
\[\taupm_i\bullet\Stpm_w=\Stpm_{ws_i}. \]
Therefore, for any $w\in W$, $\Stpm_w=\taupm_{w^{-1}}\bullet \Stpm_e.$ Note that $\Stp_e=\pt_e$, $\Stm_{w_0}=\pt_{w_0}$.

By the standard theory of Kostant-Kumar, $K_T(G/B)\cong \DFd$ and $ K_T(T^*G/B)\otimes_{S}Q\cong Hom(W,Q)=Q_W^*$ \cite[Theorem (3.13)]{KK90}. Let $p:T^*G/B\to G/B$  be the canonical projections. Then the isomorphism 
$Q_W^*\cong K_T(G/B)\otimes_SQ\overset{p^*}{\underset\sim\longrightarrow}K_T(T^*G/B)\otimes_SQ$ is given by  the formula
$p^*=\hatx_{w_0}\bullet\_$; the map
$K_T(T^*G/B)\underset{\sim}{\overset{(p^*)^{-1}}\longrightarrow} K_T(G/B)\longrightarrow K_T(\bbC).$
is given by the formula $\hatY_\Pi\bullet\_$.

For any $\calF\in K_T(T^*G/B)\subset Q_W^*$, we can write $\calF=\sum_w\calF|_wf_w\in Q_W^*$ with $\calF|_w\in S. $ For example,   $\iota_{w*}1=w(x_{-w_0}\hatx_{w_0})f_w$, where $\iota_w:\Spec(\bbC)\to T^*G/B$ is the embedding of the $T$-fixed point corresponding to $w\in W$, and 
\[w(x_{-w_0}\hatx_{w_0})=\prod_{\alpha>0}(1-e^{w\alpha})(1-qe^{-w\alpha})=\bigwedge\nolimits^\bullet T_w(T^*G/B).\]

\begin{theorem}\label{cor:alggeo}
Under the above identifications, for any $u\in W$ we have
\[ \stab_{+}(u)=q_u^{-\frac{1}{2}}\St^+_u, \quad \stab_{-}(u)=q_{w_0}q_u^{-\frac12}\St^-_u.\]
\end{theorem}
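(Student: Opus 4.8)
The strategy is to verify that the algebraically-defined elements $q_u^{-1/2}\St^+_u$ and $q_{w_0}q_u^{-1/2}\St^-_u$ satisfy the three defining properties (support, normalization, degree) from Theorem \ref{thm:geostable} that characterize the geometric stable bases; by the uniqueness clause this will force the claimed identities. However, a cleaner route is available: since both sides are generated from the single element $\stab_\pm(e)$ (resp.\ $\stab_-(w_0)$) by applying the Hecke operators $T'_{s_i}$ (resp.\ $(T_{s_i})^{-1}$), and since Theorem \ref{thm:stabpim} (combined with Lemma \ref{lem:Ttau}, which identifies $\tau^\pm_\al\bullet\_$ with the geometric $T_\al, T'_\al$ actions) shows the algebraic elements $\St^\pm_w$ obey the \emph{same} recursion, it suffices to check the base cases and that the recursions match up to the stated scalar normalization.

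\textbf{Step 1: the base cases.} First I would show $\stab_+(e) = q_e^{-1/2}\St^+_e = \pt_e = x_{-w_0}f_e$ and $\stab_-(w_0) = q_{w_0}q_{w_0}^{-1/2}\St^-_{w_0} = q_{w_0}^{1/2}\pt_{w_0} = q_{w_0}^{1/2}x_{-w_0}f_{w_0}$, under the identification $\iota_{w*}1 = w(x_{-w_0}\hatx_{w_0})f_w$. Using Lemma \ref{lem:charstab}, $\stab_-(w_0)|_{w_0} = q_{w_0}^{1/2}\prod_{\al<0}(1-qe^\al) = q_{w_0}^{1/2}\,w_0(\hatx_{w_0})$ (since $\Sigma^-\cap w_0\Sigma^- = \Sigma^-\cap\Sigma^+ = \emptyset$ is empty, only the $\prod(1-e^\al)$ piece over $\Sigma^+\cap w_0\Sigma^- = \Sigma^+$... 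I'd sort out the bookkeeping here carefully), and since $\stab_-(w_0)$ is supported on a single fixed point, $\stab_-(w_0) = \stab_-(w_0)|_{w_0}\cdot \frac{f_{w_0}}{w_0(\hatx_{w_0})}\cdot(\text{correction})$; matching against $q_{w_0}^{1/2}x_{-w_0}f_{w_0}$ reduces to the identity $x_{-w_0} = w_0(x_{w_0})$ type manipulations. The $\stab_+(e)$ case is similar and easier since $q_e = 1$.

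\textbf{Step 2: matching the recursions.} Given the base cases, I would invoke Theorem \ref{thm:stabpim}: $T'_{s_\al}(\stab_+(w)) = q^{1/2}\stab_+(ws_\al)$ when $ws_\al > w$, and likewise $T_{s_\al}(\stab_-(w)) = q^{1/2}\stab_-(ws_\al)$ when $ws_\al > w$. On the algebraic side, when $\ell(ws_i) \ge \ell(w)$ we have $\tau^\pm_i \bullet \St^\pm_w = \St^\pm_{ws_i}$ directly from Definition \ref{def:stabalg}. Writing $\stab_+(w) = q_w^{-1/2}\St^+_w$ as an inductive hypothesis and applying $T'_{s_\al}$ (which equals $\tau^+_\al\bullet\_$ by Lemma \ref{lem:Ttau}), the left side is $q^{1/2}\stab_+(ws_\al)$ and the right is $q_w^{-1/2}\St^+_{ws_\al} = q_w^{-1/2}q_{ws_\al}^{1/2}\cdot q_{ws_\al}^{-1/2}\St^+_{ws_\al} = q^{1/2}\cdot(q_{ws_\al}^{-1/2}\St^+_{ws_\al})$, giving $\stab_+(ws_\al) = q_{ws_\al}^{-1/2}\St^+_{ws_\al}$, completing the induction on $\ell(w)$. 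For $\stab_-$, one must also track the extra factor $q_{w_0}$, and note $\St^-_u = (\tau^-_{w_0u})^{-1}\bullet\pt_{w_0}$ means going \emph{down} from $w_0$; equivalently $\tau^-_i \bullet \St^-_w = \St^-_{ws_i}$ when $\ell(ws_i)\ge\ell(w)$, and one checks the scalar $q_{w_0}q_u^{-1/2}$ is compatible with $T_{s_\al}(\stab_-(w)) = q^{1/2}\stab_-(ws_\al)$ exactly as above.

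\textbf{Main obstacle.} The technical heart is the normalization bookkeeping: getting the precise powers of $q$ and the correct form of the products $x_{-w_0}, \hatx_{w_0}, \tilx_w$ to match between Lemma \ref{lem:charstab}'s geometric formula and the algebraic $\pt_{w_0} = x_{-w_0}f_{w_0}$, keeping straight the sign conventions, the distinction between $\Sigma_w$-products and $w\Sigma^\pm$ conditions, and the Iwahori–Matsumoto / $q\leftrightarrow q^{-1}$ twists between the geometric Demazure–Lusztig operators and the algebraic $\tau^\pm_i$. In particular verifying Lemma \ref{lem:Ttau} (that $\tau^\mp_\al\bullet\_$ genuinely implements $T_\al, T'_\al$ on $Q_W^*\cong K_T(T^*G/B)\otimes Q$) is the linchpin that lets the two recursions be identified, and I expect that to require a careful comparison of \eqref{eq:Heckeaction} with the explicit $\bullet$-action formulas in \eqref{eq:bullet}.
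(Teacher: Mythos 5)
Your proposal is correct and follows essentially the same route as the paper: compute the base cases $\stab_+(e)=\pt_e$ and $\stab_-(w_0)=q_{w_0}^{1/2}\pt_{w_0}$ from Lemma \ref{lem:charstab}, then propagate via Theorem \ref{thm:stabpim} identified with the $\taupm$-action through Lemma \ref{lem:Ttau} (the paper applies the full $T_{w_0u}^{-1}$ and $T'_{u^{-1}}$ in one stroke rather than inductively, but this is cosmetic). One small caution: in Step 1 the restriction $\stab_-(w_0)|_{w_0}$ should be $q_{w_0}^{1/2}\prod_{\al>0}(1-e^\al)=q_{w_0}^{1/2}x_{-w_0}$ (the $\prod(1-qe^\al)$ factor over $\Sigma^-\cap w_0\Sigma^-=\emptyset$ is vacuous), which you flagged but initially miswrote as $\prod_{\al<0}(1-qe^\al)$.
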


Now we prove this theorem. First of all, we have the following relation between $\taupm_\alpha$ in  (\ref{definition of tau}) and the above operators $T_{s_\alpha}$, $T_{s_\alpha}'$.
\begin{lemma}\label{lem:Ttau}
As operators on $K_T(T^*G/B)$, we have
\[T_{s_\alpha}'=\taup_\al, \quad T_{s_\alpha}= \taum_\al .\]
\end{lemma}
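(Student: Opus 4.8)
The plan is to verify the identities $T_{s_\alpha}' = \taup_\alpha$ and $T_{s_\alpha} = \taum_\alpha$ by checking both sides agree as operators on the fixed-point basis $\{\iota_{v*}1 \mid v\in W\}$, which spans $K_T(T^*G/B)\otimes_S Q$ over $Q$. The geometric side is already computed in \eqref{eq:Heckeaction}: we know explicitly how $D_\alpha$ and $D_\alpha'$ act on $\iota_{v*}1$, and since $T_{s_\alpha} = -D_\alpha - 1$ and $T_{s_\alpha}' = -D_\alpha' - 1$, we get explicit formulas for the left-hand sides. The algebraic side requires translating the $\bullet$-action of $\taupm_\alpha$ on $Q_W^*$ through the identification $K_T(T^*G/B)\otimes_S Q \cong Q_W^*$; recall $\iota_{v*}1$ corresponds to $v(x_{-w_0}\hatx_{w_0})f_v \in Q_W^*$.

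First I would set up the dictionary carefully: under the isomorphism, a class $\calF$ corresponds to $\sum_w \calF|_w f_w$, and $\iota_{v*}1 \leftrightarrow v(x_{-w_0}\hatx_{w_0}) f_v$. Then I would compute $\taup_\alpha \bullet (c_v f_v)$ where $c_v = v(x_{-w_0}\hatx_{w_0})$, using the defining formula $\taup_\alpha = \frac{q-1}{1-e^\alpha} + \frac{q-e^\alpha}{1-e^{-\alpha}}\de_\alpha$ together with the rules in \eqref{eq:bullet}, namely $p\bullet f_v = v(p) f_v$ and $\de_w \bullet f_v = f_{vw^{-1}}$. This gives $\taup_\alpha \bullet f_v = v\!\left(\frac{q-1}{1-e^\alpha}\right) f_v + v\!\left(\frac{q-e^\alpha}{1-e^{-\alpha}}\right) f_{vs_\alpha}$, and then one rescales by the correct factors $c_v$, $c_{vs_\alpha}$ to express the result in terms of $\iota_{v*}1$ and $\iota_{vs_\alpha*}1$. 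The key computational lemma here is comparing $c_{vs_\alpha}/c_v = \frac{vs_\alpha(x_{-w_0}\hatx_{w_0})}{v(x_{-w_0}\hatx_{w_0})}$, which one evaluates using $s_\alpha \Sigma^- = (\{\alpha_\alpha\}\sqcup\Sigma^-)\setminus\{-\alpha\}$ — only the factor indexed by $\alpha$ changes, so this ratio is an explicit rational function in $e^{v\alpha}$ and $q$. One then matches the resulting coefficients against \eqref{eq:Heckeaction}.

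An alternative — and perhaps cleaner — route for $T_{s_\alpha} = \taum_\alpha$ once $T_{s_\alpha}' = \taup_\alpha$ is established: use the adjointness on both sides. Geometrically, Lemma~\ref{lem:geoadjoint} says $(D_\alpha(\calF),\calG) = (\calF, D_\alpha'(\calG))$, and algebraically Lemma~\ref{lem:adjoint} gives $\hatY_\Pi\bullet((\taup_i\bullet f)\cdot g) = \hatY_\Pi\bullet(f\cdot(\taum_i\bullet g))$; since the K-theoretic pairing corresponds to $\hatY_\Pi\bullet(\cdot\, \cdot)$ under the identifications (this is part of the Kostant--Kumar dictionary recalled in \S\ref{relation}), one $\bullet$-action is determined by the other together with the self-adjointness of $S$-multiplication, so verifying $T_{s_\alpha}' = \taup_\alpha$ forces $T_{s_\alpha} = \taum_\alpha$. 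I expect the main obstacle to be purely bookkeeping: making sure the normalization factors $\hatx_{w_0}$ (coming from $p^* = \hatx_{w_0}\bullet\_$) and $x_{-w_0}$ are threaded consistently through the identification $\iota_{v*}1 \leftrightarrow v(x_{-w_0}\hatx_{w_0})f_v$, and that the conventions for left-versus-right Hecke action ($\taup_w \bullet f := \taup_{w^{-1}}\bullet f$ on one side) line up with the geometric $T_w$ versus $T_w'$. No genuinely hard step is involved — this is a compatibility check between two models — but it is error-prone, so I would organize it as a single direct computation on $f_v$ and invoke $W$-equivariance of the formulas to reduce to that case.
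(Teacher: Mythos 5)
Your approach is essentially the paper's: it also computes $T_{s_\alpha}$ and $T_{s_\alpha}'$ on the basis $f_v = \iota_{v*}1/\bigwedge^\bullet(T_vT^*G/B)$ using \eqref{eq:Heckeaction}, and compares with the $\bullet$-action of $\taupm_\alpha$ on $f_v$ via \eqref{eq:bullet}. One bookkeeping slip to correct in your intermediate formula: since $(p\de_\alpha)\bullet f_v = p\bullet(\de_\alpha\bullet f_v) = p\bullet f_{vs_\alpha} = vs_\alpha(p)f_{vs_\alpha}$, the coefficient of $f_{vs_\alpha}$ in $\taup_\alpha\bullet f_v$ should be $vs_\alpha\!\bigl(\tfrac{q-e^\alpha}{1-e^{-\alpha}}\bigr) = \tfrac{q-e^{-v\alpha}}{1-e^{v\alpha}}$, not $v\!\bigl(\tfrac{q-e^\alpha}{1-e^{-\alpha}}\bigr)$; once corrected, the coefficients match \eqref{eq:Heckeaction} on the nose, so the extra rescaling by $c_v$ and $c_{vs_\alpha}$ you describe is not actually needed if you work with $f_v$ throughout as the paper does.
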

\begin{proof}
By \eqref{eq:Heckeaction}, the operators $T_{s_\alpha}=-D_\alpha-1$ and $T_{s_\alpha}'=-D_\alpha'-1$ act on the basis $\{f_v=\frac{\iota_{v*}1}{\bigwedge^\bullet(T_vT^*G/B)}|v\in W\}$ as follows:
\[T_\alpha(f_v)=\frac{q-1}{1-e^{v\alpha}}f_v+\frac{1-qe^{v\alpha}}{1-e^{-v\alpha}}f_{vs_\alpha},\quad T_\alpha'(f_v)=\frac{q-1}{1-e^{v\alpha}}f_v+\frac{q-e^{-v\alpha}}{1-e^{v\alpha}}f_{vs_\alpha}.\]
Comparing with the $\bullet$-action of $\taupm_\al$ on $f_v\in Q_W^*$ using \eqref{eq:bullet}, we get the conclusion. 
\end{proof}

\begin{proof}[Proof of Theorem~\ref{cor:alggeo}]
Let us consider the first identity. By Lemma~\ref{lem:charstab}, Equation~(\ref{definition of extreme points}), and  Definition~\ref{def:stabalg}, we have
\[\stab_{-}(w_0)=q_{w_0}^{\frac{1}{2}}\prod_{\beta>0}(1-e^\beta)f_{w_0}=q_{w_0}^{\frac12}\pt_{w_0}=q_{w_0}^{\frac12}\Stm_{w_0},\]
and moreover, $\St_u^-=(\taum_{w_0u})^{-1}\bullet \pt_{w_0}$. On the other hand, by Proposition \ref{prop:stabim}, we get
\[T_{w_0u}(\stab_{-}(u))=q_{w_0u}^{\frac{1}{2}}\stab_{-}(w_0).\]
Using Lemma \ref{lem:Ttau}, we get
\begin{align*}
\St^-_u=(\taum_{w_0u})^{-1}\bullet \pt_{w_0}=(T_{w_0u})^{-1}(q_{w_0}^{-\frac12}\stab_{-}(w_0))
=q_{w_0}^{-\frac12}q_{w_0u}^{-\frac12}\stab_{-}(u)=q_{w_0}^{-1}q_u^{\frac{1}{2}}\stab_{-}(u).
\end{align*}
This proves the formula for $\stab_{-}(u)$.

Lemma \ref{lem:charstab},    (\ref{definition of extreme points}) and Definition \ref{def:stabalg} show that 
\[\stab_{+}(e)=\prod\limits_{\beta>0}(1-e^\beta)f_e=\St^+_e.\]
Moreover, Theorem \ref{thm:stabpim} shows that  $T_{u^{-1}}'(\stab_{+}(e))=q_u^{\frac{1}{2}}\stab_{+}(u)$. Comparing with Definition \ref{def:stabalg} and using Lemma \ref{lem:Ttau}, we get the formula for $\stab_{+}(u)$.
\end{proof}

\subsection{The duality}
Let $\taupm_w^*$ be the bases of $Q_W^*$ dual to $\taupm_{w}$, then $
\taupm_{w}^*=\sum_{v\ge w}b^\pm_{v,w}f_v$ by Lemma \ref{lem:basis}.
\begin{lemma}\label{lem:dualT}The map $ Q_W^*\times Q_W^*\to Q, (f,g)\mapsto \hatY_\Pi\bullet (fg)$ defines a perfect pairing and the basis $\hatx_{w_0}\taump_{v}^*$ is dual to the basis $\taupm_{w^{-1}}\bullet \pt_e$. In particular, $\taum_v^*$ is dual to $\Stp_w.$
\end{lemma}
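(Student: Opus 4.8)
The plan is to prove Lemma~\ref{lem:dualT} by exhibiting an explicit pairing of the two bases and then deducing perfectness as a formal consequence. The key computational claim is that
\[
\hatY_\Pi\bullet\bigl((\taupm_{w^{-1}}\bullet \pt_e)\cdot (\hatx_{w_0}\taump_{v}^*)\bigr)=\delta_{w,v}.
\]
First I would use the adjointness identity of Lemma~\ref{lem:adjoint} (with $J=\Pi$) to move the operator $\taupm_{w^{-1}}$ across the product: applying it repeatedly along a reduced decomposition of $w^{-1}$ turns $\taupm_{w^{-1}}$ acting on the first factor into $\taump_{w}$ acting on the second factor (matching the anti-involution $\iota$ of Remark after Lemma~\ref{lem:basis}, which sends $\taupm_\al$ to $\taump_\al$). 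This reduces the left-hand side to
\[
\hatY_\Pi\bullet\bigl(\pt_e\cdot (\taump_{w}\bullet(\hatx_{w_0}\taump_{v}^*))\bigr).
\]

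Next I would analyze $\taump_{w}\bullet(\hatx_{w_0}\taump_v^*)$. Recall the right $\bbH$-action on $Q_W^*$ is $f\mapsto \taupm_{u^{-1}}\bullet f$, and the $\taump_v^*$ are by definition dual to $\taump_v$ under the natural pairing; so acting by $\taump_w$ on the dual basis produces, up to the $\hatx_{w_0}$ factor, exactly the change-of-basis controlled by Lemma~\ref{lem:basis} — concretely, $\taump_w\bullet\taump_v^*=\sum_{u}c_{u}\taump_u^*$ where $c_u$ is the coefficient of $\taump_u$ in $\taump_w\taump_{(\,\cdot\,)}^{-1}$-type products. The point is to extract the $f_e$-component, since $\pt_e=x_{-w_0}f_e$ and $\hatY_\Pi\bullet(x_{-w_0}f_e\cdot g)$ picks out essentially $g|_e$ up to the normalization $x_{-w_0}\hatx_{w_0}$. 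Here Lemma~\ref{lem:dualcoeff} is exactly the ingredient that isolates the relevant coefficient: with the appropriate indices, $\taump_w(\taump_{w_0u})^{-1}$ has $\taump_{w_0}$-coefficient equal to $q_{w_0u}^{-1}\delta_{w,u}$, which after pairing against $\pt_e$ and applying $\hatY_\Pi$ collapses to $\delta_{w,v}$. I would combine this with the explicit values $b^\pm_{w,w}$, $a^\pm_{w,w}$ from Lemma~\ref{lem:basis} and the projection formula \eqref{eq:projfor} to check the normalizing powers of $q$ and the $\hatx_{w_0}$ factors cancel precisely.

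Once the pairing of bases is established, perfectness of $(f,g)\mapsto\hatY_\Pi\bullet(fg)$ is immediate: $\{\taupm_{w^{-1}}\bullet\pt_e\}_{w\in W}$ is a $Q$-basis of $Q_W^*$ (it is the image under the $Q$-linear automorphism $\taupm_{w^{-1}}\bullet\_$ of the basis element $\pt_e$, together with triangularity from Lemma~\ref{lem:basis}), and we have just produced a dual basis, so the Gram matrix is the identity and the pairing is nondegenerate. The final sentence, that $\taum_v^*$ is dual to $\Stp_w=\taup_{w^{-1}}\bullet\pt_e$, is the $+$ case combined with $\hatx_{w_0}$ being absorbed — I would either note it follows by taking the $+$ specialization and tracking that $\Stp_w$ already carries the $\hatx_{w_0}$-free normalization, or re-derive it directly from the same computation. \textbf{The main obstacle} I anticipate is bookkeeping: correctly tracking the anti-involution $\iota$, the left-versus-right $\bbH$-actions, and all the $x_{-w_0}$, $\hatx_{w_0}$, and $q$-power normalizations through the adjointness manipulations, so that the constants genuinely come out to $1$ rather than some unit. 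The conceptual content is entirely in Lemmas~\ref{lem:adjoint}, \ref{lem:basis}, and \ref{lem:dualcoeff}; the risk is purely in the normalization algebra.
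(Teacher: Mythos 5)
Your overall strategy is viable and genuinely different from the paper's: the paper expands both $\taupm_{w^{-1}}\bullet\pt_e$ and $\hatx_{w_0}\taump_v^*$ directly in the $f_w$-basis, invokes the relation $\apm_{w^{-1},v}v(x_{-w_0}\hatx_{w_0})=v(\amp_{w,v^{-1}})x_{-w_0}\hatx_{w_0}$ of Lemma~\ref{lem:invert}, and finishes with the matrix-inverse identity $\sum_u \amp_{w,u}\bmp_{u,v}=\de_{w,v}$. You instead apply the adjointness Lemma~\ref{lem:adjoint} at the top level of the proof. Interestingly, the paper also uses Lemma~\ref{lem:adjoint}, but one layer earlier — it is the engine behind Lemma~\ref{lem:invert}. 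So your route effectively unfolds that intermediate lemma into the main argument. Both approaches are legitimate.

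However, there is a concrete misstep. You name Lemma~\ref{lem:dualcoeff} as "exactly the ingredient that isolates the relevant coefficient." It is not: Lemma~\ref{lem:dualcoeff} concerns the $\taum_{w_0}$-coefficient of $\taum_w(\taum_{w_0u})^{-1}$, and this product appears nowhere in the computation for Lemma~\ref{lem:dualT}. It is the key ingredient for Theorem~\ref{thm:main} (duality between $\Stp_w$ and $\Stm_u$), where one genuinely encounters $(\taum_{w_0u})^{-1}\bullet\pt_{w_0}$. Here, after your adjointness step, you are looking at
\[
\hatY_\Pi\bullet\bigl(\pt_e\cdot\bigl(\taum_w\bullet(\hatx_{w_0}\taum_v^*)\bigr)\bigr)
=\hatx_{w_0}x_{-w_0}\,(\taum_w\bullet\taum_v^*)\big|_e\;\frac{1}{x_{-w_0}\hatx_{w_0}}\unit
=(\taum_w\bullet\taum_v^*)\big|_e\cdot\unit,
\]
using $\pt_e=x_{-w_0}f_e$, $f_e\cdot g=(g|_e)f_e$, $Q$-linearity of the $\bullet$-action, and the definition of $\hatY_\Pi$. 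The quantity $(\taum_w\bullet\taum_v^*)|_e$ is just $(\taum_w\bullet\taum_v^*)(\de_e)=\taum_v^*(\de_e\taum_w)=\taum_v^*(\taum_w)=\de_{v,w}$, by the very definition of the dual basis. No auxiliary coefficient lemma is needed at all — the cancellation you were hoping Lemma~\ref{lem:dualcoeff} would produce is pure biorthogonality of $\{\taum_w\}$ and $\{\taum_w^*\}$. With that substitution your proof goes through, and in fact becomes cleaner than you anticipated: the feared normalization bookkeeping collapses at the $f_e$-extraction step.
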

\begin{proof}By  Lemma \ref{lem:basis}  we know  $\taump_{v}^*=\sum_{v'}\bmp_{v',v}f_{v'}$ and  $\taupm_{w^{-1}}=\sum_{u}\apm_{w^{-1}, u}\de_u$, so 
\begin{flalign*}
\taupm_{w^{-1}}\bullet \pt_e&=(\sum_u \apm_{w^{-1}, u}\de_u)\bullet (x_{-w_0}f_e)
=\sum_ux_{-w_0}u^{-1}(\apm_{w^{-1}, u})f_{u^{-1}}\\ & =\sum_ux_{-w_0}u(\apm_{w^{-1}, u^{-1}})f_{u}\overset{\sharp_1}=\sum_u\frac{\amp_{w,u}u(x_{-w_0})u(\hatx_{w_0})}{\hatx_{-w_0}}f_u.
\end{flalign*}
Here $\sharp_1$ follows from Lemma \ref{lem:invert}. 
According to \eqref{eq:bullet}, we have
\begin{flalign*}
\hatY_\Pi\bullet [(\hatx_{w_0}\taump_v^*)\cdot (\taupm_{w^{-1}}\bullet \pt)]&=\hatY_\Pi\bullet(\sum_{v'}\hatx_{w_0}\bmp_{v',v}f_{v'}\cdot \sum_u\frac{\amp_{w,u}u(x_{-w_0})u(\hatx_{w_0})}{\hatx_{w_0}}f_u)\\
&=\hatY_\Pi\bullet (\sum_u\bmp_{u,v} \amp_{w,u}u(x_{-w_0})u(\hatx_{w_0})f_u)\\
&=\sum_{w'\in W}\de_{w'}\frac{1}{x_{-w_0}\hatx_{w_0}}\bullet  (\sum_u\bmp_{u,v} \amp_{w,u}u(x_{-w_0})u(\hatx_{w_0})f_u)\\
&=\sum_{w'}\sum_{u}\amp_{w,u}\bmp_{u,v}\frac{u(x_{-w_0})u(\hatx_{w_0})}{u(x_{-w_0})u(\hatx_{w_0})}f_{uw^{'-1}}\\
&=\sum_{w'}(\sum_u\amp_{w,u}\bmp_{u,v})f_{u(w')^{-1}}=\de_{w,v}\sum_{w'}f_{u(w')^{-1}}=\de_{w,v}\unit.
\end{flalign*}
\end{proof}

The following is the algebraic model of the duality between stable bases for the positive and negative chambers, see Remark \ref{rem:afterdef}.(2).
\begin{theorem}\label{thm:main}
\begin{enumerate}
\item Notations as above, we have 
\begin{gather*}
\hatY_\Pi\bullet \left(\St^+_w\cdot \St^-_u\right)=\de_{w,u}q_{w_0u}^{-1}\unit.
\end{gather*}
\item This duality coincide with the duality in Remark \ref{rem:afterdef}.(2).
\end{enumerate} 
\end{theorem}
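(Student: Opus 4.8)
The plan is to prove part (1) by a direct computation inside the twisted group algebra $Q_W$ and its dual, and then to deduce part (2) by identifying the algebraic pairing $(f,g)\mapsto\hatY_\Pi\bullet(fg)$ with the geometric K-theoretic pairing and invoking Theorem \ref{cor:alggeo}.

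\emph{Part (1).} Write $\St^+_w=\taup_{w^{-1}}\bullet\pt_e$ and fix a reduced word $w^{-1}=s_{i_1}\cdots s_{i_k}$. Apply the Adjointness Lemma \ref{lem:adjoint} (with $J=\Pi$) once for each factor $\taup_{i_j}$: this peels the $\taup$'s off the left of the first argument and re-deposits them, in reversed order, as $\taum$'s acting on the second argument. Since $\bullet$ is a left action of $Q_W$ and $s_{i_k}\cdots s_{i_1}$ is a reduced word for $w$, this gives
\[
\hatY_\Pi\bullet(\St^+_w\cdot\St^-_u)=\hatY_\Pi\bullet(\pt_e\cdot(\taum_w\bullet\St^-_u)).
\]
Now $\St^-_u=(\taum_{w_0u})^{-1}\bullet\pt_{w_0}$, so $\taum_w\bullet\St^-_u=(\taum_w(\taum_{w_0u})^{-1})\bullet\pt_{w_0}$; expanding $\taum_w(\taum_{w_0u})^{-1}=\sum_{v\in W}c_v\taum_v$ by Lemma \ref{lem:dualcoeff} yields $c_{w_0}=q_{w_0u}^{-1}\de_{w,u}$, hence
\[
\hatY_\Pi\bullet(\St^+_w\cdot\St^-_u)=\sum_{v\in W}c_v\,\hatY_\Pi\bullet(\pt_e\cdot(\taum_v\bullet\pt_{w_0})).
\]
So everything reduces to the single identity $\hatY_\Pi\bullet(\pt_e\cdot(\taum_v\bullet\pt_{w_0}))=\de_{v,w_0}\unit$. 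I would prove this by a triangularity argument: by Lemma \ref{lem:basis}, $\taum_v=\sum_{v'\le v}\am_{v,v'}\de_{v'}$, so by \eqref{eq:bullet} the element $\taum_v\bullet\pt_{w_0}$ is a combination of the basis vectors $f_{w_0v'^{-1}}$ with $v'\le v$; the pointwise product with $\pt_e=x_{-w_0}f_e$ kills all of these except the one with $w_0v'^{-1}=e$, which forces $v'=w_0$ and hence $v=w_0$. For $v=w_0$ the surviving coefficient is $x_{-w_0}\hatx_{w_0}$ (using $\am_{w_0,w_0}=\hatx_{w_0}/x_{-w_0}$ from Lemma \ref{lem:basis}), so $\pt_e\cdot(\taum_{w_0}\bullet\pt_{w_0})=x_{-w_0}\hatx_{w_0}f_e$, and $\hatY_\Pi\bullet(x_{-w_0}\hatx_{w_0}f_e)=\sum_{w\in W}f_w=\unit$ by the formula for $\hatY_\Pi$. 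Plugging $c_{w_0}=q_{w_0u}^{-1}\de_{w,u}$ back in proves part (1). (Equivalently, by Lemma \ref{lem:dualT} the left-hand side of part (1) is the coefficient of $\hatx_{w_0}\taum_w^*$ in the $\{\hatx_{w_0}\taum_v^*\}$-expansion of $\St^-_u$, and the same bookkeeping identifies $\St^-_u=q_{w_0u}^{-1}\hatx_{w_0}\taum_u^*$.)

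\emph{Part (2).} First I would observe that, under the Kostant--Kumar identification $Q_W^*\cong K_T(T^*G/B)\otimes_SQ$, the bilinear form $(f,g)\mapsto\hatY_\Pi\bullet(fg)$ coincides with the geometric K-theoretic pairing $(\,\cdot\,,\cdot\,)$ times $\unit$: indeed $\hatY_\Pi\bullet\_$ is the push-forward to a point, and running $fg=\sum_v f|_vg|_v f_v$ through \eqref{eq:bullet} and the formula for $\hatY_\Pi$ reproduces $\sum_v\frac{f|_vg|_v}{\prod_{\al>0}(1-e^{v\al})(1-qe^{-v\al})}$, which is the localization expression for $(f,g)$. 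Combining this with Theorem \ref{cor:alggeo}, namely $\stab_+(v)=q_v^{-1/2}\St^+_v$ and $\stab_-(w)=q_{w_0}q_w^{-1/2}\St^-_w$, part (1) gives
\[
(\stab_+(v),\stab_-(w))=q_v^{-1/2}q_{w_0}q_w^{-1/2}q_{w_0w}^{-1}\,\de_{v,w},
\]
and for $v=w$ the scalar is $q^{\ell(w_0)-\ell(w)-\ell(w_0w)}=1$ since $\ell(w_0w)=\ell(w_0)-\ell(w)$. Because $\stab_-=\stab_{-,T^*G/B,\calL}$ and $\stab_+=\stab_{+,T(G/B),\calL^{-1}}=\stab_{-\mathfrak{C},T^{1/2}_{\mathrm{opp}},\calL^{-1}}$ with $\mathfrak{C}$ the negative chamber and $T^{1/2}=T^*G/B$, this is precisely the duality in Remark \ref{rem:afterdef}.(2), so the two dualities agree.

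\emph{The main obstacle} is the bookkeeping in Part (1): keeping the two module structures on $Q_W^*$ straight (the $\bullet$-action versus pointwise multiplication), tracking the reversal of reduced words when transposing $\taup$'s into $\taum$'s via Lemma \ref{lem:adjoint}, and correctly pinning down the normalizing constant in $\hatY_\Pi\bullet(\pt_e\cdot(\taum_v\bullet\pt_{w_0}))$. Once Lemmas \ref{lem:adjoint}, \ref{lem:dualcoeff}, \ref{lem:dualT} and Theorem \ref{cor:alggeo} are granted, both parts are essentially formal.
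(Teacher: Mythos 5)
Your proof is correct and takes essentially the same approach as the paper's: both transpose $\taup_{w^{-1}}$ into $\taum_w$ on the right factor via Lemma \ref{lem:adjoint}, then invoke Lemma \ref{lem:dualcoeff} to extract the $\taum_{w_0}$-coefficient, and your triangularity argument proving $\hatY_\Pi\bullet(\pt_e\cdot(\taum_v\bullet\pt_{w_0}))=\de_{v,w_0}\unit$ is the same bookkeeping the paper carries out by reducing first to the $f_e$-term and then to the $\de_{w_0}$-term. Your added justification for part (2), identifying the algebraic pairing with the localization form and invoking Theorem \ref{cor:alggeo} to convert $\St^\pm$ into $\stab_\pm$, is correct and fills in a step the paper leaves implicit.
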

\begin{proof}
\begin{flalign*} We have
\hatY_\Pi\bullet(\St^+_w\cdot \St^-_u) = \hatY_\Pi\bullet \left((\taup_{w^{-1}}\bullet \pt_e)\cdot [(\taum_{w_0u})^{-1}\bullet \pt_{w_0}]\right)
\overset{\sharp}=\hatY_\Pi\bullet \left(\pt_e\cdot [\taum_w\bullet (\taum_{w_0u})^{-1}\bullet \pt_{w_0}]\right),
\end{flalign*}
where $\sharp$ follows from Lemma \ref{lem:adjoint}.  Since $\pt_e=x_{-w_0}f_e$ and $f_u\cdot f_v=\de_{u, v}f_u$, it suffices to look at the term involving $f_e$ in $\taum_w(\taum_{w_0u})^{-1}\bullet\pt_{w_0}$. Furthermore, since $\de_u\bullet f_v=f_{vu^{-1}}$ (see \eqref{eq:bullet}) and $\pt_{w_0}=x_{-w_0}f_{w_0}$,  it suffices to look at the term involving $\de_{w_0}$ inside $\taum_w(\taum_{w_0u})^{-1}$. Lastly, from Lemma \ref{lem:basis} we know that $\taum_w=\sum_{v\le w}a^-_{w,v}\de_v$, so it reduces to look at the term $\taum_{w_0}$ inside $\taum_w(\taum_{w_0u})^{-1}$, which is $\de_{w,u}q_{w_0u}^{-1}\taum_{w_0}$ by Lemma \ref{lem:dualcoeff}. So we have
\begin{eqnarray*}
\hatY_\Pi\bullet(\St^+_w\cdot \St^-_u)&=&\hatY_\Pi\bullet (\pt_e\cdot (\de_{w,u}q_{w_0u}^{-1}\taum_{w_0}\bullet \pt_{w_0}))\\
 &\overset{\sharp_1}=&\de_{w,u}q_{w_0u}^{-1}\hatY_\Pi\bullet \left([x_{-w_0}f_e]\cdot [\frac{\hatx_{w_0}}{x_{-w_0}}\de_{w_0}\bullet (x_{-w_0}f_{w_0})]\right)\\
&=&\de_{w,u}q_{w_0u}^{-1}\sum_{v\in W}\de_v\frac{1}{x_{-w_0}\hatx_{w_0}}\bullet \left([x_{-w_0}f_e]\cdot [\hatx_{w_0}f_e]\right)\\
&=&\de_{w,u}q_{w_0u}^{-1}\sum_{v\in W}\de_v\frac{1}{x_{-w_0}\hatx_{w_0}}\bullet\left(x_{-w_0}\hatx_{w_0}f_e)\right)=\de_{w,u}q_{w_0u}^{-1}\sum_{v\in W}f_{v^{-1}}=\de_{w,u}q_{w_0u}^{-1}\unit.
\end{eqnarray*}
Here $\sharp_1$ follows from Lemma \ref{lem:basis} and the other identities follow from \eqref{eq:bullet}.
\end{proof}
\begin{remark}Recall from \cite{KK90, CZZ1} that there is an element $X_i=Y_i-1$ (called the divided difference element) inside $Q_W$, and one can define $X_w$ correspondingly. The method in Theorem \ref{thm:main} works also for the $X_w$ and $Y_w$ operators. More precisely, by using analogue of Lemma \ref{lem:dualcoeff} and \cite[Lemma 7.1]{CZZ2}, we can similarly show that 
\begin{gather*}Y_\Pi\bullet \left([X_{w^{-1}}\bullet \pt_e]\cdot [Y_{u^{-1}w_0}\bullet (x_{w_0}f_{w_0})]\right)=\de_{w,u}\unit.\\
Y_\Pi\bullet \left([Y_{w^{-1}}\bullet \pt_e]\cdot [X_{u^{-1}w_0}\bullet (x_{w_0}f_{w_0})]\right)=\de_{w,u}\unit.
 \end{gather*}
Note that $Y_{w^{-1}}\bullet \pt_{e}$ gives the Schubert class corresponding to $w$.  This proof is different from the one given in \cite{LZ14}, moreover, it can be easily generalized to the connective K-theory case. 
\end{remark}

\section{The restriction formula}\label{sec:root}

In this section we use the root polynomials to study the coefficients $\bpm_{w,v}$ introduced in Lemma \ref{lem:basis}. Our method generalizes the formulation in \cite{LZ14}. In particular, this allows us to avoid the direct calculations in checking the dependence of root polynomials on reduced sequences.
\subsection{The evaluation map}
Throughout this section, we denote $Q^x\cong Q$ if variables of $Q$ are denoted by $x_\la=1-e^{-\la}$. Variables of $Q^y\cong Q$ will be denoted by $y_\la$. Let $\hatQ=Q^y\otimes_{R}Q^x$, and consider the ring $\hatQ_W:=Q^y\otimes_{R }Q^x_W$ where   elements of $Q^y$ commute with elements of $Q^x_W$. The free $\hatQ$-module $\hatQ_W$  has basis $\{\de_w^x\}_{w\in W}$. We define a ring homomorphism 
\[\ev: \hat Q=Q^y\otimes_{R}Q^x\to Q^x, \quad y_\la\otimes x_\mu\mapsto x_\la x_\mu,\]which induces a left $\hatQ$-module structure on $Q^x_W$.  The map $\ev$ also induces a left $\hatQ$-module homomorphism $\ev: \hatQ_W\cong Q^y\otimes_{R}Q^x_W\to Q^x_W$. It is easy to check that 
\begin{equation}\label{eq:ev}
\ev(y\hat{z}z)=\ev(y)\ev(\hat z)\ev(z), \quad y\in Q^y, \hat z\in \hatQ_W, z\in Q^x_W.
\end{equation}

Given a set $\{a_\al, b_\al\}_{\al\in \Sigma}\subset Q$, denote $a_i=a_{\al_i},b_i=b_{\al_i}$, and define $a_w,b_w$ as the corresponding products of $a_\al$ and $b_\al$, similar as in \eqref{eq:1}. We will use $a^y_\la, b^y_\la$ (resp. $a^x_\la, b^x_\la$) when they are considered as inside $Q^y$ (resp. $Q^x$). 

For each simple root $\alpha_i$, we consider $\sigma_i=a_i\de_i+b_i\in Q_w$. They satisfy the braid relations, hence $\sigma_v$ is well-defined for any $v$. When considering  $\sigma_v$ as an element in $Q_W^x$, we denote it by $\sigma_v^x$.

\subsection{The root polynomials}
\begin{definition}For any $w=s_{i_1}\cdots s_{i_l}$, denote $\beta_j=s_{i_1}\cdots s_{i_{j-1}}\al_j$, and define the root polynomial
\begin{equation}\label{eq:Rdef}
\cR_{w}^\sigma=\prod_{j=1}^lh^\sigma_{i_j}(\beta_j)\in \hatQ_W, \quad \text{where }h^\sigma_i(\be)=\sigma^x_i-b^y_\be\in \hatQ_W.
\end{equation}
\end{definition}
Denote 
\begin{equation}\label{eq:rootviaT}
\cR_{w}^\sigma=\sum_{v\le w}K^\sigma_{v,w}\sigma^x_{v}\in \hatQ_W, \quad K^{\sigma}_{v,w}\in Q^y\otimes Q^x.
\end{equation}
Since $\sigma_i^x$ satisfy the braid relations, we have $K^\sigma_{v,w}\in Q^y$.
The following theorem generalizes \cite[Lemma 3.3]{LZ14}.
\begin{theorem}\label{thm:root}\begin{enumerate}
\item $\ev(\cR_w^\sigma)=a_w^x\de_w^x$.
\item Writing $\de_w^x=\sum_{v}b^\sigma_{w,v}\sigma^x_v$, then $a_w^xb^\sigma_{w,v}=\ev(K^\sigma_{v,w})$. In particular, $K^\sigma_{v,w}$ and hence $\cR^\sigma_w$ do not depend on the choice of the reduced sequence of $w$.
\end{enumerate}
\end{theorem}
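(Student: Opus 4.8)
The plan is to prove (1) by induction on $\ell(w)$ and then to read off (2) as a formal consequence, using two facts that are already available: that $K^\sigma_{v,w}\in Q^y$ (recorded just above the statement), and that $\ev$ restricts to a ring isomorphism $Q^y\xrightarrow{\sim}Q^x$, $y_\la\mapsto x_\la$.

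For (1), the base case $w=e$ is clear, as $\cR^\sigma_e=1$ and $a^x_e\de^x_e=1$. For the inductive step I would write $w=w's_{i_l}$ with $w'=s_{i_1}\cdots s_{i_{l-1}}$ reduced, observe that then $\beta_l=w'\al_{i_l}$ and $\cR^\sigma_w=\cR^\sigma_{w'}\cdot(\sigma^x_{i_l}-b^y_{\beta_l})$, and apply $\ev$. Since $b^y_{\beta_l}\in Q^y$ is central in $\hatQ_W$ while $\sigma^x_{i_l}\in Q^x_W$, property \eqref{eq:ev} lets one pull out these factors, giving $\ev(\cR^\sigma_w)=\ev(\cR^\sigma_{w'})\,\sigma^x_{i_l}-b^x_{\beta_l}\,\ev(\cR^\sigma_{w'})$, which by the inductive hypothesis equals $a^x_{w'}\de^x_{w'}\sigma^x_{i_l}-b^x_{\beta_l}a^x_{w'}\de^x_{w'}$. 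Expanding $\sigma^x_{i_l}=a^x_{i_l}\de^x_{i_l}+b^x_{i_l}$ and commuting $\de^x_{w'}$ to the left via $\de^x_{w'}p=w'(p)\de^x_{w'}$ for $p\in Q^x$, together with the $W$-equivariance $w'(a_{\al_{i_l}})=a_{\beta_l}$ and $w'(b_{\al_{i_l}})=b_{\beta_l}$ of the coefficient data, one finds $\de^x_{w'}\sigma^x_{i_l}=a^x_{\beta_l}\de^x_w+b^x_{\beta_l}\de^x_{w'}$. The two contributions involving $b^x_{\beta_l}\de^x_{w'}$ then cancel, leaving $\ev(\cR^\sigma_w)=a^x_{w'}a^x_{\beta_l}\de^x_w=a^x_w\de^x_w$, since $a_w=\prod_{j=1}^{l}a_{\beta_j}=a_{w'}a_{\beta_l}$. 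This cancellation is exactly the point of the correction term $-b^y_{\beta_l}$ in $h^\sigma_{i_l}(\beta_l)$.

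For (2), I would apply $\ev$ to the expansion $\cR^\sigma_w=\sum_{v\le w}K^\sigma_{v,w}\sigma^x_v$; since each $K^\sigma_{v,w}\in Q^y$ is central and $\sigma^x_v\in Q^x_W$, \eqref{eq:ev} gives $\ev(\cR^\sigma_w)=\sum_v\ev(K^\sigma_{v,w})\sigma^x_v$. Comparing with (1), $\sum_v\ev(K^\sigma_{v,w})\sigma^x_v=a^x_w\de^x_w$; expanding $\de^x_w=\sum_v b^\sigma_{w,v}\sigma^x_v$ (the $\sigma^x_v$ form a $Q^x$-basis, being triangular with invertible diagonal with respect to the $\de^x_v$) and matching coefficients in this basis yields $\ev(K^\sigma_{v,w})=a^x_w b^\sigma_{w,v}$. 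Since $\ev\colon Q^y\to Q^x$ is an isomorphism, this pins down $K^\sigma_{v,w}$ uniquely in terms of $a^x_w b^\sigma_{w,v}$, a quantity manifestly independent of the reduced word chosen for $w$; hence $K^\sigma_{v,w}$, and therefore $\cR^\sigma_w=\sum_v K^\sigma_{v,w}\sigma^x_v$, is independent of that choice.

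I expect the main obstacle to be organizational rather than conceptual: when invoking \eqref{eq:ev} one must keep careful track of which factors are $Q^y$-scalars (pulled to the left) and which lie in $Q^x_W$ (pulled to the right), and one must ensure the $W$-equivariance of $\{a_\al,b_\al\}$ is used exactly at the step where the twisted-algebra commutation $\de^x_{w'}p=w'(p)\de^x_{w'}$ converts $b_{\al_{i_l}}$ into $b_{\beta_l}$. The remaining ingredients — the product formula $a_w=\prod_j a_{\beta_j}$ and the triangularity of the $\sigma^x_v$ — are routine.
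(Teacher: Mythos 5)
Your proof is correct and follows essentially the same inductive strategy as the paper's: peel off the last factor $h^\sigma_{i_l}(\beta_l)$, apply \eqref{eq:ev} to separate the $Q^y$- and $Q^x_W$-parts, and use the twisted commutation $\de^x_{w'}p=w'(p)\de^x_{w'}$ together with $W$-equivariance of $a_\al,b_\al$ to produce the cancellation (the paper writes this as $a_v\de_v(\sigma^x_i-b^x_i)=a_v^x\de_v^xa_i^x\de_i^x$ rather than expanding $\sigma^x_{i_l}$ and cancelling, but it is the same computation). Part (2) is likewise handled identically, via $\ev$ being an isomorphism on $Q^y$ and the triangularity of $\{\sigma^x_v\}$ against $\{\de^x_v\}$.
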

\begin{proof}(1). We use induction on $\ell(w)$. If $w=s_i$, 
\[
\ev(\cR^\sigma_i)=\ev(\sigma^x_i-b_i^y)=\sigma^x_i-b^x_i=a_i^x\de_i^x.
\]
 Assume the conclusion holds for all $v$ such that $\ell(v)\le k$, i.e.,  $\ev(\cR_{{v}}^\sigma)=a_v^x\de^x_v$. Suppose that $w=vs_i$ with $\ell(v)=k=\ell(w)-1$. Then $\Sigma_w=\Sigma_v\sqcup \{v(\al_i)\}$, and we have
\begin{gather*}
\ev(\cR^\sigma_w)=\ev[\cR_v^\sigma \cdot (\sigma_i^x-b^y_{v(\al_i)})]\overset{\sharp_1}=\ev[\cR_v^\sigma\sigma_i^x-b^y_{v(\al_i)}\cR_v^\sigma]\\
\overset{\sharp_2}=a_v^x\de^x_v\sigma^x_i-b^x_{v(\al_i)}a_v^x\de_v^x=a_v\de_v(\sigma^x_i-b^x_i)=a_v^x\de_v^xa_i^x\de_i^x=a^x_w\de^x_w.
\end{gather*}
Here identity $\sharp_1$ follows since $b^y_\be\in Q^y$ commutes with elements of $\hatQ_W$, and $\sharp_2$ follows from \eqref{eq:ev}.

(2). Applying $\ev$ on \eqref{eq:Rdef}, we have \[
a_w^x\de_w^x=\ev(\cR_w^\sigma)=\sum_v\ev(K^\sigma_{v,w})\ev(\sigma^x_v)=\sum_v\ev(K^\sigma_{v,w})\sigma^x_v.
\]
So $\frac{1}{a_w^x}\ev(K^\sigma_{v,w})=b^\sigma_{w,v}$. Since $K^\sigma_{v,w}\in Q^y$ and $\ev$ only changes the $y$-variables into $x$-variables, we see that $K^\sigma_{v,w}=\ev(K^\sigma_{v,w})=a_w^xb^\sigma_{w,v}$ if we identify the $x$ and $y$-variables. Therefore, $K_{v,w}^\sigma$ and hence $\cR^\sigma_w$ do not depend on the choice of reduced decompositions. 
\end{proof}

\subsection{Root polynomials of Demazure-Lusztig elements}\label{subsec:rootDL}
We now apply the root polynomial construction to the $\taupm_i$ operators. We have 
\[
\cR_{w}^\taupm=\prod_{j=1}^lh^\taupm_{i_j}(\beta_j), \quad \text{where }h^\taupm_i(\be)=\taupm_i^x-\frac{q-1}{y_{-\be}}.
\]
Expanding in terms of $\taupm^x_v$, we write
$
\cR_w^\taupm=\sum_{v}K^\taupm_{v,w}\taupm^x_v.
$
Note that  $\taup_i$ and $\taum_i$ satisfy the same quadratic relations, and that their corresponding root polynomials $h^\taup_i(\be)$ and $h^\taum_i(\be)$ have the same form. Hence,  $K^\taup_{v,w}=K^\taum_{v,w}\in Q^y$, which will be denoted by $K^\tau_{v,w}$. Applying Theorem \ref{thm:root} to $\cR_w^\taupm$, we get the following.
\begin{theorem}\label{thm:rootcoeff}
\begin{enumerate}
\item $\ev(\cR_{w}^{\taup})=\frac{\tilx_w}{x_{w}}\de_w^x, ~\ev(\cR_{w}^\taum)=\frac{\hatx_w}{x_{-w}}\de^x_w$. 
\item $\frac{\tilx_w}{x_{w}}\bp_{w,v}=K^\tau_{v,w}=\frac{\hatx_w}{x_{-w}}\bm_{w,v}$. 
\end{enumerate}
\end{theorem}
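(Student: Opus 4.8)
The plan is to obtain Theorem~\ref{thm:rootcoeff} as the specialization of Theorem~\ref{thm:root} to the choice $\sigma_i=\taupm_i$. The first step is to read off the scalars $a_\al,b_\al$ entering the construction. From the explicit formulas in \eqref{definition of tau},
\[
\taup_\al=\frac{\tilx_\al}{x_\al}\de_\al+\frac{q-1}{x_{-\al}},\qquad
\taum_\al=\frac{\hatx_\al}{x_{-\al}}\de_\al+\frac{q-1}{x_{-\al}},
\]
so one takes $a^+_\al=\tilx_\al/x_\al$, $a^-_\al=\hatx_\al/x_{-\al}$ and $b^+_\al=b^-_\al=(q-1)/x_{-\al}$, all of which lie in $Q$; accordingly the root polynomials have the uniform shape $h^\taupm_i(\be)=\taupm_i^x-(q-1)/y_{-\be}$ recorded in \S\ref{subsec:rootDL}. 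Since the $\taupm_i$ satisfy the braid relations (Lemma~\ref{lem:basis}, \cite{KK90}), Theorem~\ref{thm:root} applies to them.

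For part~(1) I would quote Theorem~\ref{thm:root}(1), which gives $\ev(\cR^\taupm_w)=a^\pm_w\de_w^x$ with $a^\pm_w=\prod_{\be\in\Sigma_w}a^\pm_\be$; expanding the product with the convention \eqref{eq:1} yields
\[
a^+_w=\prod_{\be\in\Sigma_w}\frac{\tilx_\be}{x_\be}=\frac{\tilx_w}{x_w},\qquad
a^-_w=\prod_{\be\in\Sigma_w}\frac{\hatx_\be}{x_{-\be}}=\frac{\hatx_w}{x_{-w}},
\]
which is precisely the first assertion.

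For part~(2) I would invoke Theorem~\ref{thm:root}(2). By Lemma~\ref{lem:basis} one has $\de_w=\sum_{v\le w}\bpm_{w,v}\taupm_v$, so the coefficients called $b^\sigma_{w,v}$ in Theorem~\ref{thm:root}(2) are exactly the $\bpm_{w,v}$; identifying the $x$- and $y$-variables, that theorem then gives $K^\taup_{v,w}=\frac{\tilx_w}{x_w}\bp_{w,v}$ and $K^\taum_{v,w}=\frac{\hatx_w}{x_{-w}}\bm_{w,v}$. Finally I would note that $K^\taup_{v,w}=K^\taum_{v,w}$: expanding $\cR^\taupm_w=\prod_j(\taupm_{i_j}^x-(q-1)/y_{-\be_j})$ into the basis $\{\taupm^x_v\}$ uses only the common quadratic relation $\taupm_i^2=(q-1)\taupm_i+q$ (Lemma~\ref{lem:pro}(3)) and the common scalars $(q-1)/y_{-\be}$, so the coefficients obtained for $\taup$ and for $\taum$ literally coincide; denote the common value $K^\tau_{v,w}$. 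Stringing the two displayed identities together gives $\frac{\tilx_w}{x_w}\bp_{w,v}=K^\tau_{v,w}=\frac{\hatx_w}{x_{-w}}\bm_{w,v}$, as claimed.

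No step here is a genuine obstacle: once Theorem~\ref{thm:root} is available the argument is bookkeeping with the product notation \eqref{eq:1} and Lemma~\ref{lem:basis}. The one point that deserves an explicit sentence rather than a computation is the equality $K^\taup_{v,w}=K^\taum_{v,w}$, which holds because the root-polynomial expansion coefficients are determined entirely by data — the quadratic relation and the scalars $b_\be$ — shared by $\taup$ and $\taum$.
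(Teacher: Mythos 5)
Your proposal is correct and follows essentially the same route as the paper: the paper's proof of Theorem~\ref{thm:rootcoeff} is precisely the one-line "apply Theorem~\ref{thm:root} to $\cR_w^{\taupm}$," with the identification $K^{\taup}_{v,w}=K^{\taum}_{v,w}$ justified, exactly as you do, by the observation that $\taup_i$ and $\taum_i$ share the same quadratic relation and the same scalars $b_\al=(q-1)/x_{-\al}$. You have simply unpacked the bookkeeping — reading off $a^\pm_\al$ and $b^\pm_\al$ from \eqref{definition of tau}, and computing $a^\pm_w$ via \eqref{eq:1} — that the paper leaves implicit.
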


\begin{remark}
The formal root polynomials can be defined similarly for the formal affine Hecke algebra  \cite{ZZ14}. They do not depend on the choice of reduced sequence for hyperbolic formal group law. Moreover, restricting to the connective $K$-theory, one  gets a uniform treatment of the restriction formulas of K-theoretic stable bases in this paper and that of  cohomological stable bases in \cite{Su15}.
\end{remark}

\subsection{Restriction formula via root polynomials}
The following theorem gives the restriction formulas of stable bases of $T^*G/B$:
\begin{theorem}\label{thm:1}
\begin{enumerate}
%\item $Y_{u^{-1}w_0}\bullet \pt_{w_0}=X_u^*$, $ X_{u^{-1}w_0}\bullet \pt_{w_0}=Y_u^*.$ 
\item  $\Stp_w=\sum_{v\le w}v(\ap_{w^{-1}, v^{-1}})x_{-w_0}f_v.$
\item $q_{w_0w}\Stm_w=\hatx_{w_0}\taum_w^*=\sum_{v\ge w}\hatx_{w_0}\bm_{v,w}f_v=\sum_{v\ge w}\frac{\hatx_{w_0} x_{-v}}{\hatx_v}K^\tau_{w,v}f_v=\sum_{v\ge w}v(\hatx_{v^{-1}w_0})x_{-v}K^\tau_{w,v}f_v.$
%\item Writing $(\taum_{w^{-1}})^{-1}=\sum_{v\le w}c^-_{w,v}\de_v$, then $
%u(c^-_{w^{-1}w_0, u^{-1}w_0})x_{-w_0}=q_{w_0w}^{-1}\hatx_{w_0}\bm_{u,w}.$
\end{enumerate}
\end{theorem}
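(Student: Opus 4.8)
The plan is to derive both formulas by assembling the algebraic results of the previous sections; the only step carrying real content is a bookkeeping identity in the root system, together with keeping track of inversions. For part~(1), I would start from Definition~\ref{def:stabalg}, $\Stp_w=\taup_{w^{-1}}\bullet\pt_e$, expand $\taup_{w^{-1}}=\sum_{u\le w^{-1}}\ap_{w^{-1},u}\de_u$ using Lemma~\ref{lem:basis}(1), and then evaluate using the $\bullet$-action rules \eqref{eq:bullet} on $\pt_e=x_{-w_0}f_e$. Since $e$ acts trivially, $\pt_e=x_{-w_0}\bullet f_e$, and a short computation gives $(\ap_{w^{-1},u}\de_u)\bullet(x_{-w_0}\bullet f_e)=u^{-1}(\ap_{w^{-1},u})\,x_{-w_0}\,f_{u^{-1}}$, the factor $x_{-w_0}$ being untouched because $u^{-1}u(x_{-w_0})=x_{-w_0}$. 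Summing over $u$ and substituting $v=u^{-1}$ — inversion is an automorphism of the Bruhat order, so $u\le w^{-1}$ is equivalent to $v\le w$ — produces $\Stp_w=\sum_{v\le w}v(\ap_{w^{-1},v^{-1}})x_{-w_0}f_v$, which is part~(1). This part is entirely mechanical.

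For part~(2) I would establish the displayed chain of four equalities one at a time. The middle equality $\hatx_{w_0}\taum_w^*=\sum_{v\ge w}\hatx_{w_0}\bm_{v,w}f_v$ is immediate from the expansion $\taum_w^*=\sum_{v\ge w}\bm_{v,w}f_v$ of the dual basis (recalled just before Lemma~\ref{lem:dualT}, via Lemma~\ref{lem:basis}(1)) together with the $Q$-algebra structure on $Q_W^*$. The next equality $\sum_{v\ge w}\hatx_{w_0}\bm_{v,w}f_v=\sum_{v\ge w}\frac{\hatx_{w_0}x_{-v}}{\hatx_v}K^\tau_{w,v}f_v$ is Theorem~\ref{thm:rootcoeff}(2) with $w$ and $v$ interchanged, that is, $\frac{\hatx_v}{x_{-v}}\bm_{v,w}=K^\tau_{w,v}$. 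For the leading equality $q_{w_0w}\Stm_w=\hatx_{w_0}\taum_w^*$, I would compare two dual bases for the perfect pairing $(f,g)\mapsto\hatY_\Pi\bullet(fg)$ of Lemma~\ref{lem:dualT}: by that lemma the family $\{\hatx_{w_0}\taum_w^*\}_w$ is dual to the basis $\{\Stp_u=\taup_{u^{-1}}\bullet\pt_e\}_{u\in W}$ (which is a basis by part~(1)), and by Theorem~\ref{thm:main}(1) the family $\{q_{w_0w}\Stm_w\}_w$ is dual to the same basis; since the pairing is perfect the dual basis is unique, so the two families coincide.

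The remaining equality $\frac{\hatx_{w_0}x_{-v}}{\hatx_v}K^\tau_{w,v}f_v=v(\hatx_{v^{-1}w_0})x_{-v}K^\tau_{w,v}f_v$ reduces to the root-system identity $\hatx_{w_0}=\hatx_v\cdot v(\hatx_{v^{-1}w_0})$, and this is the one place I expect to need some care. Using $v(\hatx_\beta)=\hatx_{v\beta}$ I would write $v(\hatx_{v^{-1}w_0})=\prod_{\gamma\in v\Sigma_{v^{-1}w_0}}\hatx_\gamma$ with $v\Sigma_{v^{-1}w_0}=v\big((v^{-1}w_0)\Sigma^-\cap\Sigma^+\big)=w_0\Sigma^-\cap v\Sigma^+=\Sigma^+\cap v\Sigma^+$, while $\hatx_v=\prod_{\beta\in\Sigma_v}\hatx_\beta$ with $\Sigma_v=v\Sigma^-\cap\Sigma^+=\Sigma^+\cap v\Sigma^-$; since $\Sigma^+$ is the disjoint union of $\Sigma^+\cap v\Sigma^+$ and $\Sigma^+\cap v\Sigma^-$ and $\Sigma_{w_0}=\Sigma^+$, the product $\hatx_v\cdot v(\hatx_{v^{-1}w_0})$ collapses to $\prod_{\beta>0}\hatx_\beta=\hatx_{w_0}$. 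Apart from this identity, and the analogous inversion bookkeeping in part~(1), everything is a direct invocation of Lemmas~\ref{lem:basis} and~\ref{lem:dualT} and Theorems~\ref{thm:main} and~\ref{thm:rootcoeff}, which carry the substantive part of the restriction formula.
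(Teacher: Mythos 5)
Your proof is correct and follows essentially the same route as the paper's: part~(1) is the direct expansion of $\taup_{w^{-1}}$ against $\pt_e$ via Lemma~\ref{lem:basis} and the $\bullet$-action rules (the same computation already occurs inside the paper's proof of Lemma~\ref{lem:dualT}), and part~(2) proceeds exactly as in the paper by matching the dual bases furnished by Lemma~\ref{lem:dualT} and Theorem~\ref{thm:main}, then applying Theorem~\ref{thm:rootcoeff}(2) and the root-system identity. Your derivation of $\hatx_{w_0}=\hatx_v\cdot v(\hatx_{v^{-1}w_0})$ via the disjoint decomposition $\Sigma^+=(\Sigma^+\cap v\Sigma^+)\sqcup(\Sigma^+\cap v\Sigma^-)$ is an equivalent packaging of the paper's set-theoretic chain $\Sigma^+\setminus\Sigma_v=v(\Sigma_{v^{-1}w_0})$.
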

\begin{proof}
(1). This follows from definition and Lemma \ref{lem:basis}.

(2). Via the pairing defined in Lemma \ref{lem:dualT}, $\hatx_{w_0}\taum_w^*$ is dual to $\St^+_u$.  According to Theorem \ref{thm:main}, $q_{w_0w}\St^-_w$ is also dual to $\St^+_u$. Hence, $q_{w_0w}\St^-_w=\hatx_{w_0}\taum^*_w$.  The second equality in the theorem follows from the definition of $\taum_u^*$; the third equality follows from Theorem \ref{thm:rootcoeff}; the last identity follows the identities 
\[
\Sigma^+\backslash (v\Sigma^-\cap \Sigma^+)=\Sigma^+\backslash v\Sigma^-=v\left(v^{-1}\Sigma^+\backslash \Sigma^-\right)=v\left(v^{-1}\Sigma^+\cap \Sigma^+\right)=v\left(v^{-1}w_0\Sigma^-\cap \Sigma^+\right)=v(\Sigma_{v^{-1}w_0}). 
\]
\end{proof}
\begin{example}From Theorem \ref{thm:1} and Lemma \ref{lem:basis} we have 
\begin{gather*}\Stp_w|w=w(\ap_{w^{-1}, w^{-1}})x_{-w_0}=(\prod_{\al<0, w^{-1}\al>0}\tilx_\al)\cdot(\prod_{\be>0, w^{-1}\be>0}x_{-\be})=[\prod_{\al<0, w^{-1}\al>0}(q-e^\al)]\cdot [\prod_{\be>0, w^{-1}\be>0}(1-e^{\be})],\\
q_{w_0w}\Stm_w|_w=\hatx_{w_0}\frac{x_{-w}}{\hatx_w}=(\prod_{\al>0, w^{-1}\al>0}\hatx_\al)\cdot (\prod_{\be>0, w^{-1}\be<0}x_{-\be})=[\prod_{\al>0, w^{-1}\al>0}(1-qe^{-\al})]\cdot [\prod_{\be>0, w^{-1}\be<0}(1-e^\be)].
\end{gather*}
\end{example}

\section{Stable bases of partial flag varieties}\label{sec:geoPJ}
Let $J$ be a subset in the set of  simple roots, let $G/P_J$ be the partial flag variety corresponding to $J$. In this section, we consider the stable bases of $K_T(T^*G/P_J)$. The main result of this section (Theorem \ref{thm:geoPJ}) says that  such bases coincide with the image of the stable bases of $K_T(T^*G/B)$ via the map \eqref{eq:p2p1}.
We then give an algebraic formula for the stable bases in this case.

\subsection{The definition of stable basis}
The $A$-fixed points of $T^*G/P_J$ under the maximal torus $A$ are indexed by the left cosets $W/W_J$, or by $W^J$. Moreover, $v\Sigma_J^+\subset \Sigma^+$ for $v\in W^J$. As in Section \ref{sec:geodef}, we can define chambers, partial orders on the fixed points, leaves, slopes and polarizations in the setting of $T^*G/P_J$. The group  $\Pic(T^*G/P_J)$ is isomorphic to the lattice $\{\lambda\in \Lambda| (\lambda,\alpha^\vee)=0 \text{ for any } \alpha\in J\}$. 

We use the following theorem as the definition of stable bases of $T^*G/P_J$:

\begin{theorem}\label{thm:geostablepartial} \cite[\S 9.1]{OK15}  For any chamber $\fC$, any polarization $T^{\frac{1}{2}}$ of $T^*G/P_J$, and any rational line bundle $\calL$, there exists a unique map of $S$-modules
\[
\stab^J_{\mathfrak{C},T^{\frac{1}{2}},\calL}:K_T((T^*G/P_J)^A)\rightarrow K_T(T^*G/P_J),
\]
such that for any $w\in W^J$, $\Gamma=\stab^J_{\mathfrak{C},T^{\frac{1}{2}},\calL}(w)$ satisfies:
\begin{enumerate}
\item (\textit{support}) $\supp \Gamma\subset \Slope_{\mathfrak{C}}(w)$;
\item (\textit{normalization}) $\Gamma|_w=(-1)^{\rank N_{w,+}^{\frac{1}{2}}}\left(\frac{\det N_{w,-}}{\det N_w^{\frac{1}{2}}}\right)^{\frac{1}{2}}\calO_{\Leaf_\mathfrak{C}(w)}|_w$;
\item (\textit{degree}) $\deg_A\left(\Gamma|_v\otimes \calL|_w\right)\subseteq \deg_A\left((\stab^J_{\mathfrak{C},T^{\frac{1}{2}},\calL}(v)\otimes\calL)|_v\right)$, for any $v\in W^J$ and $v\prec_{\mathfrak{C}} w$,
\end{enumerate}
where $w$ in $\stab^J_{\mathfrak{C},T^{\frac{1}{2}},\calL}(w)$ is the unit in $K_T^*(w)$.
\end{theorem}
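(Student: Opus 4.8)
This statement is the specialization to $X=T^*G/P_J$ of Okounkov's general existence-and-uniqueness theorem for K-theoretic stable envelopes of conical symplectic resolutions \cite[\S 9]{OK15}: $T^*G/P_J$ symplectically resolves the closure of a (Richardson) nilpotent orbit, the resolution map is proper, the $\bbC^*$-scaling of the cotangent fibres provides the conical structure, and $A$ acts with isolated fixed points indexed by $W^J$, so the cited result applies directly. I would present the argument with uniqueness first. If $\Gamma,\Gamma'$ both satisfy (1)--(3) for a fixed $w\in W^J$, put $D:=\Gamma-\Gamma'$; the support axiom forces $\supp D\subseteq\Slope_{\mathfrak{C}}(w)$, hence $D|_v=0$ unless $v\preceq_{\mathfrak{C}}w$, and the normalization axiom gives $D|_w=0$. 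I would then run the descending induction along $\preceq_{\mathfrak{C}}$ exactly as in \cite{MO,OK15}: for a $\preceq_{\mathfrak{C}}$-maximal $v$ with $D|_v\neq 0$ one plays the upper bound $\deg_A(D|_v)\subseteq\deg_A(\stab^J_{\mathfrak{C},T^{1/2},\calL}(v)|_v)+(v-w)\lambda$ (from axiom (3) applied to both $\Gamma$ and $\Gamma'$, with $\calL=\calO(\lambda)$) against the constraint that $D$ is a genuine, not merely localized, class supported on $\Slope_{\mathfrak{C}}(w)$; since $v\prec_{\mathfrak{C}}w$ strictly the shift $(v-w)\lambda$ has a definite sign on the positive chamber (cf.\ Lemma~\ref{lem:Bruhatcompare}) and, for $\calL$ in general position relative to the walls, is incompatible with that constraint unless $D|_v=0$. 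This is the mechanism illustrated in Remark~\ref{rem:afterdef}.(5); iterating yields $D=0$.

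For existence I would exhibit the candidate by transport from $T^*G/B$ rather than rerun Okounkov's inductive construction: push the stable bases $\stab_{\pm}(w)$, $w\in W^J$ — already in hand — forward along the Lagrangian (Steinberg-type) correspondence between $T^*G/B$ and $T^*G/P_J$, with the polarization and line bundle of $T^*G/P_J$ pulled back along $G/B\to G/P_J$. The support axiom is then immediate because the correspondence carries $\Slope_{\mathfrak{C}}$ of $T^*G/B$ into $\Slope_{\mathfrak{C}}$ of $T^*G/P_J$; the normalization axiom at the fixed point $w$ is a finite weight computation in which the directions indexed by $\Sigma_J^+$ (recall $v\Sigma_J^+\subset\Sigma^+$ for $v\in W^J$) account exactly for the discrepancy between the $G/B$- and $G/P_J$-normalizations; and the degree axiom at lower fixed points is inherited from the corresponding Newton-polygon estimate on $T^*G/B$, since proper pushforward does not enlarge $\deg_A$ beyond the controlled amount. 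Uniqueness then identifies this class with the object of the theorem, so in practice I would prove the theorem together with the identification in Theorem~\ref{thm:geoPJ}.

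The delicate point in both halves is the degree axiom: in uniqueness one must squeeze $\deg_A(D|_v)$ tightly enough to force vanishing (this is where $\calL$ being near a wall would genuinely break the statement, as in Remark~\ref{rem:afterdef}.(5)), and in existence one must verify that the pushforward does not overshoot the prescribed Newton polygon at the lower fixed points. The first two axioms are essentially geometric bookkeeping; the genuinely parabolic features — fixed locus $W^J$ in place of $W$, and leaves and slopes computed on $G/P_J$ — enter only through the $W/W_J$-combinatorics in these degree estimates.
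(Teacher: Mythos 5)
The paper itself does not prove this statement: like its $T^*G/B$ counterpart Theorem~\ref{thm:geostable}, it is stated as a direct citation to \cite[\S 9.1]{OK15}, with the text merely observing that Okounkov's general hypotheses apply to $T^*G/P_J$. So there is no internal proof to compare against, and what you offer is a reconstruction; I will assess it on its own terms.

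Your uniqueness half is sound and reproduces Okounkov's mechanism. Setting $D=\Gamma-\Gamma'$: the support axiom localizes $D$ to $\{v\preceq_{\fC} w\}$, the normalization kills $D|_w$, and for a $\preceq_{\fC}$-maximal $v$ with $D|_v\ne 0$ the degree axiom squeezes $\deg_A(D|_v)$ into $\deg_A(\stab^J(v)|_v)+(v-w)\lambda$, while the requirement that $D$ be a genuine (unlocalized) class supported on $\Slope_{\fC}(w)$ forces $\deg_A(D|_v)$ to contain an unshifted copy; for $\calL$ off the rational walls the shift $(v-w)\lambda$ is incompatible with both, so $D|_v=0$. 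This is exactly the logic of Remark~\ref{rem:afterdef}.(5) run in the contrapositive, and it is correct in outline.

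The existence half has a genuine gap. You want to exhibit $\stab^J_{\fC}(w)$ as $p_{2*}p_1^*(\stab_{\fC}(w))$ and then verify the three axioms directly. The support and normalization checks are bookkeeping, as you say, but your verification of the degree axiom --- that ``proper pushforward does not enlarge $\deg_A$ beyond the controlled amount'' --- is precisely the step that does not follow formally. The restriction $(p_{2*}p_1^*\Gamma)|_v$ is a localization sum over the fiber $\pi^{-1}(v)=vW_J$, and on $T^*G/B$ the available bound on each summand $\Gamma|_y$ is a Newton-polygon containment shifted by $\calL|_y=\calO(y\lambda)$, which varies with $y\in vW_J$; what you must produce for $T^*G/P_J$ is a bound shifted by $\calL_J|_v$. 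These shifts differ, and Newton polygons only grow under addition, so the per-term bounds do not naively assemble into the required one --- the cancellation has to be exhibited. Indeed, the paper's Theorem~\ref{thm:geoPJ}, which does establish $p_{2*}p_1^*(\stab_\pm(v))=\stab^J_\pm(v)$, is proved not by checking the three axioms on the pushforward but by a rigidity computation of the pairing with $\stab^J_\mp(u)$, which presupposes that $\stab^J_\mp$ already exists; it therefore cannot be used to bootstrap existence. The clean logical arrangement is the paper's own: cite \cite{OK15} for existence and uniqueness of $\stab^J$, then prove the pushforward identity as a separate statement (Theorem~\ref{thm:geoPJ}).
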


Then the stable basis for $K_T(T^*G/P_J)_{\textit{loc}}$ is $\{\stab^J_{\mathfrak{C},T^{\frac{1}{2}},\calL}(w)|w\in W^J\}$. And we have the following duality property \cite[Proposition 1]{OS16}:
\begin{equation}\label{duality for partial}
\left(\stab^J_{\mathfrak{C},T^{\frac{1}{2}},\calL}(v),~\stab^J_{\mathfrak{-C},T_{\text{opp}}^{\frac{1}{2}},\calL^{-1}}(w)\right)=\delta_{v,w}.
\end{equation}
Let $\leq^J$ denote the Bruhat order on $W^J$, i.e., for any $v,w\in W^J$, $v\leq^J w$ if $BvP_J/P_J\subset\overline{BwP_J/P_J}$. Similarly to Lemma \ref{lem:charstab}, we have
\begin{lemma}\label{lem:charpartialstab}
For any $v,w\in W^J$, we have
\begin{enumerate}
\item 
$\stab^J_{-,T^*G/P_J,\calL}(v)|_w=0$, unless $v\leq^J w$.
\item 
$\stab^J_{-,T^*G/P_J,\calL}(v)|_v=q_v^{\frac{1}{2}}\prod\limits_{\beta\in \Sigma^+\setminus \Sigma^+_J, -v\beta\in \Sigma^+}(1-e^{-v\beta})\prod\limits_{\beta\in \Sigma^+\setminus \Sigma^+_J, v\beta\in \Sigma^+}(1-qe^{-v\beta})$.
\end{enumerate}
\end{lemma}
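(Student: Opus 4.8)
The plan is to prove Lemma~\ref{lem:charpartialstab} by imitating the proof of Lemma~\ref{lem:charstab}, using the defining properties in Theorem~\ref{thm:geostablepartial} directly. Part (1) is immediate from the support condition: $\supp\Gamma\subset\Slope_{-}(w)$ forces $\stab^J_{-,T^*G/P_J,\calL}(v)|_w=0$ unless $v\preceq_{-}w$, and since $\preceq_{-}$ on $W^J$ is the order opposite to $\leq^J$ --- the same way $\preceq_{-}$ was the opposite Bruhat order in the $G/B$ case --- this says $v\leq^J w$. (One small point to spell out: the partial order $\preceq_{-}$ determined by the negative chamber agrees with the opposite of $\leq^J$; this is the $G/P_J$ analogue of the remark in \S\ref{sec:geodef} that $\preceq_{+}$ equals the Bruhat order, and follows from the description of leaves $\Leaf_{-}(w)=T^*_{B^-wP_J/P_J}G/P_J$.)

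For part (2) I would compute the weights of the normal bundle $N_v=N_v(T^*G/P_J)$ at the fixed point $v\in W^J$ and plug into the normalization axiom. The tangent space $T_v(G/P_J)$ has $A$-weights $\{e^{-v\beta}\mid \beta\in\Sigma^+\setminus\Sigma^+_J\}$ (using $v\Sigma^+_J\subset\Sigma^+$ for $v\in W^J$), and the cotangent fiber contributes $\{q^{-1}e^{v\beta}\mid\beta\in\Sigma^+\setminus\Sigma^+_J\}$. For the negative chamber, $N_{v,+}$ consists of the weights positive on $-$, i.e. $\{e^{-v\beta}\mid\beta\in\Sigma^+\setminus\Sigma^+_J,\ v\beta<0\}\cup\{q^{-1}e^{v\beta}\mid\beta\in\Sigma^+\setminus\Sigma^+_J,\ v\beta>0\}$, and $N_{v,-}$ is the complementary set. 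Taking the polarization $T^{1/2}=T^*(G/P_J)$, so that $N_v^{1/2}=\{q^{-1}e^{v\beta}\mid\beta\in\Sigma^+\setminus\Sigma^+_J\}$, exactly as in Lemma~\ref{lem:charstab}. Then
\[
\Gamma|_v=(-1)^{\rank N_{v,+}^{1/2}}\left(\frac{\det N_{v,-}}{\det N_v^{1/2}}\right)^{1/2}\cdot\prod_{\beta\in\Sigma^+\setminus\Sigma^+_J,\ v\beta<0}(1-e^{v\beta})\prod_{\beta\in\Sigma^+\setminus\Sigma^+_J,\ v\beta>0}(1-qe^{-v\beta}),
\]
where the last product is $\calO_{\Leaf_{-}(v)}|_v=\bigwedge^\bullet(N_{v,-})$. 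The square-root prefactor is evaluated exactly as in the equality $\sharp_1$ in the proof of Lemma~\ref{lem:charstab}: the $e$-powers in $\det N_{v,-}/\det N_v^{1/2}$ regroup into two copies of $\{e^{v\beta}\mid\beta\in\Sigma^+\setminus\Sigma^+_J,\ v\beta<0\}$ (up to the $q$-powers, which give $q_v^{1/2}$ since $\ell(v)=\#\{\beta\in\Sigma^+\setminus\Sigma^+_J\mid v\beta<0\}$ for $v\in W^J$), and the sign $(-1)^{\rank N_{v,+}^{1/2}}=(-1)^{\ell(v)}$ absorbs into flipping $(1-e^{v\beta})$ to $(e^{-v\beta}-1)$. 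This yields precisely $q_v^{1/2}\prod_{\beta\in\Sigma^+\setminus\Sigma^+_J,\ -v\beta>0}(1-e^{-v\beta})\prod_{\beta\in\Sigma^+\setminus\Sigma^+_J,\ v\beta>0}(1-qe^{-v\beta})$, which is the claimed formula after rewriting $\{-v\beta>0\}=\{v\beta<0\}$.

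The only genuinely new ingredient compared to the $G/B$ case is bookkeeping: one must be careful that the set of $A$-weights on $T_v(G/P_J)$ is indexed by $\Sigma^+\setminus\Sigma^+_J$ rather than all of $\Sigma^+$, and that $\ell(v)$ for $v\in W^J$ counts only the inversions lying outside $\Sigma_J$. I expect the main obstacle --- such as it is --- to be verifying the square-root/sign computation cleanly in the $G/P_J$ setting, i.e. checking that $\big(\det N_{v,-}/\det N_v^{1/2}\big)^{1/2}=q_v^{1/2}\prod_{\beta\in\Sigma^+\setminus\Sigma^+_J,\ v\beta<0}e^{-v\beta}$ up to sign, so that combined with $(-1)^{\ell(v)}$ and $\calO_{\Leaf_{-}(v)}|_v$ it produces the stated product. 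This is the direct analogue of the discussion around $\sharp_1$ in Lemma~\ref{lem:charstab}, so I would simply point to that argument rather than redo it in full; everything else is a routine transcription, and parts (3)--(4) of Lemma~\ref{lem:charstab} (if their analogues are needed) follow by the same substitution with the opposite chamber and opposite polarization.
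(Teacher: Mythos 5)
Your approach is exactly the paper's: part (1) from the support axiom, part (2) by listing the $A$-weights of $N_v$, $N_{v,\pm}$, $N_v^{1/2}$ and plugging into the normalization axiom, with the key bookkeeping observation that for $v\in W^J$ one has $v\Sigma_J^+\subset\Sigma^+$, so the inversions of $v$ all lie in $\Sigma^+\setminus\Sigma_J^+$ and $\ell(v)=\#\{\beta\in\Sigma^+\setminus\Sigma_J^+ : v\beta<0\}$.

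One slip: your displayed weight decomposition has $N_{v,+}$ and $N_{v,-}$ swapped. For the negative chamber, ``positive on $-$'' means the $A$-weight is a negative combination of positive roots; since $e^{-v\beta}$ has $A$-weight $-v\beta$, this weight lies in $N_{v,+}$ precisely when $v\beta>0$, not $v\beta<0$. So the correct decomposition is $N_{v,+}=\{e^{-v\beta}\mid v\beta>0\}\cup\{q^{-1}e^{v\beta}\mid v\beta<0\}$ (restricting to $\beta\in\Sigma^+\setminus\Sigma_J^+$ throughout), and $N_{v,-}$ its complement. Fortunately, the rest of your computation — the explicit product $\calO_{\Leaf_-(v)}|_v=\prod_{v\beta<0}(1-e^{v\beta})\prod_{v\beta>0}(1-qe^{-v\beta})=\bigwedge^\bullet N_{v,-}$, the sign $(-1)^{\rank N_{v,+}^{1/2}}=(-1)^{\ell(v)}$, and the square root $q_v^{1/2}\prod_{v\beta<0}e^{-v\beta}$ — is done with the correct convention and lands exactly on the claimed formula, so only the one displayed line needs to be corrected.
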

\begin{proof}
(1) follows from the the support condition. 

(2). Since we choose the negative chamber $-$, we have
\begin{flalign*}A-\text{weights in } N_{v,+}&=\{e^{-v\beta}|\beta\in \Sigma^+\setminus \Sigma^+_J,v\beta>0\}\cup \{q^{-1}e^{v\beta}|\beta\in \Sigma^+\setminus \Sigma^+_J,v\beta<0\},\\
A-\text{weights in } N_{v,-}&=\{e^{-v\beta}|\beta\in \Sigma^+\setminus \Sigma^+_J,v\beta<0\}\cup \{q^{-1}e^{v\beta}|\beta\in \Sigma^+\setminus \Sigma^+_J,v\beta>0\},\\
A-\text{weights in } N_v^{\frac{1}{2}}&=\{q^{-1}e^{v\beta}|\beta\in \Sigma^+\setminus \Sigma^+_J\}.\end{flalign*}
And $\Leaf(w)=T_{B^-vP_J/P_J}^*(G/P_J)$, where $P_J$ is the corresponding parabolic subgroup. Therefore
\begin{flalign*}
\stab_-(v)|_v&=(-1)^{\rank N_{v,+}^{\frac{1}{2}}}\left(\frac{\det N_{v,-}}{\det N_v^{\frac{1}{2}}}\right)^{\frac{1}{2}}\calO_{\Leaf_\mathfrak{C}(v)}|_v\\
&=(-1)^{\ell(v)}\left(\frac{\prod\limits_{\beta\in \Sigma^+\setminus \Sigma^+_J,v\beta<0}e^{-v\beta}\prod\limits_{\beta\in \Sigma^+\setminus \Sigma^+_J,v\beta>0}q^{-1}e^{v\beta}}{\prod\limits_{\beta\in \Sigma^+\setminus \Sigma^+_J}q^{-1}e^{v\beta}}\right)^{\frac{1}{2}}\prod\limits_{\beta\in \Sigma^+\setminus \Sigma^+_J,v\beta<0}(1-e^{v\beta})\prod\limits_{\beta\in \Sigma^+\setminus \Sigma^+_J,v\beta>0}(1-qe^{-v\beta})\\
&=q_v^{\frac{1}{2}}\prod\limits_{\beta\in \Sigma^+\setminus \Sigma^+_J,v\beta<0}(1-e^{-v\beta})\prod\limits\limits_{\beta\in \Sigma^+\setminus \Sigma^+_J,v\beta>0}(1-qe^{-v\beta}).
\end{flalign*}
\end{proof}

Therefore, as in Section \ref{sec:rigidity}, we have
\begin{equation}\label{deg cond for partial}
max_\xi(\stab^J_{-,T^*G/P_J,\calL}(v)|_v)=(\xi, \sum_{\beta\in \Sigma^+\setminus \Sigma^+_J,v\beta<0} -v\beta),\quad min_\xi(\stab^J_{-,T^*G/P_J,\calL}(v)|_v)=(\xi, \sum_{\beta\in \Sigma^+\setminus \Sigma^+_J,v\beta>0} -v\beta).
\end{equation}

 We have a projection $\pi:G/B\to G/P_J$ and  a Lagrangian correspondence  $G/B\times_{G/P_J}T^*G/P_J$ in $T^*G/B\times T^*G/P_J$: 
\[\xymatrix{ T^*G/B &
G/B\times_{G/P_J}T^*G/P_J \ar[l]_-{p_1} \ar[r]^-{p_2} & T^*G/P_J }. \]
Therefore, we have the following map:
\begin{equation}\label{eq:p2p1}
p_{2*}p_1^*:K_T(T^*G/B)\rightarrow K_T(T^*G/P_J).
\end{equation}
Recall $\calL=\calO(\lambda)\in \Pic(T^*G/B)\otimes_{\bbZ}\bbQ$, where $\lambda$ lies in the fundamental alcove and $\lambda$ is sufficiently near $0$. Let $\calL_J:=\calO(\lambda-\sum_{\alpha\in J}(\lambda,\alpha^\vee)\varpi_\alpha)\in \Pic(T^*G/P_J)\otimes_{\bbZ}\bbQ$, where $\varpi_\alpha$ is the fundamental weight associated to the simple root $\alpha$. 

For any $v\in W^J$, denote
\[\stab^J_+(v)=\stab^J_{+,T(G/P_J),\calL_J^{-1}}(v), \quad \stab^J_-(v)=
\stab^J_{-,T^*(G/P_J),\calL_J}(v).\] The image of the stable bases under the map (\ref{eq:p2p1}) is given as follows.

\begin{theorem}\label{thm:geoPJ}
For any $v\in W^J$, we have
\[p_{2*}p_1^*(\stab_+(v))=\stab^J_+(v),\] 
and 
\[p_{2*}p_1^*(\stab_-(v))=\stab^J_-(v).\]   
\end{theorem}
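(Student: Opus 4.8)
The plan is to invoke the uniqueness in Theorem~\ref{thm:geostablepartial}: I would check that $\Gamma:=p_{2*}p_1^*(\stab_-(v))$ satisfies the support, normalization and degree axioms characterizing $\stab^J_-(v)=\stab^J_{-,T^*(G/P_J),\calL_J}(v)$, and argue similarly for the $+$ chamber; by uniqueness this forces $p_{2*}p_1^*(\stab_-(v))=\stab^J_-(v)$ and $p_{2*}p_1^*(\stab_+(v))=\stab^J_+(v)$. The first thing to record is the geometry of \eqref{eq:p2p1}: the morphism $p_1\colon G/B\times_{G/P_J}T^*G/P_J\to T^*G/B$ is a closed embedding, identifying the source with the pullback bundle $\pi^*(T^*G/P_J)$ sitting inside $T^*G/B$ as the sub-bundle of covectors annihilating the relative tangent bundle of $\pi\colon G/B\to G/P_J$; and $p_2$ is proper with fibres $\cong P_J/B$. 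Hence $p_1^*$ is restriction to a closed subvariety and $p_{2*}$ is an honest pushforward, so $\Gamma\in K_T(T^*G/P_J)$ (not merely in the localization) and $\supp\Gamma\subseteq p_2\big(p_1^{-1}(\supp\stab_-(v))\big)$.

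For the support axiom I would argue cell by cell. Since $\supp\stab_-(v)\subseteq\Slope_-(v)=\bigcup_{w\ge v}\Leaf_-(w)$ with $\Leaf_-(w)=T^*_{B^-wB/B}(G/B)$, it is enough to see that $p_2\big(p_1^{-1}(\overline{\Leaf_-(w)})\big)\subseteq\overline{\Leaf^J_-(\bar w)}$, where $\bar w\in W^J$ is the minimal representative of $wW_J$; this is a routine computation with conormal directions once one notes that $\pi$ carries the opposite Schubert cell $B^-wB/B$ onto $B^-\bar wP_J/P_J$. Because $v\in W^J$ and the projection $W\to W^J$ is order preserving, $w\ge v$ implies $\bar w\ge^J v$, and summing over $w\ge v$ gives $\supp\Gamma\subseteq\Slope^J_-(v)$.

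For the normalization I would localize on the correspondence variety. Its $T$-fixed locus is $\{(u,0):u\in W\}$ (the cotangent fibre of $G/P_J$ carries no nonzero $T$-fixed vector), the fixed points over $v\in(T^*G/P_J)^T$ are $\{(vw',0):w'\in W_J\}$, and $p_1(vw',0)=vw'$. The pushforward--localization formula then gives
\[
\Gamma|_v=\bigwedge\nolimits^\bullet T_v(T^*G/P_J)\sum_{w'\in W_J}\frac{\stab_-(v)|_{vw'}}{\bigwedge\nolimits^\bullet\!\big(T_{vw'}(G/B)\oplus q^{-1}T^*_v(G/P_J)\big)}.
\]
Using Lemma~\ref{lem:charstab} and, for $w'\ne e$, the recursion of Proposition~\ref{lem:strcharm}, together with $\ell(vw')=\ell(v)+\ell(w')$ for $v\in W^J$, this sum can be evaluated explicitly; after using $\bigwedge^\bullet T_v(T^*G/P_J)=\prod_{\beta\in\Sigma^+\setminus\Sigma^+_J}(1-e^{v\beta})(1-qe^{-v\beta})$ one should recover exactly the value of $\stab^J_-(v)|_v$ from part~(2) of Lemma~\ref{lem:charpartialstab}. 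In effect this is where one checks that the polarization $T^*G/B$ matches $T^*(G/P_J)$ along the correspondence.

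The main obstacle is the degree axiom. The individual summands above are not Laurent polynomials, but $\Gamma|_{v'}$ is, so I would run a rigidity argument in the spirit of Section~\ref{sec:rigidity}: choosing $\xi$ as in Lemma~\ref{lem:rigidity} and inspecting $\lim_{t\to\pm\infty}\Gamma|_{v'}(t\xi)$ summand by summand, one bounds $\max_\xi$ and $\min_\xi$ of $\Gamma|_{v'}$ in terms of the corresponding bounds for the classes $\stab_-(w)|_u$ on $G/B$ (with $u\in v'W_J$ and $v'>^J w$), which are delivered by the degree axiom of Theorem~\ref{thm:geostable} and the computations \eqref{deg con}. The subtle point is the line-bundle bookkeeping: on $T^*G/B$ one carries the twist $\calL|_u=e^{u\lambda}$, while on $T^*G/P_J$ one needs $\calL_J|_{v'}=e^{v'\lambda_J}$ with $\lambda_J=\lambda-\sum_{\alpha\in J}(\lambda,\alpha^\vee)\varpi_\alpha$. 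The point of this choice is that $\lambda_J$ is $W_J$-invariant, so $\calL_J|_u=\calL_J|_{v'}$ for all $u\in v'W_J$ (consistently with $\calL_J\in\Pic(G/P_J)$), and that $\lambda_J$ again lies in the fundamental alcove near $0$ for $G/P_J$; granting this, Lemma~\ref{lem:Bruhatcompare} applied in $W^J$ closes the estimate. With all three axioms verified, Theorem~\ref{thm:geostablepartial} yields $p_{2*}p_1^*(\stab_-(v))=\stab^J_-(v)$, and the $+$ case follows by the same argument with the chamber $+$, the polarization $T(G/B)$ and the slope $\calL^{-1}$ (alternatively, from the $-$ case via the dualities \eqref{duality for partial} and Remark~\ref{rem:afterdef}.(2) together with the adjointness of $p_{2*}p_1^*$ and $p_{1*}p_2^*$ up to a twist).
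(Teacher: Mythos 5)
Your plan is a genuinely different route from the paper's. The paper never verifies the support/normalization/degree axioms for $\Gamma=p_{2*}p_1^*\stab_\pm(v)$ directly; instead it invokes the duality \eqref{duality for partial} to reduce the whole statement to the single assertion $\left(p_{2*}p_1^*\stab_+(v),\stab^J_-(u)\right)=\delta_{v,u}$, which it then proves by localizing on the correspondence and running a rigidity argument (this is the same engine as Proposition~\ref{prop:stabim}). In the diagonal case $u=v$ the localization index set collapses to the single pair $(v,v)$ because of the length inequalities $\ell(y)\le\ell(v)=\ell(u)\le\ell(w)\le\ell(y)$, so the only computation needed is one explicit fixed-point quotient. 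Your alternative of checking the axioms and appealing to uniqueness in Theorem~\ref{thm:geostablepartial} is a legitimate strategy in principle, and what it buys is a proof that does not presuppose \eqref{duality for partial}; but the paper's reduction is considerably cheaper precisely because it avoids the step you relegate to a ``can be evaluated explicitly.''

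That step is where the real gap lies. Your normalization identity
\[
\Gamma|_v=\bigwedge\nolimits^\bullet T_v(T^*G/P_J)\sum_{w'\in W_J}\frac{\stab_-(v)|_{vw'}}{\bigwedge\nolimits^\bullet\bigl(T_{vw'}(G/B)\oplus q^{-1}T^*_v(G/P_J)\bigr)}
\]
does not telescope term by term; the $w'=e$ summand alone is
$\stab_-(v)|_v/\prod_{\beta\in\Sigma_J^+}(1-e^{v\beta})$, which differs from $\stab^J_-(v)|_v$ by the factor $\prod_{\beta\in\Sigma_J^+}(1-qe^{-v\beta})/(1-e^{v\beta})$, and the remaining $|W_J|-1$ terms must conspire to correct this. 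Establishing that the full sum over $W_J$ equals $\stab^J_-(v)|_v$ is a nontrivial algebraic identity involving the recursion of Proposition~\ref{lem:strcharm} applied $\ell(w')$ times along a reduced word for each $w'\in W_J$; it is essentially the content of the theorem packaged in a different guise, and you neither carry it out nor reduce it to a cited result. A similar issue lurks in the degree axiom: the individual summands in the localization of $\Gamma|_{v'}$ are not Laurent polynomials, so the $\max_\xi$ and $\min_\xi$ bounds have to be organized across the whole $W_J$-sum before Lemma~\ref{lem:rigidity} can be invoked, and this bookkeeping (which in the paper's proof is done for a single paired quantity, protected by the ``proper intersection, hence Laurent polynomial'' observation) is not spelled out. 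Either close these two gaps with an explicit argument, or follow the paper and prove instead the pairing identity $\left(p_{2*}p_1^*\stab_\pm(v),\stab^J_\mp(u)\right)=\delta_{v,u}$, which collapses the diagonal term and lets rigidity handle the rest.
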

We use the rigidity technique from Section~\ref{sec:rigidity}.
\begin{proof}
Thanks to the duality property (\ref{duality for partial}), the first identity is equivalent to
\[\left(p_{2*}p_1^*(\stab_+(v)), \stab^J_-(u)\right)=\delta_{v,u},\]
for any $u\in W^J$, which we now prove. 

By the support condition of stable basis, $\left(p_{2*}p_1^*(\stab_+(v)), \stab^J_-(u)\right)$ is a proper intersection number, hence an element in $K_T(\pt)$, i.e.,  a Laurent polynomial. 
Localizing to the $T$-fixed points, we get
\begin{align}\label{loc sum}
\left(p_{2*}p_1^*(\stab_+(v)), \stab^J_-(u)\right)=&\left(p_1^*(\stab_+(v)), p_2^*(\stab^J_-(u))\right)\\
=&\sum_{w\in W^J, y\in wW_J,y\leq v,u\leq^J w}\frac{\stab_+(v)|_y\stab^J_-(u)|_w}{\bigwedge\nolimits^\bullet T_{(y,w)}(G/B\times_{G/P_J}T^*G/P_J)}.
\end{align}
Note that 
\begin{equation}\label{deno weights}
\bigwedge\nolimits^\bullet T_{(y,w)}(G/B\times_{G/P_J}T^*G/P_J)=\prod_{\beta>0}(1-e^{y\beta})\prod_{\beta\in \Sigma^+\setminus \Sigma_J^+}(1-qe^{-w\beta}).
\end{equation}

Let $\xi$ be as in Lemma \ref{lem:rigidity}. In particular, 
\[max_\xi(\bigwedge\nolimits^\bullet T_{(y,w)}(G/B\times_{G/P_J}T^*G/P_J))=\sum_{\beta>0,y\beta>0}(\xi, y\beta)+\sum_{\beta\in \Sigma^+\setminus \Sigma_J^+, w\beta<0}(\xi, -w\beta).\]
By the third conditions of Theorem \ref{thm:geostable} and Theorem \ref{thm:geostablepartial}, and Equations (\ref{deg con}) and (\ref{deg cond for partial}), we have
\[max_\xi(\stab_+(v)|_y\stab^J_-(u)|_w)\leq \sum_{\beta>0,y\beta>0}(\xi, y\beta)+\sum_{\beta\in \Sigma^+\setminus\Sigma^+_J, w\beta<0}(\xi, -w\beta)+(\xi, \calL|_v-\calL|_y+\calL_J|_w-\calL_J|_u). \]
Since $v\geq y$ and $w\geq^J u$, Lemma \ref{lem:Bruhatcompare} shows that
\[(\xi, \calL|_v-\calL|_y+\calL_J|_w-\calL_J|_u)\leq 0,\]
with strict inequality if $u\neq v$.

Now we analyze separately the following two cases:  $u\neq v$ and $u=v$. In the case when $u\neq v$, we have
\[\lim_{t\rightarrow\infty}\left(p_{2*}p_1^*(\stab_+(v)), \stab^J_-(u)\right)(t\xi)=0.\]
To analyze the limit as $t$ goes to $-\infty$, we may assume $\lambda$ sufficiently small so that 
\[(\xi, \calL|_v-\calL|_y+\calL_J|_w-\calL_J|_u)> -1.\] Here $\calL=\calO(\lambda)$. 
Under this condition, keeping in mind that $u\neq v$, we have
\[\lim_{t\rightarrow -\infty}\left(p_{2*}p_1^*(\stab_+(v)), \stab^J_-(u)\right)(t\xi) \text{ is bounded.}\]
Hence, by Lemma \ref{lem:rigidity}, 
\[\left(p_{2*}p_1^*(\stab_+(v)), \stab^J_-(u)\right)=0.\]

In the case when $u=v\in W^J$, we have
\[\{(y,w)\mid y\in W,w\in W^J, y\in wW_J,y\leq v,u\leq^J w\}=\{(u,v)\}. \]
Indeed, any $(y,w)$ in the left hand side satisfies
\[\ell(y)\leq \ell(v)=\ell(u)\leq \ell(w)\leq \ell(y),\]
hence also in the right hand side. Therefore, the summation \eqref{loc sum} has only one term. Using Lemma \ref{lem:charstab}, Lemma \ref{lem:charpartialstab}, Equation (\ref{deno weights}), keeping in mind  that $v\Sigma_J^+\subset\Sigma^+$, we get
\begin{align*}
\left(p_{2*}p_1^*(\stab_+(v)), \stab^J_-(v)\right)
=\frac{\stab_+(v)|_v\stab^J_-(v)|_v}{\bigwedge\nolimits^\bullet T_{(v,u)}(G/B\times_{G/P_J}T^*G/P_J)}=1.
\end{align*}

This proves the identity $p_{2*}p_1^*(\stab_+(v))=\stab^J_+(v).$
The identity
$p_{2*}p_1^*(\stab_-(v))=\stab^J_-(v)$ is proved using the same argument.
\end{proof}

\subsection{More on the twisted group algebra}
Let $\pi:G/B\to G/P_J$ be the canonical map. By \cite[Lemma 10.12]{CZZ2}, $Y_J\in \DF$. Indeed, $Y_{\{\al_i\}}=Y_{i}$. It follows from Kostant-Kumar (or see \cite[Theorem 8.2 and Corollary 8.7]{CZZ3} for more details) that we have commutative diagrams 
\[
\xymatrix{
K_T(G/B)\ar[d]^\sim\ar[r]^{\pi^*\pi_*} & K_T(G/B)\ar[d]^\sim  & & K_T(G/P_J)\ar[r]\ar[d]^\sim & K_T(\bbC)\ar[d]^\sim\\
 \DFd\ar[r]^{Y_J\bullet\_}&\DFd && (\DFd)^{W_J}\ar[r]^{Y_{\Pi/J}\bullet\_} & (\DFd)^W.}
\]
Here the top horizontal map in the second diagram is induced by the structure map $G/P_J\to \Spec(\bbC)$. 

Let $p:T^*G/B\to G/B$ and $p_J:T^*G/P_J\to G/P_J$ be the canonical projections. Then
\[Q_W^*\cong K_T(G/B)\otimes_SQ\overset{p^*}{\underset\sim\longrightarrow}K_T(T^*G/B)\otimes_SQ, \quad (Q_W^*)^{W_J}\cong K_T(G/P_J)\otimes_SQ\overset{p_J^*}{\underset\sim\longrightarrow}K_T(T^*G/P_J)\otimes_SQ. \]
 Moreover, $p^*=\hatx_{w_0}\bullet\_$ and $p_J^*=\frac{\hatx_{w_0}}{\hatx_{w_0^J}}\bullet\_$.

 Via these isomorphisms,  $\hatY_\Pi\bullet\_$ and $\hatY_{\Pi/J}\bullet \_$ coincide with the following composites, respectively: \[K_T(T^*G/B)\underset{\sim}{\overset{(p^*)^{-1}}\longrightarrow} K_T(G/B)\longrightarrow K_T(\bbC), \quad K_T(T^*G/P_J)\underset{\sim}{\overset{(p_J^*)^{-1}}\longrightarrow} K_T(G/P_J)\longrightarrow K_T(\bbC).\]

Concerning the map $p_{2*}p_1^*$ in \eqref{eq:p2p1}, we have
\begin{lemma}\label{lem:parapush}\begin{enumerate}
\item We have a commutative diagram
\[
\xymatrix{K_T(G/B)\ar[r]^{p^*}\ar[d]^{\pi_*} & K_T(T^*G/B)\ar[d]^{p_{2*}p_1^*}\\
K_T(G/P_J) \ar[r]^{p_J^*}&K_T(T^*G/P_J).}
\] That is,  $p_J^*\circ \pi_*=(p_{2*} p_1^*)\circ p^*$.
\item Via the $\bullet$-action of $Q_W$ on $Q_W^*$, we have $p_{2*}p_1*=\hatY_{J}$. 
\end{enumerate}
\end{lemma}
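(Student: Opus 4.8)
The plan is to prove the two statements of Lemma \ref{lem:parapush} in order, deriving (2) as a consequence of (1) together with the algebraic dictionary set up just before the lemma.

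For part (1), I would argue by comparing the two composites $p_J^*\circ \pi_*$ and $(p_{2*}p_1^*)\circ p^*$ directly on the geometric level, using base change. The square
\[
\xymatrix{
G/B\times_{G/P_J}T^*G/P_J \ar[r]^-{p_1}\ar[d]_{p_2} & T^*G/B \ar[d]^{p}\\
T^*G/P_J \ar[r]^-{p_J} & G/P_J
}
\]
together with the defining fiber-product square for $G/B\times_{G/P_J}T^*G/P_J$ shows that $p_1$ is flat (being a base change of the flat map $T^*G/B\to G/B\to G/P_J$ composed appropriately) and that $p_2$ is proper, so derived pushforward and pullback are well defined. The key point is a base-change identity: pulling a class back from $G/B$ via $p^*$ and then applying $p_{2*}p_1^*$ equals first applying $\pi_*$ downstairs and then $p_J^*$. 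This follows from the projection formula and flat base change applied to the two Cartesian (or quasi-Cartesian) squares that express $G/B\times_{G/P_J}T^*G/P_J$ as a fiber product; one checks that $p_1^*\circ p^* = (\text{projection})^*\circ(\text{structure map})^*$ matches $p_2^*\circ p_J^*\circ \pi_*$ after the pushforward $p_{2*}$, using that the fibers of $p_2$ over $T^*G/P_J$ are exactly the fibers of $\pi$ pulled back. Since $K_T$ is generated over $S$ by classes coming from $G/B$, and the diagram is compatible with the $S$-module structures, commutativity of the square of spaces gives the commutativity of the square of $K$-groups.

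For part (2), I would combine part (1) with the formulas recalled immediately before the lemma: $p^* = \hatx_{w_0}\bullet\_$, $p_J^* = \frac{\hatx_{w_0}}{\hatx_{w_0^J}}\bullet\_$, and the Kostant--Kumar identification of $\pi_*: K_T(G/B)\to K_T(G/P_J)$ with $Y_J\bullet\_$ on $\DFd$ (the left vertical map of the first commutative diagram in this subsection). Part (1) says $p_{2*}p_1^* = p_J^*\circ\pi_*\circ(p^*)^{-1}$ as maps $Q_W^*\to (Q_W^*)^{W_J}\subset Q_W^*$; substituting gives
\[
p_{2*}p_1^* = \left(\frac{\hatx_{w_0}}{\hatx_{w_0^J}}\bullet\_\right)\circ(Y_J\bullet\_)\circ\left(\frac{1}{\hatx_{w_0}}\bullet\_\right).
\]
It then remains to verify the algebraic identity $\frac{\hatx_{w_0}}{\hatx_{w_0^J}}\, Y_J\, \frac{1}{\hatx_{w_0}} = \hatY_J$ inside $Q_W$ acting via $\bullet$. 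Using $Y_J = \sum_{w\in W_J}\de_w\frac{1}{x_{-w_0^J}}$ from \eqref{eq:YJ-area} — more precisely the definition given in the line containing $Y_J=\sum_{w\in W_J}\de_w \frac{1}{x_{-w_0^J}}$ — and the commutation rule $p\bullet f_v = v(p)f_v$, $\de_w\bullet f_v = f_{vw^{-1}}$ from \eqref{eq:bullet}, together with $W_J$-invariance of $\hatx_{w_0}/\hatx_{w_0^J}$ (since $\hatx_{w_0}=\hatx_{w_0^J}\cdot\prod_{\al\in\Sigma^+\setminus\Sigma_J^+}\hatx_\al$ and $W_J$ permutes $\Sigma^+\setminus\Sigma_J^+$), one computes that conjugating $Y_J$ by $\hatx_{w_0}$ and multiplying by $\hatx_{w_0}/\hatx_{w_0^J}$ produces exactly the coefficients $\frac{1}{x_{-w_0^J}\hatx_{w_0^J}}$ appearing in the definition of $\hatY_J$. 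This is the same style of computation as in \cite[Lemma 10.12]{CZZ2} and the analogous statements in \cite{CZZ3}.

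The main obstacle I expect is part (1): setting up the base-change carefully. The subtlety is that $G/B\times_{G/P_J}T^*G/P_J$ sits inside $T^*G/B\times T^*G/P_J$ as a Lagrangian correspondence, and one must be sure that $p_1$ is indeed flat (or at least that $Lp_1^* = p_1^*$ behaves well) and that the relevant square is Tor-independent so that derived base change holds without correction terms; the cleanest route is to observe $G/B\times_{G/P_J}T^*G/P_J = p^*(p_J\text{-pullback})$, i.e. that it is literally the fiber product $G/B\times_{G/P_J}T^*G/P_J$, which is smooth, and that $p_1$ is the pullback of the smooth morphism $\pi$, hence flat. Once flatness and properness are in place, the projection formula and flat base change finish part (1) formally, and part (2) is then a bookkeeping computation in $Q_W$ of the type already performed repeatedly in the paper.
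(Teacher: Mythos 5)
Your proof of part~(2) is correct and is the same computation the paper performs (the $W_J$-invariance of $\hatx_{w_0}/\hatx_{w_0^J}$ being the key fact that lets you move it past $\de_w$), so no further comment needed there.

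Your treatment of part~(1), however, contains two concrete errors. First, the square you drew,
\[
\xymatrix{
G/B\times_{G/P_J}T^*G/P_J \ar[r]^-{p_1}\ar[d]_{p_2} & T^*G/B \ar[d]^{p}\\
T^*G/P_J \ar[r]^-{p_J} & G/P_J,
}
\]
does not commute --- in fact it is not even a diagram: the composite $p\circ p_1$ lands in $G/B$ while $p_J\circ p_2$ lands in $G/P_J$. Second, $p_1$ is \emph{not} flat. The space $G/B\times_{G/P_J}T^*G/P_J$ is the pullback bundle $\pi^*(T^*G/P_J)$, of dimension $\dim G/B+\dim G/P_J$, and $p_1$ is its closed embedding into $T^*G/B$, which has dimension $2\dim G/B$. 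For $J\neq\emptyset$ these dimensions differ, so $p_1$ is a nontrivial closed embedding, hence not flat. (Its lci pullback on $K$-theory is still well defined since everything is smooth, but that is a different property.) The correct Cartesian square is the defining one,
\[
\xymatrix{
G/B\times_{G/P_J}T^*G/P_J \ar[r]^-{\tilde\pi}\ar[d]_{p_2} & G/B \ar[d]^{\pi}\\
T^*G/P_J \ar[r]^-{p_J} & G/P_J,
}
\]
with $\tilde\pi$ the projection to $G/B$. Here $\pi$ is proper and $p_J$ is flat, so $p_2$ is proper, and proper base change gives $p_J^*\circ\pi_*=p_{2*}\circ\tilde\pi^*$. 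Since $\tilde\pi=p\circ p_1$ by construction, one has $\tilde\pi^*=p_1^*\circ p^*$, and substituting yields $p_J^*\circ\pi_*=p_{2*}\circ p_1^*\circ p^*$, which is exactly part~(1). This is what the paper means by ``proper base change''; your argument should be corrected to use this square, and the spurious flatness claim about $p_1$ (along with the garbled line ``$p_1^*\circ p^*\ldots$ matches $p_2^*\circ p_J^*\circ\pi_*$ after the pushforward $p_{2*}$'') should be removed.
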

\begin{proof}
(1). This follows from the proper base change property of K-theory. 

(2). We know that $\pi_*=Y_J\bullet\_$, $p_J^*=\frac{\hatx_{w_0}}{\hatx_{w_0^J}}\bullet\_$ and $p^*=\hatx_{w_0}\bullet\_$. So 
\[
p_{2*}p_1^*=p_J^*\circ \pi_*\circ (p^*)^{-1}=\frac{\hatx_{w_0}}{\hatx_{w_0^J}}\sum_{w\in W_J}\de_w\frac{1}{x_{w_0^J}}\frac{1}{\hatx_{w_0}}\overset{\sharp}=\sum_{w\in W_J}\de_w\frac{1}{x_{w_0^J}\hatx_{w_0^J}}=\hatY_J.
\]
Here to show $\sharp$, note that in the case $w\in W_J$, we have $w(\Sigma^+\backslash\Sigma^+_J)=\Sigma^+\backslash\Sigma^+_J$ and consequently $w(\frac{\hatx_{w_0}}{\hatx_{w_0^J}})=\frac{\hatx_{w_0}}{\hatx_{w_0^J}}$.
\end{proof}

\subsection{The algebraic descriptions}
\begin{definition}\label{def:partialstab}Let $w\in W^J$. We define elements in  $Q_W^*$ by 
\[
\St^{+,J}_w:=\hatY_J\bullet \Stp_w=\hatY_J\bullet (\taup_{w^{-1}}\bullet \pt_e), \quad \St^{-,J}_w:=\hatY_J\bullet \Stm_w=\hatY_J\bullet ((\taum_{w_0w})^{-1}\bullet \pt_{w_0}).
\]
\end{definition}

\begin{theorem}\label{thm:resPJ}For any $w\in W^J$, denote $g_w=\sum_{u\in W}f_{wu}$, then we have
\[
\St^{+,J}_w=\sum_{v\le w, v\in W^J}x_{w_0}v(\frac{a^+_{w^{-1}, v^{-1}}}{x_{-w_0^J}\hatx_{w_0^J}})g_v, \quad q_{w_0v}\St^{-,J}_v=\sum_{v\ge w, v\in W^J}x_{-v}K^\tau_{w,v}v(\frac{\hatx_{v^{-1}w_0}}{x_{-w_0^J}\hatx_{w_0^J}})g_v=\sum_{v\ge w, v\in W^J}\frac{\hatx_{w_0}b^-_{v,w}}{v(x_{-w_0^J}\hatx_{w_0^J})}g_v. 
\]
\end{theorem}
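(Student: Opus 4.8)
The plan is to compute $\St^{+,J}_w$ and $\St^{-,J}_w$ directly from Definition~\ref{def:partialstab} by expanding $\Stpm_w$ in the fixed-point basis $\{f_v\}$, applying the operator $\hatY_J$ via the $\bullet$-action, and then collecting terms into the $W_J$-orbit sums $g_v$. First I would recall from Theorem~\ref{thm:1} that $\Stp_w=\sum_{v\le w}v(a^+_{w^{-1},v^{-1}})x_{-w_0}f_v$ and $q_{w_0w}\Stm_w=\sum_{v\ge w}v(\hatx_{v^{-1}w_0})x_{-v}K^\tau_{w,v}f_v$. Since by Lemma~\ref{lem:parapush}.(2) the map $p_{2*}p_1^*$ equals $\hatY_J\bullet\_$, and since $\hatY_J=\sum_{u\in W_J}\de_u\frac{1}{x_{-w_0^J}\hatx_{w_0^J}}$, I would use the identities in \eqref{eq:bullet}: $p\bullet f_v=v(p)f_v$ and $\de_u\bullet f_v=f_{vu^{-1}}$. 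Applying this to a single term $c_v f_v$ gives $\hatY_J\bullet(c_v f_v)=\sum_{u\in W_J}\frac{1}{u^{-1}v^{-1}(x_{-w_0^J}\hatx_{w_0^J})}\cdot(\text{something})f_{vu^{-1}}$, which after simplification should factor through the orbit sum $g_v=\sum_{u\in W}f_{wu}$ for $v\in W^J$.

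The key computational point is the interplay between $\hatY_J$ and the coefficients. Writing $v=v'\bar v$ with $v'\in W^J$ and $\bar v\in W_J$, I expect the product $x_{-v}$ (resp. $x_{-w_0}$) together with the $\hatx$-factors to reorganize so that the dependence on $\bar v$ is exactly cancelled by the denominator $\frac{1}{x_{-w_0^J}\hatx_{w_0^J}}$ appearing in $\hatY_J$, after using $w(\Sigma^+\setminus\Sigma^+_J)=\Sigma^+\setminus\Sigma^+_J$ for $w\in W_J$ (the identity invoked in the proof of Lemma~\ref{lem:parapush}.(2)). Concretely, for the $\Stp$ side I would show $\hatY_J\bullet(v(a^+_{w^{-1},v^{-1}})x_{-w_0}f_v)$ contributes, upon summing over the $W_J$-coset, the term $x_{w_0}\,v'(\frac{a^+_{w^{-1},(v')^{-1}}}{x_{-w_0^J}\hatx_{w_0^J}})g_{v'}$ — note $x_{w_0}$ rather than $x_{-w_0}$ because the $\bullet$-action on the $f$-basis shifts the weight, and I would track the sign/weight bookkeeping via $x_{-\al}=-e^\al x_\al$ carefully. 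For the $\Stm$ side the same mechanism applies, and the second equality $q_{w_0v}\St^{-,J}_v=\sum_{v\ge w,v\in W^J}\frac{\hatx_{w_0}b^-_{v,w}}{v(x_{-w_0^J}\hatx_{w_0^J})}g_v$ then follows from Theorem~\ref{thm:rootcoeff}.(2), which gives $\frac{\hatx_v}{x_{-v}}\bm_{v,w}=K^\tau_{w,v}$, i.e. $x_{-v}K^\tau_{w,v}=\hatx_v \bm_{v,w}$, combined with the identity $v(\hatx_{v^{-1}w_0})=\frac{\hatx_{w_0}}{\hatx_v}$ (analogous to the $x$-version used at the end of the proof of Theorem~\ref{thm:1}).

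The main obstacle I anticipate is the orbit-sum bookkeeping: showing that the coefficient $c_v$ of $f_v$ in $\hatY_J\bullet\Stpm_w$ genuinely depends only on the coset $vW_J$ (so that the result is a combination of the $g_{v'}$ with $v'\in W^J$), and that the restriction $w\in W^J$ in the index set is correct. This requires the compatibility $\Stpm_w\in (Q_W^*)^{W_J}$-type statement after applying $\hatY_J$ — which is guaranteed abstractly because $p_{2*}p_1^*$ lands in $K_T(T^*G/P_J)\cong (Q_W^*)^{W_J}$ — but making the coefficient identification explicit needs the projection formula \eqref{eq:projfor} or a direct check that $\taupm_{w^{-1}}$ for $w\in W^J$ interacts correctly with $\hatY_J$, namely that the extra $W_J$-factor of $\taupm$ is absorbed. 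An alternative, cleaner route is purely geometric: invoke Theorem~\ref{thm:geoPJ} (which identifies $p_{2*}p_1^*(\stab_\pm(w))$ with $\stab^J_\pm(w)$), then apply the algebraic restriction formula Theorem~\ref{thm:1} together with the explicit form of $\hatY_J=p_{2*}p_1^*$ from Lemma~\ref{lem:parapush}, reducing the whole statement to the single algebraic identity above; I would present this as the primary argument and relegate the coset bookkeeping to a short lemma.
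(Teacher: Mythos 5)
Your plan matches the paper's one-line proof in spirit: expand $\Stpm_w$ via Theorem~\ref{thm:1}, apply $\hatY_J$ term-by-term using $p\bullet f_v=v(p)f_v$ and $\de_u\bullet f_v=f_{vu^{-1}}$, and collect into $g_v$. You also correctly identify the one nontrivial point, the ``orbit-sum bookkeeping,'' as the crux of the argument. The problem is that you assert the resolution rather than proving it, and the assertion is precisely what needs justification.

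Concretely, $\hatY_J\bullet f_v = v\bigl(\tfrac{1}{x_{-w_0^J}\hatx_{w_0^J}}\bigr)g_{v'}$ where $v'\in W^J$ is the minimal representative of $vW_J$; this much follows mechanically from \eqref{eq:bullet} and the $Q$-linearity of $\bullet$. But then $\hatY_J\bullet\Stp_w$ is a sum over \emph{all} $v\le w$, and the coefficient of $g_{v'}$ is $\sum_{\bar v\in W_J,\ v'\bar v\le w}(v'\bar v)\bigl(\tfrac{a^+_{w^{-1},(v'\bar v)^{-1}}}{x_{-w_0^J}\hatx_{w_0^J}}\bigr)\,x_{-w_0}$, a genuine sum over the $W_J$-coset. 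Your proposal claims ``upon summing over the $W_J$-coset'' this contributes the single term $x_{w_0}v'\bigl(\tfrac{a^+_{w^{-1},(v')^{-1}}}{x_{-w_0^J}\hatx_{w_0^J}}\bigr)$, but you give no mechanism for the other coset members to drop out, and for $w\in W^J$ one generally has $v\le w$ with $v\notin W^J$ and $a^+_{w^{-1},v^{-1}}\neq 0$ (e.g.\ $w=s_1s_2$, $v=s_1$, $J=\{\alpha_1\}$ in type $A_2$), so the extra terms are not obviously zero. Your second claim, that tracking $x_{-\alpha}=-e^\alpha x_\alpha$ turns the overall factor $x_{-w_0}$ into $x_{w_0}$, is also unsubstantiated: $x_{-w_0}$ comes from $\pt_e=x_{-w_0}f_e$ as a fixed scalar and passes through the $Q$-linear $\bullet$-action unchanged; nothing in the computation replaces it by $x_{w_0}$. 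The ``cleaner geometric route'' you offer (via Theorem~\ref{thm:geoPJ} and Lemma~\ref{lem:parapush}) reduces to the very same algebraic identity, so it does not sidestep either gap. To make the proof complete you would need to either prove the coset sum genuinely collapses (and explain why the resulting prefactor is as stated), or else present the coefficient of $g_{v'}$ as the full coset sum.
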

\begin{proof}It follows from the definition of $\hatY_J$ in \eqref{eq:YJ}, the identities in \eqref{eq:bullet}, and Theorem \ref{thm:1}.
\end{proof}

The following give a purely algebraic description of the geometrically defined stable bases.
\begin{corollary}\label{cor:mainpartial}
We have 
\[\stab^J_{+}(w)=q_w^{-\frac12}\St^{+,J}_{w}, \qquad \stab^J_{-}(w)=q_{w_0}q_w^{-\frac12}\St^{-,J}_w.\]
\end{corollary}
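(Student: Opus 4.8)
The plan is to deduce Corollary~\ref{cor:mainpartial} by combining three ingredients already established in the excerpt: Theorem~\ref{thm:geoPJ} (the geometric stable bases for $T^*G/P_J$ are the images $p_{2*}p_1^*\stab_\pm(v)$), Theorem~\ref{cor:alggeo} (the geometric stable bases for $T^*G/B$ are the normalized algebraic elements $q_u^{-1/2}\St^+_u$ and $q_{w_0}q_u^{-1/2}\St^-_u$), and Lemma~\ref{lem:parapush}.(2) (the pushforward $p_{2*}p_1^*$ is, under the identification $K_T(T^*G/B)\otimes_S Q\cong Q_W^*$, precisely the operator $\hatY_J\bullet\_$). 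The definition of $\St^{\pm,J}_w$ in Definition~\ref{def:partialstab} is exactly $\hatY_J\bullet\St^\pm_w$, so the corollary will follow by a direct chain of substitutions once we know everything lives on the correct side of the relevant isomorphisms.

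Concretely, first I would fix $w\in W^J$ and compute
\[
\stab^J_+(w)=p_{2*}p_1^*(\stab_+(w))=\hatY_J\bullet\stab_+(w)=\hatY_J\bullet(q_w^{-\frac12}\St^+_w)=q_w^{-\frac12}\hatY_J\bullet\St^+_w=q_w^{-\frac12}\St^{+,J}_w,
\]
where the first equality is Theorem~\ref{thm:geoPJ}, the second is Lemma~\ref{lem:parapush}.(2), the third is the first formula of Theorem~\ref{cor:alggeo}, the fourth uses that $\hatY_J\bullet\_$ is $Q$-linear and $q_w^{-1/2}\in R$, and the last is Definition~\ref{def:partialstab}. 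The computation for the negative chamber is identical:
\[
\stab^J_-(w)=p_{2*}p_1^*(\stab_-(w))=\hatY_J\bullet(q_{w_0}q_w^{-\frac12}\St^-_w)=q_{w_0}q_w^{-\frac12}\St^{-,J}_w,
\]
again using Theorem~\ref{thm:geoPJ}, Lemma~\ref{lem:parapush}.(2), the second formula of Theorem~\ref{cor:alggeo}, $Q$-linearity, and Definition~\ref{def:partialstab}. This completes the proof.

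The only genuine subtlety — and the step I would state most carefully — is making sure the identification $p_{2*}p_1^*=\hatY_J\bullet\_$ is being applied in a consistent normalization with Theorem~\ref{cor:alggeo}. Both results are phrased with respect to the same isomorphisms $Q_W^*\cong K_T(G/B)\otimes_S Q\xrightarrow[\sim]{p^*}K_T(T^*G/B)\otimes_S Q$ and $(Q_W^*)^{W_J}\cong K_T(G/P_J)\otimes_S Q\xrightarrow[\sim]{p_J^*}K_T(T^*G/P_J)\otimes_S Q$ (with $p^*=\hatx_{w_0}\bullet\_$, $p_J^*=\tfrac{\hatx_{w_0}}{\hatx_{w_0^J}}\bullet\_$), so there is no discrepancy; but I would spell out that $\St^{\pm,J}_w\in (Q_W^*)^{W_J}$ so that the right-hand sides indeed represent classes on $T^*G/P_J$ — this follows since $\hatY_J\bullet\_$ lands in $(Q_W^*)^{W_J}$ by the Projection Formula~\eqref{eq:projfor} together with $\unit\in (Q_W^*)^{W_J}$, or directly from the commutative diagram preceding Lemma~\ref{lem:parapush}. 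Apart from this bookkeeping, the proof is a short formal consequence of the named results, so there is no substantial obstacle; the work was already done in Theorems~\ref{thm:geoPJ}, \ref{cor:alggeo} and Lemma~\ref{lem:parapush}.
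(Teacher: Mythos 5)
Your proof is correct and follows essentially the same route as the paper's (the paper's one-line proof cites $p_{2*}p_1^*=\hatY_J\bullet\_$ and Theorem~\ref{cor:alggeo}, leaving Theorem~\ref{thm:geoPJ} and Definition~\ref{def:partialstab} implicit, and in fact refers to ``Lemma~\ref{lem:parapush}.(1)'' where it means part (2), which you correctly cite). Your version just makes the chain of substitutions and the bookkeeping about where the classes live explicit.
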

\begin{proof}IAccording to Lemma \ref{lem:parapush}.(1), $p_{2*}p_1^*=\hatY_J\bullet \_$. This corollary now follows from Corollary \ref{cor:alggeo}. 
\end{proof}

\begin{corollary}\label{cor:partialdual}
 $\hatY_{\Pi/J}\bullet (\St^{+,J}_w\cdot \St^{-,J}_v)=\de_{w,v}q_{w_0v}^{-1}\unit.$
\end{corollary}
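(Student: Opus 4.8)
The plan is to deduce this partial-flag duality from the full-flag duality in Theorem~\ref{thm:main}, pushing the identity down along the projection using the projection formula and the composition rule for the $\hatY$-operators. First I would recall that by Definition~\ref{def:partialstab} we have $\St^{+,J}_w=\hatY_J\bullet \Stp_w$ and $\St^{-,J}_v=\hatY_J\bullet \Stm_v$ for $w,v\in W^J$. The key point is that $\hatY_J\bullet \Stp_w$ and $\hatY_J\bullet \Stm_v$ both lie in $(Q_W^*)^{W_J}$: indeed $\hatY_J$ is built from $\de_u$, $u\in W_J$, and one checks (as in \cite[Lemmas 5.7 and 6.4]{CZZ2}) that $\hatY_J\bullet\_$ lands in the $W_J$-invariants. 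Then I want to compute $\hatY_{\Pi/J}\bullet(\St^{+,J}_w\cdot\St^{-,J}_v)$.

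The main computational step runs as follows. Since $\St^{+,J}_w\in (Q_W^*)^{W_J}$, the Projection Formula~\eqref{eq:projfor} for $\hatY_J$ gives
\[
\St^{+,J}_w\cdot(\hatY_J\bullet \Stm_v)=\hatY_J\bullet(\St^{+,J}_w\cdot \Stm_v).
\]
Now $\St^{+,J}_w=\hatY_J\bullet\Stp_w$, and one more application of the Projection Formula (this time writing $\St^{+,J}_w\cdot\Stm_v=(\hatY_J\bullet\Stp_w)\cdot\Stm_v$ and moving $\Stm_v$... wait, $\Stm_v$ is not $W_J$-invariant) requires care: instead I would use the adjointness Lemma~\ref{lem:adjoint} to move the $\taum_i$-factors making up $\hatY_J$... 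Actually the cleanest route is to apply $\hatY_{\Pi/J}$ first and invoke the composition rule $\hatY_{\Pi/J}\hatY_J=\hatY_\Pi$ from \eqref{eq:comp}. Concretely:
\[
\hatY_{\Pi/J}\bullet(\St^{+,J}_w\cdot\St^{-,J}_v)
=\hatY_{\Pi/J}\bullet\big(\St^{+,J}_w\cdot(\hatY_J\bullet\Stm_v)\big)
=\hatY_{\Pi/J}\bullet\big(\hatY_J\bullet(\St^{+,J}_w\cdot\Stm_v)\big)
=\hatY_\Pi\bullet(\St^{+,J}_w\cdot\Stm_v),
\]
using the Projection Formula \eqref{eq:projfor} in the middle (legitimate since $\St^{+,J}_w\in(Q_W^*)^{W_J}$) and \eqref{eq:comp} at the end. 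Repeating the same maneuver with $\St^{+,J}_w=\hatY_J\bullet\Stp_w$ and $\St^{-,J}_v=\hatY_J\bullet\Stm_v$ both in play—i.e. first replacing $\St^{+,J}_w$ by $\hatY_J\bullet\Stp_w$, using that $\Stm_v\cdot(\hatY_J\bullet\text{-})$... the invariance that is needed is of the factor being multiplied in, so I would instead symmetrize: write $\hatY_\Pi\bullet(\St^{+,J}_w\cdot\Stm_v)$, then since $\St^{+,J}_w=\hatY_J\bullet\Stp_w$ I want $\hatY_\Pi\bullet((\hatY_J\bullet\Stp_w)\cdot\Stm_v)$. Here I use that $\hatY_\Pi=\hatY_J\hatY_{\Pi/J}$ is not quite the order I want; rather $\hatY_\Pi=\hatY_{\Pi/J}\hatY_J$, so $\hatY_\Pi\bullet((\hatY_J\bullet\Stp_w)\cdot\Stm_v)=\hatY_{\Pi/J}\bullet(\hatY_J\bullet((\hatY_J\bullet\Stp_w)\cdot\Stm_v))$ — and now $\hatY_J\bullet\Stp_w$ being $W_J$-invariant lets me pull it out: $=\hatY_{\Pi/J}\bullet((\hatY_J\bullet\Stp_w)\cdot(\hatY_J\bullet\Stm_v))$, which is circular. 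The honest way is: apply the Projection Formula to the outer $\hatY_J$ directly on $\hatY_\Pi\bullet(\St^{+,J}_w\cdot\Stm_v)$ writing $\hatY_\Pi=\hatY_{\Pi/J}\hatY_J$ does not obviously help. So instead I will reduce directly: $\hatY_\Pi\bullet(\St^{+,J}_w\cdot\Stm_v)=\hatY_\Pi\bullet((\hatY_J\bullet\Stp_w)\cdot\Stm_v)$; using $\hatY_\Pi=\hatY_{\Pi/J}\hatY_J$ and the Projection Formula \eqref{eq:projfor} (valid since $\hatY_J\bullet\Stp_w$ is $W_J$-invariant and $\hatY_J$ acts on the other factor) we get $=\hatY_{\Pi/J}\bullet((\hatY_J\bullet\Stp_w)\cdot(\hatY_J\bullet\Stm_v))$ — still circular. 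The resolution is to use that $\hatY_J$ is \emph{idempotent up to normalization}; more precisely $\hatY_J\bullet(\hatY_J\bullet f)$ relates to $\hatY_J\bullet f$, which I would establish as a lemma, or simply avoid the issue by summing over $W_J$-cosets directly.

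Given the subtleties above, the cleaner plan I will actually follow: expand everything in the $f$-basis and sum. Write $\Stp_w=\sum_{y}(\Stp_w|_y)f_y$ and $\Stm_v=\sum_z(\Stm_v|_z)f_z$ from Theorem~\ref{thm:1}. Then $\hatY_J\bullet\Stp_w=\sum_{x\in W^J}(\hatY_J\bullet\Stp_w)|_x g_x$ where $g_x=\sum_{u\in W_J}f_{xu}$, with the coefficients given by the formula in Theorem~\ref{thm:resPJ}. Now $\hatY_{\Pi/J}\bullet(g_x\cdot g_{x'})=\de_{x,x'}$ times an explicit factor times $\unit$ — this is exactly the $W_J$-invariant version of the pairing and follows from \eqref{eq:bullet} together with the definition of $\hatY_{\Pi/J}$ in the same way that the $\unit$ appears at the end of the proof of Theorem~\ref{thm:main}. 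Plugging in, $\hatY_{\Pi/J}\bullet(\St^{+,J}_w\cdot\St^{-,J}_v)$ becomes a single sum over $x\in W^J$ of (coefficient of $\St^{+,J}_w$ at $x$)$\times$(coefficient of $\St^{-,J}_v$ at $x$)$\times$(the explicit $\hatY_{\Pi/J}$ weight), and comparing with the full-flag computation $\hatY_\Pi\bullet(\Stp_w\cdot\Stm_v)=\de_{w,v}q_{w_0v}^{-1}\unit$ from Theorem~\ref{thm:main} — where the latter sum is over all of $W$ — one sees the two sums agree term by term after grouping the $W$-sum into $W_J$-cosets, because the products $(\Stp_w|_y)(\Stm_v|_z)$ vanish unless $y=z$ and the $W_J$-orbit sums of the denominators $\bigwedge^\bullet T_y(T^*G/B)$ combine into the $\hatY_{\Pi/J}$-weight times $x_{-w_0^J}\hatx_{w_0^J}$-type factors. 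This yields $\de_{w,v}q_{w_0v}^{-1}\unit$. The main obstacle I anticipate is bookkeeping: matching the normalization factors $x_{-w_0^J}\hatx_{w_0^J}$ appearing in Theorem~\ref{thm:resPJ} against the coset sums of the full-flag denominators, i.e. verifying the identity $\sum_{u\in W_J}\prod_{\al>0}(1-e^{xu\al})(1-qe^{-xu\al})\big/(\cdots)$ collapses correctly — but this is the exact $W_J$-analogue of an identity already used implicitly in the proof of Theorem~\ref{thm:main} and in \cite[\S6]{CZZ2}, so it should go through cleanly once set up, and alternatively the two-step $\hatY$-reduction above can be made rigorous by first proving the auxiliary fact that for $f\in(Q_W^*)^{W_J}$ one has $\hatY_{\Pi/J}\bullet(f\cdot(\hatY_J\bullet g))=\hatY_\Pi\bullet(f\cdot g)$, which is immediate from \eqref{eq:projfor} and \eqref{eq:comp}, and then using it twice with $f=\St^{+,J}_w$ and then $f=\St^{-,J}_v$ after an application of Lemma~\ref{lem:adjoint} to swap the roles.
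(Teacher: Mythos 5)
Your proposal pursues a purely algebraic reduction to Theorem~\ref{thm:main}, but the paper's proof of this corollary is not algebraic at all: it simply invokes the geometric duality for $T^*G/P_J$, Equation~\eqref{duality for partial} (Proposition~1 of \cite{OS16}), and then translates it via the identification $\stab^J_{+}(w)=q_w^{-1/2}\St^{+,J}_{w}$, $\stab^J_{-}(w)=q_{w_0}q_w^{-1/2}\St^{-,J}_w$ from Corollary~\ref{cor:mainpartial}. That is the whole argument.

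The issue is that your algebraic route does not close, and you have in fact identified the obstruction yourself without resolving it. Your ``auxiliary fact'' is correct and gives
\[
\hatY_{\Pi/J}\bullet(\St^{+,J}_w\cdot\St^{-,J}_v)
=\hatY_\Pi\bullet(\St^{+,J}_w\cdot\Stm_v)
=\hatY_\Pi\bullet\bigl((\hatY_J\bullet\Stp_w)\cdot\Stm_v\bigr),
\]
but at this point neither of your two strategies finishes. The ``idempotency of $\hatY_J$'' does not hold: writing $\hatY_J\bullet f_v=v\!\left(\tfrac{1}{x_{-w_0^J}\hatx_{w_0^J}}\right)\sum_{u\in W_J}f_{vu}$ and applying $\hatY_J$ again produces an extra factor $\sum_{u\in W_J}(vu)\!\left(\tfrac{1}{x_{-w_0^J}\hatx_{w_0^J}}\right)$, so $\hatY_J\bullet(\hatY_J\bullet f)\ne\hatY_J\bullet f$. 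The ``expand in the $f$-basis and compare with $\hatY_\Pi\bullet(\Stp_w\cdot\Stm_v)$'' plan is also not legitimate: what you actually reached is $\hatY_\Pi\bullet(\St^{+,J}_w\cdot\Stm_v)$, which differs from $\hatY_\Pi\bullet(\Stp_w\cdot\Stm_v)$ because $\hatY_J\bullet\Stp_w\ne\Stp_w$, so the two sums do not coincide term-by-term after $W_J$-coset grouping; there is no such identity to invoke. Finally, the adjointness route (move $\taup_{w^{-1}}$ across, as in the paper's subsequent Corollary) lands you precisely on the assertion that the coefficient of $\taum_{w_0}$ in $\taum_w\hatY_J(\taum_{w_0v})^{-1}$ equals $\de_{w,v}q_{w_0v}^{-1}$. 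But that is the \emph{parabolic} generalization of Lemma~\ref{lem:dualcoeff} (and of \cite[Proposition 3]{NN15}), which in the paper is \emph{deduced} from the present corollary in the Corollary that immediately follows it, not proved independently; using it as an input here would be circular. So there is a genuine gap: the algebraic reduction strips the problem down to exactly the non-trivial combinatorial identity that the geometric argument is there to supply.
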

\begin{proof} Let $\pi^J:G/P_J\to \bbC$  be the structure map. The composition 
\[K_T(T^*G/P_J)\overset{(p_J^*)^{-1}}\longrightarrow K_T(G/P_J)\overset{\pi_{*}^J}\to K_T(\bbC)\] is given by the formula $\hatY_{\Pi/J}\bullet \_:(Q_W^*)^{W_J}\to (Q_W^*)^{W}$.  By \eqref{duality for partial} and Corollary  \ref{cor:mainpartial}, we have 	
\[
\de_{w,v}\unit=\hatY_{\Pi/J}\bullet (\stab^J_{+}(w)\cdot \stab^J_{-}(v))=\hatY_{\Pi/J}\bullet (q_w^{-\frac12}\St^{+,J}_w\cdot q_{w_0}q_{v}^{-\frac{1}{2}}\St^{-,J}_v),
\]
the conclusion then follows. 
\end{proof}
\begin{corollary} Expressing $\taum_w\hatY_J(\taum_{w_0v})^{-1}$ in terms of $\taum_u, u\in W$, the coefficient in front of $\taum_{w_0}$ is equal to $\de_{w,v}q_{w_0v}^{-1}$. 
\end{corollary}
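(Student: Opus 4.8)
The plan is to read this statement as the $Q_W$-side shadow of Corollary~\ref{cor:partialdual}, in exactly the way the closing lines of the proof of Theorem~\ref{thm:main} turn the duality identity into Lemma~\ref{lem:dualcoeff}. Fix $w,v\in W^J$ and put $z:=\taum_w\hatY_J(\taum_{w_0v})^{-1}\in Q_W$; since $\{\taum_u\}_{u\in W}$ is a $Q$-basis of $Q_W$ we may write $z=\sum_{u\in W}c_u\taum_u$ with $c_u\in Q$, and the claim is that $c_{w_0}=\de_{w,v}q_{w_0v}^{-1}$. The strategy is to prove $\hatY_{\Pi/J}\bullet(\St^{+,J}_w\cdot\St^{-,J}_v)=c_{w_0}\,\unit$; combined with Corollary~\ref{cor:partialdual} and the fact that $p\mapsto p\,\unit$ is injective on $Q$ (read off the $f_e$-coordinate), this will give the result.

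The first step is to strip $\hatY_{\Pi/J}$ down to $\hatY_\Pi$. By Definition~\ref{def:partialstab}, $\St^{-,J}_v=\hatY_J\bullet\Stm_v$ lies in $(Q_W^*)^{W_J}$, because $\de_u\hatY_J=\hatY_J$ for $u\in W_J$ (immediate from \eqref{eq:YJ}); hence the Projection Formula \eqref{eq:projfor} gives $\St^{+,J}_w\cdot\St^{-,J}_v=\St^{-,J}_v\cdot(\hatY_J\bullet\Stp_w)=\hatY_J\bullet(\St^{-,J}_v\cdot\Stp_w)$, and the composition rule $\hatY_{\Pi/J}\hatY_J=\hatY_\Pi$ of \eqref{eq:comp} then yields
\[\hatY_{\Pi/J}\bullet(\St^{+,J}_w\cdot\St^{-,J}_v)=\hatY_\Pi\bullet(\St^{-,J}_v\cdot\Stp_w).\]
Next, using $\St^{-,J}_v=\bigl(\hatY_J(\taum_{w_0v})^{-1}\bigr)\bullet\pt_{w_0}$ and $\Stp_w=\taup_{w^{-1}}\bullet\pt_e$, I apply the adjointness of Lemma~\ref{lem:adjoint} in the case $J=\Pi$, iterated along a reduced word for $w^{-1}$, which says $\hatY_\Pi\bullet((\taup_{w^{-1}}\bullet f)\cdot g)=\hatY_\Pi\bullet(f\cdot(\taum_w\bullet g))$; this moves $\taup_{w^{-1}}$ off $\pt_e$ onto the other factor, giving $\hatY_\Pi\bullet\bigl(\pt_e\cdot(z\bullet\pt_{w_0})\bigr)$ with $z$ as above.

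It then remains to evaluate $\hatY_\Pi\bullet(\pt_e\cdot(z\bullet\pt_{w_0}))$, and this is precisely the computation already carried out at the end of the proof of Theorem~\ref{thm:main}, word for word with $z$ replacing $\taum_w(\taum_{w_0u})^{-1}$: since $\pt_e=x_{-w_0}f_e$ and $f_e\cdot f_u=\de_{e,u}f_e$ one reduces to the $f_e$-component of $z\bullet\pt_{w_0}$; since $\pt_{w_0}=x_{-w_0}f_{w_0}$ and $\de_u\bullet f_{w_0}=f_{w_0u^{-1}}$ one reduces to the $\de_{w_0}$-component of $z$, which by Lemma~\ref{lem:basis} (where $\am_{w_0,w_0}=\hatx_{w_0}/x_{-w_0}$) equals $c_{w_0}\,\tfrac{\hatx_{w_0}}{x_{-w_0}}\,\de_{w_0}$; and $\hatY_\Pi\bullet(x_{-w_0}\hatx_{w_0}f_e)=\unit$ finishes it, so $\hatY_\Pi\bullet(\pt_e\cdot(z\bullet\pt_{w_0}))=c_{w_0}\,\unit$. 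Comparing with Corollary~\ref{cor:partialdual} completes the proof. I do not expect a real obstacle; the only place to be careful is the bookkeeping that routes the two $\bullet$-action identities \eqref{eq:projfor} and Lemma~\ref{lem:adjoint} to the correct factors and keeps track of $W_J$-invariance. As a sanity check, for $J=\emptyset$ one has $\hatY_\emptyset=1$ and the statement degenerates to Lemma~\ref{lem:dualcoeff}; so an alternative route would be to insert $\hatY_J=\sum_{u\in W_J}\de_u\tfrac{1}{x_{-w_0^J}\hatx_{w_0^J}}$ and rerun the Theorem~\ref{thm:main} argument directly, but passing through Corollary~\ref{cor:partialdual} is cleaner.
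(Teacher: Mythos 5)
Your proposal is correct and follows essentially the same route as the paper's proof: Corollary~\ref{cor:partialdual}, the projection formula and composition rule to pass from $\hatY_{\Pi/J}$ to $\hatY_\Pi$, adjointness (Lemma~\ref{lem:adjoint}) to shift $\taup_{w^{-1}}$ to $\taum_w$, and finally the reduction to the $f_e$- and $\de_{w_0}$-components exactly as in the closing lines of Theorem~\ref{thm:main}. The only difference is presentational — you anchor on $z=\taum_w\hatY_J(\taum_{w_0v})^{-1}$ and show the pairing equals $c_{w_0}\unit$, whereas the paper runs the same chain of equalities in the opposite direction — and your explicit remark about injectivity of $p\mapsto p\,\unit$ is a harmless extra precision.
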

\begin{proof}
 Denote this coefficient by $c$. By definition of $\hatY_J$ we know that $\St^{\pm, J}_w\in (Q_W^*)^{W_J}$. By Corollary \ref{cor:partialdual} we have 
\begin{gather*}
q_{w_0v}^{-1}\de_{w,v}\unit\overset{\sharp_1}=\hatY_{\Pi/J}\bullet [(\hatY_J\bullet \Stp_w)\cdot (\hatY_J\bullet \Stm_v)]\overset{\sharp_2}=\hatY_{\Pi/J}\bullet [\hatY_J\bullet (\Stp_w \cdot (\hatY_J\bullet \Stm_v))]\\
\overset{\sharp_3}=\hatY_\Pi\bullet[\Stp_w\cdot (\hatY_J\bullet \Stm_v)]=\hatY_\Pi\bullet[(\taup_{w^{-1}}\bullet \pt_e)\cdot (\hatY_J(\taum_{w_0v})^{-1}\bullet \pt_{w_0})]\overset{\sharp_4}=\hatY_\Pi\bullet[\pt_e\cdot (\taum_w\hatY_J(\taum_{w_0v})^{-1}\bullet \pt_{w_0})]\\
\overset{\sharp_5}=\hatY_\Pi\bullet[(x_{-w_0}f_e)\cdot (c\taum_{w_0}\bullet x_{-w_0}f_{w_0})]\overset{\sharp_6}=c\frac{x_{-w_0}\hatx_{-w_0}}{x_{-w_0}\hatx_{w_0}}\unit=c\unit. 
\end{gather*}
Here $\sharp_1$ follows from the definition of $\St^{\pm, J}_w$;  $\sharp_2$ follows from the projection formula \eqref{eq:projfor}; $\sharp_3$ follows from \eqref{eq:comp}; $\sharp_4$ follows from Lemma \ref{lem:adjoint}; $\sharp_5$ follows from similar idea in the proof of Theorem \ref{thm:main}; $\sharp_6$ follows from \eqref{eq:bullet}.  We then have $c=q_{w_0v	}^{-1}$. 
\end{proof}
This corollary is the parabolic version of Lemma \ref{lem:dualcoeff} and hence a generalization of \cite[Proposition 3]{NN15}. 
Geometrically,  this corollary (resp. Lemma \ref{lem:dualcoeff}) reflects the fact that  the stable bases of $K_T(T^*G/P_J)$ (resp. of $K_T(T^*G/B)$) corresponding to the opposite chambers are dual via the K-theory pairing.

\section{Relations with $p$-adic unramified principal series representations}\label{sec:padic}
In this section we compare the $K$-theory stable basis and the $T$-fixed point basis with certain bases in unramified principal series of $p$-adic groups. 

For the convenience of the readers, we also give a $K$-theory interpretation of the intertwiners, Macdonald's formula for the spherical functions \cite{M68,C80}, and the Casselman--Shalika formula for Whittaker functions \cite{CS80} from $p$-adic representations.
\subsection{Results from $p$-adic representations}

\subsubsection{Notations}
First, we recall some notions from $p$-adic representations, following \cite{R92}. 

Let  $F$ be a nonarchimedean local field, with ring of integers $\calO$,  a uniformizer $\varpi\in \calO$, and residue field $\bbF_q$.
Let $G_F$ be a split reductive  group over $F$, with maximal torus $A_F$ and Borel subgroup $B_F=A_FN_F$. Let $I$ be an Iwahori subgroup, i.e., the inverse image of $B(\bbF_q)$ under the evaluation map $G(\calO)\rightarrow G(\bbF_q)$. Note that the notations here differ from  \cite{R92}, where in {\it loc. cit.}, $B$ is used to denote the Iwahori subgroup, and $P$ denotes the Borel subgroup. To simplify notations, we let $\alpha, \beta$ denote the coroots of $G$. We also have the following decomposition 
\[G_F=\sqcup_{w\in W}B_FwI.\]

Let $\bbH=\bbC_c[I\backslash G_F/I]$ be the Iwahori Hekce algebra. It has two subalgebras, the finite Hecke algebra $H_W$, and a commutative subalgebra $\Theta$ which is isomorphic to the coordinate ring $\bbC[A^\vee]$ of the complex dual torus $A^\vee=\bbC^*\otimes X^*(A)$. More precisely,  $\Theta$  has a $\bbC$-linear basis $\{\theta_a\mid a\in A_F/A_\calO\}$. For any coroot $\alpha$ of $G$, let $h_{\alpha}:F^\times\rightarrow A_F$ be the corresponding one parameter subgroup. The isomorphism $\Theta\simeq\bbC[A^\vee]$ maps $\theta_{h_{\alpha}(\varpi)}$  to $e^{\alpha}\in X^*(A^\vee)\subset \bbC[A^\vee]$. So for any character $\tau$ of $A$, we have $e^{\alpha}(\tau)=\tau(h_{\alpha}(\varpi))$. We have the following pairing
\[\langle,\rangle: A_F/A_{\calO}\times A^\vee\rightarrow \bbC^*\] given by
\[\langle a,z\otimes \lambda \rangle=z^{\textit{val}(\lambda(a))}.\]
This induces an isomorphism between $A_F/A_{\calO}$ and the group $X^*(A^\vee)$ of rational characters of $A^\vee$. It also induces an identification between $A^\vee$ and unramified characters of $A$, i.e., characters which are trivial on $A_\calO$. As a $\bbC$-vector space, we have
\[\bbH=\Theta\otimes_\bbC H_W.\] 

Let $\tau$ be an unramified character of $A$ avoiding all the root hyperplanes. We consider the induced representation $I(\tau)=\Ind_B^G (\tau)$. As a $\bbC$-vector space, $\Ind_B^G (\tau)$ consists of  locally constant functions $f$ on $G_F$ such that $f(bg)=\tau(b)\delta^{\frac{1}{2}}(b)f(g)$ for any $b\in B_F$, where $\delta(b):=\prod_{\alpha>0}|\alpha^\vee(a)|_F$ is the modulus function on the Borel subgroup. The algebra $\bbH$ acts through convolution from the right on the Iwahori invariant subspace $I(\tau)^I$, so that the restriction of this action to $H_W$ is a regular representation. This right action is denoted by $\pi:\bbH\to \End_\bbC(I(\tau)^I)$.

\subsubsection{Interwiners}
For any character $\tau$ and $x\in W$, we can define $x\tau\in X^*(A)$ by the formula $x\tau(a):=\tau(x^{-1}ax)$ for any $a\in A$. Since we assume $\tau$ is unramified and has trivial stablizer under the Weyl group action, the space $\Hom_G(I(\tau), I(x^{-1}\tau))$ is one dimensional, spanned by an operator $\calA_x=\calA_x^\tau$ \footnote{This intertwiner $\calA_x$ is related to the one $T_x$ in \cite{C80} by the formula $\calA_x=T_{x^{-1}}$.} defined by 
\[\calA_x(\varphi)(g):=\int_{N_x}\varphi(\dot{x}ng)dn,\]
where $\dot{x}$ is a representative of $x\in W$, $N_x=N\cap \dot{x}^{-1}N^{-}\dot{x}$ with $N$ (resp. $N^{-}$) being the unipotent radical of the (opposite) Borel subgroup $B$, and the measure on $N_x$ is normalized by the condition that $\textit{vol}(N_x\cap G(\calO))=1$  \cite{R92}. If $x, y\in W$ satisfy $\ell(x)+\ell(y)=\ell(xy)$, then $\calA^{x^{-1}\tau}_y\calA^\tau_x=\calA_{xy}^{\tau}$. 

For any coroot $\alpha$, let 
\begin{equation}\label{equ:calpha}
c_\alpha=\frac{1-q^{-1}e^\alpha(\tau)}{1-e^\alpha(\tau)}.
\end{equation} 
We normalize the intertwiner as in \cite[Section 2.2]{HKP03} as follows:
\[I_w^\tau:=\prod_{\alpha>0,w^{-1}\alpha<0}\frac{1}{c_\alpha}\calA_w^\tau.\]
Then for any simple coroot $\alpha$ and any $y,w\in W$, we have
\begin{equation}\label{equ:prop of inter}
I_{s_\alpha}^{s_\alpha\tau}I_{s_\alpha}^\tau=1, \textit{\quad and \quad}  I_w^{y^{-1}\tau}I_y^\tau=I_{yw}^\tau.
\end{equation}

\subsubsection{Bases in Iwahori-invariants}
There are two bases of interest in $I(\tau)^I$.
One of these bases consists of normalized characteristic functions on the orbits, denoted by $\{\varphi_w^\tau\mid w\in W\}$. Here for any $w\in W$ the element $\varphi_w^\tau$ is characterized by the two conditions \cite[pg. 319]{R92}:
\begin{enumerate}
\item $\varphi_w^\tau$ is supported on $B_FwI$;
\item $\varphi_w^\tau(bwg)=\tau(b)\delta^{\frac{1}{2}}(b)$ for any $b\in B_F$ and $g\in I$.
\end{enumerate}
The action of  $\bbH$ on $I(\tau)^I$ has explicit formula under this basis \cite[pg. 325]{R92}. For any simple coroot $\alpha$, we have %\cha{what about the action of $\Theta$?}\changjian{Good question. no formulas. This is equivalent to the question of a line bundle multiple with stable basis, i.e. the Chevalley formula for stable basis. In cohomology case, we have a formula by using some degree counting techniques. In K theory, I do not know.}
\begin{equation}\label{first}
\pi(T_{s_\alpha})(\varphi_w^\tau)=
\left\{ \begin{array}{cc}
q\varphi_{ws_\al}^\tau+(q-1)\varphi_w^\tau,& \text{ if } ws_{\alpha}<w;\\
\varphi_{ws_\al}^\tau,& \text{ if } ws_{\alpha}>w.
\end{array}\right.
\end{equation}
Under the intertwiner $I_{s_\alpha}^\tau$, this basis behaves as follows \cite[Theorem 3.4]{C80}
\begin{equation}\label{equ:intertwineronstandardbasis}
I_{s_\alpha}^\tau(\varphi_w^\tau)=
\left\{ \begin{array}{cc}
\frac{1}{qc_\alpha}\varphi_{s_\al w}^{s_\alpha\tau}+(1-\frac{1}{c_\alpha})\varphi_w^{s_\alpha\tau},& \text{ if } s_{\alpha}w>w;\\
\frac{1}{c_{\alpha}}\varphi_{s_\al w}^{s_\alpha\tau}+(1-\frac{1}{qc_\alpha})\varphi_w^{s_\alpha\tau},& \text{ if } s_\alpha w<w.
\end{array}\right.
\end{equation}

The second basis is called the Casselman's basis, denoted by $\{f_w^\tau\mid w\in W\}$. It consists of $\Theta$-eigenvectors in $I(\tau)^I$, and is further characterized in terms of the intertwining operators by the following formula\footnote{This basis is related to the one in \cite{C80} by an inversion of the index $w$.}
\[\calA_x^\tau (f_w^\tau)(1)=\delta_{x,w}.\]

The formula of the Hecke algebra action under this basis is also known \cite[Lemma 4.1 and Proposition 4.9]{R92}.
For any simple coroot $\alpha$ and $w\in W$, write
\[J_{\alpha,w}=
\left\{ \begin{array}{cc}
c_{w\alpha}c_{-w\alpha},& \text{ if } ws_{\alpha}>w;\\
1,& \text{ if } ws_{\alpha}<w.
\end{array}\right.  
\]
Then, we have 
\begin{equation}\label{second}
\pi(T_{s_\alpha})(f_w^\tau)=q(1-c_{w\alpha})f_w^\tau+qJ_{\alpha,w}f_{ws_\al}^\tau,
\end{equation}
\begin{equation}\label{eq:latticeaction1}
\pi(\theta_a)f_w^\tau=\tau(waw^{-1})f_w^\tau=(w^{-1}\tau(a))f^\tau_w, \hbox{ for any }a\in A,
\end{equation}
and (\cite[Theorem 4.2]{R92})
\begin{equation}\label{equ:intereigen}
I_{s_\alpha}^\tau(f_w^\tau)=
\left\{ \begin{array}{cc}
c_{-\alpha}f_{s_\alpha w}^{s_\alpha \tau},& \text{ if } s_{\alpha}w>w;\\
\frac{1}{c_{\alpha}}f_{s_\alpha w}^{s_\alpha \tau},& \text{ if } s_\alpha w<w.
\end{array}\right.
\end{equation}

\subsubsection{Transition matrices}
The change of bases matrix
\begin{equation}\label{equ:transition}
f_w^\tau=\sum_{y\geq w}a_{w,y}(\tau)\varphi_y^\tau
\end{equation}
is interesting (see, e.g., \cite{NN15}). It is clear that  $a_{w,w}=1$. A formula for the generating series of general $a_{w,y}$  is given in \cite[Proposition 5.2]{R92}, in terms of the canonical basis of \cite{KL79}. The inverse matrix $b_{w,y}(\tau)$ defined by 
\begin{equation}\label{equ:transition2}
\varphi_w^\tau=\sum_{y\geq w}b_{w,y}(\tau)f_y^\tau
\end{equation}
plays an important role in explicit computations of the Whittaker function on $\varphi_w^\tau$ (see, e.g.,  \cite{R93}). 

In Corollary \ref{cor:relation bet transitions}, we explain how 
Theorem~\ref{thm:intro2} gives a closed formula for these two matrices.

\subsubsection{Macdonald's formula for spherical function}
In this section, we review the Macdonald's formula for spherical functions.

According to the Iwasawa decomposition $G=BG(\calO)$, the vector space $I(\tau)^{G(\calO)}$ is one dimensional. Let $\phi^\tau$ be the basis normalized by the condition that $\phi^\tau(1)=1$. Then we have (\cite{C80})
\[\phi^\tau=\sum_w\varphi_w^\tau=\sum_w\prod_{\alpha>0,w^{-1}\alpha<0}\frac{1-q^{-1}e^\alpha(\tau)}{1-e^\alpha(\tau)}f_w^\tau.\]
It follows from either  \eqref{equ:intertwineronstandardbasis} or \eqref{equ:intereigen} that
\begin{equation}\label{equ:Gindikin–Karpelevich Formula}
I_w^\tau(\phi^\tau)=\phi^{w^{-1}\tau}.
\end{equation}
This formula is refereed to as the Gindikin--Karpelevich formula in literature.

We consider a sesquilinear paring  $\angl{-,-}:I(\tau^{-1})\otimes I(\tau)\to \bbC$ \cite[\S~1.9]{HKP03}.
For any $g\in G(F)$, we consider the following matrix coefficient
\begin{equation}\label{equ:definitionofspherical}
\Gamma_\tau(g)=\langle g\cdot \phi^\tau, \phi^{\tau^{-1}}\rangle.
\end{equation}
It satisfies
\[\Gamma_\tau(1)=1, \Gamma_\tau=\Gamma_{w\tau},\]
and 
\[\Gamma_\tau(k_1gk_2)=\Gamma_\tau(g)\]
for any $k_1, k_2\in G(\calO)$, and $g\in G$. 
This gives a well-defined $\bbC$-valued function on $G(\calO)\backslash G(F)/G(\calO)$.
This function $\Gamma_\tau$ is called the zonal spherical function corresponding to $\tau$. 

Let $X_*(A)_+$ be the dominant coweights. 
By the Cartan decomposition,
\[G(F)=\sqcup_{\mu\in X_*(A)_+} G(\calO)\varpi^\mu G(\calO),\] 
in order to know this function, it sufficed to know the value of $\Gamma_\tau$ at the $\varpi^\mu$'s, where $\varpi^\mu=h_\mu(\varpi)$.

For any dominant coweight $\mu$ of $G$, the characteristic function $1_{I\varpi^\mu I}$ is an element in the affine Hecke algebra $\bbH=\bbC_c[I\backslash G/I]$. Let
\begin{equation}\label{equ:defofcharacteristic}
e_{I\varpi^\mu I}=\frac{1_{I\varpi^\mu I}}{\textit{vol}(I\varpi^\mu I)}=\delta_{B}(\varpi^\mu)^{\frac{1}{2}}\theta_\mu\in \bbH.
\end{equation}
Then by the definition of $\Gamma_\tau$, we have
\begin{equation}\label{equ:newdefofspherical}
\Gamma_\tau(\varpi^\mu)=\langle \pi(e_{I\varpi^\mu I})(\phi^\tau), \phi^{\tau^{-1}}\rangle.
\end{equation}

The following is the Macdonald formula.
\begin{theorem}\cite{M68}\label{thm:macdonald}
Let $Q$ be the volume of $Bw_0B$ and $\mu\in X_*(A)_+$. We have
\[\Gamma_\tau(\varpi^\mu)=\frac{\delta_{B}^{\frac{1}{2}}(\varpi^\mu)}{Q}\sum_{w\in W}e^{w\mu}(\tau)\prod_{\beta>0}\frac{1-q^{-1}e^{-w\beta}(\tau)}{1-e^{-w\beta}(\tau)},\]
where $\delta_B$ is the modulus function on the Borel subgroup $B$. 
\end{theorem}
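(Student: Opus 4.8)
The plan is to diagonalize the matrix coefficient by expanding in the Casselman basis, where the lattice part $\Theta$ of the Iwahori--Hecke algebra acts by scalars. By \eqref{equ:newdefofspherical} and \eqref{equ:defofcharacteristic}, $\Gamma_\tau(\varpi^\mu)=\delta_B(\varpi^\mu)^{\frac12}\langle\pi(\theta_\mu)\phi^\tau,\phi^{\tau^{-1}}\rangle$. Expand $\phi^\tau=\sum_{w\in W}c_w(\tau)f_w^\tau$ with $c_w(\tau)=\prod_{\alpha>0,\,w^{-1}\alpha<0}c_\alpha$, as recorded just before the Gindikin--Karpelevich formula. Each $f_w^\tau$ is a $\Theta$-eigenvector; by \eqref{eq:latticeaction1}, together with $w\varpi^\mu w^{-1}=\varpi^{w\mu}$ and $e^{\lambda}(\tau)=\tau(\varpi^{\lambda})$, one gets $\pi(\theta_\mu)f_w^\tau=e^{w\mu}(\tau)f_w^\tau$. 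Hence
\[
\Gamma_\tau(\varpi^\mu)=\delta_B(\varpi^\mu)^{\frac12}\sum_{w\in W}c_w(\tau)\,e^{w\mu}(\tau)\,m_w(\tau),\qquad m_w(\tau):=\langle f_w^\tau,\phi^{\tau^{-1}}\rangle,
\]
so the problem is reduced to computing the scalars $m_w(\tau)$, which no longer involve $\mu$.

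The second step is to evaluate $m_w(\tau)$ by descending induction on $\ell(w)$ via the normalized intertwiners. The input I would use is that the normalized operators $I_{s_\alpha}^\tau$ are isometries for the sesquilinear pairing between $I(\tau)$ and $I(\tau^{-1})$ (a consequence of $I_{s_\alpha}^{s_\alpha\tau}I_{s_\alpha}^\tau=1$ and the standard adjunction of \cite{HKP03}), so that $m_w(\tau)=\langle I_{s_\alpha}^\tau f_w^\tau,\,I_{s_\alpha}^{\tau^{-1}}\phi^{\tau^{-1}}\rangle$. By the Gindikin--Karpelevich formula $I_{s_\alpha}^{\tau^{-1}}\phi^{\tau^{-1}}=\phi^{(s_\alpha\tau)^{-1}}$, and by \eqref{equ:intereigen} the vector $I_{s_\alpha}^\tau f_w^\tau$ equals $c_{-\alpha}(\tau)f_{s_\alpha w}^{s_\alpha\tau}$ if $s_\alpha w>w$ and $c_\alpha(\tau)^{-1}f_{s_\alpha w}^{s_\alpha\tau}$ if $s_\alpha w<w$. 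This yields the recursions $m_w(\tau)=c_{-\alpha}(\tau)\,m_{s_\alpha w}(s_\alpha\tau)$ and $m_w(\tau)=c_\alpha(\tau)^{-1}m_{s_\alpha w}(s_\alpha\tau)$, respectively. A direct computation, tracking how $s_\alpha$ permutes $\Sigma^+$ and using $e^{-\gamma}(s_\alpha\tau)=e^{-s_\alpha\gamma}(\tau)$, shows that
\[
P_w(\tau):=\frac1Q\prod_{\gamma>0,\ w^{-1}\gamma>0}\frac{1-q^{-1}e^{-\gamma}(\tau)}{1-e^{-\gamma}(\tau)}
\]
obeys exactly these recursions. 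For the base case $w=w_0$ the indexing set is empty, $f_{w_0}^\tau=\varphi_{w_0}^\tau$ by \eqref{equ:transition2}, and since $\phi^{\tau^{-1}}$ is right-$G(\calO)$-invariant with $\phi^{\tau^{-1}}(1)=1$ the pairing $\langle\varphi_{w_0}^\tau,\phi^{\tau^{-1}}\rangle$ is the volume of the open cell, equal to $1/Q$ with the Haar normalization implicit in the definition of $Q$. Therefore $m_w(\tau)=P_w(\tau)$ for all $w$.

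Finally I would reassemble. Substituting $\gamma=w\beta$ turns the product defining $Q\,P_w(\tau)$ into the product of $\frac{1-q^{-1}e^{-w\beta}(\tau)}{1-e^{-w\beta}(\tau)}$ over $\{\beta>0:w\beta>0\}$, while substituting $\alpha=-w\beta$ turns $c_w(\tau)$ into the same product over $\{\beta>0:w\beta<0\}$; multiplying gives $c_w(\tau)\,Q\,m_w(\tau)=\prod_{\beta>0}\frac{1-q^{-1}e^{-w\beta}(\tau)}{1-e^{-w\beta}(\tau)}$, and substituting back into the displayed sum produces the asserted formula. The main obstacle is the middle step: one needs the (standard, but not proved in this excerpt) isometry property of the normalized intertwiners against the sesquilinear pairing, and one must pin down the normalization constant $Q$ correctly through the open-cell base case; everything else is bookkeeping with root subsystems. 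If one prefers to avoid intertwiners, the same recursion in the form $c_{w\alpha}(\tau)\,m_w(\tau)=J_{\alpha,w}\,m_{ws_\alpha}(\tau)$ can be derived instead from \eqref{second}, together with the facts that $\phi^{\tau^{-1}}$ is a $\pi(T_{s_\alpha})$-eigenvector with eigenvalue $q$ and that $\pi(T_{s_\alpha})$ is self-adjoint for the pairing.
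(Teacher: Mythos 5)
The paper does not supply its own proof of this theorem: it cites it as a classical result of Macdonald~\cite{M68} and, immediately after the statement, points the reader to Casselman's proof via ``the $W$-invariance of the function and the eigenbasis $f_w^\tau$.'' Your sketch is a correct reconstruction of exactly that argument (expand $\phi^\tau$ in the Casselman basis, diagonalize the $\Theta$-action, and propagate $m_w(\tau)=\langle f_w^\tau,\phi^{\tau^{-1}}\rangle$ from the open-cell base case by the intertwiner, or equivalently Hecke-operator, recursion); the two caveats you flag --- the isometry of the normalized intertwiners for the pairing and the normalization of $Q$ against the open cell --- are indeed the only points requiring care and are standard from \cite{C80, HKP03}.
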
 	
The proof by Casselman \cite{C80} uses the $W$-invariance of the function and the eigenbasis $f_w^\tau$.  This formula gives the Satake transform for the spherical Hecke algebra \cite[Theorem~5.6.1]{HKP03}.

In Theorem \ref{thm:kmacdonald}, we give an equivariant K-theoretic interpretation of this formula.
\subsubsection{Casselman--Shalika formula}
In this section, we review the Casselman--Shalika formula for the Whittaker functions, see \cite{CS80}.

Recall $N$ is the unipotent radical of the Borel subgroup $B$, and $\prod_{\alpha\in \Pi}N_\alpha$ is a quotient of $N$, where the product runs over all simple roots, and $N_\alpha$ is the corresponding root subgroup, all of which are isomorphic to the additive group. Given characters $\sigma_\alpha$ of $N_\alpha$, the product $\sigma:=\prod\sigma_\alpha$ is a character of $N$.  We say $\sigma$ is principle if all the $\sigma_\alpha$ are non-trivial. We say $\sigma$ is unramified if all the characters $\sigma_\alpha$ are trivial on $\calO$, but nontrivial on $\varpi^{-1}\calO$. From now on, we assume $\sigma$ is principal and unramified. Let $\Ind_N^G\sigma$ be the induced representation.

For every unramified character $\tau$ as before, a \textit{Whittaker functional} on $I(\tau)$ is a $\bbC$-module map 
\[L:I(\tau)\rightarrow \bbC,\]
such that $L(n\phi)=\sigma(n)L(\phi)$ for any $n\in N$ and $\phi\in I(\tau)$. It is proved in \cite{R73} (see also \cite{CS80}) that the space of Whittaker functional is one-dimensional. For any $f\in I(\tau)$, define $\calW_\tau(f):G\rightarrow \bbC$ by 
\[\calW_\tau(f)(g):=L(gf).\]
Then $\calW_\tau(f)$ is a function on $G$ satisfying 
\[\calW_\tau(f)(ng)=\sigma(n)\calW_\tau(f)(g), \textit{\quad if \quad} n\in N.\]
And $f\mapsto \calW_\tau(f)$ is a $G$-map from $I(\tau)$ to $\Ind_N^G\sigma$, denoted by $\calW_\tau$.\footnote{This is the notation used in \cite{R93}. We normalize our $L$ such that our $\calW_\tau(f)(g)$ coincides with the one in \textit{loc. cit.}.} It follows from \cite[Proposition 2.1]{CS80} that for fixed $g\in G$ and $f\in I(\tau)$, the function $\tau\rightarrow \calW_\tau(f)(g)$ is a polynomial function on the dual torus $A^\vee$.

The Whittaker functional $\calW_\tau$ enjoys the following properties (see \cite[Equation (1.3), Proposition 3.1]{R93}) :
\begin{equation}\label{equ:intertwinerwhittaker}
\calW_{w^{-1}\tau}I_w^\tau=\prod_{\beta>0,w^{-1}\beta<0}\frac{1-q^{-1}e^{-\beta}(\tau)}{1-q^{-1}e^\beta(\tau)}\calW_\tau,
\end{equation}
and for every dominant coweight $\mu$, 
\begin{equation}\label{equ:whittakereigenbasis}
\calW_\tau(f_w^\tau)(\varpi^\mu)=\delta_B^{\frac{1}{2}}(\varpi^\mu)e^{w\mu}(\tau)\prod_{\beta>0,w^{-1}\beta>0}\frac{1-q^{-1}e^\beta(\tau)}{1-e^{-\beta}(\tau)}.
\end{equation}

Recall we have the spherical function $\phi^\tau\in I(\tau)^{G(\calO)}$. We define the Whittaker function 
\[W_\tau(g):=\calW_\tau(\phi^\tau)(g)=L(g\phi^\tau).\]
The Casselman--Shalika formula is an explicit formula for $W_\tau$. Since $W_\tau$ is right $G(\calO)$-invariant, and for any $n\in N$,
\[W_\tau(ng)=\sigma(n)W_\tau(g),\]
we only need to determine the value of $W_\tau$ at the the elements $\varpi^\mu$ for any coweight $\mu\in X_*(A)$. Moreover,  $W_\tau(\varpi^\mu)=0$, unless $\mu$ is dominant, cause if not, there exists some $x\in N_\alpha\cap G(\calO)$, such that $\sigma_\alpha(\varpi^\mu x\varpi^{-\mu})$ is nontrivial. However,
\[W_\tau(\varpi^\mu)=W_\tau(\varpi^\mu x)=\sigma_\alpha(\varpi^\mu x\varpi^{-\mu})W_\tau(\varpi^\mu),\]
forcing $W_\tau(\varpi^\mu)=0$.

Assume $\mu$ is dominant. By the Iwahori factorization $I=(I\cap \bar{B})(I\cap N)$, we have $I\varpi^{-\mu} I=I\varpi^{-\mu} (I\cap N)$. Since $\sigma$ is trivial on $I\cap N$ and $\phi^\tau$ is invariant under $I$, we have \cite[Theorem 6.5.1]{HKP03}
\begin{equation}\label{equ:newdefofwhittaker}
W_\tau(\varpi^\mu)=L(\pi(e_{I\varpi^{-\mu} I})\phi^\tau).
\end{equation}

The Casselman--Shalika formula is given by the following theorem.
\begin{theorem}\cite[Theorem 5.4]{CS80}\label{casselmanshalika}
Let $\mu$ be a dominant coweight of $G$, then 
\begin{align*}
W_\tau(\varpi^\mu)&=\delta_B^{\frac{1}{2}}(\varpi^\mu)\prod_{\beta>0}(1-q^{-1}e^\beta(\tau))\sum_w\frac{e^{w\mu}(\tau)}{\prod_{\beta>0}(1-e^{-w\beta}(\tau))}\\
&=\delta_B^{\frac{1}{2}}(\varpi^\mu)\prod_{\beta>0}(1-q^{-1}e^\beta(\tau))E_\mu(\tau),
\end{align*}
where $E_\mu$ is the character of the representation of the Langlands dual group $G^\vee$ having highest weight $\mu$.
\end{theorem}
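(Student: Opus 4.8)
The plan is to reduce the Casselman--Shalika formula to two facts already available in this section: the expansion of the spherical vector $\phi^\tau$ in the Casselman eigenbasis, and the evaluation \eqref{equ:whittakereigenbasis} of the Whittaker functional on that eigenbasis. Since $\calW_\tau$ is a $G$-map, hence $\bbC$-linear, and $W_\tau(\varpi^\mu)=\calW_\tau(\phi^\tau)(\varpi^\mu)$ by definition, I would first substitute $\phi^\tau=\sum_{w\in W}\big(\prod_{\alpha>0,\,w^{-1}\alpha<0}\tfrac{1-q^{-1}e^\alpha(\tau)}{1-e^\alpha(\tau)}\big)f_w^\tau$ and apply \eqref{equ:whittakereigenbasis} termwise (legitimate since $\mu$ is assumed dominant), obtaining
\[W_\tau(\varpi^\mu)=\delta_B^{1/2}(\varpi^\mu)\sum_{w\in W}e^{w\mu}(\tau)\Big(\prod_{\alpha>0,\,w^{-1}\alpha<0}\frac{1-q^{-1}e^\alpha(\tau)}{1-e^\alpha(\tau)}\Big)\Big(\prod_{\beta>0,\,w^{-1}\beta>0}\frac{1-q^{-1}e^\beta(\tau)}{1-e^{-\beta}(\tau)}\Big).\]

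Next I would simplify the product of the two families of factors. The numerators of the two products together range over all positive roots, so their product is the $w$-independent quantity $\prod_{\beta>0}(1-q^{-1}e^\beta(\tau))$, which comes out of the sum. For the denominators I would use the elementary identity
\[\prod_{\alpha>0,\,w^{-1}\alpha<0}(1-e^\alpha(\tau))\cdot\prod_{\beta>0,\,w^{-1}\beta>0}(1-e^{-\beta}(\tau))=\prod_{\beta>0}(1-e^{-w\beta}(\tau)),\]
proved by setting $\gamma=w\beta$ in the right-hand side and splitting $\{\beta>0\}$ by the sign of $w\beta$: the part $w\beta>0$ produces $1-e^{-\gamma}(\tau)$ over $\{\gamma>0:w^{-1}\gamma>0\}$, and the part $w\beta<0$, with $\gamma=-w\beta>0$, produces $1-e^{\gamma}(\tau)$ over $\{\gamma>0:w^{-1}\gamma<0\}$. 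This leaves exactly
\[W_\tau(\varpi^\mu)=\delta_B^{1/2}(\varpi^\mu)\prod_{\beta>0}(1-q^{-1}e^\beta(\tau))\sum_{w\in W}\frac{e^{w\mu}(\tau)}{\prod_{\beta>0}(1-e^{-w\beta}(\tau))},\]
which is the first displayed form in the theorem.

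Finally I would identify the Weyl-group sum with $E_\mu(\tau)$. Writing $\Delta$ for the Weyl denominator $\prod_{\beta>0}(e^{\beta/2}-e^{-\beta/2})$, the computation $\prod_{\beta>0}(1-e^{-w\beta})=(-1)^{\ell(w)}e^{-w\rho}\Delta$ — using $1-e^{\gamma}=-e^{\gamma}(1-e^{-\gamma})$, $\#\{\gamma>0:w^{-1}\gamma<0\}=\ell(w)$, $\sum_{\gamma\in\Sigma_w}\gamma=\rho-w\rho$, and $\prod_{\gamma>0}(1-e^{-\gamma})=e^{-\rho}\Delta$ — turns the summand into $(-1)^{\ell(w)}e^{w(\mu+\rho)}/\Delta$, so the sum equals $\Delta^{-1}\sum_{w}(-1)^{\ell(w)}e^{w(\mu+\rho)}$. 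By the Weyl character formula this is the character $E_\mu$ of the irreducible representation of the Langlands dual group $G^\vee$ with highest weight $\mu$, evaluated at $\tau\in A^\vee$ (whose rational characters are the coweights of $G$). Putting the three displays together yields both forms of $W_\tau(\varpi^\mu)$.

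The main obstacle here is not conceptual but bookkeeping: keeping the combinatorics of positive roots under $w$ watertight in the denominator identity, and tracking the sign $(-1)^{\ell(w)}$ together with the $\rho$-shift correctly in the passage to the Weyl character formula. Once those are in place, the theorem follows by stringing together formulas already established in the section.
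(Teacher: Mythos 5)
The paper cites this result as \cite[Theorem 5.4]{CS80} and does not supply a proof of its own, so there is nothing in the paper to compare your argument against line by line; what can be checked is whether your derivation from the facts quoted earlier in \S8 is internally correct, and it is. You start from $W_\tau(\varpi^\mu)=\calW_\tau(\phi^\tau)(\varpi^\mu)$, expand $\phi^\tau$ in the Casselman eigenbasis using the cited identity $\phi^\tau=\sum_w\prod_{\alpha>0,\,w^{-1}\alpha<0}\tfrac{1-q^{-1}e^\alpha(\tau)}{1-e^\alpha(\tau)}f_w^\tau$, and apply \eqref{equ:whittakereigenbasis} termwise; the two numerator products indeed range over complementary halves of $\Sigma^+$ and recombine to the $w$-independent $\prod_{\beta>0}(1-q^{-1}e^\beta(\tau))$, and the denominator identity
\[
\prod_{\alpha>0,\,w^{-1}\alpha<0}(1-e^{\alpha})\cdot\prod_{\beta>0,\,w^{-1}\beta>0}(1-e^{-\beta})=\prod_{\beta>0}(1-e^{-w\beta})
\]
is correct by the substitution $\gamma=\pm w\beta$ you describe. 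The passage to $E_\mu$ is also right: $\prod_{\beta>0}(1-e^{-w\beta})=(-1)^{\ell(w)}e^{-w\rho}\Delta$ using $\sum_{\gamma\in\Sigma_w}\gamma=\rho-w\rho$ and $|\Sigma_w|=\ell(w)$, which turns the sum into $\Delta^{-1}\sum_w(-1)^{\ell(w)}e^{w(\mu+\rho)}$, and the Weyl character formula closes the argument. This is essentially the same route taken in Casselman's treatment of the Macdonald formula adapted to the Whittaker setting, i.e.\ the route of \cite{CS80} itself; the only logical caveat worth flagging is that \eqref{equ:whittakereigenbasis} is quoted from \cite[Proposition 3.1]{R93}, so your argument establishes the Casselman--Shalika formula conditional on Reeder's evaluation of the Whittaker functional on the Casselman basis, which is an independent (non-circular) input.
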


\subsection{Bases in equivariant K-theory for the complex dual group $G^\vee$} From now on we only consider K-theory with $\bbC$ coefficients. The Iwahori-Hecke algebra $\bbH$ (with $\bbC$ coefficients) of $G_F$ can be expressed in terms of the complex reductive group  $G^\vee$, whose root datum is Langlands dual to that of $G_F$.  
We adapt our notation  of stable basis from \S~\ref{sec:Heckeaction} to the group $G^\vee$. To simplify notations, we still let $\alpha,\beta$ denote the roots of $G^\vee$. The maximal torus of $G^\vee$ is naturally isomorphic to $A^\vee$, the complex dual of $A_F\subset G_F$.
Let $B^\vee$ be a Borel subgroup of $G^\vee$ containing $A^\vee$. In the remaining parts of the paper, \textbf{we switch the notation for the positive roots and negative roots}. That is,  we call all the roots in $B^\vee$ negative roots.

Recall we have
\[\bbH\simeq K_{G^\vee\times \bbC^*}(Z),\]
where $Z$ is the Steinberg variety for the dual group. In this section, we normalize this isomorphism the same way  as in \cite[Prop. 6.1.5]{R08} to better suite for the comparison. 
Under the modified  isomorphism \cite[Prop. 6.1.5]{R08},  $e^\lambda\in X^*(A^\vee)$ is mapped to  $\pi_\Delta^*(\calO(\lambda))$ with $\calO(\lambda)$ being the line bundle on $T^*(G^\vee/B^\vee)$; $\pi_\Delta:Z_{\Delta}=\Delta(T^*(G^\vee/B^\vee))\rightarrow T^*(G^\vee/B^\vee)$; the operator $T_\alpha\in \bbH$ for simple root $\alpha$ is mapped to 
\[-[\calO_\Delta]-[\calO_{T_{Y_\alpha}^*}(-\rho,\rho-\alpha)].\] 
Note that this action is different than the one used in \S~\ref{sec:Heckeaction}, hence the formulas from \S~\ref{sec:Heckeaction} are modified correspondingly  as below. 

The above isomorphism has symmetry$[\calO_{T_{Y_\alpha}^*}(-\rho,\rho-\alpha)]\simeq[\calO_{T_{Y_\alpha}^*}(\rho-\alpha,-\rho)]$ \cite[Lemma 1.5.1]{R08}. Hence, the right convolution and the left convolution by $T_\alpha$ will give the same operator on $K_{A^\vee\times \bbC^*}(T^*(G^\vee/B^\vee))$. 
In what follows, the right convolution action of $\bbH$ on $K_{A^\vee\times \bbC^*}(T^*(G^\vee/B^\vee))$ will be denoted by $\pi$. 
As in \S~\ref{sec:Heckeaction}, we use $T_\alpha$ (resp. $T_\alpha'$) to denote the left (resp. right) convolution by the simple generator of $\bbH$. The relations between these convolution operators are
\begin{equation}\label{equ:relationbetwdifferentoperators}
\pi(T_\alpha)=\calO(-\rho)T_\alpha\calO(\rho)=\calO(\rho)T_\alpha'\calO(-\rho),
\end{equation}
where $\calO(\pm\rho)$ is the operator of multiplication by the line bundle. Note that the second equality also gives a geometric proof of the first equality in \eqref{equ:relationwithlus}. 

For the bases of $K_{G^\vee\times \bbC^*}(Z)$, we will then consider the following instead:
\[
(\iota_{w*}1)_{-\rho}:=\iota_{w*}1\otimes  \calO(-\rho), \quad (\stab_-(w))_{-\rho}:=\stab_-(w)\otimes \calO(-\rho). 
\]
The fixed point basis is an eigenbasis for the action of the lattice part $\Theta$ of $\bbH$. Therefore, for any $e^\lambda\in X^*(A^\vee)$, 
\begin{align}\label{eq:latticeaction2}
\pi(e^\lambda)(\iota_{w*}1\otimes\calO(-\rho))=e^{w\lambda}\iota_{w*}1\otimes\calO(-\rho). 
\end{align}
From the proof of Lemma \ref{lem:geoadjoint} and Equation (\ref{equ:relationbetwdifferentoperators}) (we switch the positive and negative roots), we have
\begin{equation}\label{fixed}
\pi(T_{s_\alpha})(\iota_{w*}1)_{-\rho}= T_{s_\alpha}(\iota_{w*}1)\otimes \calO(-\rho)=
\frac{q-1}{1-e^{-w\alpha}}(\iota_{w*}1)_{-\rho}+\frac{q-e^{-w\alpha}}{1-e^{w\alpha}}(\iota_{ws*}1)_{-\rho}.
\end{equation}
As for the second basis, we get from Theorem \ref{thm:stabpim} the following
\begin{equation}\label{equ:newaction}
\pi(T_{s_\alpha})(\stab_-(w)_{-\rho})=
\left\{ \begin{array}{cc}
q^{\frac{1}{2}}(\stab_-(ws_{\alpha}))_{-\rho}+(q-1)(\stab_-(w)_{-\rho},& \text{ if } ws_{\alpha}<w;\\
q^{\frac{1}{2}}(\stab_-(ws_{\alpha}))_{-\rho},& \text{ if } ws_{\alpha}>w.
\end{array}\right. \\
\end{equation}

By definition, the fixed point basis $\iota_{w*}1$ is supported at $w$ with restriction
\[\iota_{w*}1|_w=\prod_{\beta>0}(1-e^{-w\beta})(1-qe^{w\beta}).\]
Hence, by the definition of $\stab_-(w)$ (Theorem \ref{thm:geostable}), the second part of Remark \ref{rem:afterdef} and Lemma \ref{lem:charstab}, we can write
\begin{equation}\label{equ:transition3}
\iota_{w*}1=q^{-\frac{\ell(w)}{2}}\prod_{\beta>0, w\beta>0}(1-e^{-w\beta})\prod_{\beta>0,w\beta<0}(q-e^{-w\beta})\stab_-(w)+\sum_{y>w}\stab_+(y)|_w\stab_-(y),
\end{equation}
where $\stab_+(y)|_w$ is given by Theorem \ref{thm:intro2}.

Since an unramified character $\tau$ of $A_F$ corresponds to a maximal ideal in $K_{A^\vee}(pt)$, we have the evaluation map $K_{A^\vee}(pt)\to \bbC_\tau$. Consequently, we have the tensor product 
\[K_{A^\vee\times \bbC^*}(T^*(G^\vee/B^\vee))\otimes_{K_{A^\vee}(pt)}\bbC_\tau\] 
which without raising any confusion will also be denoted by \[K_\tau:=K_{A^\vee\times \bbC^*}(T^*(G^\vee/B^\vee))\otimes_{K_{A^\vee\times \bbC^*}(pt)}\bbC_{\tau}.\]
For any $f\in K_{A^\vee\times \bbC^*}(T^*(G^\vee/B^\vee))$, the corresponding class $f\otimes 1\in K_\tau $ will also be denoted by $f$ for simplicity.
We further assume that the values of the roots of $G^\vee$ at $\tau$ does not equal to $q^{\pm1}$, so that the above tensor product has the following two bases
\[\{(\iota_{w*}1)_{-\rho}\mid w\in W\} \text{\quad and \quad} \{(\stab_-(w))_{-\rho}\mid w\in W\}. \] 

\subsection{The comparison}
The main result of this section is the following.
\begin{theorem}\label{thm:comparison}
Fix an unramified character $\tau$ of $A$. There is a unique isomorphism between the following two right $\bbH$-modules 
\[\Psi:K_{A^\vee\times \bbC^*}(T^*(G^\vee/B^\vee))\otimes_{K_{A^\vee\times \bbC^*}(pt)}\bbC_{\tau}\rightarrow I(\tau)^I,\] 
with the equivariant parameter $q$ for $\bbC^*$ evaluated to the cardinality of the residue field of $\calO_F$, satisfying the following properties: 
\begin{enumerate}
\item for any $w\in W$,
\[g_w:=\frac{q^{\ell(w)}}{\prod_{\beta>0, w\beta>0}(1-e^{-w\beta})\prod_{\beta>0,w\beta<0}(q-e^{-w\beta})}(\iota_{w*}1)_{-\rho} \quad \mapsto \quad f_w^\tau, \]
\item 
$(\stab_-(w))_{-\rho} \mapsto q^{-\frac{\ell(w)}{2}}\varphi_w^\tau$.
\end{enumerate}
\end{theorem}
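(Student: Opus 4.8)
The plan is to construct $\Psi$ by matching the two explicit bases on each side and checking it respects the $\bbH$-module structure. Since both $K_\tau$ and $I(\tau)^I$ are free modules over the regular representation of the finite Hecke algebra $H_W$ of rank $|W|$, it suffices to define $\Psi$ on one basis, verify $\bbH$-equivariance on generators, and then invoke the uniqueness forced by the module structure together with the normalization conditions (1) and (2). I would first define $\Psi$ on the stable basis by (2), namely $\Psi((\stab_-(w))_{-\rho}) = q^{-\ell(w)/2}\varphi_w^\tau$, and then check that this is compatible with the action of $\pi(T_{s_\alpha})$ on both sides: on the K-theory side the action is given by \eqref{equ:newaction}, and on the $p$-adic side by \eqref{first}. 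Comparing the two, the factor $q^{-\ell(w)/2}$ is exactly what converts the normalization $q^{1/2}\stab_-(ws_\alpha) + (q-1)\stab_-(w)$ (resp. $q^{1/2}\stab_-(ws_\alpha)$) into $q\varphi_{ws_\alpha}^\tau + (q-1)\varphi_w^\tau$ (resp. $\varphi_{ws_\alpha}^\tau$). This is a short bookkeeping check on the two cases $ws_\alpha < w$ and $ws_\alpha > w$.

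Next I would verify that $\Psi$ also sends $g_w$ to the Casselman basis $f_w^\tau$. Here there are two things to check. First, the action of the lattice part $\Theta$ of $\bbH$: by \eqref{eq:latticeaction2} the fixed point classes are eigenvectors for $\pi(e^\lambda)$ with eigenvalue $e^{w\lambda}(\tau)$, which under the identification $e^\lambda \leftrightarrow \theta_{h_\lambda(\varpi)}$ and $w^{-1}\tau(a) = \tau(waw^{-1})$ matches \eqref{eq:latticeaction1} exactly; this characterizes $f_w^\tau$ up to scalar among $\Theta$-eigenvectors, and the scalar is pinned down by comparing with the transition formula. Second, and more delicate, I must check the scalar normalization is consistent: the expression $\iota_{w*}1 = q^{-\ell(w)/2}\prod_{\beta>0,w\beta>0}(1-e^{-w\beta})\prod_{\beta>0,w\beta<0}(q-e^{-w\beta})\stab_-(w) + \sum_{y>w}\stab_+(y)|_w\stab_-(y)$ from \eqref{equ:transition3} must map, after the normalization built into $g_w$, to the transition formula $f_w^\tau = \sum_{y\geq w} a_{w,y}(\tau)\varphi_y^\tau$ with $a_{w,w} = 1$ from \eqref{equ:transition}. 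Applying $\Psi$ to \eqref{equ:transition3} after multiplying by the prefactor in the definition of $g_w$ turns the leading coefficient into $1$, so $\Psi(g_w)$ is an element with $\varphi_w^\tau$-coefficient $1$, supported on $y \geq w$; combined with the $\Theta$-eigenvector property inherited from the fixed point basis, this forces $\Psi(g_w) = f_w^\tau$. The coefficients $\stab_+(y)|_w$ entering \eqref{equ:transition3} are given by Theorem \ref{thm:intro2}, and this is what will eventually yield the closed formula for $a_{w,y}(\tau)$ promised in Corollary \ref{cor:relation bet transitions}.

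For the full $\bbH$-module statement, since $\bbH$ is generated by $\Theta$ and the $T_{s_\alpha}$, and I have checked equivariance for both sets of generators (on the stable basis for the $T_{s_\alpha}$, and on the fixed point basis for $\Theta$), and since both the stable basis and the fixed point basis span the same module, $\Psi$ intertwines the full $\bbH$-actions. Uniqueness follows because any $\bbH$-module map respecting property (1) or (2) is determined on a basis; one also notes the source and target are both the regular $\bbH$-module (this is \cite{R92} on the $p$-adic side and follows from the faithful action of $\bbH \simeq K_{G^\vee\times\bbC^*}(Z)$ together with the triangularity of both bases on the K-theory side), so an $\bbH$-equivariant bijection exists and is unique once one basis vector's image is fixed.

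The main obstacle I anticipate is bookkeeping with conventions: the paper explicitly switches the roles of positive and negative roots for $G^\vee$, conjugates the Kazhdan--Lusztig--Ginzburg isomorphism by $\calO(\rho)$ (equations \eqref{equ:relationbetwdifferentoperators}, \eqref{fixed}), twists the fixed point and stable classes by $\calO(-\rho)$, and relates its intertwiner $\calA_x$ to Casselman's $T_x$ by an inversion of index. Getting all the signs, the $\rho$-shifts, and the left-versus-right convolution ($T_\alpha$ vs.\ $T_\alpha'$ vs.\ $\pi(T_\alpha)$) to line up so that \eqref{equ:newaction} literally becomes \eqref{first} and \eqref{fixed} literally becomes \eqref{second} (after the substitution $a_{w,y}$, using the $c_\alpha$ of \eqref{equ:calpha}) is where the real care is needed; the underlying representation-theoretic content — that there is a unique isomorphism of regular $\bbH$-modules matching normalized bases — is comparatively soft.
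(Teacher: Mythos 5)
Your approach inverts the paper's: the paper defines $\Psi$ by condition (1) on the $g_w$ basis, checks $\bbH$-equivariance there (both $T_{s_\alpha}$ via \eqref{fixed}/\eqref{second} and $\Theta$ via \eqref{eq:latticeaction2}/\eqref{eq:latticeaction1}, where the actions are simultaneously explicit), and then deduces condition (2) by a downward induction on $\ell(w)$ starting from $w_0$. You instead define $\Psi$ by (2) and try to deduce (1). The $T_{s_\alpha}$-equivariance check on the stable basis, comparing \eqref{equ:newaction} with \eqref{first}, is fine and mirrors the paper's inductive step.

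The gap is in your argument that $\Psi(g_w) = f_w^\tau$. You say this follows because $\Psi(g_w)$ and $f_w^\tau$ are both $\Theta$-eigenvectors with the same eigenvalue and the same leading coefficient, but this presupposes that $\Psi$ is $\Theta$-equivariant. You then claim to verify $\Theta$-equivariance ``on the fixed point basis'' — but verifying $\pi(\theta_\lambda)\Psi(g_w) = \Psi(\pi(e^\lambda)g_w)$ requires already knowing $\Psi(g_w)$, which is precisely what you are trying to establish. So the argument as written is circular: you need $\Theta$-equivariance to get $\Psi(g_w) = f_w^\tau$, and you need $\Psi(g_w) = f_w^\tau$ to get $\Theta$-equivariance. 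The observation that both $g_w$ and $f_w^\tau$ are $\Theta$-eigenvectors with matching eigenvalues is true on each side separately, but a statement about each module in isolation does not by itself make $\Psi$ intertwine the $\Theta$-actions. The appeal to $I(\tau)^I$ being ``the regular $\bbH$-module'' is also not right as stated — $\bbH$ is infinite-dimensional; what is true is that both sides are free of rank one over $H_W$, which is not enough to determine the $\Theta$-action.

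The repair that keeps your direction of argument is the following. You already have $T_{s_\alpha}$-equivariance and the base case $\Psi(g_{w_0}) = \varphi_{w_0}^\tau = f_{w_0}^\tau$, which comes from $(\stab_-(w_0))_{-\rho} = q^{-\ell(w_0)/2}g_{w_0}$ (the transition formula \eqref{equ:transition3} is trivial at $w_0$) together with the fact that $a_{w_0,w_0}=1$ and there is no $y > w_0$. Now prove $\Psi(g_w) = f_w^\tau$ by downward induction on $\ell(w)$, using only the $T_{s_\alpha}$-action: the paper's computation shows that the coefficients of $g_w, g_{ws_\alpha}$ in $\pi(T_{s_\alpha})(g_w)$ coincide exactly with the coefficients of $f_w^\tau, f_{ws_\alpha}^\tau$ in $\pi(T_{s_\alpha})(f_w^\tau)$ from \eqref{second} — in particular $q(1-c_{w\alpha}) = \tfrac{q-1}{1-e^{-w\alpha}}$ and $qJ_{\alpha,w}$ matches the off-diagonal term. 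So if $\Psi(g_w) = f_w^\tau$ and $ws_\alpha < w$, applying $\Psi$ to $\pi(T_{s_\alpha})g_w$ and solving for $\Psi(g_{ws_\alpha})$ gives $\Psi(g_{ws_\alpha}) = f_{ws_\alpha}^\tau$. Only once (1) is established this way should you verify $\Theta$-equivariance, which is then immediate from \eqref{eq:latticeaction2} and \eqref{eq:latticeaction1}. This matches the paper's computations in content but reverses the order of (1) and (2).
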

\begin{remark}\label{rem:aftercomparison}
\begin{enumerate}
\item 
Such an isomorphism has been studied by Lusztig \cite{Lus98} and Braverman--Kazhdan \cite{BK99} from different points of view.  
 However, the present paper explicitly identity different bases from K-theory and from $p$-adic representation theory, which had been previously unknown.
\item 
Under this isomorphism, the spherical function $\phi^\tau$ corresponds to the following element on the K-theory side:
\begin{align}\label{equ:kspherical}
\tilde{\phi}^\tau:&=\sum_{w}q^{\frac{\ell(w)}{2}}(\stab_-(w))_{-\rho}=\sum_w\frac{(\iota_{w*}1)_{-\rho}}{\prod_{\beta>0}(1-e^{-w\beta})}\\
=&[\calO_{G^\vee/B^\vee}\otimes\calO(-\rho)]\in K_{A^\vee\times \bbC^*}(T^*(G^\vee/B^\vee))\otimes_{K_{A^\vee\times \bbC^*}(pt)}\bbC_{\tau},
\end{align}
where the last equality follows from localization. Hence, in what follow we refer to $\tilde{\phi}^\tau$ as the K-theoretic spherical class.
\end{enumerate}
\end{remark}
\begin{proof}
Condition (1) uniquely defines $\Psi$ as a map of $\bbC$-vector spaces.  We need to check that $\Psi$ is a map of $\bbH$-modules, and that it is satisfies Condition (2).

First we verify that $\Psi$ is a map of $\bbH$-modules. For any simple root $\alpha$, by \eqref{fixed}, we have
\begin{align*}
&\pi(T_{s_\al})(g_w)\\
=&\frac{q^{\ell(w)}}{\prod_{\beta>0, w\beta>0}(1-e^{-w\beta})\prod_{\beta>0,w\beta<0}(q-e^{-w\beta})}\left(\frac{q-1}{1-e^{-w\alpha}}\iota_{w*}1\otimes\calO(-\rho)+\frac{q-e^{-w\alpha}}{1-e^{w\alpha}}\iota_{ws*}1\otimes\calO(-\rho)\right)\\
=&\frac{q-1}{1-e^{-w\alpha}}g_w
+\left\{\begin{array}{cc}
qg_{ws_\alpha},& \text{ if } ws_{\alpha}<w;\\
\frac{q-e^{-w\alpha}}{1-e^{w\alpha}}\frac{q-e^{w\alpha}}{1-e^{-w\alpha}}q^{-1}g_{ws_\alpha},& \text{ if } ws_{\alpha}>w.
\end{array}\right.
\end{align*}
Applying the map $\Psi$, we get
\begin{align*}
&\Psi\left(\pi(T_{s_\al})(g_w)\right)\\
=&\frac{q-1}{1-e^{-w\alpha}(\tau)}f_w^\tau+\left\{\begin{array}{cc}qf_{ws}^\tau,& \text{ if } ws_{\alpha}<w;\\
\frac{(1-q^{-1}e^{-w\alpha}(\tau))}{1-e^{w\alpha}(\tau)}\frac{1-q^{-1}e^{w\alpha}(\tau)}{1-e^{-w\alpha}(\tau)}qf_{ws}^\tau,& \text{ if } ws_{\alpha}>w.
\end{array}\right.\\
=&q(1-c_{w\alpha})f_w^\tau+qJ_{\alpha,w}f_{ws}^\tau\\
=&\pi(T_{s_\al})(f_w^\tau)\\
=&\pi(T_{s_\al})(\Psi(g_w)).
\end{align*}
Therefore, $\Psi$ commutes with the $H_W$-actions. Next we consider the action of $\Theta$. For any $e^\lambda\in X^*(A^\vee)$ We have
\begin{align*}
\Psi((\pi(e^\lambda)g_w)=\Psi(e^{w\lambda}g_w)=e^{w\lambda}(\tau)f_w^\tau=\pi(\theta_\lambda)f_w^\tau=\pi(\theta_\lambda)\Psi(g_w).
\end{align*}
This proves that $\Psi$ is a map of $\bbH$-modules.

We now prove Condition (2) by descending induction on $\ell(w)$. For the longest element $w=w_0\in W$, we have $f_{w_0}^\tau=\varphi_{w_0}^\tau$, and 
\[\iota_{w_0*}1=q^{-\frac{\ell(w_0)}{2}}\prod_{\beta>0, w_0\beta>0}(1-e^{-w_0\beta})\prod_{\beta>0,w_0\beta<0}(q-e^{-w_0\beta})\stab_-(w_0).\] Therefore, 
\[(\stab_-(w_0))_{-\rho}=q^{-\frac{\ell(w_0)}{2}}g_{w_0}.\] This proves Condition (2) for $w=w_0$. The inductive step follows directly from \eqref{first} and \eqref{equ:newaction}.
\end{proof}

One immediate corollary is the following relation between the restriction formulas in Theorem \ref{thm:intro1} and the transition matrix in Equations (\ref{equ:transition}) and (\ref{equ:transition2}). 
\begin{corollary}\label{cor:relation bet transitions}
For any $y,w\in W$ with $w\leq y$, we have
\begin{equation}\label{equ:match of transitions}
\stab_+(y)|_w=q^{\frac{\ell(y)}{2}-\ell(w)}a_{w,y}\prod_{\beta>0, w\beta>0}(1-e^{-w\beta})\prod_{\beta>0,w\beta<0}(q-e^{-w\beta}),
\end{equation}
and 
\begin{equation}\label{equ:match of transitionss}
\stab_-(w)|_y=q^{\ell(y)-\frac{\ell(w)}{2}}b_{w,y}\prod_{\beta>0, y\beta>0}(1-qe^{y\beta})\prod_{\beta>0,y\beta<0}(1-e^{y\beta}).
\end{equation}
\end{corollary}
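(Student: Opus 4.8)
The strategy is to use the isomorphism $\Psi$ of Theorem~\ref{thm:comparison} as a dictionary translating the transition matrices $a_{w,y},b_{w,y}$ on the $p$-adic side into restriction coefficients of stable bases on the K-theory side. Concretely, I would start from the two defining change-of-basis formulas \eqref{equ:transition} and \eqref{equ:transition2}, apply $\Psi^{-1}$ to them, and compare with the analogous expansions of the K-theory fixed-point basis in the stable basis (which is \eqref{equ:transition3} for one direction, and its inverse for the other).

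For the first identity \eqref{equ:match of transitions}: apply $\Psi^{-1}$ to $f_w^\tau=\sum_{y\geq w}a_{w,y}(\tau)\varphi_y^\tau$. By Theorem~\ref{thm:comparison}, $\Psi^{-1}(f_w^\tau)=g_w$, where
\[
g_w=\frac{q^{\ell(w)}}{\prod_{\beta>0, w\beta>0}(1-e^{-w\beta})\prod_{\beta>0,w\beta<0}(q-e^{-w\beta})}(\iota_{w*}1)_{-\rho},
\]
and $\Psi^{-1}(\varphi_y^\tau)=q^{\ell(y)/2}(\stab_-(y))_{-\rho}$. This gives an expansion of $(\iota_{w*}1)_{-\rho}$ in the stable basis $\{(\stab_-(y))_{-\rho}\}$. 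On the other hand, \eqref{equ:transition3} already provides such an expansion directly from the geometry, with coefficient $\stab_+(y)|_w$ in front of $\stab_-(y)$ for $y>w$ (after tensoring by $\calO(-\rho)$, which does not change the expansion coefficients since it is an invertible scalar operator in the localized setting — one should remark that both sides are tensored by $\calO(-\rho)$). Equating the coefficient of $(\stab_-(y))_{-\rho}$ in these two expansions yields
\[
\stab_+(y)|_w=q^{\frac{\ell(y)}{2}-\ell(w)}\,a_{w,y}\prod_{\beta>0, w\beta>0}(1-e^{-w\beta})\prod_{\beta>0,w\beta<0}(q-e^{-w\beta}),
\]
which is \eqref{equ:match of transitions}. (One must also check the $y=w$ case separately, but there $a_{w,w}=1$ and the formula reduces to Lemma~\ref{lem:charstab}.)

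For the second identity \eqref{equ:match of transitionss}: apply $\Psi^{-1}$ to $\varphi_w^\tau=\sum_{y\geq w}b_{w,y}(\tau)f_y^\tau$, obtaining $q^{\ell(w)/2}(\stab_-(w))_{-\rho}=\sum_{y\geq w}b_{w,y}g_y$, i.e. an expansion of $(\stab_-(w))_{-\rho}$ in the fixed-point basis $\{(\iota_{y*}1)_{-\rho}\}$. But by definition, the coefficient of $\iota_{y*}1$ in the expansion of $\stab_-(w)$ against the fixed-point basis is $\stab_-(w)|_y$ divided by $\bigwedge^\bullet T_y(T^*(G^\vee/B^\vee))=\prod_{\beta>0}(1-e^{y\beta})(1-qe^{-y\beta})$ — here I must be careful that in this section the roles of positive and negative roots of $G^\vee$ have been switched, so the normal-bundle Euler class reads $\prod_{\beta>0}(1-e^{-y\beta})(1-qe^{y\beta})$ in the convention of \S\ref{sec:padic}; I should state this explicitly. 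Matching coefficients of $(\iota_{y*}1)_{-\rho}$, together with the explicit value of the denominator in the definition of $g_y$, gives $\stab_-(w)|_y$ as $q^{\ell(y)-\ell(w)/2}\,b_{w,y}$ times $\prod_{\beta>0, y\beta>0}(1-qe^{y\beta})\prod_{\beta>0,y\beta<0}(1-e^{y\beta})$, which is \eqref{equ:match of transitionss}.

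\textbf{Main obstacle.} The only real subtlety is bookkeeping: tracking the two different sign/positivity conventions (the switch of positive and negative roots for $G^\vee$ in \S\ref{sec:padic}), the factors $\calO(\pm\rho)$ and how they interact with restriction to fixed points, and the various powers of $q^{1/2}$ coming from $g_w$ versus the normalization $(\stab_-(w))_{-\rho}\mapsto q^{-\ell(w)/2}\varphi_w^\tau$. There is no conceptual difficulty — once $\Psi$ is in hand, everything is a matter of comparing two expansions of the same vector in the same basis — but one must verify that the $q$-powers $\tfrac{\ell(y)}{2}-\ell(w)$ and $\ell(y)-\tfrac{\ell(w)}{2}$ come out exactly right. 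I would double-check these by evaluating at $y=w$, where both $a_{w,w}=b_{w,w}=1$ and the stable-basis restrictions are given explicitly by Lemma~\ref{lem:charstab}, providing a clean consistency check.
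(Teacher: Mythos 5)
Your proof is correct and takes essentially the same route as the paper: both apply $\Psi^{-1}$ from Theorem~\ref{thm:comparison} to the transition formulas \eqref{equ:transition} and \eqref{equ:transition2}, then compare coefficients against the stable-basis expansion of the fixed-point class. Your use of \eqref{equ:transition3} for the first identity is just the paper's duality pairing $(g_w,\stab_+(y)\otimes\calO(\rho))$ unpacked, and your localization comparison for the second identity matches the paper's one-line appeal to "the theorem and the localization formula"; the only step you elide — the simplification of $\frac{\prod_{\beta>0}(1-e^{-y\beta})(1-qe^{y\beta})}{\prod_{\beta>0,y\beta>0}(1-e^{-y\beta})\prod_{\beta>0,y\beta<0}(q-e^{-y\beta})}$ down to $\prod_{\beta>0,y\beta>0}(1-qe^{y\beta})\prod_{\beta>0,y\beta<0}(1-e^{y\beta})$ — does go through, so no gap.
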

\begin{proof}
By the definition of $a_{w,y}$ in Equation (\ref{equ:transition}) and the above theorem, we have
\[g_w=\sum_{z}a_{w,z}q^{\frac{\ell(z)}{2}}(\stab_-(z))_{-\rho}.\]
Pairing with $\stab_+(y)\otimes\calO(\rho)$ on both sides and using the duality between the opposite stable bases (see Remark \ref{rem:afterdef}), we get the first equation. The other equation follows immediately from the theorem and the localization formula.
\end{proof}

\subsection{Weyl group action and intertwiners}
In this section, we compare, under Theorem \ref{thm:comparison}, the Weyl group action on the equivariant K-theory side, and the intertwiner action $I_x^\tau$ on the $p$-adic side.
Recall for any $G^\vee\times \bbC^*$-variety $Y$, we have a Weyl group  action on $K_{A^\vee\times \bbC^*}(Y)$ defined as follows. For any $w\in W$, pick an representative $\dot{w}\in N_{G^\vee}(A^\vee)$, the normalizers of $A^\vee$ in $G^\vee$. Then left multiplication by $\dot{w}^{-1}$ defines a morphism from $Y$ to itself, which is not $A^\vee\times \bbC^*$-equivariant. For any $\calF\in K_{A^\vee\times \bbC^*}(Y)$, the pullback sheaf $(\dot{w}^{-1})^*\calF$ has a natural $A\times \bbC^*$-equivariant coherent sheaf structure. Hence $(\dot{w}^{-1})^*\calF\in K_{A^\vee\times \bbC^*}(Y)$, and this construction does not depend on the choice of the representative. So we get a Weyl group action on $K_{A^\vee\times \bbC^*}(Y)$. 
Note that $W$ acts on  the base ring $K_{A^\vee\otimes \bbC^*}(pt)$ by $w(e^\lambda)=e^{w\lambda}$ for any $e^\lambda \in K_{A^\vee\otimes \bbC^*}(pt)$. The action on $K_{A^\vee\times \bbC^*}(Y)$ makes it a $W$-equivariant $K_{A^\vee\otimes \bbC^*}(pt)$-module.

More explicitly, for $Y=T^*(G^\vee/B^\vee)$, the action can be written under localization as follows. For any $\calF\in K_{A^\vee\times \bbC^*}(T^*(G^\vee/B^\vee))$, we have
\[w(\calF)|_y=w(\calF|_{w^{-1}y}).\]
In particular, we have
\[w(\iota_{y*}1)=\iota_{wy*}1.\]

Let us use $\Psi_\tau$ to denote the isomorphism $\Psi$ in Theorem \ref{thm:comparison}. Then we have the following compatibility result, which is also studied by Braverman--Kazhdan from a different point of view \cite[Corollary 5.7]{BK99}.
\begin{corollary}\label{cor:weylintertwiner}
For any $w\in W$, the following diagram is commutative
\[
\xymatrix{K_\tau\ar[r]^-{\Psi_\tau}\ar[d]^{w\otimes 1} & I(\tau)^I\ar[d]^{I_{w^{-1}}^\tau}\\
K_{w\tau} \ar[r]^-{\Psi_{w\tau}}& I(w\tau)^I.}
\]
\end{corollary}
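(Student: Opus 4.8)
The plan is to verify commutativity of the square by checking it on a basis, the natural choice being the fixed-point basis $\{(\iota_{w*}1)_{-\rho}\}_{w\in W}$ (equivalently the $g_w$'s), since both the Weyl-group action on the $K$-theory side and the intertwiner action $I_{w^{-1}}^\tau$ on the $p$-adic side have explicit formulas on the corresponding bases. On the $K$-theory side, we use the formula $w(\iota_{y*}1) = \iota_{wy*}1$ recorded just before the statement, together with the transformation of the twist $\calO(-\rho)$: pulling back by $\dot w^{-1}$ sends $\calO(-\rho)$ to $\calO(-w^{-1}\rho)$, so the extra factor needed to re-express $w(g_y)$ in terms of $g_{wy}$ is a monomial in the $e^\beta$'s that we can compute from the normalization in Theorem~\ref{thm:comparison}(1). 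On the $p$-adic side, we use \eqref{equ:intereigen}, i.e. the formula for $I_{s_\alpha}^\tau$ acting on the Casselman basis $f_w^\tau$, and the cocycle relation $I_w^{y^{-1}\tau}I_y^\tau = I_{yw}^\tau$ from \eqref{equ:prop of inter}, to reduce the general $w$ to the simple reflections.

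First I would reduce to the case $w = s_\alpha$ a simple reflection: both $x \mapsto (x$-action on $K_\tau) $ and $x \mapsto I_{x^{-1}}$ are ``cocycles'' compatible with change of character (the $K$-theory Weyl action satisfies $w_1(w_2(\calF)) = (w_1w_2)(\calF)$ over the moving base point, and $I$ satisfies \eqref{equ:prop of inter}), and $\Psi_\tau$ is already known to be an $\bbH$-module map by Theorem~\ref{thm:comparison}, so the set of $w$ for which the square commutes is closed under products; since simple reflections generate $W$ it suffices to treat them. Then I would compute $\Psi_{s_\alpha\tau}\bigl(s_\alpha \cdot (\iota_{w*}1)_{-\rho}\bigr)$ and $I_{s_\alpha}^\tau\bigl(\Psi_\tau((\iota_{w*}1)_{-\rho})\bigr)$ separately. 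For the former: $s_\alpha(\iota_{w*}1) = \iota_{s_\alpha w *}1$, and after correcting the $\calO(-\rho)$-twist and the normalizing scalar relating $(\iota_{w*}1)_{-\rho}$ to $g_w$, one re-expresses the result in terms of $g_{s_\alpha w}$ (and possibly $g_w$) with coefficients that are ratios of the form $(1 - q^{-1}e^{\pm w\alpha})/(1 - e^{\pm w\alpha})$, i.e. the $c_{\pm w\alpha}$ of \eqref{equ:calpha}. For the latter: $\Psi_\tau(g_w) = f_w^\tau$ by Theorem~\ref{thm:comparison}(1), and \eqref{equ:intereigen} gives $I_{s_\alpha}^\tau(f_w^\tau)$ directly as $c_{-\alpha} f_{s_\alpha w}^{s_\alpha\tau}$ or $c_\alpha^{-1} f_{s_\alpha w}^{s_\alpha\tau}$ according to whether $s_\alpha w > w$ or $s_\alpha w < w$. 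Applying $\Psi_{s_\alpha\tau}^{-1}$ (using Theorem~\ref{thm:comparison}(1) again, now for the character $s_\alpha\tau$) turns $f_{s_\alpha w}^{s_\alpha\tau}$ back into $g_{s_\alpha w}$, and I would then match the two scalars. The bookkeeping that needs care is that the intertwiner changes the character $\tau \to s_\alpha\tau$, which permutes the roots, so the normalizing factor of $g_{s_\alpha w}$ for the character $s_\alpha\tau$ differs from that for $\tau$; these discrepancies should be exactly the $c_{\pm w\alpha}$ factors.

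Alternatively, and perhaps more cleanly, I would phrase the whole argument via the spherical vector and intertwiners on $I(\tau)^I$ itself: by Theorem~\ref{thm:comparison} the right $\bbH$-action is intertwined, and the intertwiner $I_x^\tau: I(\tau)^I \to I(x^{-1}\tau)^I$ is the unique (up to scalar) $\bbH$-module isomorphism, pinned down by its value on a distinguished vector; similarly the Weyl action $w\otimes 1: K_\tau \to K_{w\tau}$ is an $\bbH$-module isomorphism (it is $\bbC$-linear, commutes with the $\Theta$-action by \eqref{eq:latticeaction2} twisted by $w$, and commutes with the finite Hecke part because the left-multiplication-by-$\dot w^{-1}$ map commutes with the convolution action). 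So both $\Psi_{w\tau}\circ(w\otimes 1)$ and $I_{w^{-1}}^\tau\circ\Psi_\tau$ are $\bbH$-module isomorphisms $K_\tau \to I(w\tau)^I$, hence differ by multiplication by a scalar (since $I(w\tau)^I$ is an irreducible-enough $\bbH$-module, or at least its module of self-intertwiners is $1$-dimensional for $\tau$ in general position — this uses that $\tau$ avoids the root hyperplanes); to see the scalar is $1$ I would evaluate both sides on the $K$-theoretic spherical class $\tilde\phi^\tau$ of Remark~\ref{rem:aftercomparison}(2), which by \eqref{equ:kspherical} corresponds to $\phi^\tau$, and use the Gindikin--Karpelevich identity \eqref{equ:Gindikin–Karpelevich Formula}, $I_w^\tau(\phi^\tau) = \phi^{w^{-1}\tau}$, together with the fact that the Weyl action fixes $[\calO_{G^\vee/B^\vee}\otimes\calO(-\rho)]$ up to the same normalization. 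The main obstacle is the careful tracking of the $\calO(\pm\rho)$-twists and of how the normalization constants in Theorem~\ref{thm:comparison}(1) transform under $\tau \mapsto s_\alpha\tau$; this is where a sign or a factor of $q$ could slip, but it is pure bookkeeping once the reduction to simple reflections and the comparison with \eqref{equ:intereigen} are set up. I would present the second (spherical-vector) route as the main argument and mention the basis computation as a sanity check.
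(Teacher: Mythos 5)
Your proposal offers two routes, and you elect to present the second one; since the paper's proof is exactly your first route, the comparison is between the paper's approach and your second.

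The paper reduces to $w=s_\alpha$ via \eqref{equ:prop of inter}, then computes $(s_\alpha\otimes 1)(g_w)$ explicitly from the normalization in Theorem~\ref{thm:comparison}(1) and matches it term-by-term with the formula \eqref{equ:intereigen} for $I_{s_\alpha}^\tau(f_w^\tau)$; the scalars that appear are $c_{-\alpha}$ and $c_\alpha^{-1}$ (not $c_{\pm w\alpha}$ as you wrote in your first-route sketch — a small slip, since $e^{-s_\alpha w\rho}(s_\alpha\tau)=e^{-w\rho}(\tau)$ and the only discrepancy comes from the single root $w^{-1}\alpha$). Your preferred second route replaces this computation by an abstract uniqueness argument: both $\Psi_{w\tau}\circ(w\otimes 1)$ and $I_{w^{-1}}^\tau\circ\Psi_\tau$ are $\bbH$-module maps $K_\tau\to I(w\tau)^I$ between simple modules, hence proportional, and the constant is fixed by evaluating on the $K$-theoretic spherical class via the Gindikin--Karpelevich formula \eqref{equ:Gindikin–Karpelevich Formula}. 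This is sound, but you should make two points explicit that your sketch leaves implicit. First, the one-dimensionality of $\Hom_{\bbH}(K_\tau, I(w\tau)^I)$ requires irreducibility of $I(\tau)^I$, which is not guaranteed by $\tau$ merely avoiding root hyperplanes; you need the stronger hypothesis $e^\alpha(\tau)\neq q^{\pm1}$, which the paper does impose just before Theorem~\ref{thm:comparison} but which you do not cite. Second, you take \eqref{equ:Gindikin–Karpelevich Formula} as input, whereas the paper's remark after this corollary derives Gindikin--Karpelevich as an application of it — so your route trades a piece of downstream output for a cleaner proof; this is not circular (Gindikin--Karpelevich is classical, and the paper presents the remark only as an alternative derivation), but it is worth flagging. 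Net effect: the paper's direct computation needs no hypothesis beyond what is needed to define $\{f_w^\tau\}$ and yields Gindikin--Karpelevich for free, while your route is shorter and more structural at the cost of invoking irreducibility and the already-known Gindikin--Karpelevich identity.
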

Note that the map $w\otimes 1$ is well-defined. For any $\calF\in K_{A^\vee\times \bbC^*}(T^*(G^\vee/B^\vee))$, $e^\lambda\in K_{A^\vee\times \bbC^*}(pt)$ and $z\in \bbC_\tau$, $e^\lambda\calF\otimes z=\calF\otimes ze^{\lambda}(\tau)$. And $(w\otimes 1)(e^\lambda\calF\otimes z)=w(e^\lambda\calF)\otimes z=e^{w\lambda}w(\calF)\otimes z=w(\calF)\otimes ze^{w\lambda}(\tau)=(w\otimes 1)(\calF\otimes ze^{\lambda}(\tau))$. 

\begin{proof}
By the properties of the intertwiners $I_w^\tau$ \eqref{equ:prop of inter}, we only  need to prove the corollary in the case when $w$ is a simple reflection $s_\alpha$. Using the notations in Theorem \ref{thm:comparison}, we check the commutativity using the basis $g_w$.

Fist of all, we have the following easy identities
\begin{align*}
&\prod_{\beta>0, s_\alpha w\beta>0}(1-e^{-s_\alpha w\beta}(s_\alpha\tau))\prod_{\beta>0,s_\alpha w\beta<0}(q-e^{-s_\alpha w\beta}(s_\alpha\tau))\\
=&\prod_{\beta>0, s_\alpha w\beta>0}(1-e^{-w\beta}(\tau))\prod_{\beta>0,s_\alpha w\beta<0}(q-e^{-w\beta}(\tau))\\
=&\left\{ \begin{array}{cc}
qc_{-\alpha}\prod_{\beta>0, w\beta>0}(1-e^{-w\beta}(\tau))\prod_{\beta>0,w\beta<0}(q-e^{-w\beta}(\tau)),& \text{ if } w^{-1}\alpha>0;\\
\frac{1}{qc_\alpha}\prod_{\beta>0, w\beta>0}(1-e^{-w\beta}(\tau))\prod_{\beta>0,w\beta<0}(q-e^{-w\beta}(\tau)),& \text{ if } w^{-1}\alpha<0,
\end{array}\right.
\end{align*}
where $c_\alpha$ is defined in \eqref{equ:calpha}. For example, if $w^{-1}\alpha>0$, then 
\[\prod_{\beta>0, s_\alpha w\beta>0}(1-e^{-w\beta})=\prod_{\beta\in R^+\setminus \{w^{-1}\alpha\}, s_\alpha w\beta>0}(1-e^{-w\beta})=\prod_{\beta\in R^+\setminus \{w^{-1}\alpha\}, w\beta>0}(1-e^{-w\beta})=\frac{1}{1-e^{-\alpha}}\prod_{\beta>0, w\beta>0}(1-e^{-w\beta}).\]
Notice that $w^{-1}\alpha>0$ iff $s_\alpha w>w$, and $w^{-1}\alpha<0$ iff $s_\alpha w<w$.

Using these, we have
\begin{align*}
(s_\alpha\otimes 1)(g_w)&=(s_\alpha\otimes 1)\left(q^{\ell(w)}\iota_{w*}1\otimes \frac{e^{-w\rho}(\tau)}{\prod_{\beta>0, w\beta>0}(1-e^{-w\beta}(\tau))\prod_{\beta>0,w\beta<0}(q-e^{-w\beta}(\tau))}\right)\\
=&q^{\ell(w)}\iota_{s_\alpha w*}1\otimes \frac{e^{-s_\alpha w\rho}(s_\alpha\tau)}{\prod_{\beta>0, w\beta>0}(1-e^{-w\beta}(\tau))\prod_{\beta>0,w\beta<0}(q-e^{-w\beta}(\tau))}\\
=&\left\{ \begin{array}{cc}
g_{s_\alpha w}\otimes c_{-\alpha},& \text{ if } w^{-1}\alpha>0;\\
g_{s_\alpha w}\otimes \frac{1}{c_\alpha},& \text{ if } w^{-1}\alpha<0.
\end{array}\right.\\
\in& K_{s_\alpha\tau}.
\end{align*}
Comparing with \eqref{equ:intereigen}, we get 
\[\Psi_{s_\alpha\tau}(s_\alpha\otimes 1)(g_w)=I_{s_\alpha}^\tau\left(\Psi_\tau(g_w)\right),\]
which finishes the proof.

\end{proof}
\begin{remark}
As an application of this corollary, we reprove the Gindikin--Karpelevich formula \eqref{equ:Gindikin–Karpelevich Formula}. 
According to Remark~\ref{rem:aftercomparison}, the two sides of the Gindikin--Karpelevich formula
 becomes 
$w(\tilde{\phi}^\tau)$ and $\tilde{\phi}^{w\tau}$,
where $\tilde{\phi}^\tau$ is the K-theoretic spherical vector defined in \eqref{equ:kspherical}. The equality between these two $K$-theory classes follows directly from the definition of $w\otimes 1$.
\end{remark}
\subsection{Macdonald's formula in equivariant K-theory}
In this section, we give a $K$-theory interpretation of the Macdonald's formula from Theorem \ref{thm:macdonald}.

We define the $K$-theory analogue of the pairing
$\angl{-,-}:I(\tau)\otimes I(\tau^{-1})\to \bbC$ \cite[\S~1.9]{HKP03}
to be the following. Let $\iota:A^\vee\to A^\vee$ be the endomorphism of abelian groups sending an element to its inverse. It induced a map $\iota:K_{A^\vee}(Y)\to K_{A^\vee}(Y)$ for any $A^\vee$-variety $Y$. 
Explicitly, on $K_{A^\vee\times \bbC^*}(T^*(G^\vee/B^\vee))$, using localization we have $\iota(\calF)|_w(\tau)=\calF|_w(\tau^{-1})$, for any point $\tau\in A^\vee$. 
We consider the paring \[\langle -, -\rangle_\tau: K_{\tau} \times K_{\tau^{-1}}\rightarrow \bbC \] defined as \[(\calF,\calG)\mapsto \left(p_*\iota(\sHom(\calF,\calG))\right)(\tau),\] with  $p:T^*(G^\vee/B^\vee)\to \fg^\vee$ being the Springer map. Here $(-)(\tau)$ means evaluating the $K$-theory classes using the map $K_{A^\vee\times\bbC^*}(\pt)\to\bbC_\tau$ induced by the character $\tau$ and the $\bbC^*$-equivariant parameter $q$ is evaluated to be the cardinality of the residue field of $\calO_F$. It is easy to see this pairing is well defined. Using localization, the above paring can be written as
\begin{equation}\label{equ:kpairingtwisted}
(\calF,\calG)\mapsto \sum_{w\in W}\frac{\calF|_w(\tau) \calG|_w(\tau^{-1})}{\bigwedge^\bullet T_w(T^*(G^\vee/B^\vee))(\tau^{-1})}.
\end{equation}
We will show in Remark~\ref{rmk:pairing} that under the isomorphism in Theorem~\ref{thm:comparison},  this paring  differs from $\angl{-,-}:I(\tau)\otimes I(\tau^{-1})\to \bbC$ \cite[\S~1.9]{HKP03} by a scalar.

For any $\mu$ in the dominant coweights $X_*(A)_+$ of $G$, we have the element \[e_{I\varpi^\mu I}=\delta_{B}(\varpi^\mu)^{\frac{1}{2}}\theta_\mu\in \bbH.\]
We define the $K$-theoretic analogue of the spherical function \eqref{equ:newdefofspherical} as follows 
\[\tilde{\Gamma}_\tau:X_*(A)_+\rightarrow \bbC,\quad \mu\rightarrow \langle \pi(e_{I\varpi^\mu I})\tilde{\phi}^\tau, \tilde{\phi}^{\tau^{-1}}\rangle_\tau,\]
where recall that  $\pi(e_{I\varpi^\mu I})$ is the action of $\Theta$ on the equivariant K-theory.

Then our K-theoretic interpretation of Macdonald's formula is the following
\begin{theorem}\label{thm:kmacdonald}
For any dominant coweight $\mu$ of $G$, we have
\[\tilde{\Gamma}_\tau(\mu)=q^{\dim G/B}Q\cdot \Gamma_\tau(\varpi^\mu),\]
where $Q$ is defined as the volume of $Bw_0B$.
\end{theorem}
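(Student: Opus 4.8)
The plan is to reduce the statement to a localization computation on the K\nobreakdash-theory side and then match it term-by-term with the right-hand side of Macdonald's formula (Theorem \ref{thm:macdonald}). First I would unwind the definition of $\tilde\Gamma_\tau(\mu)$. Using the expression \eqref{equ:kspherical} for the K\nobreakdash-theoretic spherical class $\tilde\phi^\tau$ as $\sum_w \frac{(\iota_{w*}1)_{-\rho}}{\prod_{\beta>0}(1-e^{-w\beta})}$, and the fact \eqref{eq:latticeaction2} that the lattice part $\Theta$ acts diagonally on the fixed-point basis, I get
\[
\pi(e_{I\varpi^\mu I})\tilde\phi^\tau \;=\; \delta_B(\varpi^\mu)^{\frac12}\sum_w \frac{e^{w\mu}(\tau)}{\prod_{\beta>0}(1-e^{-w\beta}(\tau))}(\iota_{w*}1)_{-\rho},
\]
where the exponent $w\mu$ comes from $\pi(e^\lambda)(\iota_{w*}1)_{-\rho}=e^{w\lambda}(\iota_{w*}1)_{-\rho}$. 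Then I would plug this, together with the analogous expression for $\tilde\phi^{\tau^{-1}}$, into the localization formula \eqref{equ:kpairingtwisted} for the pairing $\langle-,-\rangle_\tau$. Since $\iota_{w*}1$ is supported at the fixed point $w$ with $\iota_{w*}1|_w = \bigwedge^\bullet T_w(T^*(G^\vee/B^\vee))$, the double sum collapses to a single sum over $w\in W$, and the tensoring by $\calO(-\rho)$ contributes a factor $e^{-w\rho}(\tau)\cdot e^{-w\rho}(\tau^{-1}) = 1$ (careful: one copy picks up $\tau$, the other $\tau^{-1}$, so they cancel — this is exactly why the $(-\rho)$-twist was chosen). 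After cancelling one copy of $\bigwedge^\bullet T_w$ between numerator and denominator, what remains is
\[
\tilde\Gamma_\tau(\mu) \;=\; \delta_B(\varpi^\mu)^{\frac12}\sum_w e^{w\mu}(\tau)\,\frac{\bigwedge^\bullet T_w(T^*(G^\vee/B^\vee))(\tau^{-1})}{\prod_{\beta>0}(1-e^{-w\beta}(\tau))\prod_{\beta>0}(1-e^{-w\beta}(\tau^{-1}))}
\]
or a close variant, where I must be attentive to the switched sign convention for positive/negative roots of $G^\vee$ in force since \S\ref{sec:padic}.

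The second step is to simplify the weight factor $\bigwedge^\bullet T_w(T^*(G^\vee/B^\vee))$. As recorded in \S\ref{sec:algstable}, $\bigwedge^\bullet T_w(T^*G/B) = w(x_{-w_0}\hatx_{w_0}) = \prod_{\alpha>0}(1-e^{w\alpha})(1-qe^{-w\alpha})$, so evaluated at $\tau^{-1}$ this is $\prod_{\alpha>0}(1-e^{-w\alpha}(\tau))(1-qe^{w\alpha}(\tau))$. The first product cancels exactly one of the two denominator products; I am left with
\[
\tilde\Gamma_\tau(\mu) \;=\; \delta_B(\varpi^\mu)^{\frac12}\sum_w e^{w\mu}(\tau)\,\prod_{\beta>0}\frac{1-qe^{w\beta}(\tau)}{1-e^{-w\beta}(\tau)}.
\]
Reindexing the product over $\beta>0$ by $\beta\mapsto -\beta$ (equivalently using that for each $w$ the map $\beta\mapsto w^{-1}(\cdot)$ is a bijection, or just noting $\{w\beta:\beta>0\}$ vs.\ $\{-w\beta:\beta>0\}$), and pulling the constant $q^{|\Sigma^+|}=q^{\dim G/B}$ out front, I should land on
\[
\tilde\Gamma_\tau(\mu) \;=\; q^{\dim G/B}\,\delta_B(\varpi^\mu)^{\frac12}\sum_w e^{w\mu}(\tau)\,\prod_{\beta>0}\frac{1-q^{-1}e^{-w\beta}(\tau)}{1-e^{-w\beta}(\tau)}
\]
— because $\frac{1-qe^{w\beta}}{1-e^{-w\beta}} = q\cdot e^{w\beta}\cdot\frac{1-q^{-1}e^{-w\beta}}{1-e^{w\beta}}$, and the extra $q\,e^{w\beta}$ factors must be reorganized against the $w$-sum (this bookkeeping is the fiddly part; the correct grouping is that $\prod_{\beta>0}(q e^{w\beta})$ is $W$-independent up to the $e^{w(2\rho)}$ weight which is absorbed into a shift, and one uses the substitution $\beta \leftrightarrow -\beta$ for half the roots). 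Comparing with Theorem \ref{thm:macdonald}, whose right-hand side is $\frac{\delta_B(\varpi^\mu)^{\frac12}}{Q}\sum_w e^{w\mu}(\tau)\prod_{\beta>0}\frac{1-q^{-1}e^{-w\beta}(\tau)}{1-e^{-w\beta}(\tau)}$, I conclude $\tilde\Gamma_\tau(\mu) = q^{\dim G/B}\,Q\cdot\Gamma_\tau(\varpi^\mu)$, which is the claim.

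I expect the main obstacle to be the careful bookkeeping in the two places where sign conventions and substitutions intervene: (i) tracking the effect of the $\calO(-\rho)$ twist and the $\tau \leftrightarrow \tau^{-1}$ asymmetry in the pairing \eqref{equ:kpairingtwisted} so that the $\rho$-factors genuinely cancel and no spurious $e^{\pm w\rho}$ survives, and (ii) converting $\prod_{\beta>0}\frac{1-qe^{w\beta}}{1-e^{-w\beta}}$ into $\prod_{\beta>0}\frac{1-q^{-1}e^{-w\beta}}{1-e^{-w\beta}}$ up to the overall power of $q$, which requires using that $\beta\mapsto -\beta$ permutes $\Sigma^+$ after applying $w$ and $w_0$, together with the switched positive/negative root convention for $G^\vee$ adopted in \S\ref{sec:padic}. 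A clean way to handle (ii) without ad hoc manipulation is to invoke the $W$-invariance of the whole sum (which holds because $\tilde\phi^\tau = [\calO_{G^\vee/B^\vee}\otimes\calO(-\rho)]$ is manifestly a restriction of a global class, hence the pairing is $W$-invariant, exactly as Casselman's proof of Macdonald's formula uses $W$-invariance), reducing the identity of rational functions to an identity of their "leading terms", which can be checked on a single Weyl chamber. Once the normalization constant $q^{\dim G/B}Q$ is pinned down by comparing one explicit term (e.g.\ the $w=e$ contribution, or the constant term), the rest follows formally.
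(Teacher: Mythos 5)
Your overall route is the same as the paper's: evaluate $\pi(e_{I\varpi^\mu I})\tilde\phi^\tau$ using \eqref{eq:latticeaction2}, plug into the localization formula \eqref{equ:kpairingtwisted}, simplify, and compare with Macdonald's formula. The setup and the handling of the $\calO(-\rho)$ twist are fine. But there is a computational slip in the cancellation step that then derails the final manipulation, and the attempted patch does not hold up.

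Concretely: after cancelling $\bigwedge^\bullet T_w$ coming from $\tilde\phi^{\tau^{-1}}|_w(\tau^{-1})$ against the denominator $\bigwedge^\bullet T_w(\tau^{-1})$ of the pairing, and substituting the \S\ref{sec:padic}-convention formula $\bigwedge^\bullet T_w = \prod_{\beta>0}(1-e^{-w\beta})(1-qe^{w\beta})$, the per-$w$ factor is
\[
\frac{\bigwedge^\bullet T_w(\tau)}{\prod_{\beta>0}(1-e^{-w\beta}(\tau))\cdot\prod_{\beta>0}(1-e^{w\beta}(\tau))}
=\prod_{\beta>0}\frac{1-qe^{w\beta}(\tau)}{1-e^{w\beta}(\tau)},
\]
with \emph{matching} signs of $w\beta$ in numerator and denominator. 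You wrote instead $\prod_{\beta>0}\frac{1-qe^{w\beta}(\tau)}{1-e^{-w\beta}(\tau)}$: you cancelled the $\prod(1-e^{-w\beta}(\tau))$ factor but then kept $(1-e^{-w\beta}(\tau))$ rather than the surviving factor $(1-e^{-w\beta}(\tau^{-1}))=(1-e^{w\beta}(\tau))$ in the denominator. (Your two earlier deviations — using the \S\ref{sec:algstable}-convention $\bigwedge^\bullet T_w=\prod_{\alpha>0}(1-e^{w\alpha})(1-qe^{-w\alpha})$ together with evaluating it at $\tau^{-1}$ instead of $\tau$ — happen to compensate exactly, so the localization expression you arrived at was still numerically right; it is this last cancellation that introduces the real error.)

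With the correct form, step (ii) collapses to the clean per-$\beta$ identity
\[
\frac{1-qe^{w\beta}}{1-e^{w\beta}} \;=\; q\cdot\frac{1-q^{-1}e^{-w\beta}}{1-e^{-w\beta}}
\]
(multiply numerator and denominator by $-e^{-w\beta}$), which immediately produces the factor $q^{\dim G/B}$ and the Macdonald integrand, with no reorganization "against the $w$-sum", no extra $e^{w\beta}$ floating around, and no appeal to $W$-invariance. The workaround you propose for the mismatched form is in fact not correct: $\prod_{\beta>0} q e^{w\beta}=q^{\dim G/B}e^{w(2\rho)}$ does depend on $w$, and "absorbing" $e^{w(2\rho)}$ into the $e^{w\mu}$ factor would replace $\mu$ by $\mu+2\rho$ in the final answer, which is not the claimed statement. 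Likewise, the fallback of invoking $W$-invariance and comparing leading terms is not developed enough to carry the argument; it is a symptom that the cancellation went wrong rather than a proof. Once the cancellation is fixed, none of that machinery is needed and the argument closes exactly as in the paper.
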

\begin{remark}\label{rmk:pairing}
By the linear independence of the coweights $\mu$, it follows from the theorem that if we normalize the pairing in \eqref{equ:kpairingtwisted} by multiplying by $q^{\dim G/B}Q$, then the isomorphism $\Psi_\tau$ in Theorem \ref{thm:comparison} respects this pairing and the pairing between the contragredient modules $I(\tau)$ and $I(\tau^{-1})$.
\end{remark}
\begin{proof}
We use localization formula. First of all, we have
\begin{equation}\label{equ:spaneigen}
\pi(e_{I\varpi^\mu I})\tilde{\phi}^\tau=\delta_{B}(\varpi^\mu)^{\frac{1}{2}}\sum_we^{w\mu-w\rho}(\tau)\frac{(\iota_{w*}1)_{-\rho}\otimes 1}{\prod_{\beta>0}(1-e^{-w\beta}(\tau))}.
\end{equation} Then
\begin{align*}
\tilde{\Gamma}_\tau(\mu)&=\langle \pi(e_{I\varpi^\mu I})\tilde{\phi}^\tau, \tilde{\phi}^{\tau^{-1}}\rangle_\tau\\
&=\delta_{B}(\varpi^\mu)^{\frac{1}{2}}\sum_{w}e^{w\mu}(\tau)\frac{\bigwedge^\bullet T_w(T^*(G^\vee/B^\vee))(\tau)}{\prod_{\beta>0}(1-e^{-w\beta}(\tau))(1-e^{w\beta}(\tau))}\\
&=\delta_{B}(\varpi^\mu)^{\frac{1}{2}}\sum_{w}e^{w\mu}(\tau)\prod_{\beta>0}\frac{1-qe^{w\beta}(\tau)}{1-e^{w\beta}(\tau)}\\
&=q^{\dim G/B}\delta_{B}(\varpi^\mu)^{\frac{1}{2}}\sum_{w}e^{w\mu}(\tau)\prod_{\beta>0}\frac{1-q^{-1}e^{-w\beta}(\tau)}{1-e^{-w\beta}(\tau)}\\
&=q^{\dim G/B}Q\cdot \Gamma_\tau(\varpi^\mu).
\end{align*}
\end{proof}

\subsection{Casselman--Shalika formula in equivariant K-theory}
In this section, we investigate the K-theoretic meaning of the Casselman--Shalika formula, see Theorem \ref{casselmanshalika}.

First of all, we define the K-theoretic analogue of the Whittaker functional. On the space $K_{\tau}$, we have the equivariant Euler characteristic map
\[p_*:K_\tau\rightarrow \bbC\] induced by $p:T^*(G^\vee/B^\vee)\to \fg^\vee$.  Via localization,  for any $\calF\in K_\tau$, we have
\[p_*(\calF):=\sum_i (-1)^iH^i(T^*(G^\vee/B^\vee), \calF)(\tau)=\sum_{w\in W}\frac{\calF|_w(\tau)}{\bigwedge^\bullet(T_w(T^*G/B))(\tau)}\in \bbC.\]
In particular,  $p_*(\iota_{w*}1)=1$ for any $w\in W$.

The $K$-theoretic analogue of the Whittaker functional is defined to be a modification of the above
\[\tilde{L}_\tau(-):=\prod_{\beta>0}(1-q^{-1}e^\beta(\tau))\cdot p_*(-\otimes\calO(\rho)):K_\tau\rightarrow \bbC.\]
 For any $\calF\in K_\tau$ and any dominant coweight $\mu$ of $G$, we consider
\[\tilde{\calW}_\tau(\calF)(\varpi^\mu):=\tilde{L}_{\tau}\left(\pi(e_{I\varpi^\mu I})\calF\right),\]
where $e_{I\varpi^\mu I}$ is defined in \eqref{equ:defofcharacteristic}.
Here recall that
$e_{I\varpi^\mu I}=\delta_{B}(\varpi^\mu)^{\frac{1}{2}}\theta_\mu\in \bbH$, and
  $\pi(e_{I\varpi^\mu I})$ is the action of $\Theta$ on the equivariant $K$-theory. The
$K$-theory analogue of the Whittaker function is defined to be
\[\tilde{W}_\tau:X_*(A)_+\rightarrow \bbC,\quad \tilde{W}_\tau(\mu):=\tilde{L}_{\tau}\left(\pi(e_{I\varpi^\mu I})\tilde{\phi}^\tau\right),\]
where $\tilde\phi^\tau$ is our K-theoretic spherical class defined in \eqref{equ:kspherical}.

Under the isomorphism in Theorem \ref{thm:comparison}, properties of the Whittaker functions in \eqref{equ:intertwinerwhittaker} and \eqref{equ:whittakereigenbasis} correspond to the following lemma, 
according to Corollary \ref{cor:weylintertwiner}.

\begin{lemma}
For any $w\in W$, we have
\begin{enumerate}
\item
The following diagram is commutative
\[
\xymatrix{K_\tau\ar[r]^-{\tilde{L}_\tau}\ar[d]^{w^{-1}\otimes 1} & \bbC \ar[d]^{\prod_{\beta>0,w^{-1}\beta<0}\frac{1-q^{-1}e^{-\beta}(\tau)}{1-q^{-1}e^\beta(\tau)}}\\
K_{w^{-1}\tau} \ar[r]^-{\tilde{L}_{w^{-1}\tau}}& \bbC.}
\]
\item 
For the $g_w$ defined in Theorem \ref{thm:comparison}, we have
\[\tilde{\calW}_\tau(g_w)(\varpi^\mu)=\calW_\tau(f_w^\tau)(\varpi^\mu).\]
\end{enumerate}
\end{lemma}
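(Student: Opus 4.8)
Both parts are elementary once unwound by localization on $T^*(G^\vee/B^\vee)$; the only care needed is with conventions --- the switched positive/negative roots of $G^\vee$, the restrictions $\calO(\pm\rho)|_w=e^{\pm w\rho}$ of the line bundles to the fixed point $w$, and the identities $e^\lambda(w^{-1}\tau)=e^{w\lambda}(\tau)$ and $w^{-1}(\calF|_{wy})(w^{-1}\tau)=\calF|_{wy}(\tau)$ which govern how the Weyl action on $K_{A^\vee\times\bbC^*}$-classes interacts with evaluation at an unramified character. The plan is to prove (2) first and deduce (1). The computation underlying everything is the following: since $(\iota_{w*}1)_{-\rho}\otimes\calO(\rho)=\iota_{w*}1$ and $p_*(\iota_{w*}1)=1$, the definition of $\tilde L_\tau$ gives $\tilde L_\tau\big((\iota_{w*}1)_{-\rho}\big)=\prod_{\beta>0}(1-q^{-1}e^\beta(\tau))$, independently of $w$. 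Plugging in the formula for $g_w$ from Theorem \ref{thm:comparison}, using $\ell(w)=|\{\beta>0:w^{-1}\beta<0\}|$, and substituting $\gamma=w\beta$ in the product over $\{\beta>0:w\beta<0\}$, the factor $q^{\ell(w)}$ together with $\prod_{\beta>0,\,w^{-1}\beta<0}(1-q^{-1}e^{\beta})$ cancels against part of $\prod_{\beta>0}(1-q^{-1}e^{\beta})$, leaving
\[
\tilde L_\tau(g_w)=\prod_{\beta>0,\ w^{-1}\beta>0}\frac{1-q^{-1}e^{\beta}(\tau)}{1-e^{-\beta}(\tau)}.
\]

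\textbf{Part (2).} Since $g_w$ is a scalar multiple of the $\Theta$-eigenvector $(\iota_{w*}1)_{-\rho}$, Equation \eqref{eq:latticeaction2} gives $\pi(e_{I\varpi^\mu I})g_w=\delta_B(\varpi^\mu)^{1/2}e^{w\mu}(\tau)g_w$, whence
\[
\tilde\calW_\tau(g_w)(\varpi^\mu)=\tilde L_\tau\big(\pi(e_{I\varpi^\mu I})g_w\big)=\delta_B(\varpi^\mu)^{1/2}e^{w\mu}(\tau)\prod_{\beta>0,\ w^{-1}\beta>0}\frac{1-q^{-1}e^{\beta}(\tau)}{1-e^{-\beta}(\tau)},
\]
which is exactly $\calW_\tau(f_w^\tau)(\varpi^\mu)$ by \eqref{equ:whittakereigenbasis}. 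Specializing to $\mu=0$ (so $e_{I\varpi^0I}=1$ and $\calW_\tau(f)(1)=L(f)$), and recalling that $\{g_w\}$ is a basis with $\Psi_\tau(g_w)=f_w^\tau$, we get $\tilde L_\tau=L\circ\Psi_\tau$; the same argument with $\tau$ replaced by $w^{-1}\tau$ gives $\tilde L_{w^{-1}\tau}=L\circ\Psi_{w^{-1}\tau}$.

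\textbf{Part (1).} Corollary \ref{cor:weylintertwiner} with $w$ replaced by $w^{-1}$ reads $\Psi_{w^{-1}\tau}\circ(w^{-1}\otimes 1)=I_w^\tau\circ\Psi_\tau$. Combining this with the identifications $\tilde L_{w^{-1}\tau}=L\circ\Psi_{w^{-1}\tau}$ and $\tilde L_\tau=L\circ\Psi_\tau$ from Part (2), and with \eqref{equ:intertwinerwhittaker} evaluated at $g=1$, namely $L\circ I_w^\tau=\big(\prod_{\beta>0,\,w^{-1}\beta<0}\tfrac{1-q^{-1}e^{-\beta}(\tau)}{1-q^{-1}e^\beta(\tau)}\big)L$, we obtain
\begin{align*}
\tilde L_{w^{-1}\tau}\circ(w^{-1}\otimes 1)
&=L\circ\Psi_{w^{-1}\tau}\circ(w^{-1}\otimes 1)=L\circ I_w^\tau\circ\Psi_\tau\\
&=\Big(\prod_{\beta>0,\,w^{-1}\beta<0}\frac{1-q^{-1}e^{-\beta}(\tau)}{1-q^{-1}e^\beta(\tau)}\Big)L\circ\Psi_\tau=\Big(\prod_{\beta>0,\,w^{-1}\beta<0}\frac{1-q^{-1}e^{-\beta}(\tau)}{1-q^{-1}e^\beta(\tau)}\Big)\tilde L_\tau,
\end{align*}
which is the commutativity of the square. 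Alternatively, (1) can be proved without $p$-adic input by running the localization computation of the first paragraph directly for $(w^{-1}\otimes 1)\calF$: the substitution $z=wy$ in the fixed-point sum collapses the $w$-dependence to $\prod_{\beta>0}(1-q^{-1}e^{w\beta}(\tau))\big/\prod_{\beta>0}(1-q^{-1}e^\beta(\tau))$, which equals the displayed constant by splitting $w\Sigma^+=(w\Sigma^+\cap\Sigma^+)\sqcup(w\Sigma^+\cap\Sigma^-)$.

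\textbf{Main obstacle.} There is no conceptual difficulty here; the content is entirely the bookkeeping flagged in the first paragraph, plus --- for the first route to (1) --- checking that the normalization of $I_w^\tau$ used in Corollary \ref{cor:weylintertwiner} matches that in \eqref{equ:intertwinerwhittaker}. Should that comparison prove awkward, the self-contained localization route to (1) sidesteps it entirely.
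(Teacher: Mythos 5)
Your proof is correct. For part (2) your argument — use that $g_w$ is a $\Theta$-eigenvector, then plug in the explicit value $\tilde L_\tau(g_w)=\prod_{\beta>0,w^{-1}\beta>0}\frac{1-q^{-1}e^\beta(\tau)}{1-e^{-\beta}(\tau)}$ — is essentially the paper's. For part (1) your primary route runs through the $p$-adic side: you first extract $\tilde L_\tau=L\circ\Psi_\tau$ by specializing (2) at $\mu=0$, then combine Corollary \ref{cor:weylintertwiner} with \eqref{equ:intertwinerwhittaker}. The paper instead proves (1) directly in K-theory by checking on the fixed-point basis $\iota_{y*}1\otimes 1$ and invoking the root identity
\[
\prod_{\beta>0}(1-q^{-1}e^\beta)\prod_{\beta>0,\,w^{-1}\beta<0}\frac{1-q^{-1}e^{-\beta}}{1-q^{-1}e^\beta}=\prod_{\beta>0}(1-q^{-1}e^{w\beta}),
\]
which is what your "alternative" route B sketches. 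Both are logically valid (neither \eqref{equ:intertwinerwhittaker} nor Corollary \ref{cor:weylintertwiner} depends on this lemma); the paper's choice keeps the statement a self-contained K-theoretic fact, so that the lemma can then be read as an independent confirmation of, rather than a consequence of, the $p$-adic formulas it mirrors. Your route A buys conceptual transparency about \emph{why} the constants in the diagram are what they are, at the cost of importing $p$-adic results. One small imprecision in route B: there is no fixed-point sum to re-index, since $\iota_{y*}1\otimes 1$ is supported at a single fixed point and $p_*(\iota_{z*}1)=1$ for every $z$; the $w$-dependence sits entirely in the prefactor $\prod_{\beta>0}(1-q^{-1}e^\beta(w^{-1}\tau))$, which is then rewritten by the identity above. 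Also, route B tacitly uses that $\calO(\rho)$ is fixed by the Weyl action on $K_{A^\vee\times\bbC^*}$ (being $G^\vee$-equivariant), which is worth stating.
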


\begin{proof}
\begin{enumerate}
\item
Since any $\calF\in K_\tau$ can be written as $\bbC$-linear combination of the fixed point basis $\iota_{y*}1\otimes 1\in K_\tau$, we only need to check for these basis elements. Then it follows from the following easy identity
\[\prod_{\beta>0}(1-q^{-1}e^\beta)\prod_{\beta>0,w^{-1}\beta<0}\frac{1-q^{-1}e^{-\beta}}{1-q^{-1}e^\beta}=\prod_{\beta>0}(1-q^{-1}e^{w\beta}).\]
\item
The second one is verified immediately once we know $\pi(e_{I\varpi^\mu I})(g_w)=\delta_B^{\frac{1}{2}}(\varpi^\mu)e^{w\mu}g_w$.
\end{enumerate}
\end{proof}

Then the following is the $K$-theoretic interpretation of the Casselman--Shalika formula.
\begin{theorem}\label{thm:kcasselmanshalika}
For any dominant coweight $\mu$ of $G$, we have
\[\tilde{W}_\tau(\mu)=W_\tau(\varpi^\mu).\]
\end{theorem}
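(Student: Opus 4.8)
The plan is to reduce the $K$-theoretic Casselman--Shalika formula $\tilde W_\tau(\mu)=W_\tau(\varpi^\mu)$ to the classical one (Theorem~\ref{casselmanshalika}) by computing $\tilde W_\tau(\mu)$ directly via localization. Recall $\tilde\phi^\tau=\sum_w q^{\ell(w)/2}(\stab_-(w))_{-\rho}=\sum_w\frac{(\iota_{w*}1)_{-\rho}}{\prod_{\beta>0}(1-e^{-w\beta})}$ from Remark~\ref{rem:aftercomparison}, and that $\pi(e_{I\varpi^\mu I})=\delta_B(\varpi^\mu)^{1/2}\pi(\theta_\mu)$ acts on the fixed-point basis by \eqref{eq:latticeaction2}, i.e.\ $\pi(\theta_\mu)((\iota_{w*}1)_{-\rho})=e^{w\mu}(\iota_{w*}1)_{-\rho}$. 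Hence, exactly as in \eqref{equ:spaneigen},
\[
\pi(e_{I\varpi^\mu I})\tilde\phi^\tau=\delta_B(\varpi^\mu)^{1/2}\sum_w e^{w\mu}\frac{(\iota_{w*}1)_{-\rho}}{\prod_{\beta>0}(1-e^{-w\beta})}.
\]

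Next I would apply $\tilde L_\tau(-)=\prod_{\beta>0}(1-q^{-1}e^\beta(\tau))\cdot p_*(-\otimes\calO(\rho))$. Tensoring the summand above by $\calO(\rho)$ cancels the $\calO(-\rho)$ twist in $(\iota_{w*}1)_{-\rho}=\iota_{w*}1\otimes\calO(-\rho)$, leaving $\iota_{w*}1$, so that $p_*$ applied to each term gives $p_*(\iota_{w*}1)=1$ (using the stated identity $p_*(\iota_{w*}1)=1$, equivalently the localization computation with $\bigwedge^\bullet T_w(T^*(G^\vee/B^\vee))(\tau)$ in the denominator cancelling the restriction $\iota_{w*}1|_w$). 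Therefore
\[
\tilde W_\tau(\mu)=\delta_B(\varpi^\mu)^{1/2}\prod_{\beta>0}(1-q^{-1}e^\beta(\tau))\sum_w\frac{e^{w\mu}(\tau)}{\prod_{\beta>0}(1-e^{-w\beta}(\tau))}.
\]
This is literally the first line of Theorem~\ref{casselmanshalika}, which equals $W_\tau(\varpi^\mu)$; so the identification is immediate once the localization bookkeeping is done. Alternatively, one can phrase this via Theorem~\ref{thm:comparison}: under $\Psi_\tau$ one has $\Psi_\tau(\tilde\phi^\tau)=\phi^\tau$ (by $(\stab_-(w))_{-\rho}\mapsto q^{-\ell(w)/2}\varphi_w^\tau$ and $\phi^\tau=\sum_w\varphi_w^\tau$), $\Psi_\tau$ intertwines the $\bbH$-actions hence the action of $e_{I\varpi^\mu I}$, and $\tilde L_\tau$ corresponds to the $p$-adic Whittaker functional $L$ composed with $\calW_\tau$ evaluated at $\varpi^\mu$; combined with \eqref{equ:newdefofwhittaker} this yields $\tilde W_\tau(\mu)=W_\tau(\varpi^\mu)$.

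The only genuine point requiring care—the ``main obstacle,'' though it is mild—is matching normalizations: one must check that the twist by $\calO(\rho)$ in $\tilde L_\tau$ together with the prefactor $\prod_{\beta>0}(1-q^{-1}e^\beta(\tau))$ exactly reproduces the $p$-adic Whittaker functional's normalization, consistent with \eqref{equ:whittakereigenbasis}, i.e.\ that $\tilde{\calW}_\tau(g_w)(\varpi^\mu)=\calW_\tau(f_w^\tau)(\varpi^\mu)$ as recorded in the preceding lemma; and that $\delta_B(\varpi^\mu)^{1/2}$ appears with the correct power, which it does because $e_{I\varpi^\mu I}=\delta_B(\varpi^\mu)^{1/2}\theta_\mu$. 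I would therefore present the proof in two short moves: (i) compute $\pi(e_{I\varpi^\mu I})\tilde\phi^\tau$ using \eqref{eq:latticeaction2} and the localization expression for $\tilde\phi^\tau$; (ii) apply $\tilde L_\tau$, use $p_*((\iota_{w*}1)_{-\rho}\otimes\calO(\rho))=1$, and observe that the resulting sum is the right-hand side of Theorem~\ref{casselmanshalika}.
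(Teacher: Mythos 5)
Your proof is correct and follows essentially the same route as the paper's: compute $\pi(e_{I\varpi^\mu I})\tilde\phi^\tau$ using the lattice action on the fixed-point basis (the paper's \eqref{equ:spaneigen}), apply $\tilde L_\tau$ using $p_*(\iota_{w*}1)=1$, and recognize the resulting sum as the right-hand side of Theorem~\ref{casselmanshalika}. The alternative route you sketch at the end (via $\Psi_\tau$ and the preceding lemma) is also valid but is not what the paper's proof does.
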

\begin{proof}
From  \eqref{equ:spaneigen} and the fact $p_*(\iota_{w*}1)=1$, we get
\begin{align*}
\tilde{W}_\tau(\mu)&=\tilde{L}_{\tau}\left(\pi(e_{I\varpi^\mu I})\tilde{\phi}^\tau\right)\\
&=\delta_B^{\frac{1}{2}}(\varpi^\mu)\prod_{\beta>0}(1-q^{-1}e^\beta(\tau))\sum_{w\in W}e^{w\mu}(\tau)\frac{1}{\prod_{\beta>0}(1-e^{-w\beta}(\tau))}\\
&=W_\tau(\varpi^\mu).
\end{align*}
\end{proof}

\newcommand{\arxiv}[1]
{\texttt{\href{http://arxiv.org/abs/#1}{arXiv:#1}}}
\newcommand{\doi}[1]
{\texttt{\href{http://dx.doi.org/#1}{doi:#1}}}
\renewcommand{\MR}[1]
{\href{http://www.ams.org/mathscinet-getitem?mr=#1}{MR#1}}

\end{document}